\definecolor{my-blue}{rgb}{0.0,0.0,0.6}
\definecolor{my-red}{rgb}{0.5,0.0,0.0}
\definecolor{my-green}{rgb}{0.0,0.5,0.0}
\definecolor{nicos-red}{rgb}{0.75,0.0,0.0}
\definecolor{nicos-red}{rgb}{0.75,0.0,0.0}
\definecolor{light-gray}{gray}{0.6} 
\definecolor{really-light-gray}{gray}{0.8}
\definecolor{darkgreen}{rgb}{0.0,0.5,0.0}
\definecolor{darkblue}{rgb}{0.0,0.0,0.3}
\definecolor{nicosred}{rgb}{0.65,0.1,0.1}
\numberwithin{figure}{section}
\newtheorem{theorem}{\sc Theorem}[section]
\newtheorem{lemma}[theorem]{\sc Lemma}
\newtheorem{proposition}[theorem]{\sc Proposition}
\newtheorem{corollary}[theorem]{\sc Corollary}
\numberwithin{equation}{section}
\theoremstyle{remark}
\newtheorem{remark}[theorem]{Remark}
\newtheorem{example}[theorem]{Example}
\newcommand{\be}{\begin{equation}}
\newcommand{\ee}{\end{equation}}
\newcommand{\nn}{\nonumber}
 \def\esssup{\mathop{\mathrm{ess\,sup}}}
\def\p{\omega}
\def\pch{\widecheck\p}
\def\doob{\kappa}
\def\wt{\widetilde}
\def\w{\omega}
\def\wplus{\bar\w}
\def\Uset{\mathcal U}
\def\pich{\widecheck\pi}
\providecommand{\abs}[1]{\vert#1\vert}
\newcommand{\fl}[1]{\lfloor{#1}\rfloor} 
\def\E{\mathbb{E}}
\def\P{\mathbb{P}}
\def\wcP{\widecheck\bfP}
\def\wcE{\widecheck\bfE}
\def\bfP{\mathbf P}
\def\bfE{\mathbf E}
\def\bC{\mathbb{C}}  \def\bH{\mathbb{H}}
\def\N{\mathbb{N}}
\def\Z{\mathbb{Z}}
\def\R{\mathbb{R}}
\def\S{\mathbb{S}}
\def\Q{\mathbb{Q}}
\def\C{\mathbb{C}}
\def\B{\mathbb{B}}
\font \mymathbb = bbold10 at 11pt
\newcommand{\one}{\mbox{\mymathbb{1}}}
\def\e{\varepsilon}
\def\Beta{W}
\def\cO{\mathcal O}
\def\ri{{\rm{ri}}\,}
\def\Bd{\rho}
\def\Bdla{\Bd^\lambda}
\def\Pplus{{\overline\P}}
\def\Eplus{{\overline\E}}
\def\kS{\mathfrak{S}}
\def\Varplus{\overline{\mathbb V}{\rm ar}}
 \def\Vvv{{\rm\mathbb{V}ar}}   
\def\Gvar{\Gamma}   
\def\Xmin{X^{1,\rm min}}
\def\Xmax{X^{1,\rm max}}
\def\Ymin{X^{2,\rm min}}
\def\Ymax{X^{2,\rm max}}
\def\llnv{\xi^*} 
\def\OSP{(\Omega,\kS,\P)}
\newcommand\cbullet{{\scaleobj{0.6}{\bullet}}}
\def\ddd{\displaystyle}
\newcommand\HP[3]{F^{#1}_{#2,\,#3}}    
\newcommand\HPa[3]{F^{#1}(#2,#3)}    
\begin{document}

\title[Beta RWRE]
{Large deviations and wandering exponent  
for random walk in a  dynamic beta environment} 

\author[M.~Bal\'azs]{M\'arton Bal\'azs}
\address{M\'arton Bal\'azs\\ School of Mathematics\\ University of Bristol\\ University Walk\\ Bristol, BS8 1TW\\ United Kingdom.}
\email{m.balazs@bristol.ac.uk}
\urladdr{http://www.maths.bris.ac.uk/~mb13434}
\thanks{M.\ Bal\'azs was partially supported by the Hungarian National Research, Development and Innovation Office, NKFIH grant K109684.}

\author[F.~Rassoul-Agha]{Firas Rassoul-Agha}
\address{Firas Rassoul-Agha\\ University of Utah\\  Mathematics Department\\ 155S 1400E\\   Salt Lake City, UT 84112\\ USA.}
\email{firas@math.utah.edu}
\urladdr{http://www.math.utah.edu/~firas}
\thanks{F.\ Rassoul-Agha was partially supported by National Science Foundation grant DMS-1407574 and Simons Foundation grant 306576.}

\author[T.~Sepp\"al\"ainen]{Timo Sepp\"al\"ainen}
\address{Timo Sepp\"al\"ainen\\ University of Wisconsin-Madison\\  Mathematics Department\\ Van Vleck Hall\\ 480 Lincoln Dr.\\   Madison WI 53706-1388\\ USA.}
\email{seppalai@math.wisc.edu}
\urladdr{http://www.math.wisc.edu/~seppalai}
\thanks{T.\ Sepp\"al\"ainen was partially supported by  National Science Foundation grant  DMS-1602486, by Simons Foundation grant 338287,  and by the Wisconsin Alumni Research Foundation.} 
\keywords{Beta distribution, Doob transform, hypergeometric function, Kardar-Parisi-Zhang, KPZ, large deviations, random environment, random walk, RWRE, wandering exponent}
\subjclass[2000]{60K35, 60K37} 

\thanks{F.\ Rassoul-Agha is grateful for the hospitality of the School of Mathematics  at the University of Bristol, where this project started.
Part of this work was completed during the 2017 Park City Mathematics Institute, supported by NSF Grant DMS-1441467. }

\date{February 5, 2018}

\begin{abstract}  
Random walk in a dynamic  i.i.d.\ beta random environment, conditioned  to escape at an atypical velocity,  converges to  a Doob transform of the original walk.   The Doob-transformed environment is correlated in time,  i.i.d.\ in space, and its marginal density function is a product of a beta density and a hypergeometric function.   Under its averaged distribution  the transformed walk  obeys the wandering exponent 2/3  that agrees with Kardar-Parisi-Zhang universality.    The  harmonic function in the Doob transform comes from   a Busemann-type limit and  appears as an extremal  in  
a variational problem  for the quenched large deviation rate function.  
\end{abstract}
\maketitle

\setcounter{tocdepth}{2}
\tableofcontents

\section{Introduction} 
We study an exactly solvable  version of  random walk in a random environment (RWRE)  in one space dimension.  The walk is nearest-neighbor  and the environment dynamical and  product-form.  Our main results (i) construct a Doob transform of the   RWRE   that  conditions the walk on an atypical velocity,  (ii) establish that the transformed walk   has path fluctuation exponent 2/3 of the KPZ (Kardar-Parisi-Zhang) class  instead of the diffusive 1/2, and (iii) describe the quenched large deviation rate function of the walk.   

 The three points above are closely tied together.    The  harmonic functions in the Doob transform furnish extremals of a variational formula for the quenched large deviation rate function.   Explicit distributional properties of these harmonic functions enable the derivation of the path exponent.  The logarithm of the harmonic function itself  obeys the KPZ  longitudinal exponent 1/3.   

This work  rests on the development of analogues of   percolation and  polymer ideas   for  RWRE.  The  harmonic functions in the Doob transform arise through limits that correspond to Busemann functions of  percolation and polymers.  The quenched large deviation rate function is strictly above the averaged one except at their common minimum.  For a deviation of small order $h$, the difference of  the quenched and averaged rate functions is of  order $h^4$.  These properties are  exactly as for the quenched and averaged free energy of 1+1 dimensional directed polymers  \cite{Com-Var-06, Lac-10}.  
  As is the case for the entire KPZ class,  proofs of fluctuation exponents are restricted to models with special features.  A natural  expectation is that   the picture that emerges here should be universal for 1+1 dimensional directed RWRE under some assumptions.


 We turn to a detailed introduction of the model.  
  
A dynamical  environment is   refreshed at each  time step.   On the  two-dimensional space-time lattice $\Z^2$ we run time in  the diagonal direction $(\tfrac12,\tfrac12)$,   and   the admissible steps of the walk are $e_1=(1,0)$ and $e_2=(0,1)$.  The jump probabilities are independent and identically distributed at each lattice point of $\Z^2$.   When the walk starts at the origin, after  $n$ time steps  its location is among the points $(i,j)$ in the first quadrant (that is,  $i,j\ge 0$)   with $i+j=n$.  

   The {\it environment}  $\p=(\p_{x,\,x+e_1}:x\in\Z^2)$ is a collection of   i.i.d.\ $[0,1]$-valued random variables $\p_{x,\,x+e_1}$ indexed by lattice points $x$. Set $\p_{x,\,x+e_2}=1-\p_{x,\,x+e_1}$.   $(\p_{x,\,x+e_1},  \p_{x,\,x+e_2})$ are the jump probabilities from point $x\in\Z^2$ to one of the neighbors $\{x+e_1,x+e_2\}$.   Transitions $\p$ do not allow backward jumps.     The distribution of the environment $\p$ is   $\P$ with expectation operator $\E$.
Given a realization $\p$ and a point $x\in\Z^2$,   $P_x^\p$ denotes the {\it quenched}  path measure of  the Markov chain $(X_n)_{n\ge 0}$  on $\Z^2$ that starts at $x$ and uses transition probabilities $\p$:
\be\label{12}	\begin{aligned}
	&P_x^\p(X_0=x)=1\quad\text{and, for } \ y\in\Z^2,\ n\ge0, \  \text{ and } \ i\in\{1,2\},   \\
	&P_x^\p(X_{n+1}=y+e_i\,|\,X_n=y)=\p_{y,y+e_i}.
	\end{aligned}\ee
Precisely speaking,  $P^\p_x$ is a probability measure on the path space $(\Z^2)^{\Z_+}$ of the walk and $X_\cbullet$ is the coordinate process.   
This is a special case of   {\it  random walk in a space-time random environment}. 

This paper focuses on the {\it beta RWRE} where $\p_{x,\,x+e_1}$ is beta-distributed.    Barraquand and Corwin  \cite{Bar-Cor-17}  discovered  that this special case is {\it exactly solvable}.   This means that  fortuitous coincidences of combinatorics and probability permit   derivation of   explicit formulas and precise results far deeper than anything presently available for the general case.   Some limit results  uncovered in an  exactly solvable case are expected to be universal.  These then form natural conjectures to be investigated in the general case.  

An earlier  case of exact calculations for  RWRE in a  static environment appeared in a series of papers by Sabot and coauthors (see \cite{Sab-Tou-17} and references therein).    They discovered and utilized  special features of the multidimensional Dirichlet RWRE to prove results currently not accessible for the general multidimensional RWRE.   Section 8 of  \cite{Sab-Tou-17} discusses  one-dimensional  RWRE in a static beta environment.


  Before specializing to the dynamic beta environment, we  run through some known results for the general 1+1 dimensional  RWRE \eqref{12} in an i.i.d.\ environment.

\subsection{Nearest-neighbor space-time RWRE} \label{s:1.1} 
Under an i.i.d.\ environment for the quenched model in \eqref{12}, 
the {\it  averaged} path measure  $P_0(\cdot)=\int P_0^\p(\cdot) \,\P(d\p)$ is a classical random walk with admissible steps  $\{e_1,e_2\}$ and transition kernel  
$p(e_i)=\E(\p_{0,e_i})$, $i=1,2$. 
Hence there is a law of large numbers   $P_0\{X_N/N\to\llnv\}=1$ with limiting velocity $\llnv=(\llnv_1, \llnv_2)=(p(e_1), p(e_2))$.  
  Fubini's theorem   then gives the quenched law of large numbers  
	\begin{align}\label{lln}
	P_0^{\p}\Big\{\frac{X_N}N\to\llnv\Big\}=1\quad\text{for }\P\text{-a.e.\ }\p.
	\end{align}
	
By Donsker's invariance principle, under $P_0$ the centered and diffusively rescaled walk 
	\[\Big\{W_N(t)=\frac{X_{\fl{Nt}}-Nt\llnv}{\sqrt{\llnv_1\llnv_2N}}:t\ge0\Big\}\] 
 converges weakly to  $\{(W(t),-W(t)):t\ge0\}$, where $W(\cdot)$ is standard one-dimensional Brownian motion.

The same  invariance principle holds for the {\it  quenched} RWRE  if and only if $\P(\p_{0,e_1}\in\{0,1\})<1$. That is,  for $\P$-almost every $\p$
the distribution of $\{W_N(t):t\ge0\}$ under $P_0^\p$ converges weakly to that of $\{(W(t),-W(t)):t\ge0\}$ (Theorem 1 of \cite{Ras-Sep-05}). 
An invariance principle also holds for the quenched mean $E_0^\p[X_N]$, but with scaling $N^{1/4}$ 
(Corollary 3.5  of \cite{Bal-Ras-Sep-06}). 
In summary, as $N\to\infty$,  the quenched mean of the walk has Gaussian fluctuations on a small scale of order $N^{1/4}$, while  under a typical   environment the walk  itself has Gaussian fluctuations on the larger scale of order $N^{1/2}$.
The fluctuations of the quenched walk  dominate and hence the averaged process has Gaussian fluctuations of order $N^{1/2}$.



Let  $\Uset=\{te_1+(1-t)e_2:0\le t\le1\}$ denote the simplex of possible limiting velocities.  For $\xi\in\Uset$ and $N\in\N$ let $[N\xi]$ denote a point closest to $N\xi$ on the antidiagonal $\{(x_1,x_2)\in\Z^2: x_1+x_2=N\}$.  
The averaged  large deviation principle (LDP) is the standard Cram\'er theorem and  tells us that for $\xi\in\Uset$ 
	\[\lim_{N\to\infty}N^{-1} \log P_0\{ X_N = [N\xi] \} =-I_a(\xi)\]
with rate function 
	\begin{align}\label{Ia}
	I_a(\xi)=\xi_1\log\frac{\xi_1}{\llnv_1}+\xi_2\log\frac{\xi_2}{\llnv_2} \qquad\text{for $\xi=(\xi_1,\xi_2)\in\Uset$.}
	\end{align}	

Under the assumption 
\be\label{qLDP3} \E[\abs{\log\p_{0,e_i}}^{2+\e}]<\infty\quad\text{ for $i\in\{1,2\}$ and some $\e>0$ }\ee
  a quenched   LDP  holds as well. 
By Theorems 2.2, 4.1, 2.6(b), and 3.2(a)  of \cite{Ras-Sep-14},  for all $\xi\in\Uset$, 
	\begin{align}\label{qLDP}
	\lim_{N\to\infty}N^{-1} \log P_0^\p\{ X_N = [N\xi] \} =-I_q(\xi)
	\end{align}
exists $\P$-almost surely. 
The rate function  $I_q$ does not depend on $\w$.  It is a nonnegative convex continuous function on $\Uset$ with a unique zero at $\llnv$.    By Fatou's lemma and Jensen's inequality,   $I_q(\xi)\ge I_a(\xi)$ for all $\xi\in\Uset$.    It is shown in \cite{Yil-Zei-10} that in fact $I_q(\xi)>I_a(\xi)$ for all $\xi\in\Uset\setminus\{\llnv\}$.   The proof in \cite{Yil-Zei-10} utilizes a uniform ellipticity assumption, namely that  $\P(\delta\le\p_{0,e_1}\le1-\delta)=1$   for some  $\delta>0$,  but their proof works more generally.  Theorem \ref{thm:Iq} below states the strict inequality in  the beta case.  

In general in RWRE closed formulas for $I_q$ have not been found.  Variational representations  exist, for example in \cite{Ros-06,Yil-09-cpam,Ras-Sep-14,Gre-Hol-94,Com-Gan-Zei-00+err}.  We state below one particular formula for the RWRE \eqref{12} on $\Z^2$.  In the beta case  extremals  for this formula   are  identified  in Section \ref{sec:ldp} below,  in terms of harmonic functions constructed for the beta RWRE.

  Let  $\mathcal K$ denote  the space of  {\it integrable stationary cocycles} defined on the probability space $\OSP$  of the environments.  By this  we mean   stochastic processes $\{B_{x,y}(\w):x,y\in\Z^2\}$  that satisfy these conditions   for all $x,y,z\in\Z^2$ and   $\P$-a.e.\ $\p$:  $\E\abs{B_{x,y}}<\infty$,   $B_{x,y}(\p)+B_{y,z}(\p)=B_{x,z}(\p)$, and   $B_{x,y}(T_z\p)=B_{x+z,y+z}(\p)$  where $T_z$ is the shift    $(T_z\p)_{x,\,x+e_i}=\p_{x+z,\,x+z+e_i}$.   The rate function in \eqref{qLDP} is then characterized as 
  	\begin{align}\label{K-var1}
	\begin{split}
	I_q(\xi)&=-\inf_{B\in\mathcal K}\Bigl\{\E[B_{0,e_1}]\xi_1+\E[B_{0,e_2}]\xi_2\\
	&\qquad+\P\text{-}\esssup_\w\log\bigl(\w_{0,e_1}e^{-B_{0,e_1}(\w)}+\w_{0,e_2}e^{-B_{0,e_2}(\w)}\bigr)\Bigr\}\qquad \text{for $\xi\in\ri\Uset$.}  
	\end{split}
	\end{align}
This formula for $I_q$  is valid  for an i.i.d.\ environment $\w$ under the same moment assumption \eqref{qLDP3} as the LDP.  	
	
For a nearest-neighbor RWRE on  $\Z^d$  for which all directions $\pm e_i$  satisfy \eqref{qLDP3}  formula \eqref{K-var1}  appeared in Theorem 2 on page 6 of \cite{Ros-06}.  In the directed case \eqref{K-var1} is a special case of variational formula   (4.7) in \cite{Geo-Ras-Sep-16-cmp}   for  the point-to-point limiting free energy of a directed polymer.  The  RWRE with transition probability $\w_{0,z}$ is obtained by taking the potential of the polymer to be  $V_0(\w,z)=\log \w_{0,z}-\log p(z)$ where $p(\cdot)$ is a fixed transition probability in the background. 
 



\subsection{Beta RWRE} 
Let   $\alpha,\beta>0$ be positive real parameter values.  The standard 
 gamma  and beta  functions are given by 
	\be\label{BB} \Gamma(\alpha)=\int_0^\infty s^{\alpha-1} e^{-s}\,ds
	\quad\text{and}\quad 
	 B(\alpha,\beta)=\int_0^1 s^{\alpha-1}(1-s)^{\beta-1}\,ds=\frac{\Gamma(\alpha)\Gamma(\beta)}{\Gamma(\alpha+\beta)}.\ee
The c.d.f.\ of the Beta$(\alpha,\beta)$ distribution is 
	\be\label{be-cdf} F(t;\alpha,\beta)=B(\alpha,\beta)^{-1}\int_0^t s^{\alpha-1}(1-s)^{\beta-1}\,ds \qquad \text{for $0<t<1$.}  \ee  
The case $\alpha=\beta=1$ is the uniform distribution on $(0,1)$.  
	
 For the remainder of this paper,    the variables  $\{\p_{x,\,x+e_1}:x\in\Z^2\}$  in  the RWRE \eqref{12}  are  i.i.d.\ Beta$(\alpha,\beta)$ distributed.    

Barraquand and Corwin  \cite{Bar-Cor-17} showed that if $\alpha=\beta=1$ and $\xi_1-\xi_2>4/5$   then
\be\label{b-c}  \lim_{N\to\infty} \P\biggl\{  \frac{\log P_0^\p\{X_N\cdot(e_1-e_2)\ge N(\xi_1-\xi_2)\}+NI_q(\xi)}{c(\xi)N^{1/3}} \le y \biggr\}  = F_{\text{GUE}}(y)  \ee
where the limit  is the Tracy-Widom GUE  distribution.  
Later, in a less rigorous paper,Thiery and Le Doussal \cite{Thi-Dou-16-} did the same for $\log P_0^\p\{X_N=[N\xi]\}+NI_q(\xi)$ and all $\alpha,\beta>0$ and $\xi\ne\llnv$.


These results revealed  that  this type of RWRE possesses features of the  1+1 dimensional Kardar-Parisi-Zhang (KPZ) universality class.   
A natural next question therefore  is, where in the model do we  find the KPZ  wandering exponent 2/3?   It is not in  the walk \eqref{12},  because,  as   pointed out in Section \ref{s:1.1}, the walk in an i.i.d.\ environment  satisfies a standard CLT under both its quenched and averaged distributions.   

 
 We answer  the question by  conditioning the walk on an  atypical velocity.  Under this conditioning the quenched process $X_{\cbullet}$   
converges to a  new walk given by a Doob transform of the original walk.   The harmonic function in the transform is the exponential of an analogue of a Busemann function for RWRE.     The Doob transform  is a  random walk in a  correlated environment.  When the environment is averaged out,  at time $N$   this walk has  fluctuations of the order $N^{2/3}$ and thus has the KPZ wandering exponent.  
  This behavior deviates radically  from that of classical random walk:  standard  random walk conditioned on an atypical velocity converges to another random walk with transitions altered to produce the new mean.  

Conditioning on an atypical velocity is   intimately tied to large deviations. The  logarithm of  the  harmonic function in  the Doob transform turns out to be  an extremal in \eqref{K-var1} and its  expectation is  the  gradient 
of   $I_q$.

%
%
%

\subsubsection*{Notation and conventions} We collect here some notation for easy reference.  
$\Z$ denotes the integers,   $\Q$  the rationals,  $\R$  the reals, and $\C$ the complex numbers.  
$\Z_+=\{0,1,2,3,\dotsc\}$, $\N=\{1,2,3,\dotsc\}$,  and $\R_+=[0,\infty)$.       For real  $a$,  $\fl{a}$ is the largest integer $\le a$.  

 For $x,y\in\R^2$  we use the following conventions.  Vector notation  is $x=(x_1,x_2)=x_1e_1+x_2e_2$, with canonical basis vectors $e_1=(1,0)$ and $e_2=(0,1)$.      The scalar product is  $x\cdot y$ and the $\ell^1$ norm 
$\abs{x}_1=\abs{x_1}+\abs{x_2}$.
Integer parts are  taken coordinatewise:  $\fl{x}=(\fl{x_1},\fl{x_2})$.
For   $x\cdot(e_1+e_2)\in\Z_+$,  $[x]$ is  a closest point to $x$ in $\{y\in\Z^2: y_1+y_2=x_1+x_2\}$.
Inequality $y\ge x$ is interpreted  coordinatewise:  $y_1\ge x_1$ and $y_2\ge x_2$.

Shifts or translations $T_z$ act on environments $\p$ by $(T_z\p)_{x,\,x+e_i}=\p_{x+z,\,x+z+e_i}$ for $x,y\in\Z^2$.  
When   subscripts are  inconvenient,   $\p_{x,y}$ becomes  $\p(x,y)$, with the  analogous convention for  other   quantities such as $\pi_{x,y}$, $B_{x,y}$, and $\Bd_{x,y}$.
A finite or infinite sequence is denoted by $x_{i,j}=(x_i,\dotsc,x_j)$, for $-\infty\le i<j\le\infty$.   The simplex of asymptotic velocities of walks is  $\Uset=\{te_1+(1-t)e_2:0\le t\le1\}$, with relative interior  
$\ri\Uset=\{te_1+(1-t)e_2:0<t<1\}$.

\section{Results for beta RWRE}  

In Section \ref{sec:doob}  below we construct the  Doob-transformed RWRE that is the limiting process of the quenched walk conditioned on an atypical velocity $\xi\ne\llnv$.  
Section \ref{sec:exp} states 
the KPZ fluctuation exponent of the averaged Doob-transformed  walk.   Finally in Section \ref{sec:ldp} we display the explicit quenched large deviation rate function and its connection  with the harmonic functions of the Doob transform.  

The standing assumptions for this section are that parameters $\alpha,\beta>0$ are fixed,  and  the environment  $\p=(\p_{x,\,x+e_1})_{x\in\Z^2}$ has the i.i.d.   Beta$(\alpha,\beta)$ distribution.   The probability space  of the environment  is    $\OSP$ where $\kS$ is the Borel $\sigma$-field on the product space $\Omega=[0,1]^{\Z^2}$.

\subsection{Doob transform of the  quenched walk} \label{sec:doob}

 The  first main result is   the existence of a family of increment-stationary harmonic functions, indexed by directions in $\ri\Uset=\{te_1+(1-t)e_2:0< t<1\}$.

\begin{theorem}\label{th:Bus}
On $\OSP$ there exists  a stochastic process $\{B^\xi_{x,y}(\p):x,y\in\Z^2,\xi\in\ri\Uset\}$ with  the following properties.

For each $\xi\in\ri\Uset$,  $e^{-B^\xi_{0,x}}$ is a harmonic function: for all $x\in\Z^2$
	\begin{align}\label{B:harm}
	\p_{x,\,x+e_1}e^{-B^\xi_{0,x+e_1}(\p)}+\p_{x,\,x+e_2}e^{-B^\xi_{0,x+e_2}(\p)}=e^{-B^\xi_{0,x}(\p)} \qquad \text{$\P$-a.s.} 
	\end{align}

For each   $\xi\in\ri\Uset$ there is an event $\Omega^{(\xi)}$ such that $\P(\Omega^{(\xi)}) =1$ and for   every $\p\in\Omega^{(\xi)}$, 
	\begin{align}\label{def:Bus}
	B^\xi_{x,y}(\p)=\lim_{N\to\infty}\big(\log P_x^{\p}\{X_{\abs{z_N-x}_1}=z_N\}-\log P_y^{\p}\{X_{\abs{z_N-y}_1}=z_N\}\big)
	\end{align}
for all $x,y\in\Z^2$,
and for any sequence $z_N\in\Z^2$  such that  $\abs{z_N}_1\to\infty$ and $z_N/\abs{z_N}_1\to\xi$.  

In the law of large numbers direction  $\llnv=(\frac{\alpha}{\alpha+\beta}, \frac{\beta}{\alpha+\beta})$ we have 
\be\label{B-llnv} B^{\llnv}_{x,y}(\p)=0. \ee

\end{theorem}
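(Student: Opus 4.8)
The plan is to take the limit in \eqref{def:Bus} as the \emph{definition} of $B^\xi$ and to deduce \eqref{B:harm}, the cocycle identities, and \eqref{B-llnv} from it; the substantive point is then the almost sure existence of that limit. For $x,z\in\Z^2$ with $z\ge x$ coordinatewise write $Q^\p(x,z)=P_x^\p\{X_{\abs{z-x}_1}=z\}$, the quenched point-to-point probability, which is the sum over up-right lattice paths from $x$ to $z$ of the products of step probabilities $\p$ along the path, i.e.\ a directed-polymer point-to-point partition function with potential $\log\p$; since Beta$(\alpha,\beta)$ with $\alpha,\beta>0$ puts no atom at $0$ or $1$, $Q^\p(x,z)>0$ a.s.\ and its logarithm is well defined. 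Granting that for each $\xi\in\ri\Uset$ the limit
\[
B^\xi_{x,y}(\p)=\lim_{N\to\infty}\bigl(\log Q^\p(x,z_N)-\log Q^\p(y,z_N)\bigr)
\]
exists $\P$-a.s.\ simultaneously for all $x,y$ and is the same for every sequence $z_N$ with $\abs{z_N}_1\to\infty$ and $z_N/\abs{z_N}_1\to\xi$ (henceforth ``admissible''), additivity $B^\xi_{x,y}+B^\xi_{y,z}=B^\xi_{x,z}$ is immediate by telescoping logarithms, $B^\xi_{x,y}(T_w\p)=B^\xi_{x+w,y+w}(\p)$ follows from $Q^{T_w\p}(x,z)=Q^\p(x+w,z+w)$ together with $(z_N+w)/\abs{z_N+w}_1\to\xi$, and \eqref{B:harm} follows from the one-step decomposition $Q^\p(x,z_N)=\p_{x,x+e_1}Q^\p(x+e_1,z_N)+\p_{x,x+e_2}Q^\p(x+e_2,z_N)$ (valid once $z_N\ge x+e_1+e_2$) by dividing through by $Q^\p(0,z_N)$ and letting $N\to\infty$. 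Integrability $\E\abs{B^\xi_{0,e_i}}<\infty$ comes either from the explicit marginal law of the increments (a beta density times a hypergeometric factor, as announced in the abstract) or, more crudely, from standard polymer moment estimates under \eqref{qLDP3}.

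The main obstacle is the a.s.\ existence of the Busemann limit, which I would establish by adapting the stationary-cocycle construction for directed polymers and space-time RWRE in \cite{Geo-Ras-Sep-16-cmp,Ras-Sep-14}. The ingredients: (i) the quenched free energy $N^{-1}\log Q^\p(0,[N\xi])\to -I_q(\xi)$ exists $\P$-a.s.\ and in $L^1$, which is \eqref{qLDP}; (ii) a tightness and compactness argument on the space $\mathcal K$ of integrable stationary cocycles produces a cocycle $\widehat B^\xi\in\mathcal K$ that recovers the potential, i.e.\ satisfies \eqref{B:harm}, with $\E[\widehat B^\xi_{0,e_i}]$ matching the one-sided gradients of $I_q$ at $\xi$; (iii) one shows this recovering cocycle is essentially unique for the direction $\xi$ and that the ratios $\log Q^\p(x,z_N)-\log Q^\p(y,z_N)$ converge to it, uniformly enough over admissible sequences $z_N$. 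Step (iii) is where the work concentrates: in the general polymer setting it requires either strict concavity of the shape function at $\xi$ or monotonicity/coupling inputs, and here it is precisely the beta--hypergeometric exact computations --- regularity of $I_q$, explicit joint laws of increments, and an exactly solvable stationary (Burke-type) structure for the beta environment --- that supply the needed quantitative control. Alternatively, in the solvable case one can construct $\widehat B^\xi$ directly from the stationary environment and then verify the pointwise limit by a sandwiching argument, bypassing the abstract extraction.

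Finally, \eqref{B-llnv}. Note first that the constant $1=e^{-0}$ is automatically harmonic because $\p_{x,x+e_1}+\p_{x,x+e_2}=1$, so $B\equiv 0$ is a legitimate candidate; the content of \eqref{B-llnv} is that the Busemann limit in the law-of-large-numbers direction selects this trivial one. Since $I_q(\llnv)=0$, in direction $\llnv$ the point-to-point probabilities decay only polynomially, and by the quenched local central limit theorem for the walk \eqref{12} (available from the quenched invariance principle of \cite{Ras-Sep-05}, refined via \cite{Bal-Ras-Sep-06}) each $Q^\p(x,z_N)$ is asymptotic to a local-CLT Gaussian factor whose leading term is insensitive to an $O(1)$ change of the starting point once $z_N/\abs{z_N}_1\to\llnv$; hence $Q^\p(x,z_N)/Q^\p(y,z_N)\to 1$ and $B^{\llnv}_{x,y}=0$. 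The same conclusion is visible through the variational formula \eqref{K-var1}: the choice $B\equiv 0$ makes the essential-supremum term vanish and hence attains the infimum with value $-I_q(\llnv)=0$; since $\llnv$ is the interior minimum of $I_q$ on $\Uset$ the one-sided gradients of $I_q$ at $\llnv$ vanish, so $\E[B^{\llnv}_{0,e_i}]=0$, and combined with the local-limit reasoning this pins the Busemann cocycle in direction $\llnv$ at $B\equiv 0$.
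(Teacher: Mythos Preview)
Your high-level strategy---define $B^\xi$ via the Busemann limit \eqref{def:Bus} and read off harmonicity and the cocycle identities from the one-step decomposition---matches the paper. The divergence is in how you justify that the limit exists, and in how you handle the direction $\llnv$.

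\textbf{Existence of the limit.} Your primary route, an abstract tightness/compactness extraction of a recovering cocycle from $\mathcal K$ followed by a uniqueness-and-convergence step (iii), is \emph{not} what the paper does, and you rightly flag (iii) as the hard part. The paper instead takes exactly the route you mention only as an ``alternative'': it builds the stationary harmonic ratio weights $\Bd^\lambda$ (and $\wt\Bd^\lambda$) explicitly on quadrants via the beta involution (Lemma~\ref{ind-sol}, Proposition~\ref{stat-pr}), proves a deterministic monotonicity/comparison for ratios of hitting probabilities (Lemmas~\ref{lm:order} and \ref{lm99}), and then sandwiches $F^{\p}(x,[N\xi])/F^{\p}(x+e_i,[N\xi])$ between $\Bd^\gamma$ and $\Bd^\delta$ with $\gamma,\delta\to\lambda(\xi)$ using the law of large numbers for the Doob-transformed walk (Proposition~\ref{prop:comp}, Corollary~\ref{cor:Bus}). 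This yields the limit together with its exact marginal distribution in one stroke, and gives monotonicity in $\xi$ for free; the general extraction would still owe you the pointwise identification and the explicit laws.

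\textbf{The case $\xi=\llnv$.} Here you invoke a quenched local CLT to get $Q^\p(x,z_N)/Q^\p(y,z_N)\to1$. The references you cite (\cite{Ras-Sep-05,Bal-Ras-Sep-06}) give quenched functional CLTs and $N^{1/4}$ fluctuations of the quenched mean, not a local limit theorem at the single-point precision needed to compare two starting points; so as written this step is a gap. The paper avoids any LCLT: it sandwiches the $\llnv$-ratio between Beta$(\alpha+\lambda(\zeta),\beta)$ and Beta$(\lambda(\eta),\beta)^{-1}$ variables with $\eta_1<\llnv_1<\zeta_1$, and then lets $\eta,\zeta\to\llnv$ so that $\lambda(\eta),\lambda(\zeta)\to\infty$ and both bounding variables degenerate to $1$. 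Your variational observation that $B\equiv0$ is an extremal at $\llnv$ and that $\E[B^{\llnv}_{0,e_i}]=0$ is correct but, as you note, does not by itself force $B^{\llnv}\equiv0$ pointwise; the sandwich does.
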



By analogy with limits of increments in percolation and polymers,  we could  call 
$B^\xi$  the {\it  Busemann function} in direction $\xi$.   For $\xi\ne\llnv$, the variables $B^\xi_{x,\,x+e_i}$ are marginally logarithms of  beta-variables. 
From limit  \eqref{def:Bus} we see that 
\be\label{B89} B^\xi_{x,y}(T_z\p)=B^\xi_{x+z,y+z}(\p)\quad\text{and}\quad B^\xi_{x,y}(\p)+B^\xi_{y,z}(\p)=B^\xi_{x,z}(\p)\ee
 for all $x,y,z\in\Z^2$ and $\P$-a.e.\ $\p$.  In other words,  $B^\xi$ is a member of the space $\mathcal K$ of integrable  stationary cocycles defined above \eqref{K-var1}. 
	Harmonicity  \eqref{B:harm} comes  from  limit  \eqref{def:Bus}  and  the  Markov property
\[P^\p_x\{X_{\abs{z_N-x}_1}= z_N\}= \p_{x,\,x+e_1} P^\p_{x+e_1}\{X_{\abs{z_N-x}_1-1}=z_N\} + \p_{x,\,x+e_2} P^\p_{x+e_2}\{X_{\abs{z_N-x}_1-1}=z_N\}.\]
Further continuity, monotonicity, and explicit distributional properties of the process   $B^\xi$ are given in Theorem \ref{th:Buse}.

Theorem \ref{th:Bus} is proved by constructing a family of harmonic functions on quadrants and by using these to control the convergence of the differences on the right of \eqref{def:Bus}.   This approach is the RWRE counterpart of the arguments used for an exactly solvable polymer model  in  \cite{Geo-etal-15} and for the corner  growth model with general i.i.d.\ weights  in  \cite{Geo-Ras-Sep-17-ptrf-1}.

By \eqref{B:harm} and \eqref{B89},  
\be\label{pi-doob} \doob^\xi_{x,\,x+e_i}(\p)=\p_{x,\,x+e_i}\frac{e^{-B^\xi_{0,x+e_i}(\p)}}{e^{-B^\xi_{0,x}(\p)}} =\p_{x,\,x+e_i}e^{-B^\xi_{x,\,x+e_i}(\p)},\quad i\in\{1,2\},\ee 
defines a new transition probability on $\Z^2$, as a Doob-transform of the original transition $\p$.   It is an    RWRE   transition as a function on $\Omega$ because,  by \eqref{B89}, it obeys shifts:   $\doob^\xi_{x,\,x+e_i}(T_z\p)=\doob^\xi_{x+z,\,x+z+e_i}(\p)$. 
The environment  $\doob^\xi(\p)=(\doob^\xi_{x,\,x+e_1}(\p))_{x\in\Z^2}$ is in general correlated over  locations $x$, except that its restriction on antidiagonals is i.i.d.\ as stated in the next theorem.

  Let 
 $P^{\doob^\xi}_x$ denote the quenched path measure of the Markov chain with transition probability $\doob^\xi$. In other words,     $P^{\doob^\xi}_x$ satisfies \eqref{12} with $\doob^\xi_{y,\,y+e_i}$ instead of $\p_{y,\,y+e_i}$.   $P^{\doob^\xi}_x=P^{\doob^\xi(\p)}_x$ is a function of $\p$ through its transition probability.  
 
 \begin{theorem}\label{th:pi-xi}  Fix $\xi\in\ri\Uset$.  Then for any $n\in\Z$, the random variables $\{ \doob^\xi_{x,\,x+e_1}(\p): x_1+x_2=n\}$ are i.i.d.    We have the law of large numbers: 
	\be\label{pi-lln} P_0^{\doob^\xi(\p)}\{N^{-1}X_N\to\xi\}=1\qquad\text{for $\P$-a.e.\ $\p$.}  \ee
\end{theorem}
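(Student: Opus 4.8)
The plan is to establish the two claims of Theorem \ref{th:pi-xi} separately, the i.i.d.\ structure on antidiagonals first and then the law of large numbers.

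\textbf{Step 1: i.i.d.\ on antidiagonals.}
First I would observe that by the cocycle and shift properties \eqref{B89}, it suffices to treat the antidiagonal through the origin, i.e.\ $n=0$; the case of general $n$ follows by applying a shift $T_z$. Fix the antidiagonal $\mathcal A=\{x\in\Z^2:x_1+x_2=0\}$. The key identity is the second form in \eqref{pi-doob}: for $x\in\mathcal A$,
	\be\label{pl1}
	\doob^\xi_{x,\,x+e_i}(\p)=\p_{x,\,x+e_i}\,e^{-B^\xi_{x,\,x+e_i}(\p)},\qquad i\in\{1,2\}.
	\ee
Because $\doob^\xi_{x,\,x+e_1}+\doob^\xi_{x,\,x+e_2}=1$ by harmonicity \eqref{B:harm}, each variable $\doob^\xi_{x,\,x+e_1}$ is a deterministic function of the pair $(\p_{x,\,x+e_1},B^\xi_{x,\,x+e_1}(\p))$. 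Using the limit representation \eqref{def:Bus} with $z_N$ chosen on a far antidiagonal (say $z_N=[M\xi]$, $M\to\infty$) together with the increment identity $B^\xi_{x,\,x+e_1}(\p)=\log P^\p_x\{X_{\abs{z_N-x}_1}=z_N\}-\log P^\p_{x+e_1}\{X_{\abs{z_N-x}_1-1}=z_N\}+o(1)$, one sees that $B^\xi_{x,\,x+e_1}(\p)$ is a (limit of) function(s) of the environment on the strip strictly \emph{above} the antidiagonal through $x$, namely $\{y:y_1+y_2>0\}$, together with $\p_{x,\,x+e_1}$ itself. Hence for distinct $x,x'\in\mathcal A$, the pairs $(\p_{x,\,x+e_1},B^\xi_{x,\,x+e_1})$ and $(\p_{x',\,x'+e_1},B^\xi_{x',\,x'+e_1})$ depend on disjoint blocks of the i.i.d.\ field $\p$ (the two variables $\p_{x,\,x+e_1},\p_{x',\,x'+e_1}$ plus the common strip above), so one needs a slightly more careful argument than mere independence of disjoint blocks. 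The clean way: condition on the environment $\p^{>}=(\p_{y,\,y+e_1})_{y_1+y_2>0}$ strictly above $\mathcal A$. Given $\p^{>}$, the harmonic values $e^{-B^\xi_{0,x+e_i}}$ at the sites $x+e_i$ on the antidiagonal $\{y_1+y_2=1\}$ are determined, while the fresh variables $(\p_{x,\,x+e_1})_{x\in\mathcal A}$ are i.i.d.\ Beta$(\alpha,\beta)$ and independent of $\p^{>}$. Then \eqref{pl1} expresses $\doob^\xi_{x,\,x+e_1}$ through $\p_{x,\,x+e_1}$ and the $\p^{>}$-measurable weights at $x+e_1,x+e_2$; for different $x$ these use \emph{different} fresh variables, so conditionally on $\p^{>}$ the $\doob^\xi_{x,\,x+e_1}$, $x\in\mathcal A$, are conditionally independent. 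Finally, by the shift-invariance of the law of $\p^{>}$ under horizontal translation along $\mathcal A$ (which maps $e_1,e_2$-weights on the next antidiagonal consistently, using \eqref{B89}), the conditional law of $\doob^\xi_{x,\,x+e_1}$ given $\p^{>}$ has a distribution that is the same for all $x$ after integrating out $\p^{>}$ — more precisely the unconditional joint law is exchangeable and the conditional independence upgrades to unconditional independence once one checks the conditional marginal law of $\doob^\xi_{x,\,x+e_1}$ does not actually depend on $x$ in distribution. The cleanest formalization uses that $(\doob^\xi_{x,\,x+e_1})_{x\in\mathcal A}$ is a factor of an i.i.d.\ field composed with the $\p^{>}$-measurable harmonic profile, and that $B^\xi$ is a stationary cocycle, so the joint law is translation invariant; combined with conditional independence given the tail strip this yields i.i.d.

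\textbf{Step 2: law of large numbers \eqref{pi-lln}.}
For this I would give a direct martingale/Borel--Cantelli argument. Under $P_0^{\doob^\xi(\p)}$ the walk is an honest (inhomogeneous in $\p$) nearest-neighbor walk with steps $e_1,e_2$, so $X_N\cdot e_1+X_N\cdot e_2=N$ and it suffices to control $X_N\cdot e_1$. Write $S_N=X_N\cdot e_1=\sum_{k=0}^{N-1}\one\{X_{k+1}-X_k=e_1\}$. The point is that the one-step conditional mean is $\doob^\xi_{X_k,\,X_k+e_1}(\p)=\p_{X_k,\,X_k+e_1}e^{-B^\xi_{X_k,\,X_k+e_1}(\p)}$. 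By the distributional description in Theorem \ref{th:Bus} / the forthcoming Theorem \ref{th:Buse}, in direction $\xi$ the quantity $\doob^\xi_{0,e_1}$ has expectation $\xi_1$: indeed a Doob transform in direction $\xi$ is precisely the one that tilts the mean velocity to $\xi$, which is consistent with the appearance of $\E[B^\xi_{0,e_i}]$ as the gradient of $I_q$ at $\xi$ (stated in Section \ref{sec:ldp}) and with $B^{\llnv}=0$ from \eqref{B-llnv}. Granting $\E^{\doob^\xi}_0\bigl[\doob^\xi_{X_k,\,X_k+e_1}(\p)\bigr]$ concentrates around $\xi_1$, the cleanest route is: (a) show the averaged walk $P_0^{\doob^\xi}(\cdot)=\int P_0^{\doob^\xi(\p)}(\cdot)\,\P(d\p)$ has mean drift exactly $\xi_1$ per step because the antidiagonal-i.i.d.\ structure from Step 1 plus the cocycle/harmonicity makes $(\doob^\xi_{x,\,x+e_1})$ a stationary ergodic field along each antidiagonal with mean $\xi_1$; (b) deduce an averaged LLN $P_0^{\doob^\xi}\{N^{-1}X_N\to\xi\}=1$ by a standard $L^2$ computation using the weak spatial dependence (only within antidiagonals) or by a subadditive/ergodic argument; (c) upgrade to the quenched statement \eqref{pi-lln} by noting that the quenched walk deviates from the averaged one by a quenched-mean term whose fluctuations are lower order — here I would either invoke the cocycle structure directly (the process $M_N=B^\xi_{0,X_N}(\p)+\text{drift correction}$ is a $P_0^{\doob^\xi(\p)}$-martingale by harmonicity \eqref{B:harm}, and controlling $N^{-1}M_N\to 0$ a.s.\ via its $L^2$ increments pins down $N^{-1}X_N$) or cite the general LLN for RWRE in space-time i.i.d.\ (here antidiagonal-i.i.d.) environments. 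The martingale route is attractive because harmonicity gives it for free: $e^{-B^\xi_{0,X_n}(\p)}$ is a positive $P^\p_0$-martingale, and $P^{\doob^\xi(\p)}_0$ is exactly its associated change of measure, under which $\log e^{-B^\xi_{0,X_n}}=-B^\xi_{0,X_n}$ acquires a drift equal to the relative entropy rate, which is $\xi$-dependent and identifies the velocity.

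\textbf{Main obstacle.}
I expect the real work to be in Step 1, specifically in turning ``conditionally i.i.d.\ given the strip above'' into ``unconditionally i.i.d.'' The subtlety is that $B^\xi_{x,\,x+e_1}(\p)$ depends on $\p_{x,\,x+e_1}$ itself (it is defined through walks started at $x$ and at $x+e_1$), so the decomposition of $\doob^\xi_{x,\,x+e_1}$ into a ``fresh'' part and an ``above'' part is not completely clean and requires the explicit form \eqref{pi-doob}, $\doob^\xi_{x,\,x+e_1}=\p_{x,\,x+e_1}e^{-B^\xi_{0,x+e_1}}/e^{-B^\xi_{0,x}}$, together with the observation that after conditioning on the harmonic profile on $\{y_1+y_2=1\}$ the only remaining randomness at site $x$ is the single Beta variable $\p_{x,\,x+e_1}$, which normalizes the two conditionally-fixed weights. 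Making the conditioning precise — in particular verifying that the limit \eqref{def:Bus} defining $B^\xi$ on the antidiagonal $\{y_1+y_2=1\}$ is genuinely measurable with respect to $\{\p_{y,\,y+e_1}:y_1+y_2\ge 1\}$ and is unchanged if we resample the antidiagonal $\mathcal A$ — is the crux, and I would handle it by appealing to the construction of $B^\xi$ via harmonic functions on quadrants (mentioned after Theorem \ref{th:Bus}), which exhibits $B^\xi$ on the upper quadrant as a limit of ratios of transition probabilities that only involve the environment on and above the relevant antidiagonal. Step 2, once Step 1 and the distributional facts of Theorem \ref{th:Buse} are in hand, is routine.
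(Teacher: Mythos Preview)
Your Step~1 contains a genuine gap. The implication ``conditionally independent given $\p^{>}$ with exchangeable marginals $\Longrightarrow$ unconditionally i.i.d.'' is false in general (e.g.\ $X_1,X_2$ conditionally i.i.d.\ $N(Z,1)$ given a common $Z$ are not independent). In your setting, the conditional law of $\doob^\xi_{x,\,x+e_1}$ given $\p^{>}$ depends on $\p^{>}$ through the ratio $e^{B^\xi_{x+e_1,\,x+e_2}}$ of harmonic values at the two neighbors of $x$ on the next antidiagonal; these ratios for different $x\in\mathcal A$ are \emph{a priori} correlated through the common $\p^{>}$, and integrating $\p^{>}$ out would generically create dependence among the $\doob^\xi_{x,\,x+e_1}$. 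Your final paragraph correctly identifies this as the crux but does not resolve it: measurability of the harmonic profile with respect to $\p^{>}$ is not the issue, independence of its increments is.

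The paper's argument is different and short. One first derives the identity
\[
\doob^\xi_{x,\,x+e_1}=\frac{e^{B^\xi_{x,\,x+e_2}}-1}{e^{B^\xi_{x,\,x+e_2}}-e^{B^\xi_{x,\,x+e_1}}},
\]
so that $\doob^\xi_{x,\,x+e_1}$ is a function of the pair $(B^\xi_{x,\,x+e_1},B^\xi_{x,\,x+e_2})$ only. The antidiagonal $\{x_1+x_2=n\}$ is embedded as the even-indexed sites of a down-right staircase path $\{x^j\}$, and these pairs become \emph{disjoint} pairs of consecutive edge increments $(B^\xi_{x^{2k},x^{2k+1}},-B^\xi_{x^{2k-1},x^{2k}})$ along it. The structural input that does the work is Theorem~\ref{th:Buse}\eqref{B:B=Bd}: for fixed $\xi$, the increments $\{B^\xi_{x^j,\,x^{j+1}}\}_j$ along any down-right path are independent. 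This comes from the explicit beta involution (Lemma~\ref{ind-sol}) and Proposition~\ref{stat-pr}, not from a conditioning argument, and with it the i.i.d.\ claim is immediate.

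For Step~2, your martingale/averaged-LLN route is at best a sketch and would require real work in a correlated environment. The paper sidesteps this entirely: by Theorem~\ref{th:Buse}\eqref{B:B=Bd} the pair $(\p,\doob^\xi)$ has the same distribution as $(\bar\p^\xi,\bar\pi^\xi)$ from the quadrant construction, and the quenched LLN for that model is Theorem~\ref{thm:LLN} (proved via the quenched LDP together with a cocycle ergodic theorem). For $\xi=\llnv$ one has $\doob^{\llnv}=\p$ by \eqref{B-llnv} and the LLN is simply~\eqref{lln}.
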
 	

In \cite{Geo-etal-15} an RWRE in a correlated environment arose as a limit of the quenched log-gamma polymer.   The transition probability of the log-gamma RWRE is marginally beta-distributed.   The transition $\doob^\xi$ described above is not the same.  In particular, 
 the marginal distribution of the random variable $\doob^\xi_{x,\,x+e_i}$ is {\it not} beta.  Its density function is a product of a beta density and a hypergeometric function, given in Theorem \ref{th:pi-pdf} below.

 

The next theorem records the limits of quenched processes conditioned on particular velocities.  

\begin{theorem}\label{cond}
For each fixed $\xi\in\ri\Uset$ there is an event $\Omega^{(\xi)}$ such that $\P(\Omega^{(\xi)}) =1$ and the following holds for  every $\p\in\Omega^{(\xi)}$:  if $z_N\in\Z^2$ is any sequence such that $\abs{z_N}_1=N$ and $z_N/N\to\xi$,  	
then the  conditioned quenched path distribution $P^\p_0(\cdot\,\vert X_N=z_N)$ converges weakly on the path space $(\Z^2)^{\Z_+}$ 	to the Doob  transformed path measure $P^{\doob^\xi(\p)}_0$.

%
%
\end{theorem}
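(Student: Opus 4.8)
The plan is to realize the conditioned quenched measure $P_0^\p(\cdot\mid X_N=z_N)$ as an $h$-transform — a space-time bridge of the directed walk — and then pass to the limit $N\to\infty$ using the Busemann limit \eqref{def:Bus}. Since the path space $(\Z^2)^{\Z_+}$ carries the product topology and $P_0^{\doob^\xi(\p)}$ is a genuine probability measure on it, weak convergence follows once we prove convergence of all finite-dimensional distributions. Concretely it suffices to show that for each $m\in\N$ and each admissible path $x_0=0,x_1,\dots,x_m$ (meaning $x_{k+1}-x_k\in\{e_1,e_2\}$ for $0\le k<m$),
\[
\lim_{N\to\infty}P_0^\p\bigl(X_{1,m}=x_{1,m}\,\big|\,X_N=z_N\bigr)=P_0^{\doob^\xi(\p)}\bigl(X_{1,m}=x_{1,m}\bigr).
\]
For a non-admissible path both sides vanish: the walk is directed, and since the environment is Beta-distributed, $\p_{x,\,x+e_1}\in(0,1)$ for every $x$ on a full-probability event, so every admissible path has strictly positive quenched probability and only admissible paths contribute.

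First I would fix $\p$ in the event $\Omega^{(\xi)}$ furnished by Theorem \ref{th:Bus}, intersected with the full-probability events on which the cocycle identities \eqref{B89} hold and on which $\p_{x,\,x+e_1}\in(0,1)$ for all $x$; this intersection still has probability one and we rename it $\Omega^{(\xi)}$. Fix an admissible path $x_{0,m}$. Because $z_N/N\to\xi\in\ri\Uset$, both coordinates of $z_N$ tend to $\infty$, so for all large $N$ we have $z_N\ge x_m$ coordinatewise, $\abs{z_N-x_m}_1=N-m$, and both $P_0^\p\{X_N=z_N\}$ and $P_{x_m}^\p\{X_{N-m}=z_N\}$ are strictly positive. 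Bayes' rule and the Markov property \eqref{12} then give
\[
P_0^\p\bigl(X_{1,m}=x_{1,m}\,\big|\,X_N=z_N\bigr)=\Bigl(\prod_{k=0}^{m-1}\p_{x_k,\,x_{k+1}}\Bigr)\cdot\frac{P_{x_m}^\p\{X_{N-m}=z_N\}}{P_0^\p\{X_N=z_N\}}.
\]
The numerator and denominator of the last factor are exactly the two quenched point-to-point probabilities appearing in \eqref{def:Bus}, with base points $x_m$ and $0$ and along the admissible sequence $z_N$. Hence on $\Omega^{(\xi)}$ the ratio converges to $e^{B^\xi_{x_m,0}(\p)}$, which equals $e^{-B^\xi_{0,x_m}(\p)}$ by the cocycle relation.

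It remains to identify the limit with the Doob-transformed law. By \eqref{B89}, $\sum_{k=0}^{m-1}B^\xi_{x_k,\,x_{k+1}}(\p)=B^\xi_{0,x_m}(\p)$, so, using the definition \eqref{pi-doob} of $\doob^\xi$ and \eqref{12} applied to the transition $\doob^\xi$,
\[
\Bigl(\prod_{k=0}^{m-1}\p_{x_k,\,x_{k+1}}\Bigr)e^{-B^\xi_{0,x_m}(\p)}=\prod_{k=0}^{m-1}\p_{x_k,\,x_{k+1}}\,e^{-B^\xi_{x_k,\,x_{k+1}}(\p)}=\prod_{k=0}^{m-1}\doob^\xi_{x_k,\,x_{k+1}}(\p)=P_0^{\doob^\xi(\p)}\bigl(X_{1,m}=x_{1,m}\bigr).
\]
This gives the claimed finite-dimensional convergence, and with it the weak convergence stated in the theorem.

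The argument is short because the delicate analytic input — the almost sure convergence, in a \emph{fixed} environment, of the ratios of quenched point-to-point probabilities — is already packaged in Theorem \ref{th:Bus}. What is left here is the elementary bridge bookkeeping together with the observation that the limiting finite-dimensional distributions assemble, via the cocycle property, into the law $P_0^{\doob^\xi(\p)}$. The only point requiring care is that the target $z_N$ moves with $N$; but \eqref{def:Bus} is phrased precisely to handle moving sequences, so there is no genuine obstacle beyond keeping the directedness and strict-positivity conventions straight so that every conditional probability and ratio is well defined.
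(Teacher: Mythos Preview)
Your proof is correct and follows exactly the approach the paper takes: the paper notes just before the theorem that weak convergence reduces to checking the finite-path limit $\lim_{N\to\infty}P^\p_0(X_{0,m}=x_{0,m}\mid X_N=z_N)=\prod_{k=0}^{m-1}\doob^\xi_{x_k,x_{k+1}}(\p)$, and then its entire proof reads ``Immediate from the limits \eqref{Busemann}.'' You have simply written out the Markov-property bridge formula and the cocycle telescoping that make that immediacy explicit.
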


The weak convergence claim  in the theorem amounts to checking that for any finite path $x_{0,m}$ with $x_0=0$, 
\begin{align*}
\lim_{N\to\infty}  P^\p_0(X_{0,m}=x_{0,m}\,\vert X_N=z_N)
= \prod_{k=0}^{m-1} \doob^\xi_{x_k, \,x_{k+1}}(\p)
\quad\text{for $\P$-a.e.\ $\p$.} 
\end{align*}
This is an immediate consequence of limit \eqref{def:Bus}.   Combining \eqref{B-llnv} with the theorem above tells us that if $z_N/N\to\llnv$, then 
$P^\p_0(\cdot\,\vert X_N=z_N)\to P^\p_0$.  In other words, conditioning on the typical velocity $\llnv$ introduces no new correlations in the limit  and leads back to the original path measure.   This behavior is consistent with classical random walk.  

Observe that $P^{\doob^\xi(\p)}_0(X_{0,m}=x_{0,m}\,\vert X_N=z_N)=P^\p_0(X_{0,m}=x_{0,m}\,\vert X_N=z_N)$ for $0\le m\le N$. Consequently the family $\{P^{\doob^\xi}_0\}$ is closed under taking  limits of path distributions conditioned on velocities.


Theorems \ref{th:Bus}, \ref{th:pi-xi} and \ref{cond}   are proved after the statement of Theorem \ref{th:Buse}. 



\subsection{Fluctuation bounds}  \label{sec:exp} 

In 1+1 dimensional models in the KPZ class, the exponent $\frac13$ appears in fluctuations of heights of growing interfaces and free energies of polymer models, while  the exponent $\frac23$ appears in spatial correlations and path fluctuations.    The Barraquand-Corwin limit \eqref{b-c} indicated that  logarithms of quenched probabilities obey   $\frac13$-fluctuations.  The theorem below shows that the process $B^\xi$ has this same order of magnitude of fluctuations,  though only in the direction $\xi$,  as quantified by hypothesis \eqref{char-ass} below.   If the endpoint $(m,n)$ deviates from $N\xi$ by an amount of order  $N^\nu$ for $\nu>\tfrac23$, the fluctuations of $B^\xi_{0, (m,n)}$ become Gaussian.  (This follows similar observations for directed polymers in Corollary 1.4 of \cite{Cha-Noa-17-} and Corollary 2.2 of \cite{Sep-12-corr}.)
 
\begin{theorem}\label{th:B-var}  
Fix $\alpha,\beta>0$. Fix $\xi=(\xi_1, \xi_2) \in\ri\Uset\setminus\{\llnv\}$.  Given a constant  $0<\gamma<\infty$, there exist  positive finite constants $c$, $C$, and $N_0$, depending only on $\alpha$, $\beta$, $\gamma$, and $\xi$, 
such that 
\[   c   N^{2/3} \le \Vvv[B^\xi_{0, (m,n)}   ]\le C\, N^{2/3} \]
for all  $N\ge N_0$ and $(m,n)\in\N^2$ such that 
\be\label{char-ass} \abs{m-N\xi_1} \vee\abs{n-N\xi_2} \le \gamma N^{2/3}. \ee
The same constants can be taken for $(\alpha,\beta,\gamma,\xi)$ varying in a compact subset of $(0,\infty)^3\times\ri\Uset\setminus\{\llnv\}$.
\end{theorem}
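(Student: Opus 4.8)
The plan is to establish the variance bounds of Theorem \ref{th:B-var} by exploiting the exact solvability of the beta RWRE, specifically the explicit distributional description of the cocycle $B^\xi$ provided by Theorem \ref{th:Buse} (whose statement follows in the paper). The key structural fact we will use is that $B^\xi_{0,(m,n)}$ decomposes along any monotone lattice path from $0$ to $(m,n)$ into a sum of increments $B^\xi_{x,x+e_i}$, and by Theorem \ref{th:pi-xi} the restriction of the environment $\doob^\xi$ — hence of the increments $B^\xi$ — to each antidiagonal is i.i.d. We also use that marginally $B^\xi_{x,x+e_1}$ and $B^\xi_{x,x+e_2}$ are logarithms of (shifted) beta variables, with known mean and variance. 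The strategy mirrors the variance estimates for Busemann functions in exactly solvable corner growth and log-gamma polymer models, e.g.\ \cite{Geo-etal-15,Sep-12-corr,Cha-Noa-17-}.

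The core of the argument is a covariance-summation identity. Fix a down-right lattice path from the origin passing near $N\xi$, and write $B^\xi_{0,(m,n)}$ as the telescoping sum of increments along that path minus the sum along a reference path; more usefully, relate the variance of $B^\xi$ along the characteristic direction to a boundary sum. The standard device is: differentiate (or take discrete differences of) the relation between $B^\xi$, the quenched point-to-point partition functions, and use the harmonicity \eqref{B:harm} together with the stationarity \eqref{B89}. Concretely, I would first derive the identity
\[
\Vvv\big[B^\xi_{0,(m,n)}\big] \;=\; \sum_{\text{antidiagonal increments}} \Vvv\big[\,\cdot\,\big] \;-\;2\!\!\sum_{\text{cross terms}} \Covplus\big[\,\cdot\,,\,\cdot\,\big],
\]
and then recognize the cross-covariance terms as controlled by a coupled pair of environments under a change of parameters (the analogue of the "competition interface" or "exit point" identities). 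The upper bound $\Vvv[B^\xi_{0,(m,n)}]\le C N^{2/3}$ comes from bounding the boundary flux contributions; the lower bound $cN^{2/3}$ comes from showing this flux is genuinely of that order, using that $\xi\ne\llnv$ so the hypergeometric correction in the density (Theorem \ref{th:pi-pdf}) is nondegenerate.

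The main obstacle, and where the exact-solvability input is indispensable, is the lower bound: one must rule out anomalous cancellation and show that the curvature of the limit shape (equivalently, the strict convexity of $I_q$ transverse to $\xi$, which is where $\xi\ne\llnv$ enters) forces fluctuations of order at least $N^{1/3}$ for the height $\log$-probabilities and hence order $N^{2/3}$ transverse correlation length. For this I would import the Barraquand–Corwin $\frac13$-fluctuation estimate \eqref{b-c} — or, more robustly, a moment/coupling estimate in the spirit of the corner-growth analysis — to pin the boundary flux variance from below. The uniformity over compact subsets of $(0,\infty)^3\times\ri\Uset\setminus\{\llnv\}$ then follows because all constants entering the beta/gamma moment computations and the hypergeometric bounds depend continuously on $(\alpha,\beta,\xi)$, and the characteristic-direction hypothesis \eqref{char-ass} is scale-invariant in $\gamma$, so a compactness argument closes the estimate. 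The routine parts — the explicit beta/gamma variance computations, telescoping, and the reduction of cross terms — I would relegate to lemmas citing Theorems \ref{th:Buse} and \ref{th:pi-pdf}.
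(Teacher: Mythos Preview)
Your proposal has a genuine gap at the structural level. You write that $B^\xi_{0,(m,n)}$ decomposes along a monotone up-right path into increments which, via Theorem~\ref{th:pi-xi}, are i.i.d.\ along antidiagonals. This is not correct: Theorem~\ref{th:pi-xi} says the transition probabilities $\doob^\xi_{x,x+e_1}$ are i.i.d.\ \emph{within} a fixed antidiagonal, not that increments of $B^\xi$ along an up-right path are independent. The independence statement for $B^\xi$ (Theorem~\ref{th:Buse}\eqref{B:B=Bd}) applies to increments along \emph{down-right} paths, and any path from $0$ to $(m,n)$ with $m,n>0$ is up-right. If the increments along an up-right path were independent, the variance would scale like $N$, not $N^{2/3}$; the entire content of the theorem is that the correlations produce sublinear variance in the characteristic direction. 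So the ``covariance-summation identity'' you sketch, with cross terms treated as perturbations, cannot be the mechanism.

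The paper's proof is a reduction rather than a direct computation. By Theorem~\ref{th:Buse}\eqref{B:B=Bd}, $(\p,B^\xi)$ has the same law as $(\bar\p^\xi,\log\bar\Bd^\xi)$, so $\Vvv[B^\xi_{0,(m,n)}]=\Varplus[\log\Bd^{\lambda(\xi)}_{0,(m,n)}]$ (respectively $\wt\Bd$ for $\xi_1<\llnv_1$), and the bounds then follow from Theorem~\ref{th:Bd-var}, which the paper cites from \cite{Cha-Noa-17-} via the polymer identification in Section~\ref{Beta-Polys}. The engine behind Theorem~\ref{th:Bd-var} is the exact variance identity of Theorem~\ref{var(Bd)}: $\Varplus(\log\Bd^\lambda_{0,v})$ equals an explicit linear combination of $m$ and $n$ (which vanishes to leading order when $(m,n)\approx N\xi(\lambda)$) plus twice the $\wcE^\lambda_v$-expectation of a sum over the exit point $X(\tau_0^+)$ on the boundary. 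This identity comes from differentiating $\log Z$ in the boundary parameter $\lambda$, not from telescoping along a lattice path. Both bounds are then coupling estimates on the exit-point term, not an appeal to the Tracy--Widom limit \eqref{b-c}; invoking \eqref{b-c} for the lower bound would be both restrictive (it is stated only for $\alpha=\beta=1$ and a limited range of $\xi$) and logically inverted relative to the paper's development.
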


Theorem \ref{th:B-var} was proved independently and concurrently in the present work and as  one case of a more general result for exactly solvable directed polymers by Chaumont and Noack (Theorem 1.2 of \cite{Cha-Noa-17-}).    A proof appears  in Section 4.1 of the first preprint version  \cite{Bal-Ras-Sep-18-arxiv} of this paper. In the present version we omit the proof and cite   \cite{Cha-Noa-17-} for   details.  The translation between the Doob-transformed walk and the beta polymer is explained in Section \ref{Beta-Polys}. 


The second fluctuation result quantifies the deviations of the walk from its limiting velocity,  under the averaged measure $\bfP^\xi(\cdot)=\int P^{\doob^\xi(\p)}(\cdot)\,\P(d\p)$ of the Doob-transformed RWRE.    Bounds \eqref{KPZrwre1} and  \eqref{KPZrwre3} indicate that  this walk is superdiffusive with  the KPZ  wandering exponent  $\frac23$ instead of the diffusive  $\frac12$ of classical random walk.

\begin{theorem}\label{th:kpz}
Fix $\alpha,\beta>0$. 
Fix $\xi\in\ri\Uset\setminus\{\llnv\}$. There exist finite positive constants $C$, $c$, $r_0$, and $\delta_0$, depending only on $\alpha$, $\beta$, and $\xi$, such that for $r\ge r_0$, $\delta\in(0,\delta_0)$, and any $N\ge1$ we have
	\begin{align}\label{KPZrwre1}
	\bfP^{\xi}_0\{  \abs{X_N- N\xi}_1 \ge rN^{2/3} \}\le Cr^{-3}
	\end{align}
	and 
	\begin{align}\label{KPZrwre3}
	\bfP^{\xi}_0\{ \abs{X_N-N\xi}_1\ge \delta N^{2/3} \}\ge c.
	\end{align}
The same constants can be taken for $(\alpha,\beta,\xi)$ varying in a compact subset of $(0,\infty)^2\times\ri\Uset\setminus\{\llnv\}$.
\end{theorem}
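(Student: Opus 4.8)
\emph{Proof plan.} The plan is to reduce Theorem~\ref{th:kpz} to quantitative properties of the Busemann process, exploiting the change of measure built into the Doob transform, with Theorem~\ref{th:B-var} as the main analytic input. Write $\bfE^\xi$ for expectation under $\bfP^\xi$. Since $e^{-B^\xi_{0,\cdot}}$ is harmonic, by \eqref{B:harm} and \eqref{pi-doob} the process $(e^{-B^\xi_{0,X_n}})_{n\ge0}$ is a $P^\p_0$--martingale and $P^{\doob^\xi(\p)}_0$ is its $h$--transform; because the cocycle telescopes, the likelihood ratio on the first $N$ steps depends on the path only through its endpoint:
\[
\frac{dP^{\doob^\xi(\p)}_0}{dP^\p_0}(x_{0,N})=e^{-B^\xi_{0,x_N}(\p)},\qquad
\frac{dP^{\doob^\xi(\p)}_0}{dP^{\doob^\zeta(\p)}_0}(x_{0,N})=e^{B^\zeta_{0,x_N}(\p)-B^\xi_{0,x_N}(\p)}\qquad(\xi,\zeta\in\ri\Uset).
\]
Averaging over the environment, for bounded $f$,
\[
\bfE^\xi_0[f(X_N)]=\E\bigl[E^\p_0[f(X_N)e^{-B^\xi_{0,X_N}}]\bigr]=\E\bigl[E^{\doob^\zeta(\p)}_0[f(X_N)e^{B^\zeta_{0,X_N}-B^\xi_{0,X_N}}]\bigr],
\]
the first identity being Theorem~\ref{cond} read at time $N$ and the second the engine of the tail bounds. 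Both inequalities thus become statements about increments of $B^\xi$ across an antidiagonal and about the dependence of $B^\xi$ on $\xi$; the inputs are the $N^{2/3}$ variance bounds of Theorem~\ref{th:B-var} and the monotonicity of $\xi\mapsto B^\xi$ from Theorem~\ref{th:Buse}, under which $k\mapsto B^\zeta_{0,(k,N-k)}-B^\xi_{0,(k,N-k)}$ is monotone in $k$ whenever $\zeta_1>\xi_1$.

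\emph{Upper bound \eqref{KPZrwre1}.} Since $X_N\in\{x_1+x_2=N\}$ it suffices to bound $\bfP^\xi_0\{X_N\cdot e_1\ge N\xi_1+rN^{2/3}\}$, the other tail being analogous by symmetry. Fix an auxiliary direction $\zeta$ with $\zeta_1-\xi_1$ of order $rN^{-1/3}$. Using the second identity above, restricting to the event, and bounding the weight $e^{B^\zeta_{0,X_N}-B^\xi_{0,X_N}}$ on the event by its value $e^{G_r}$ at the threshold (legitimate by the monotonicity from Theorem~\ref{th:Buse}), where $G_r=B^\zeta_{0,z_r}-B^\xi_{0,z_r}$ and $z_r=[(N\xi_1+rN^{2/3})e_1+(N\xi_2-rN^{2/3})e_2]$, and then splitting on the sign of $G_r$, one gets
\[
\bfP^\xi_0\{X_N\cdot e_1\ge N\xi_1+rN^{2/3}\}\ \le\ \bfP^\zeta_0\{X_N\cdot e_1\ge N\xi_1+rN^{2/3}\}+\E\bigl[e^{G_r}\one\{G_r>0\}\bigr].
\]
The first term is a moderate deviation of the $\zeta$--walk of order $r$ and is $O(r^{-3})$ by induction on $r$ (the bound being sought uniformly in $\xi$). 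For the second, $G_r$ has strongly negative mean and small variance --- estimated from the distributional properties of $B^\xi$ in Theorem~\ref{th:Buse} and the variance bound of Theorem~\ref{th:B-var}, using that the monotone coupling of $B^\xi$ and $B^\zeta$ makes their difference concentrate on a scale well below $N^{1/3}$ --- so both $\P(G_r>0)$ and the contribution of $\{G_r>0\}$ to the exponential moment are negligible at order $r^{-3}$. Optimizing the proportionality constant in $\zeta_1-\xi_1$ matches the two terms; uniformity over compacta follows from that in Theorems~\ref{th:Buse} and \ref{th:B-var}.

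\emph{Lower bound \eqref{KPZrwre3}.} Decompose $\mathrm{Var}_{\bfP^\xi}(X_N\cdot e_1)=\E[\mathrm{Var}_{P^{\doob^\xi(\p)}_0}(X_N\cdot e_1)]+\mathrm{Var}_{\P}(E^{\doob^\xi(\p)}_0[X_N\cdot e_1])$, so that it suffices to bound the second (quenched--mean) term below by $cN^{4/3}$. Differentiating the law of large numbers $\bfE^\zeta_0[X_N\cdot e_1]=N\zeta_1$ (Theorem~\ref{th:pi-xi}) in $\zeta$ at $\zeta=\xi$ through the change of measure, and noting that $E^{\doob^\xi(\p)}_0[e^{B^\zeta_{0,X_N}-B^\xi_{0,X_N}}]\equiv1$ forces $E^{\doob^\xi(\p)}_0[D_{X_N}]=0$, where $D_z=(\partial_\zeta B^\zeta_{0,z})\big|_{\zeta=\xi}$, one obtains
\[
N=\E\bigl[E^{\doob^\xi(\p)}_0[(X_N\cdot e_1-\bfE^\xi_0[X_N\cdot e_1])\,D_{X_N}]\bigr].
\]
A further differentiation shows the deterministic part $\E[D_{(k,N-k)}]$ is affine in $k$ with slope the second derivative of $I_q$ at $\xi_1$ (positive, from the explicit formula in Section~\ref{sec:ldp}), so that the quenched mean of $X_N\cdot e_1$ equals, up to this constant, the quenched mean of the centered process $D-\E[D]$. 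Cauchy--Schwarz under $\P\otimes P^{\doob^\xi(\p)}_0$ combined with this identification reduces the desired variance lower bound to the bound $\mathrm{Var}(B^\xi_{0,z})\ge cN^{2/3}$ of Theorem~\ref{th:B-var}. Finally, the fourth--moment estimate $\bfE^\xi_0[(X_N\cdot e_1-N\xi_1)^4]\le CN^{8/3}$, read off from \eqref{KPZrwre1}, lets the Paley--Zygmund inequality deliver \eqref{KPZrwre3}.

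\emph{Main obstacle.} The technical heart is the analysis of the $\xi$--dependence of the Busemann process: upgrading the variance bound of Theorem~\ref{th:B-var} to genuine concentration --- not merely a variance estimate --- for the cocycle difference $B^\zeta-B^\xi$ at the threshold, and to sharp control of the mean square of its $\zeta$--derivative, including the cancellation of the affine deterministic part. This is the RWRE counterpart of the exit--point analysis used for stationary last--passage percolation and directed polymers (the dictionary being Section~\ref{Beta-Polys}); the monotone coupling of the family $\{B^\xi\}$ from Theorem~\ref{th:Buse} and the explicit marginal laws make it feasible, but it is where the bulk of the work lies.
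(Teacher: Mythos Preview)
Your route is genuinely different from the paper's and, as written, has a real gap.

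\textbf{What the paper does.} The paper never runs a change-of-measure comparison between two Doob-transformed walks at nearby directions. Instead it transfers to the increment-stationary system $(\bar\p^\xi,\bar\Bd^\xi,\bar\pi^\xi)$ via Theorem~\ref{th:Buse}\eqref{B:B=Bd}, and then works with \emph{exit points}: for $(m,n)=\lfloor N\xi\rfloor$ one has $(m-\Xmin_n)^+=m-X(\tau^-_{(m,n)})\cdot e_1$, and reversing the walk turns this into the entry point of the backward Doob-transformed walk on the boundary $\B^+_0$. Via the dictionary of Section~\ref{Beta-Polys} this is exactly the exit point of the stationary beta polymer, so the upper tail \eqref{KPZrwre1} follows from the polymer exit bound (Lemma~4.7 of \cite{Cha-Noa-17-}). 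For the lower bound the paper uses the variance identity \eqref{var-id}, which equates $\Varplus(\log\Bd^\lambda_{0,v})$ (up to explicit deterministic terms) to a weighted expectation of the exit coordinate; combined with the lower bound in \eqref{var-Bd} this gives $\bfE^\lambda_0[(m-\Xmin_n)^+]\ge cN^{2/3}$, and a short second-moment argument plus \eqref{path+bd:5} promotes this to \eqref{KPZrwre3}. No comparison of $B^\zeta$ and $B^\xi$ for distinct $\zeta,\xi$ is used.

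\textbf{Where your argument breaks.} Your upper bound hinges on controlling $G_r=B^\zeta_{0,z_r}-B^\xi_{0,z_r}$, and you correctly identify that you need genuine concentration for it, not just a variance bound on each factor. The difficulty is sharper than you indicate: Theorem~\ref{th:B-var} controls $\Vvv[B^\xi_{0,z}]$ for a \emph{single} $\xi$, and the paper explicitly warns (just after Theorem~\ref{th:Buse}) that the distributional identification $(\p,B^\xi)\overset{d}{=}(\bar\p^\xi,\log\bar\Bd^\xi)$ does \emph{not} hold jointly across $\xi$, because the joint law of $\{B^\xi\}_\xi$ is not the monotone coupling of Section~\ref{sec:bdry}. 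So neither the explicit marginals nor the $\lambda$-coupling of $\{\Bd^\lambda\}$ gives you control of the joint law of $(B^\xi,B^\zeta)$, and there is no available input that turns the monotonicity of Theorem~\ref{th:Buse}\eqref{B:mono} into the exponential-moment bound you need for $\E[e^{G_r}\one\{G_r>0\}]$. The ``induction on $r$'' for the first term is also not self-closing: $\zeta$ depends on $N$, and you would need the constants in \eqref{KPZrwre1} uniform over a set that shrinks toward $\xi$ with $N$, which is precisely what you are proving.

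\textbf{Lower bound.} Your derivative $D_z=\partial_\zeta B^\zeta_{0,z}\big|_{\zeta=\xi}$ is not known to exist: Theorem~\ref{th:Buse} gives only cadlag dependence in $\xi$ and a.s.\ continuity at fixed $\xi$, not differentiability. Even formally, the identity you aim for is essentially the variance identity \eqref{var-id} read through the polymer dictionary; the paper obtains it directly in the stationary $(\Bd^\lambda,\p^\lambda,\pch)$ system where the required regularity in $\lambda$ is available by construction (cf.\ \eqref{cont1}), not for the limiting Busemann process. If you want to salvage your strategy, the clean fix is to move the whole argument to the $(\bar\p^\xi,\bar\Bd^\xi)$ side from the start and work with exit points---which is exactly the paper's route.
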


Theorem \ref{th:kpz}  is proved in Section \ref{pf:main}. The bounds come from using harmonic functions to control the exit point of the walk from rectangles.  For this proof also  we can cite an estimate from  \cite{Cha-Noa-17-}.

 


\subsection{Large deviations}  \label{sec:ldp} 
 This section records explicit  large deviation rate functions and their link with the process $B^\xi$ of Theorem \ref{th:Bus}.   
 
We begin with a technical point that is needed for the rate function and for the entire remainder of the paper.  
 The next lemma establishes connections between three parameters:  $\xi\in\Uset$ is an asymptotic velocity of the walk, $t\in\R$ is the tilt  dual to $\xi$, and $0<\lambda<\infty$  parametrizes two families of increment-stationary harmonic functions that we construct in Section \ref{sec:bdry} and use as  tools to analyze the model.  A slight inconvenience is that as $\xi$ ranges across $\Uset$ from left to right (in the direction of $\xi_1$), $\lambda$ goes from $0$ to $\infty$ and back, with $\lambda=\infty$ corresponding to $\xi=\llnv$.  This is depicted in the  first two plots of Figure \ref{fig:lam}. 

Recall the polygamma functions $\psi_0(s)=\Gamma'(s)/\Gamma(s)$ and  $\psi_n(s)=\psi_{n-1}'(s)$ for $s>0$ and $n\in\N$.  
Some basic properties of these functions are given in Appendix \ref{psi-prop}.   Qualitatively speaking,  $\psi_0$ is strictly concave and  increasing from $\psi_0(0+)=-\infty$ to $\psi_0(\infty-)=\infty$, while $\psi_1$ is strictly convex  and decreasing from $\psi_1(0+)=\infty$ to $\psi_1(\infty-)=0$. 
 
 \begin{figure}[h]
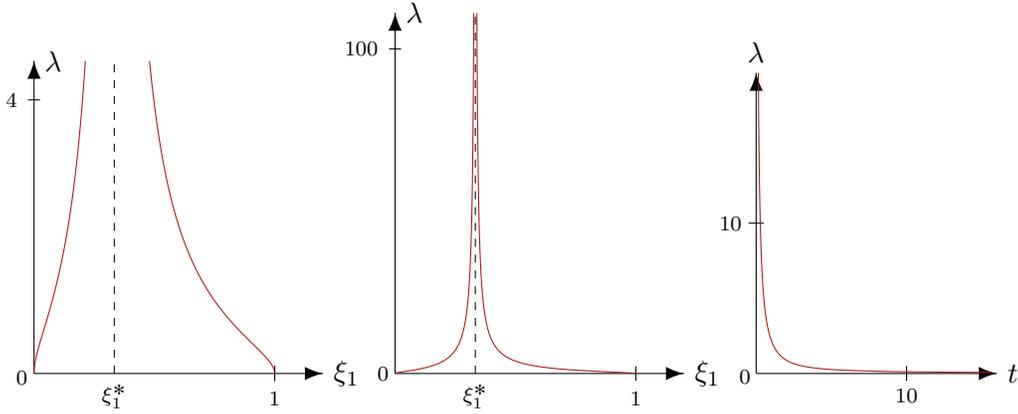

 	\begin{center}
 		 \begin{tikzpicture}[scale=0.8,>=latex,decoration={markings, mark=at position 1 with {\arrow[scale=1.7,black]{latex}}}]
		 \begin{scope}[xscale=4, yscale=1.3]
		      	\draw[postaction={decorate}] (0,0) -- (1.2,0) node[right] {\small$\xi_1$};
			\draw(-0.05,-0.05) node{\tiny$0$};
      			\draw[postaction={decorate}] (0,0) -- (0,4) node[right] {\small$\lambda$}; 
			\draw[dashed] (1/3,0)--(1/3,4);
      			\input{lamxi-max4}
			\draw (-0.025,3.5) -- (0.025,3.5);
			\draw (-0.025,3.5) node[left] {\tiny$4$};
			\draw (1,-0.1) -- (1,0.1);
			\draw (1,-0.1) node[below] {\tiny$1$};
			\draw (1/3,0) node[below] {\tiny$\llnv_1$};
		\end{scope}
 		 \begin{scope}[shift={(6,0)}, xscale=4, yscale=0.06]
		      	\draw[postaction={decorate}] (0,0) -- (1.2,0) node[right] {\small$\xi_1$};
      			\draw[postaction={decorate}] (0,0) -- (0,100) node[right] {\small$\lambda$};
			\draw(-0.05,-0.05) node{\tiny$0$};
			\input{lamxi-max100}
			\draw[dashed] (1/3,0)--(1/3,100);
			\draw (-0.025,90) -- (0.025,90);
			\draw (-0.025,90) node[left] {\tiny$100$};
			\draw (1,-2) -- (1,2);
			\draw (1,-2) node[below] {\tiny$1$};
			\draw (1/3,0) node[below] {\tiny$\llnv_1$};
		\end{scope}
 		 \begin{scope}[shift={(12,0)}, scale=0.25]
		      	\draw[postaction={decorate}] (0,0) -- (16,0) node[right] {\small$t$};
      			\draw(-0.75,-0.2) node{\tiny$0$};
      			\draw[postaction={decorate}] (0,0) -- (0,20) node[above]{\small$\lambda$};
			\draw(-0.5,10)--(0.5,10);
			\draw(-1.7,10) node{\tiny$10$};
			\draw(10,-0.5)--(10,0.5);
			\draw(10,-1.5) node{\tiny$10$};
			\input{lamt}
		\end{scope}
		\end{tikzpicture}
	\end{center}
 	\caption{\small Leftmost and middle plots are of $\lambda$ as a function of $\xi_1$. 
	The left plot stretches the $\lambda$-axis to reveal the behavior away from $\llnv_1$. The rightmost plot is of $\lambda$ as a function of $t$. These graphs are for  $(\alpha,\beta)=(1,2)$.}
 \label{fig:lam}
 \end{figure}

\begin{lemma}\label{lam-xi-t}
Fix $\alpha,\beta>0$. 
\begin{enumerate}[label={\rm(\alph*)}, ref={\rm\alph*}] 
\item\label{lam-xi} Given $\xi=(\xi_1, 1-\xi_1)\in\Uset$ there is a unique $\lambda=\lambda(\xi)\in[0,\infty]$ such that 
\begin{align}
&\xi_1=\frac{\psi_1(\lambda)-\psi_1(\alpha+\lambda)}{\psi_1(\lambda)-\psi_1(\alpha+\beta+\lambda)}\quad\text{for }\xi_1\in[\llnv_1, 1],\quad\text{and}\label{eq:lam-xi1}\\
&\xi_1=1-\frac{\psi_1(\lambda)-\psi_1(\beta+\lambda)}{\psi_1(\lambda)-\psi_1(\alpha+\beta+\lambda)}\quad\text{for }\xi_1\in[0,\llnv_1],\label{eq:lam-xi2}\\
&\text{with }\lambda=0\Longleftrightarrow \xi\in\{e_1,e_2\}\text{ and }\lambda=\infty\Longleftrightarrow \xi=\llnv=(\tfrac{\alpha}{\alpha+\beta}, \tfrac{\beta}{\alpha+\beta})\,. \notag  
\end{align}
Furthermore, $\lambda$ is strictly  increasing on $\xi_1\in[0,\llnv_1)$ and strictly decreasing on $\xi_1\in(\llnv_1,1]$.\\[-5pt] 
\item\label{lam-t} Given $t\in[0,\infty]$ there is a unique $\lambda=\lambda(t)\in[0,\infty]$ such that
 	\begin{align}\label{eq:lam-t}
	t=  \psi_0(\alpha+\beta+\lambda)-\psi_0(\lambda)
	\end{align}
where $\lambda=0\Longleftrightarrow t=\infty$ and $\lambda=\infty\Longleftrightarrow t=0$.   
\end{enumerate}
\end{lemma}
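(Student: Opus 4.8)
\medskip
\noindent\emph{Proof proposal.}
The plan is to reduce both parts to elementary monotonicity statements for a single function of $\lambda\in(0,\infty)$: in each case one writes down the relevant function, shows it is continuous and strictly monotone, computes its limits as $\lambda\downarrow 0$ and $\lambda\uparrow\infty$, and then reads off existence and uniqueness of $\lambda$ from the intermediate value theorem together with strict monotonicity. The only inputs are the qualitative properties of $\psi_0,\psi_1$ recalled before the lemma and in Appendix \ref{psi-prop}: $\psi_0'=\psi_1>0$, $\psi_1$ is strictly decreasing with $\psi_1(0+)=+\infty$ and $\psi_1(\infty-)=0$, $\psi_0(0+)=-\infty$, the asymptotics $\psi_1(\lambda)=\lambda^{-1}+O(\lambda^{-2})$ as $\lambda\to\infty$, and the representation $\psi_1(\lambda)=\int_0^\infty \frac{t\,e^{-\lambda t}}{1-e^{-t}}\,dt$.

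For part (\ref{lam-t}) I would set $g(\lambda)=\psi_0(\alpha+\beta+\lambda)-\psi_0(\lambda)=\int_\lambda^{\alpha+\beta+\lambda}\psi_1(s)\,ds$, which is positive and has $g'(\lambda)=\psi_1(\alpha+\beta+\lambda)-\psi_1(\lambda)<0$, hence is strictly decreasing. Since $\psi_0(0+)=-\infty$ while $\psi_0(\alpha+\beta)$ is finite, $g(0+)=+\infty$; and $0<g(\lambda)\le (\alpha+\beta)\psi_1(\lambda)\to 0$, so $g(\infty-)=0$. Thus $g$ is a strictly decreasing continuous bijection $(0,\infty)\to(0,\infty)$, and extending by $g(0)=\infty$, $g(\infty)=0$ gives, for each $t\in[0,\infty]$, the unique $\lambda(t)\in[0,\infty]$ solving \eqref{eq:lam-t}, with the stated boundary correspondence. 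This part is routine.

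For part (\ref{lam-xi}) I would first observe that \eqref{eq:lam-xi2} on $\xi_1\in[0,\llnv_1]$ follows from \eqref{eq:lam-xi1} on $\xi_1\in[\llnv_1,1]$ via the substitution $(\alpha,\beta,\xi_1)\mapsto(\beta,\alpha,1-\xi_1)$, which also interchanges the two monotonicity intervals abutting $\llnv_1$ and the roles of $e_1$ and $e_2$; so it suffices to study
\[
h(\lambda)=\frac{N(\lambda)}{D(\lambda)},\qquad N(\lambda)=\psi_1(\lambda)-\psi_1(\alpha+\lambda),\quad D(\lambda)=\psi_1(\lambda)-\psi_1(\alpha+\beta+\lambda).
\]
Strict monotonicity of $\psi_1$ gives $0<N<D$, so $h\in(0,1)$; writing $M:=D-N=\psi_1(\alpha+\lambda)-\psi_1(\alpha+\beta+\lambda)>0$ one has $h=(1+M/N)^{-1}$. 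As $\lambda\downarrow0$, $N\to+\infty$ while $M\to\psi_1(\alpha)-\psi_1(\alpha+\beta)<\infty$, so $h(0+)=1$; as $\lambda\to\infty$, the asymptotics of $\psi_1$ give $\lambda^2 N\to\alpha$ and $\lambda^2 M\to\beta$, so $M/N\to\beta/\alpha$ and $h(\infty-)=\alpha/(\alpha+\beta)=\llnv_1$. Once $h$ is shown strictly decreasing, it is a continuous strictly decreasing bijection $(0,\infty)\to(\llnv_1,1)$; extending by $h(0)=1$, $h(\infty)=\llnv_1$ and inverting produces $\lambda(\xi)$, strictly decreasing in $\xi_1$ on $(\llnv_1,1]$, with $\lambda=0$ exactly at $\xi=e_1$ and $\lambda=\infty$ exactly at $\xi=\llnv$, and the symmetry then yields the corresponding statements on $[0,\llnv_1)$.

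The crux, and the step I expect to take the most care, is that $h$ is strictly decreasing, equivalently that $M/N$ is strictly increasing. My plan is a monotone-likelihood-ratio comparison of Laplace transforms: from $\psi_1(\lambda)=\int_0^\infty e^{-\lambda t}\mu(dt)$ with $\mu(dt)=\frac{t}{1-e^{-t}}\,dt$ one gets $N(\lambda)=\int_0^\infty e^{-\lambda t}\nu_1(dt)$ and $M(\lambda)=\int_0^\infty e^{-\lambda t}\nu_2(dt)$ with positive measures $\nu_1=(1-e^{-\alpha t})\mu$ and $\nu_2=e^{-\alpha t}(1-e^{-\beta t})\mu$ on $(0,\infty)$. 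Letting $P_i^\lambda$ be the probability measure proportional to $e^{-\lambda t}\nu_i(dt)$, differentiation under the integral sign (legitimate by exponential decay) gives $\frac{d}{d\lambda}\log(M/N)=\mathbb{E}_{P_1^\lambda}[t]-\mathbb{E}_{P_2^\lambda}[t]$, so it suffices to check that the likelihood ratio $\frac{d\nu_2}{d\nu_1}(t)=\frac{e^{-\alpha t}(1-e^{-\beta t})}{1-e^{-\alpha t}}$ is strictly decreasing in $t$; then $P_2^\lambda$ is strictly stochastically dominated by $P_1^\lambda$ and the displayed derivative is positive. Substituting $x=e^{-t}\in(0,1)$, the decreasing-ratio claim is equivalent to $(\alpha+\beta)x^\beta-\beta x^{\alpha+\beta}<\alpha$ on $(0,1)$, which is immediate: the left-hand side has derivative $\beta(\alpha+\beta)x^{\beta-1}(1-x^\alpha)>0$ on $(0,1)$ and equals $\alpha$ at $x=1$. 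This would complete the proof.
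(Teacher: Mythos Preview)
Your proof is correct. Part~(\ref{lam-t}) is essentially the paper's argument; only the verification of $g(\infty-)=0$ differs cosmetically (you use the mean-value bound $g(\lambda)\le(\alpha+\beta)\psi_1(\lambda)$, the paper uses the series representation of $\psi_0$). For part~(\ref{lam-xi}), the overall plan (limits plus strict monotonicity of $h$) coincides with the paper's, but your proof that $h$ is strictly decreasing takes a genuinely different route. The paper computes $h'(\lambda)$ directly and reduces $h'<0$ to the inequality
\[
\frac{\psi_2(\lambda)-\psi_2(\alpha+\lambda)}{\psi_1(\lambda)-\psi_1(\alpha+\lambda)}<\frac{\psi_2(\lambda)-\psi_2(\alpha+\beta+\lambda)}{\psi_1(\lambda)-\psi_1(\alpha+\beta+\lambda)},
\]
which it deduces from the strict concavity of $\psi_2\circ\psi_1^{-1}$ (Lemma~\ref{psi-con}), in turn proved via $\psi_4\psi_2>\psi_3^2$ and a Cauchy--Schwarz/symmetrization step on the integral representation. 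Your approach rewrites $h=(1+M/N)^{-1}$ and establishes that $M/N$ is increasing by a monotone-likelihood-ratio comparison of the Laplace transforms $N,M$, reducing everything to the elementary one-variable inequality $(\alpha+\beta)x^\beta-\beta x^{\alpha+\beta}<\alpha$ on $(0,1)$. Your argument is more self-contained for this lemma: it avoids higher polygamma functions and the auxiliary Lemma~\ref{psi-con} entirely. The paper's route, on the other hand, isolates the concavity of $\psi_2\circ\psi_1^{-1}$ as a structural fact (borrowed from \cite{Bar-Cor-17}) that has independent interest and reappears in related exactly-solvable models.
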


The proof of Lemma \ref{lam-xi-t} is given in Section \ref{Iq-pf}.
The formula for the quenched rate function  $I_q$ in \eqref{qLDP}  in  the beta environment  can now be given. See Figure \ref{fig:I} for an illustration.

\begin{theorem}\label{thm:Iq}
Fix $\alpha,\beta>0$ and let $\p$ have i.i.d.\  {\rm Beta}$\,(\alpha,\beta)$ distribution.  
Then  for $\xi=(\xi_1,\xi_2)\in\Uset$ we have $I_q(\llnv)=0$ and
\begin{align}\label{Iq} 
I_q(\xi)=
\begin{cases}
\xi_1\psi_0\bigl(\alpha+\beta+\lambda(\xi)\bigr)+\xi_2\psi_0\bigl(\lambda(\xi)\bigr)-\psi_0\bigl(\alpha+\lambda(\xi)\bigr) &\text{for }\xi_1\in(\llnv_1,1],\\[3pt] 
\xi_2\psi_0\bigl(\alpha+\beta+\lambda(\xi)\bigr)+\xi_1\psi_0\bigl(\lambda(\xi)\bigr)  -\psi_0\bigl(\beta+\lambda(\xi)\bigr) &\text{for }\xi_1\in[0,\llnv_1),
\end{cases}
\end{align}
where in both cases $\lambda$ and $\xi$ determine each other uniquely via \eqref{eq:lam-xi1} and \eqref{eq:lam-xi2}.     $I_q$ is a strictly convex function on $[0,1]$ and satisfies $I_q(\xi)>I_a(\xi)$ for all $\xi\in\Uset\setminus\{\llnv\}$.
\end{theorem}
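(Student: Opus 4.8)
The plan is to first establish the identity
\[
I_q(\xi)=-\xi_1\,\E[B^\xi_{0,e_1}]-\xi_2\,\E[B^\xi_{0,e_2}]\qquad\text{for }\xi\in\ri\Uset ,
\]
with $B^\xi$ the cocycle of Theorem \ref{th:Bus}, and then to evaluate the two expectations and extract strict convexity and the comparison with $I_a$ from the resulting polygamma formula. For the lower bound on $I_q$, recall from \eqref{B89} that $B^\xi$ is an integrable stationary cocycle, so $B^\xi\in\mathcal K$, and that harmonicity \eqref{B:harm} at $x=0$ reads $\p_{0,e_1}e^{-B^\xi_{0,e_1}}+\p_{0,e_2}e^{-B^\xi_{0,e_2}}=1$ $\P$-a.s.; hence the essential supremum in the variational formula \eqref{K-var1} with $B=B^\xi$ equals $\log 1=0$, and \eqref{K-var1} gives $I_q(\xi)\ge-\xi_1\E[B^\xi_{0,e_1}]-\xi_2\E[B^\xi_{0,e_2}]$. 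Taking $\xi=\llnv$ and using $B^\llnv\equiv 0$ from \eqref{B-llnv} yields $I_q(\llnv)\ge 0$, hence $I_q(\llnv)=0$. For the matching upper bound, the Doob relation \eqref{pi-doob} and the cocycle identity give, along any up-right path $y_{0,N}$ from $0$ to a point $z$, that $\prod_k\p_{y_k,\,y_{k+1}}=e^{B^\xi_{0,z}(\p)}\prod_k\doob^\xi_{y_k,\,y_{k+1}}(\p)$; since the prefactor is path-independent, summing over paths gives $P^\p_0\{X_{\abs{z}_1}=z\}=e^{B^\xi_{0,z}(\p)}\,P^{\doob^\xi(\p)}_0\{X_{\abs{z}_1}=z\}$. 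Put $z=[N\xi]$, take logarithms, divide by $N$: the left side tends to $-I_q(\xi)$ by \eqref{qLDP}; one has $N^{-1}B^\xi_{0,[N\xi]}\to\xi_1\E[B^\xi_{0,e_1}]+\xi_2\E[B^\xi_{0,e_2}]$ because the mean is exactly $N^{-1}(m_N\E[B^\xi_{0,e_1}]+n_N\E[B^\xi_{0,e_2}])$ by additivity and stationarity (with $[N\xi]=(m_N,n_N)$) while Theorem \ref{th:B-var} bounds $\Vvv[N^{-1}B^\xi_{0,[N\xi]}]$ by $O(N^{-4/3})$; and it remains to see that $N^{-1}\log P^{\doob^\xi(\p)}_0\{X_N=[N\xi]\}\to 0$. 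This last point --- that the Doob walk has no exponential decay of its point-to-point probability at its own law-of-large-numbers velocity (cf.\ \eqref{pi-lln}) --- is the first main obstacle; it is obtained by transferring to the beta polymer (Section \ref{Beta-Polys}) and using the exact formulas available there, equivalently by a local-limit estimate built from Theorems \ref{th:B-var} and \ref{th:kpz}. Combining the two bounds gives the displayed identity.

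Next, insert the marginal laws of $B^\xi_{0,e_1}$ and $B^\xi_{0,e_2}$ recorded in Theorem \ref{th:Buse}: for $\xi_1\in(\llnv_1,1]$ and $\lambda=\lambda(\xi)$ as in Lemma \ref{lam-xi-t}\eqref{lam-xi}, these variables are logarithms of Beta-distributed variables with parameters assembled from $\alpha,\beta,\lambda$, so, using $\E[\log Y]=\psi_0(a)-\psi_0(a+b)$ for $Y\sim\mathrm{Beta}(a,b)$, one finds $\E[B^\xi_{0,e_1}]=\psi_0(\alpha+\lambda)-\psi_0(\alpha+\beta+\lambda)$ and $\E[B^\xi_{0,e_2}]=\psi_0(\alpha+\lambda)-\psi_0(\lambda)$. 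Substituting into the identity and using $\xi_1+\xi_2=1$ produces the first branch of \eqref{Iq}; the second branch follows in the same way after the exchange $(\alpha,\xi_1)\leftrightarrow(\beta,\xi_2)$ together with \eqref{eq:lam-xi2}, and the value $I_q(\llnv)=0$ is consistent since $\lambda(\llnv)=\infty$ and $\psi_0(x+a)-\psi_0(x)\to 0$ as $x\to\infty$.

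For strict convexity, differentiate \eqref{Iq} in $\xi_1$ with $\lambda=\lambda(\xi_1)$. On $(\llnv_1,1]$ the $d\lambda/d\xi_1$ term has coefficient $\xi_1\psi_1(\alpha+\beta+\lambda)+\xi_2\psi_1(\lambda)-\psi_1(\alpha+\lambda)$, which vanishes identically by \eqref{eq:lam-xi1}, so $I_q'(\xi_1)=\psi_0(\alpha+\beta+\lambda)-\psi_0(\lambda)$, which by \eqref{eq:lam-t} is exactly the tilt $t=t(\lambda)$. Since $\psi_1$ is strictly decreasing, $t(\lambda)$ is strictly decreasing in $\lambda$, and $\lambda$ is strictly decreasing in $\xi_1$ on $(\llnv_1,1]$ by Lemma \ref{lam-xi-t}, so $I_q'$ is strictly increasing there, running from $0$ at $\llnv_1$ (where $\lambda=\infty$) to $+\infty$ at $\xi_1=1$ (where $\lambda=0$); symmetrically $I_q'(\xi_1)=-\bigl(\psi_0(\alpha+\beta+\lambda)-\psi_0(\lambda)\bigr)$ is strictly increasing from $-\infty$ to $0$ on $[0,\llnv_1)$, and the expansion $\psi_0(x+a)-\psi_0(x)=a/x+O(x^{-2})$ shows $I_q\in C^1([0,1])$ with $I_q'(\llnv_1)=0$. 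Hence $I_q'$ is strictly increasing on all of $[0,1]$, i.e.\ $I_q$ is strictly convex.

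Finally, $I_q>I_a$ off $\llnv$. We already know $I_q\ge I_a$ (Fatou and Jensen); passing to convex conjugates on $[0,1]$ reverses the inequality, $I_q^*\le I_a^*$, with $I_q^*(0)=I_a^*(0)=0$ and, by a direct Legendre computation, $I_a^*(s)=\log(\llnv_1 e^s+\llnv_2)$. Since $I_a$ is strictly convex with $I_a'$ a bijection of $(0,1)$ onto $\R$, for $\xi_1\ne\llnv_1$ the tilt $s_*:=I_a'(\xi_1)$ is nonzero and $I_a(\xi_1)=s_*\xi_1-I_a^*(s_*)$, while Fenchel--Young gives $I_q(\xi_1)\ge s_*\xi_1-I_q^*(s_*)$; so $I_q(\xi)>I_a(\xi)$ follows once one proves the strict inequality $I_q^*(s)<I_a^*(s)$ for all $s\ne 0$. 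From \eqref{Iq} and the identity $I_q'(\xi_1)=\psi_0(\alpha+\beta+\lambda)-\psi_0(\lambda)$, the supremum defining $I_q^*(s)$ for $s>0$ is attained at the $\xi_1$ whose $\lambda=\lambda(s)$ solves \eqref{eq:lam-t}, and a short calculation yields $I_q^*(s)=\psi_0(\alpha+\lambda(s))-\psi_0(\lambda(s))$ (with the $\beta$-analogue for $s<0$); the required bound is then the polygamma inequality $\psi_0(\alpha+\lambda(s))-\psi_0(\lambda(s))<\log(\llnv_1 e^s+\llnv_2)$ for $s>0$ --- equivalently, that $I_q-I_a$ has a strictly positive fourth-order coefficient at $\llnv$, the counterpart in this model of the quenched/annealed free-energy gap for $1+1$ dimensional directed polymers \cite{Com-Var-06,Lac-10}. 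Verifying this inequality (or, alternatively, adapting the argument of \cite{Yil-Zei-10}, which does not in fact require uniform ellipticity) is the second main obstacle and, together with the sub-exponential estimate of the first step, carries the bulk of the work.
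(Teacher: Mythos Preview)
Your proposal has a genuine circularity problem. You invoke the marginal laws of $B^\xi_{0,e_1}$ and $B^\xi_{0,e_2}$ from Theorem \ref{th:Buse}, but in the paper's logical order those laws are established via Corollary \ref{cor:Bus} and Proposition \ref{prop:comp}, which in turn rest on Theorem \ref{thm:LLN}; and the proof of Theorem \ref{thm:LLN} explicitly uses both the formula \eqref{Iq} and the strict convexity of $I_q$ --- precisely the content of the theorem you are trying to prove. The same circularity infects your use of Theorem \ref{th:kpz} and \eqref{pi-lln} for the ``first obstacle.'' The paper sidesteps this entirely: its proof in Section \ref{Iq-pf} works only with the quadrant process $(\p^\lambda,\Bd^\lambda)$ of Proposition \ref{stat-pr}, whose construction is independent of $I_q$, and derives \eqref{I-97} by an ergodic/Laplace argument on the identity $\Bd^\lambda_{0,ne_2}=\sum_j P^{\p^\lambda}_0\{X_n=je_1+(n-j)e_2\}\prod_i R_{i,n}$. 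This yields $I_q^*$ in closed form, and then $I_q$ by Legendre duality, with no forward reference.

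Your two ``main obstacles'' are also not resolved. For the first, Theorems \ref{th:B-var} and \ref{th:kpz} give variance and location bounds for the Doob walk, not a lower bound $P^{\doob^\xi}_0\{X_N=[N\xi]\}\ge e^{-o(N)}$ on a point probability; turning fluctuation bounds into such a local-limit lower bound is nontrivial and you do not supply an argument. For the second, your reduction to $I_q^*(s)<I_a^*(s)$ for $s\ne 0$ is correct and matches the paper, but the fourth-order expansion at $\llnv$ is only a local statement and cannot deliver the global strict inequality; the paper proves it cleanly by showing $e^{\psi_0(x)}$ is strictly convex (Lemma \ref{psi-diff}), so that $e^{\psi_0(\alpha+\lambda)}<\llnv_1 e^{\psi_0(\alpha+\beta+\lambda)}+\llnv_2 e^{\psi_0(\lambda)}$ for all $\lambda>0$. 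Once you replace $B^\xi$ by $\Bd^\lambda$ and compute $I_q^*$ directly, the rest of your outline --- strict convexity via $I_q'(\xi_1)=\psi_0(\alpha+\beta+\lambda)-\psi_0(\lambda)$ and the Legendre comparison --- is essentially the paper's route.
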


\begin{example} [Case $\alpha=\beta=1$]
In the i.i.d.\ uniform environment  $\lambda$ and $I_q$ can be found in closed form with the help of the recurrence formulas \eqref{psi-rec}.  The rate function is 
\be\label{Iq-unif}  I_q(\xi)=1-2\sqrt{\xi_1\xi_2}  
=\sum_{n=1}^\infty \binom{\tfrac12}{n}(-1)^{n+1}4^n (\xi_1-\tfrac12)^{2n}\qquad \text{for } \; \xi\in\Uset.  \ee
The series   illustrates that this rate function is analytic on the entire open segment $\ri\Uset$, a property which is open for   general $(\alpha, \beta)$. 
\hfill$\triangle$\end{example}

\begin{remark}[Regularity of $I_q$]\label{rk:I-reg}
Lemma \ref{lm700} in Appendix \ref{app:exp} shows that $I_q$ is analytic away from $\llnv$.  We compute derivatives of $I_q$ up to the fourth one, to verify that across $\llnv$ we have at least four continuous derivatives.   We obtain the following expansion   around $\llnv$: 
\be\label{Iq-exp}	\begin{aligned}
	I_q(\xi)&=\frac{(\alpha+\beta)^2}{2\alpha\beta}(\xi_1-\llnv_1)^2+\frac{(\alpha+\beta)^3(\alpha-\beta)}{6\alpha^2\beta^2}(\xi_1-\llnv_1)^3\\
	&\qquad+\frac{(\alpha+\beta)^4(2\alpha^2-2\alpha\beta+2\beta^2+1)}{24\alpha^3\beta^3}(\xi_1-\llnv_1)^4+{\scriptstyle\cO}(\abs{\xi_1-\llnv_1}^4).
	\end{aligned}\ee
The details appear at the end of Appendix \ref{app:exp}.

For the sake of comparison,  here is the   expansion    around    $\llnv$ of the averaged rate function  $I_a$ from \eqref{Ia}:
	\begin{align}\label{Ia-exp}
	\begin{split}
	I_a(\xi)
	&=\frac{(\alpha+\beta)^2}{2\alpha\beta}(\xi_1-\llnv_1)^2+\frac{(\alpha+\beta)^3(\alpha-\beta)}{6\alpha^2\beta^2}(\xi_1-\llnv_1)^3\\
	&\qquad+\frac{(\alpha+\beta)^3(\alpha^3+\beta^3)}{12\alpha^3\beta^3}(\xi_1-\llnv_1)^4+{\mathcal O}(\abs{\xi_1-\llnv_1}^5).
	\end{split}
	\end{align}
The  expansions  of $I_q$ and $I_a$  agree to third order.   This explains the minute difference between the two graphs in Figure \ref{fig:I}.
One can check  that 
	\begin{align*}
	&\frac{d^4}{d\xi_1^4}\Bigl[I_q(\xi_1,1-\xi_1)-I_a(\xi_1,1-\xi_1)\Bigr]_{\xi=\llnv}=\frac{(\alpha+\beta)^4}{\alpha^3\beta^3}>0.
	\end{align*}
Thus  the fourth-order terms differ in the two expansions.
\hfill$\triangle$\end{remark} 

\begin{figure}[h]
 	\begin{center}
 		 \begin{tikzpicture}[scale=1, >=latex,decoration={markings, mark=at position 1 with {\arrow[scale=1.7,black]{latex}}}]
		 \begin{scope}[xscale=5, yscale=4]
		      	\draw[postaction={decorate}] (0,0) -- (1.2,0) node[right] {\small$\xi_1$};
      			\draw[postaction={decorate}] (0,0) -- (0,1.7);
			\draw (0.94,1.3) node{$I_q$}; 
			\draw (0.94,0.67) node{$I_a$}; 
			\input{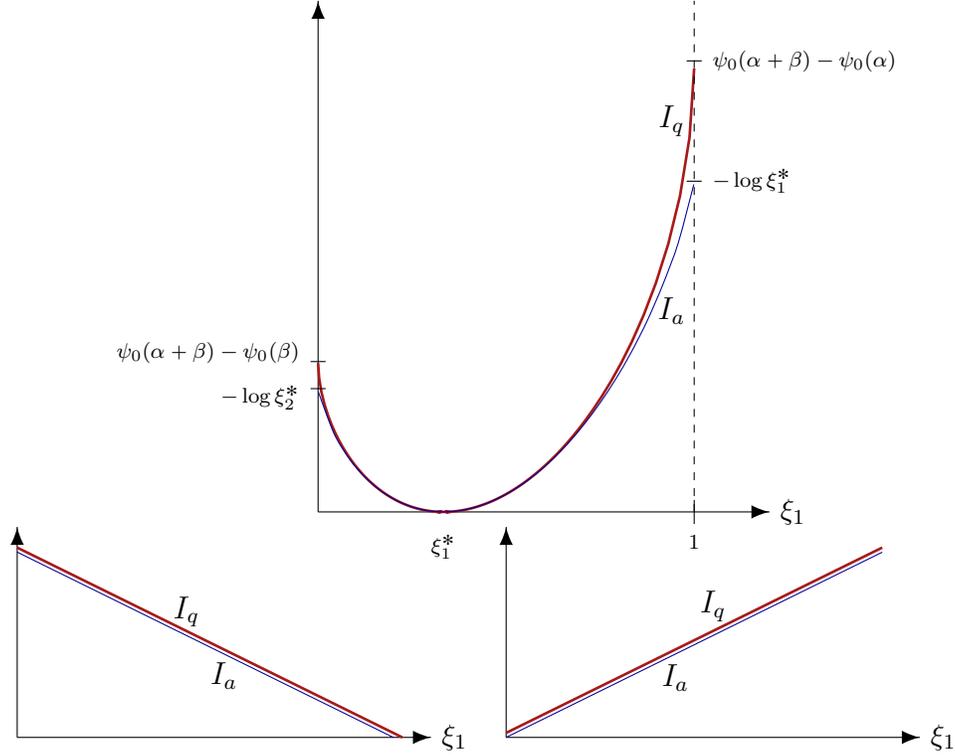}
			\draw (1,-0.04) -- (1,0.04);
			\draw (1,-0.04) node[below] {\tiny$1$};
			\draw (1/3,-0.04) node[below] {\tiny$\llnv_1$};
			\draw[domain=0.001:0.999,smooth,variable=\x,my-blue] plot ({\x},{\x*ln(3*\x)+(1-\x)*ln(3*(1-\x)/2)});
			\draw (-0.02,0.5) -- (0.02,0.5);
			\draw (-0.02,0.53) node[left] {\tiny$\psi_0(\alpha+\beta)-\psi_0(\beta)$};
			\draw (-0.02,0.41) -- (0.02,0.41);
			\draw (-0.02,0.38) node[left] {\tiny$-\log\llnv_2$};
			\draw[dashed] (1,0)--(1,1.7);
			\draw (1-.02,1.5) -- (1.02,1.5);
			\draw (1.02,1.5) node[right] {\tiny$\psi_0(\alpha+\beta)-\psi_0(\alpha)$};
			\draw (1-.02,1.1) -- (1.02,1.1);
			\draw (1.02,1.1) node[right] {\tiny$-\log\llnv_1$};
		 \end{scope}
 		 \begin{scope}[xscale=5, yscale=2.5, shift={(-0.8,-1.2)}]
		      	\draw[postaction={decorate}] (0,0) -- (1.1,0) node[right] {\small$\xi_1$};
      			\draw[postaction={decorate}] (0,0) -- (0,1.12);
		 	\draw [my-blue] (0,0.986013107782760)--(1,0);
			\draw [nicosred, line width=1pt] (0,1.01)--(1.013956843195735*1.01,0);
			\draw (0.45,0.67) node {$I_q$};
			\draw (0.55,0.33) node {$I_a$};
		   \end{scope}
 		 \begin{scope}[xscale=5, yscale=2.5, shift={(0.5,-1.2)}]
		      	\draw[postaction={decorate}] (0,0) -- (1.1,0) node[right] {\small$\xi_1$};
      			\draw[postaction={decorate}] (0,0) -- (0,1.12);
		 	\draw [my-blue] (0,0)--(1,0.985587899318051);
			\draw [nicosred, line width=1pt] (0,0.013910173484537+0.01)--(1,1.01);
			\draw (0.55,0.7) node {$I_q$};
			\draw (0.45,0.32) node {$I_a$};
		   \end{scope}
		\end{tikzpicture}
	\end{center}
 	\caption{\small Top plot shows $I_q$ (higher, thicker graph) and $I_a$ (lower, thinner graph) as functions of $\xi_1$. Bottom figures   zoom in on $I_q$ and $I_a$ over intervals 
	$[\llnv_1-4.440004\times10^{-4},\llnv_1-4.439998\times10^{-4}]$ and $[\llnv_1+4.442957038524\times 10^{-4},\llnv_1+4.442957038583\times10^{-4}]$. 
	On the left, the vertical axis goes from $4.436541\times10^{-7}$ to $4.436554\times10^{-7}$.
	On the right, the vertical axis goes from $4.440484\times10^{-7}$ to $4.440497\times10^{-7}$.
These graphs are for  $(\alpha,\beta)=(1,2)$.} 
 \label{fig:I}
 \end{figure}

We also record  the convex conjugate 
	\[I_q^*(h)
	=\sup_{\xi\in\Uset} \{ h\cdot\xi-I_q(\xi)\}
	=\lim_{n\to\infty} n^{-1} \log E^\p_0[e^{h\cdot X_n}] ,\quad h\in\R^2 .\]
The second equality above  is an instance of Varadhan's theorem \cite[page 28]{Ras-Sep-15-ldp}.   
Since  $(X_n-X_0) \cdot(e_1+e_2)=n$,  we have $I_q^*(te_1+se_2)=s+I_q^*((t-s)e_1)$  and it suffices to consider $h=te_1$ for real $t$. 

\begin{theorem}\label{thm:Iq2}
Fix $\alpha,\beta>0$ and let $\p$ have i.i.d.\  {\rm Beta}$\,(\alpha,\beta)$ distribution.  
For $t\ge0$
\begin{align}
I_q^*(te_1) & =  \psi_0(\alpha+\lambda(t))-\psi_0(\lambda(t)) \label{Iq*1}  \\
\text{and}\qquad I_q^*(-te_1)&=-t+\psi_0(\beta+\lambda(t))-\psi_0(\lambda(t)),\label{Iq*2} 
\end{align}
where $\lambda$ and $t$ determine each other via \eqref{eq:lam-t}. 
\end{theorem}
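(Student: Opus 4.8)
The plan is to read off both formulas from the explicit expression for $I_q$ in Theorem~\ref{thm:Iq} by computing the Legendre transform directly, using the observation that the defining relation \eqref{eq:lam-xi1} for $\lambda(\xi)$ is precisely the stationarity condition that lets the envelope theorem collapse the derivative of $I_q$. First I would reduce to $h=te_1$ with $t\ge0$: the reduction to the $e_1$-direction is carried out in the remark preceding the theorem, and the sign reduction uses the identity $I_q^*(h+c(e_1+e_2))=I_q^*(h)+c$ for $c\in\R$, which is immediate from $(X_n-X_0)\cdot(e_1+e_2)=n$. Taking $h=-te_1$ and $c=t$ gives $I_q^*(-te_1)=I_q^*(te_2)-t$, so \eqref{Iq*2} will follow from the computation of $I_q^*(te_2)$.

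For fixed $t\ge0$ set $\phi(\xi_1)=t\xi_1-I_q(\xi_1,1-\xi_1)$ on $[0,1]$, so $I_q^*(te_1)=\sup_{\xi_1}\phi(\xi_1)$. For $t=0$ this is $-\min I_q=0$, which matches the formula since $\lambda(0)=\infty$ and $\psi_0(\alpha+\beta+\infty-)-\psi_0(\infty-)=0$. For $t>0$ I would invoke strict convexity of $I_q$ (Theorem~\ref{thm:Iq}) together with the boundary slopes $\tfrac{d}{d\xi_1}I_q(\xi_1,1-\xi_1)\to0$ as $\xi_1\downarrow\llnv_1$ and $\to+\infty$ as $\xi_1\uparrow1$, to conclude that $\phi$ attains its maximum at a unique interior point $\xi_1^*\in(\llnv_1,1)$ characterized by $\tfrac{d}{d\xi_1}I_q(\xi_1^*,1-\xi_1^*)=t$.

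The heart of the argument is the derivative of $I_q$. Write $g(\lambda,\xi_1)=\xi_1\psi_0(\alpha+\beta+\lambda)+(1-\xi_1)\psi_0(\lambda)-\psi_0(\alpha+\lambda)$, so that $I_q(\xi_1,1-\xi_1)=g(\lambda(\xi_1),\xi_1)$ for $\xi_1\in(\llnv_1,1]$ by the first line of \eqref{Iq}. A short computation shows that $\partial_\lambda g(\lambda,\xi_1)=0$ is exactly relation \eqref{eq:lam-xi1}. Hence by the chain rule $\tfrac{d}{d\xi_1}I_q(\xi_1,1-\xi_1)=\partial_{\xi_1}g(\lambda(\xi_1),\xi_1)=\psi_0(\alpha+\beta+\lambda(\xi_1))-\psi_0(\lambda(\xi_1))$. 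Comparing with \eqref{eq:lam-t}, the maximizer condition says $\lambda(\xi_1^*)=\lambda(t)$. Substituting $t=\psi_0(\alpha+\beta+\lambda(t))-\psi_0(\lambda(t))$ into $I_q^*(te_1)=t\xi_1^*-g(\lambda(t),\xi_1^*)$, the $\psi_0(\alpha+\beta+\lambda(t))$ contributions and the $\xi_1^*$-dependence cancel, leaving $I_q^*(te_1)=\psi_0(\alpha+\lambda(t))-\psi_0(\lambda(t))$, which is \eqref{Iq*1}. For \eqref{Iq*2}, applying the same steps with $\xi_2=1-\xi_1$ in place of $\xi_1$ and the second line of \eqref{Iq} yields $I_q^*(te_2)=\psi_0(\beta+\lambda(t))-\psi_0(\lambda(t))$, because the $\xi_2$-computation is the $(\alpha,\beta)\leftrightarrow(\beta,\alpha)$ mirror image and $\lambda(t)$ is unchanged under this swap (since $\alpha+\beta+\lambda$ is symmetric in $\alpha,\beta$); then $I_q^*(-te_1)=I_q^*(te_2)-t$ gives \eqref{Iq*2}.

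The main obstacle is the regularity bookkeeping that makes the differentiation and envelope step legitimate: one needs $\xi_1\mapsto I_q(\xi_1,1-\xi_1)$ to be $C^1$ on $(\llnv_1,1]$ with the claimed boundary slopes and $\lambda(\cdot)$ differentiable there, which I would extract from Lemma~\ref{lam-xi-t}\eqref{lam-xi} together with the analyticity of $I_q$ away from $\llnv$ (Remark~\ref{rk:I-reg} and Lemma~\ref{lm700}). A secondary, routine point is checking the degenerate endpoints $t=0$ (with $\lambda=\infty$) and $t\to\infty$ (with $\lambda\to0$) against the stated formulas. Everything else is the algebraic simplification sketched above.
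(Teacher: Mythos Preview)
Your argument is correct and clean, but it runs in the opposite logical direction from the paper. You take the explicit formula \eqref{Iq} for $I_q$ as known and then compute $I_q^*$ by the envelope theorem: the stationarity condition $\partial_\lambda g=0$ is exactly \eqref{eq:lam-xi1}, so $\tfrac{d}{d\xi_1}I_q=\psi_0(\alpha+\beta+\lambda(\xi_1))-\psi_0(\lambda(\xi_1))$, and matching this to $t$ via \eqref{eq:lam-t} gives the result after cancellation. The symmetry reduction for \eqref{Iq*2} via $I_q^*(-te_1)=I_q^*(te_2)-t$ is also fine.

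The paper, however, proves Theorems~\ref{thm:Iq} and~\ref{thm:Iq2} simultaneously in Section~\ref{Iq-pf}, and it establishes \eqref{Iq*1} \emph{first}, directly from the stationary harmonic functions $\Bd^\lambda$: an ergodic-theorem argument applied to the identity \eqref{aux111} for $\Bd^\lambda_{0,ne_2}$ yields \eqref{I-97}, which is exactly $I_q^*(te_1)=\psi_0(\alpha+\lambda(t))-\psi_0(\lambda(t))$. Only then is $I_q$ obtained as the Legendre transform of $I_q^*$, giving \eqref{Iq}. So in the paper's internal logic your proof would be circular; your approach is valid only if one imports the formula \eqref{Iq} from an independent source (e.g.\ \cite{Bar-Cor-17}, as the paper notes). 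What the paper's route buys is a self-contained derivation of both $I_q^*$ and $I_q$ from the increment-stationary structure, without relying on the prior literature; what your route buys is a much shorter argument once \eqref{Iq} is in hand.
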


Formula \eqref{Iq} for $I_q$ appeared earlier   in equations~(8)--(9) of \cite{Bar-Cor-17} where it was derived by nontrivial  asymptotic analysis.   We derive $I_q$ through its  convex conjugate $I_q^*$, which in turn is calculated with the help of  harmonic functions to be constructed below.


Next we state the connections between $I_q$ and the processes $B^\xi$.  

\begin{theorem}\label{thm:Iq3} 
{\rm (a)} Fix $\xi\in\ri\Uset$.  Then the process $B^\xi$ is an extremal for variational formula \eqref{K-var1}.  In particular, we have 
	\be\label{Iq17} \begin{aligned}  
	I_q(\xi)&= -\E[B^\xi_{0,e_1}]\xi_1-\E[B^\xi_{0,e_2}]\xi_2\\
	&=-\inf_{\zeta\,\in\,\ri\Uset}\big\{\E[B^\zeta_{0,e_1}]\xi_1+\E[B^\zeta_{0,e_2}]\xi_2\big\},
\end{aligned} 	\ee
where the last infimum  is uniquely attained at $\zeta=\xi$.   

{\rm (b)}  Extend $I_q$  homogeneously to all of $\R_+^2$, that is, by $I_q(c\xi)=cI_q(\xi)$ for $c>0$ and $\xi\in\Uset$.   Then the gradient of $I_q$ satisfies 
 \be\label{Iq18}   \nabla I_q(\xi)=-\E[B^\xi_{0,e_1}]e_1-\E[B^\xi_{0,e_2}]e_2,\quad\xi\in\ri\Uset.\ee 

\end{theorem}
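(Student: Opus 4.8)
The plan is to derive Theorem~\ref{thm:Iq3} from the variational formula \eqref{K-var1}, the properties of the cocycles $B^\xi$ established in Theorem~\ref{th:Bus} (harmonicity \eqref{B:harm}, the cocycle/shift relations \eqref{B89}, membership in $\mathcal K$), and the law of large numbers \eqref{pi-lln} for the Doob-transformed walk. First I would observe that because of the harmonicity \eqref{B:harm}, the $\P$-essential supremum appearing in \eqref{K-var1} vanishes when we plug in $B=B^\xi$: indeed $\w_{0,e_1}e^{-B^\xi_{0,e_1}(\w)}+\w_{0,e_2}e^{-B^\xi_{0,e_2}(\w)}=e^{-B^\xi_{0,0}(\w)}=1$ $\P$-a.s., using $B^\xi_{0,0}=0$ (a special case of the cocycle identity). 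Hence the expression inside the infimum in \eqref{K-var1}, evaluated at $B^\xi$, is exactly $-\E[B^\xi_{0,e_1}]\xi_1-\E[B^\xi_{0,e_2}]\xi_2$, which therefore gives the lower bound $I_q(\xi)\ge -\E[B^\xi_{0,e_1}]\xi_1-\E[B^\xi_{0,e_2}]\xi_2$ for each fixed $\xi$ (the ``$\ge$'' direction follows from \eqref{K-var1} being an infimum, so any admissible $B$ yields a lower bound on $I_q$).

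Next I would establish the matching upper bound, i.e.\ that this particular choice of cocycle is in fact optimal. The cleanest route is via the Doob-transformed walk: under $P^{\doob^\xi(\p)}_0$ we have $X_N/N\to\xi$ $\P$-a.s.\ by \eqref{pi-lln}, while $\doob^\xi_{y,y+e_i}=\p_{y,y+e_i}e^{-B^\xi_{y,y+e_i}}$ by \eqref{pi-doob}. Writing the log-probability of a path as a telescoping sum of cocycle increments, $\sum_{k} B^\xi_{x_k,x_{k+1}}=B^\xi_{0,X_N}$, and using the quenched LDP \eqref{qLDP} together with the strong law $N^{-1}B^\xi_{0,X_N}\to \E[B^\xi_{0,e_1}]\xi_1+\E[B^\xi_{0,e_2}]\xi_2$ (which follows from the ergodic theorem applied to the stationary cocycle along the walk's scaled path, since $X_N/N\to\xi$), one obtains $-I_q(\xi)\ge \log P^\p_0\{X_N=[N\xi]\}\cdot N^{-1}+o(1) = \E[B^\xi_{0,e_1}]\xi_1+\E[B^\xi_{0,e_2}]\xi_2 + o(1)$, giving the reverse inequality. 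This proves the first equality in \eqref{Iq17}. The second equality and the uniqueness of the minimizer then follow by combining: for any $\zeta\in\ri\Uset$, the first identity applied at $\zeta$ plus the lower-bound direction at $\xi$ give $I_q(\xi)\ge -\E[B^\zeta_{0,e_1}]\xi_1-\E[B^\zeta_{0,e_2}]\xi_2$ (since $B^\zeta$ is an admissible cocycle in \eqref{K-var1}), with equality at $\zeta=\xi$; uniqueness of the minimizer uses strict convexity of $I_q$ from Theorem~\ref{thm:Iq} together with the fact that $-\E[B^\zeta_{0,e_1}]e_1-\E[B^\zeta_{0,e_2}]e_2$ is (via part (b)) the gradient of $I_q$ at $\zeta$, so two directions with the same minimizing linear functional must coincide.

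For part (b), the idea is that \eqref{Iq17} exhibits $I_q(\xi)$ as the value of a linear function of $\xi$ that also minorizes $I_q$ everywhere (by the lower-bound direction, $I_q(\xi')\ge -\E[B^\xi_{0,e_1}]\xi_1'-\E[B^\xi_{0,e_2}]\xi_2'$ for all $\xi'$), hence $-\E[B^\xi_{0,e_1}]e_1-\E[B^\xi_{0,e_2}]e_2$ is a subgradient of the (homogeneously extended, convex) $I_q$ at $\xi$. Since $I_q$ is differentiable on $\ri\Uset$ — this is part of Theorem~\ref{thm:Iq} / Remark~\ref{rk:I-reg}, $I_q$ being analytic away from $\llnv$ — the subgradient is unique and equals $\nabla I_q(\xi)$, which is \eqref{Iq18}. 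One must check the homogeneous extension is consistent: $I_q(c\xi)=cI_q(\xi)$ makes the supporting hyperplane at $\xi$ automatically a supporting hyperplane at $c\xi$ with the same normal direction, which is exactly what the cocycle tangent-line picture predicts, and this also forces the Euler relation $\xi\cdot\nabla I_q(\xi)=I_q(\xi)$, matching the first line of \eqref{Iq17}.

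The main obstacle I anticipate is the upper-bound half of \eqref{Iq17}, specifically making rigorous the passage $N^{-1}\log P^\p_0\{X_N=[N\xi]\}\to \E[B^\xi_{0,e_1}]\xi_1+\E[B^\xi_{0,e_2}]\xi_2$. This requires controlling the telescoping cocycle sum $B^\xi_{0,z_N}$ along a near-optimal path for the point-to-point probability, not along the Doob-walk's typical path; the clean way is to use the Doob transform to change measure, note that $P^{\doob^\xi}_0(X_N=z_N)=P^\p_0(X_N=z_N)\,e^{B^\xi_{0,z_N}}$ (from \eqref{pi-doob}), so $\log P^\p_0\{X_N=z_N\} = \log P^{\doob^\xi}_0\{X_N=z_N\} - B^\xi_{0,z_N}$, and then bound $\log P^{\doob^\xi}_0\{X_N=z_N\}=o(N)$ using \eqref{pi-lln} (local CLT-type or entropy estimates for the correlated environment walk may be needed, or one can invoke that $z_N/N\to\xi$ sits at the LLN velocity of the transformed walk so the probability decays subexponentially) while $N^{-1}B^\xi_{0,z_N}\to\E[B^\xi_{0,e_1}]\xi_1+\E[B^\xi_{0,e_2}]\xi_2$ by the ergodic theorem for the stationary cocycle along the deterministic sequence $z_N$. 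Verifying the $o(N)$ claim for the Doob walk — which is really an averaged-LDP-at-its-own-velocity statement for a space-correlated environment — is the technical heart, and I would look to reuse the point-to-point free energy machinery of \cite{Geo-Ras-Sep-16-cmp} or the explicit structure from Theorem~\ref{th:pi-xi} (antidiagonal i.i.d.) to get it.
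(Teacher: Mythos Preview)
Your approach differs substantively from the paper's. The paper proves \eqref{Iq17} by direct substitution of explicit formulas: it simply cites Theorem~\ref{th:Buse}\eqref{B:qldp}, whose proof plugs the expectations \eqref{EB1}--\eqref{EB2} (known from the beta marginals of $B^\xi$) into the explicit expression \eqref{I-var}--\eqref{I-var2} for $I_q$ (derived independently in Section~\ref{Iq-pf} via convex duality and the stationary ratios $\Bd^\lambda$), and checks by calculus that the two agree with unique optimizer $\zeta=\xi$. The variational formula \eqref{K-var1} is \emph{not} used to establish either inequality in \eqref{Iq17}; rather, once \eqref{Iq17} is known by computation, substitution into \eqref{K-var1} shows a posteriori that $B^\xi$ attains the infimum there. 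Your route is more conceptual and model-independent---it would work whenever one has harmonic Busemann cocycles together with a LLN for the associated Doob transform---whereas the paper's route leans entirely on exact solvability.

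Two corrections to your outline. First, a sign slip: from \eqref{pi-doob} one gets $P^{\doob^\xi}_0(X_N=z)=P^\p_0(X_N=z)\,e^{-B^\xi_{0,z}}$ (minus, not plus), hence $\log P^\p_0(X_N=z)=\log P^{\doob^\xi}_0(X_N=z)+B^\xi_{0,z}$; with your signs the argument would output $I_q(\xi)=+h(\xi)$ rather than $-h(\xi)$. Second, the obstacle you flag is genuine but its resolution is cheaper than local CLT or entropy estimates. By the LLN \eqref{pi-lln} and pigeonhole, for each $\epsilon>0$ there is $z_N$ with $|z_N/N-\xi|\le\epsilon$ and $P^{\doob^\xi}_0(X_N=z_N)\ge 1/(C\epsilon N)$. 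The uniform ergodic theorem for the cocycle (available through Theorem~\ref{th:Buse}\eqref{B:B=Bd} and \eqref{whatever}) gives $N^{-1}B^\xi_{0,z_N}=h(\xi)+O(\epsilon)+o(1)$, while the trivial bound $P^\p_0(X_N=z_N)\le P^\p_0(|X_N/N-\xi|\le\epsilon)$ together with the quenched LDP and continuity of $I_q$ (Theorem~\ref{thm:Iq}) bounds the other side by $-I_q(\xi)+O(\epsilon)$; send $\epsilon\to 0$. Your supporting-hyperplane argument for (b) is essentially the paper's ``calculus'' step, and uniqueness in (a) follows directly from strict convexity of $I_q$ without the circular reference to (b).
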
 

Corollary 4.5 and Remark 5.7 in \cite{Geo-Ras-Sep-16-cmp}  put  equations \eqref{Iq17}--\eqref{Iq18} in the context of a general theory for directed walks in random potentials.   Theorems \ref{thm:Iq}, \ref{thm:Iq2} and \ref{thm:Iq3} are proved in Section \ref{Iq-pf}. 



 Lastly, we record the  LDP for the Doob-transformed RWRE.  
 Definition \eqref{pi-doob} and the cocycle property in \eqref{B89} imply that 
	\[P_0^{\doob^\xi(\p)}(X_N=x)=P_0^\p(X_N=x)e^{-B^\xi_{0,x}(\p)}.\]
$B^\xi$ has i.i.d.\ increments along horizontal and vertical lines (Theorem \ref{th:Buse}\eqref{B:B=Bd}) and hence the   law of large numbers applies: $\P$-almost surely
	\[\lim_{N\to\infty}N^{-1} B^\xi_{0,[N\zeta]}=\E[B^\xi_{0,e_1}]\zeta_1+\E[B^\xi_{0,e_2}]\zeta_2=-\zeta\cdot\nabla I_q(\xi) \qquad \forall  \zeta\in\ri\Uset.\]
The quenched LDP  \eqref{qLDP} of the beta walk then gives this theorem.
 \begin{theorem}
For any fixed $\xi\in\ri\Uset$,  the following holds $\P$-almost surely, simultaneously for all $\zeta\in\ri\Uset$, 
	\[\lim_{N\to\infty}N^{-1} \log P_0^{\doob^\xi}\{ X_N = [N\zeta] \} =-I_q(\zeta)+\zeta\cdot\nabla I_q(\xi).\]
\end{theorem}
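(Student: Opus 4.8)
The plan is to reduce the statement to the two facts already assembled in the text immediately preceding it — the exact Doob-transform identity and the quenched LDP \eqref{qLDP} of the beta walk — together with the strong law for the cocycle $B^\xi$, and then to do the (only nontrivial) work of making the almost-sure event independent of $\zeta$.

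First I would record the pointwise identity. Telescoping the definition \eqref{pi-doob} along any nearest-neighbor path from $0$ to $x$ and using the additive cocycle property \eqref{B89} gives, for every $x$ with $\abs{x}_1=N$,
\[
P_0^{\doob^\xi(\p)}\{X_N=x\}=P_0^\p\{X_N=x\}\,e^{-B^\xi_{0,x}(\p)},
\]
hence
\[
N^{-1}\log P_0^{\doob^\xi}\{X_N=[N\zeta]\}
= N^{-1}\log P_0^\p\{X_N=[N\zeta]\}\;-\;N^{-1}B^\xi_{0,[N\zeta]}.
\]
So the theorem follows once each term on the right converges, $\P$-a.s.\ and simultaneously in $\zeta$, to $-I_q(\zeta)$ and to $\zeta\cdot\nabla I_q(\xi)$ respectively. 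For the first term I would invoke \eqref{qLDP}: Beta$(\alpha,\beta)$ weights satisfy the moment hypothesis \eqref{qLDP3} (indeed $-\log\p_{0,e_i}$ has moments of all orders), so $N^{-1}\log P_0^\p\{X_N=[N\zeta]\}\to -I_q(\zeta)$ $\P$-a.s.\ for each fixed $\zeta$. For the second term, write $[N\zeta]=(m_N,n_N)$ with $m_N/N\to\zeta_1$ and $n_N/N\to\zeta_2$, split $B^\xi_{0,(m_N,n_N)}=B^\xi_{0,(m_N,0)}+B^\xi_{(m_N,0),(m_N,n_N)}$ by additivity, and use Theorem \ref{th:Buse}\eqref{B:B=Bd}: $B^\xi$ has i.i.d.\ increments along horizontal and along vertical lines, with integrable marginals by Theorem \ref{th:Bus}. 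The strong law then gives $N^{-1}B^\xi_{0,(m_N,n_N)}\to \zeta_1\E[B^\xi_{0,e_1}]+\zeta_2\E[B^\xi_{0,e_2}]=-\zeta\cdot\nabla I_q(\xi)$, the last equality being the gradient formula \eqref{Iq18} of Theorem \ref{thm:Iq3}. Adding the two limits proves the statement for each fixed $\zeta$.

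The remaining point is the word \emph{simultaneously}. For the cocycle term I would upgrade the strong law to a locally uniform one: the horizontal and vertical increment sums are partial sums of i.i.d.\ sequences, so $\sup_{\zeta\in K}\bigl|N^{-1}B^\xi_{0,[N\zeta]}+\zeta\cdot\nabla I_q(\xi)\bigr|\to 0$ $\P$-a.s.\ for every compact $K\subset\ri\Uset$ (a Glivenko--Cantelli-type argument, or a sandwich between nearby rational directions using the monotonicity of $\zeta\mapsto B^\xi_{x,y}$ from Theorem \ref{th:Buse}). For the LDP term I would first intersect the full-measure events from \eqref{qLDP} over $\zeta\in\Q^2\cap\ri\Uset$, and then extend to all $\zeta$ by continuity and convexity of $I_q$: a local surgery on the walk's path near time $N$ changes $\log P_0^\p\{X_N=[N\zeta]\}$ by a product of at most $O(N\abs{\zeta-\zeta'})$ transition probabilities located near the endpoint, and by \eqref{qLDP3} and a law of large numbers the sum of the corresponding $\abs{\log\p}$-factors is $O(N\abs{\zeta-\zeta'})$ eventually, $\P$-a.s. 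This equicontinuity of $\zeta\mapsto N^{-1}\log P_0^\p\{X_N=[N\zeta]\}$ upgrades the rational-direction convergence to all directions on a single full-measure event. Intersecting the two events completes the proof.

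The main obstacle is precisely this uniformization in $\zeta$; the pointwise statement is immediate from \eqref{pi-doob}, \eqref{qLDP}, and the strong law for $B^\xi$. (One could also obtain the pointwise version by applying a quenched LDP directly to the Doob-transformed RWRE, which is i.i.d.\ along antidiagonals by Theorem \ref{th:pi-xi}, but the route through \eqref{pi-doob} is shorter and furnishes the rate function explicitly.)
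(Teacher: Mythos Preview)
Your argument is correct and follows the same decomposition as the paper: the Doob-transform identity $P_0^{\doob^\xi}\{X_N=x\}=P_0^\p\{X_N=x\}e^{-B^\xi_{0,x}}$, the quenched LDP \eqref{qLDP} for the first factor, and the cocycle law of large numbers together with \eqref{Iq18} for the second. This is exactly the route the paper takes in the paragraph preceding the theorem.

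Where you differ is in the amount of work spent on \emph{simultaneously}. In the paper both ingredients already come with the uniformity in $\zeta$ built in. The quenched LDP \eqref{qLDP} as quoted from \cite{Ras-Sep-14} holds $\P$-a.s.\ for all $\xi\in\Uset$ on a single event, so your surgery argument is not needed (and as written it is a bit loose: the $\abs{\log\p}$-factors you sum live on edges that shift with $N$, so a plain law of large numbers does not immediately control them; one needs a maximal-sum estimate of the type in Lemma~A.4 of \cite{Ras-Sep-Yil-13}, which is precisely how the cited LDP references handle it). For the cocycle term, the paper has already established the uniform ergodic theorem \eqref{whatever} for $\log\Bd^\lambda$ in the proof of Theorem~\ref{thm:LLN}; since $(\p,B^\xi)\overset{d}{=}(\bar\p^\xi,\log\bar\Bd^\xi)$ by Theorem~\ref{th:Buse}\eqref{B:B=Bd}, the same uniform limit holds for $B^\xi$, giving $N^{-1}B^\xi_{0,[N\zeta]}\to -\zeta\cdot\nabla I_q(\xi)$ for all $\zeta$ at once. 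So your Glivenko--Cantelli/sandwich discussion, while workable, is superseded by a cleaner tool already in hand.
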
 
 The rate function $I_q(\zeta)-\zeta\cdot\nabla I_q(\xi)$ is minimized at $\zeta=\xi$, by the convexity and homogeneity of $I_q$.    The main results have been stated and we turn to proofs.


\section{Increment-stationary harmonic functions}

In this section we construct quenched harmonic functions  whose probability distributions  are suitably invariant under lattice translations.     This is done first on restricted subsets of the lattice by solving a boundary value problem, and then extended to the entire lattice by taking limits.    That this is possible with explicit distributions and useful independence properties  is a  feature of exact solvability. 

 The boundaries   of the positive and negative quadrants $v+\Z_+^2$ and $v-\Z_+^2$ with a corner at   $v\in\Z^2$  are  denoted by 
 \be\label{Bpm}  \B^{+}_v=\{ v+(i,0), v+(0,j): i,j\ge 0\}
 \quad\text{and}\quad 
 \B^{-}_v=\{ v-(i,0), v-(0,j): i,j\ge 0\}.  \ee
 Hitting times of the boundaries follow analogous  notation: 
 \be\label{taupm}  \tau^\pm_v=\inf\{  n\ge 0:  X_n\in  \B^{\pm}_v\}.  \ee
  The separate axes of these boundaries are distinguished by the notation
\be\label{B12}   \B^{(\pm1)}_v=\{ v\pm(i,0) : i\ge 0\}
\quad\text{and}\quad 
 \B^{(\pm2)}_v=\{ v\pm(0,j) : j\ge 0\}. \ee
 In particular,     $\B^{\pm}_v = \B^{(\pm1)}_v\cup  \B^{(\pm2)}_v$.  

\subsection{An involution for beta variables} \label{sec:invo}  



This section undertakes some technical preparation for the construction of harmonic functions on quadrants of the lattice.   A distribution-preserving  involution of triples of beta variables is defined and its properties recorded.  We begin by motivating this construction through   a Dirichlet problem. 

Consider backward nearest-neighbor transition probabilities $\pch_{x,\,x-e_i}$, $i\in\{1,2\}$, on the  lattice $\Z^2$.   These transition probabilities allow   two steps  $-e_1$ and $-e_2$ and satisfy $\pch_{x,\,x-e_1}+\pch_{x,\,x-e_2}=1$ at each $x\in\Z^2$.   Suppose   a function $f$ is given on the boundary $\B^+_0$ of the first quadrant $\Z_+^2$.    When the backward walk starts   in the first quadrant,  the hitting time  $\tau^+_0$
is obviously  finite. Then 
\be\label{u88}  H(x)=E^{\pch}_x[f(X(\tau^+_0))] \ee
 defines  an $\pch$-harmonic function on the positive  first quadrant.  That is, $H$ satisfies 
	\begin{align}\label{har7}
	H(x) = \pch_{x,\,x-e_1} H(x-e_1) +  \pch_{x,\,x-e_2} H(x-e_2)\quad\text{for $x\in\N^2$}.
	\end{align}   

We solve   \eqref{har7}   inductively, by beginning from the boundary values and then defining $H(x)$ once $H(x-e_1)$ and $H(x-e_2)$ have been defined.  We formulate this induction in terms of   ratios   $\Bd_{x,y}= {H(x)}/{H(y)}$.   The induction assumption is   that the nearest-neighbor ratios $\Bd_{x-e_2,\,x-e_1-e_2}$ and $\Bd_{x-e_1,\,x-e_1-e_2}$   have been defined on the south and west sides of a unit square with northeast corner at $x$. Then, by \eqref{har7},   the ratios on the north and east sides are obtained from the equations 
\begin{align} \label{ind700}
\Bd_{x,\,x-e_1}&= \frac{ \pch_{x,\,x-e_1} \Bd_{x-e_1,\,x-e_1-e_2} +  (1-\pch_{x,\,x-e_1}) \Bd_{x-e_2,\,x-e_1-e_2}}{\Bd_{x-e_1,\,x-e_1-e_2}} , \\
\label{ind701} 
\Bd_{x,\,x-e_2}&= \frac{ \pch_{x,\,x-e_1} \Bd_{x-e_1,\,x-e_1-e_2} +  (1-\pch_{x,\,x-e_1})  \Bd_{x-e_2,\,x-e_1-e_2}}{\Bd_{x-e_2,\,x-e_1-e_2}}. 
\end{align} 
It is useful to augment this pair of equations with a third equation  
\begin{align} \label{ind702}
 \p_{x-e_1-e_2,\,x-e_2}=\frac{\Bd_{x-e_2,\,x-e_1-e_2}( \Bd_{x-e_1,\,x-e_1-e_2}-1)}{ \Bd_{x-e_1,\,x-e_1-e_2}-\Bd_{x-e_2,\,x-e_1-e_2}}
\end{align}
provided the denominator never vanishes.
 Together the three equations define an involution.  In the case we specialize to below $ \p_{x-e_1-e_2,\,x-e_2}$ is a forward   transition probability from $x-e_1-e_2$ to   $x-e_2$. The complementary  transition probability from $x-e_1-e_2$ to   $x-e_1$ is of course then 
\begin{align} \label{ind703}
\p_{x-e_1-e_2,\,x-e_1}=1-\p_{x-e_1-e_2,\,x-e_2}. 
\end{align}

Equations \eqref{ind700}--\eqref{ind703} are illustrated by Figure \ref{fig:ind}, with $x$ in the upper right corner of the unit square and with 
\begin{align*}   &(U,V,\Beta)=(\Bd_{x-e_2,\,x-e_1-e_2}, \, \Bd_{x-e_1,\,x-e_1-e_2},  \,\pch_{x,\,x-e_1}) \\[2pt]  
& \quad\text{and}\quad 
 (U',V',\Beta')=(\Bd_{x,\,x-e_1}, \,\Bd_{x,\,x-e_2}, \,\p_{x-e_1-e_2,\,x-e_2}). \end{align*} 


 \begin{figure}[h]
 	\begin{center}
 		 \begin{tikzpicture}[>=latex, scale=0.5]
		 	\draw(0,0)--(0,6)--(6,6)--(6,0)--(0,0);
			\draw [line width= 5pt, color=nicosred](0.3,0)--(5.7,0);
			\draw(3,-0.1)node[below]{$U$};
			\draw [line width= 5pt, color=nicosred](0,0.3)--(0,5.7);
			\draw(-0.1,3)node[left]{$V$};
			\draw (5,6.1)node[above]{$\Beta$};
			\draw (6.1,5)node[right]{$1-\Beta$};
			\draw[->, line width=1.5pt](6,6)--(4,6); 
			\draw[->, line width=1.5pt](6,6)--(6,4); 
			\draw[<->](8,3)--(11, 3);
			\draw[color=black] (9.5, 3)node[above]{\eqref{ind9}};
		 	\draw(13,0)--(13,6)--(19,6)--(19,0)--(13,0);
			\draw [line width= 5pt, color=nicosred](13.3,6)--(18.7,6);
			\draw(16,6.1)node[above]{$U'$};
			\draw [line width= 5pt, color=nicosred](19,0.3)--(19,5.7);
			\draw(19.1,3)node[right]{$V'$};
			\draw (14,-0.1)node[below]{$\Beta'$};
			\draw (12.9,1)node[left]{$1-\Beta'$};
			\draw[->, line width=1.5pt](13,0)--(15,0); 
			\draw[->, line width=1.5pt](13,0)--(13,2); 
			\draw[nicosred,fill=nicosred](13,0) circle(7pt);
			\draw[nicosred,fill=nicosred](6,6) circle(7pt); 
		\end{tikzpicture}
 	 \end{center}
 	\caption{\small Involution \eqref{ind9}: Respectively, weights $U$ and $V$ on the south and west edges 
	and west/south transition $(\Beta,1-\Beta)$
	become weights $U'$ and $V'$ on the north and east edges and east/north transition $(\Beta',1-\Beta')$, and vice-versa.}
 \label{fig:ind}
 \end{figure}
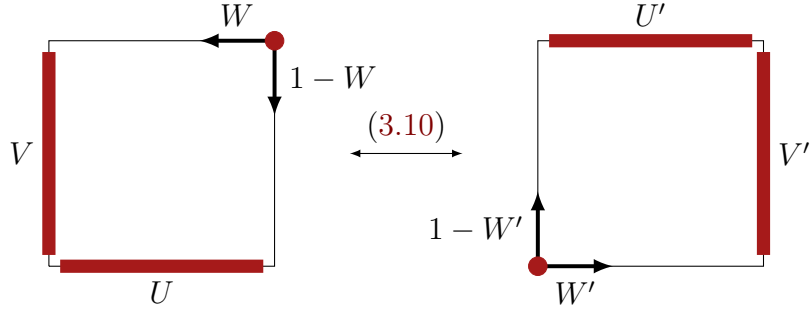


Now assume that  the transition probabilities $\pch$ come from a beta RWRE, in other words, that the variables $\{\pch_{x,x-e_1}\}_{x\in\Z^2}$ are i.i.d.\ Beta$(\alpha, \beta)$.  The next key lemma  indicates how to choose the distributions of the ratios of the boundary values $H(x)$ in order to get tractable harmonic functions.  We regard the parameters $\alpha, \beta$ of the environment fixed,  while $0<\lambda<\infty$ parametrizes two different boundary conditions in cases (a) and (b) in the lemma.

\begin{lemma}\label{ind-sol}
The equations 
	\begin{align}\label{ind9}
	U'=\frac{\Beta V+(1-\Beta)U}{V}\,,\quad V'=\frac{\Beta V+(1-\Beta)U}{U}\quad\text{and}\quad\Beta'=\frac{U(V-1)}{V-U}  	
	\end{align}
define an involution $(U,V,\Beta)\mapsto (U',V',\Beta')$ on the product space $(0,1)\times(1,\infty)\times(0,1)$.  

Let $0<\alpha,\beta,\lambda<\infty$.  

{\rm (a)}  Suppose that  $(U,V,\Beta)$ are  independent variables with distributions 
\begin{align}\label{uvw1}
U\sim\text{\rm Beta}(\alpha+\lambda,\beta), \quad V^{-1}\sim\text{\rm Beta}(\lambda,\alpha), \quad\text{and}\quad \Beta\sim\text{\rm Beta}(\alpha,\beta).
\end{align} 
	Then the triples  $(U',V',\Beta')$ and $(U,V,\Beta)$ have the same distribution.  
	
{\rm (b)}  Suppose that  $(U,V,\Beta)$ are  independent variables with distributions 	
\begin{align}\label{uvw2} U^{-1}\sim\text{\rm Beta}(\lambda,\beta), \quad V\sim\text{\rm Beta}(\beta+\lambda,\alpha), \quad\text{and}\quad \Beta\sim\text{\rm Beta}(\alpha,\beta).
\end{align} 
	Then again  the triples  $(U',V',\Beta')$ and $(U,V,\Beta)$ have the same distribution.

 \end{lemma}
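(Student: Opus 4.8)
\textbf{Proof proposal for Lemma \ref{ind-sol}.}

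The plan is to verify the involution property by direct algebra, and then to establish the two distributional identities by computing the pushforward of the product density under the map and recognizing the result. First I would check that \eqref{ind9} defines a map on $(0,1)\times(1,\infty)\times(0,1)$: since $0<U<1<V$, the common numerator $N:=\Beta V+(1-\Beta)U$ satisfies $U<N<V$, hence $U'=N/V\in(0,1)$ and $V'=N/U\in(1,\infty)$; and $\Beta'=U(V-1)/(V-U)$ lies in $(0,1)$ because $0<U(V-1)<V-U$ (the right inequality rearranges to $V(1-U)>0$). For the involution property, I would either substitute the primed variables back into \eqref{ind9} and simplify, or — more transparently — observe that the map factors through the triple $(U,V,N)$ with $N$ fixed by the transformation (one checks $\Beta'V'+(1-\Beta')U'=N$ after a short computation), so the map on $(U,V)$ is $(U,V)\mapsto(N/V,N/U)$ with $N$ held fixed, which is visibly an involution; one then confirms that $\Beta$ is recovered, i.e.\ $U'(V'-1)/(V'-U')=\Beta$. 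This is routine but needs to be done carefully because $N$ itself depends on $\Beta$.

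For part (a), I would compute the joint density of $(U,V,\Beta)$ from \eqref{uvw1}, change variables to $(U',V',\Beta')$ via \eqref{ind9}, and show the resulting density equals the same product form. The Jacobian is the crux of the bookkeeping: since the map is an involution, the absolute Jacobian determinant of the forward map times that of its inverse equals $1$, so $\lvert J\rvert=\lvert J\rvert^{-1}$ forces $\lvert J\rvert=1$ only if one already knows it is its own inverse up to relabeling — in fact I would just compute $\partial(U',V',\Beta')/\partial(U,V,\Beta)$ directly. A cleaner route: use the substitution $V=1/\tilde V$ with $\tilde V\in(0,1)$, so that in part (a) all three of $U,\tilde V,\Beta$ are genuine beta variables, and the map becomes a rational transformation of $(U,\tilde V,\Beta)\in(0,1)^3$; then match the transformed density against the product of the three beta densities with parameters $(\alpha+\lambda,\beta)$, $(\lambda,\alpha)$, $(\alpha,\beta)$. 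The exponents of the various linear factors ($U$, $1-U$, $\tilde V$, $1-\tilde V$, $\Beta$, $1-\Beta$, and the combined factor $N$) must all reassemble correctly; the normalizing constants then automatically agree because both sides are probability densities. Part (b) is the mirror image under swapping the roles of the two axes (equivalently $\alpha\leftrightarrow\beta$ together with the reflection exchanging $U\leftrightarrow V^{-1}$ appropriately), so I would either repeat the computation or derive it from (a) by a symmetry argument; I would state the symmetry precisely and then invoke it.

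\emph{Main obstacle.} The real work is the Jacobian-plus-density matching in part (a): one must show that after the change of variables the six-factor product $U^{\alpha+\lambda-1}(1-U)^{\beta-1}\cdot \tilde V^{\lambda-1}(1-\tilde V)^{\alpha-1}\cdot \Beta^{\alpha-1}(1-\Beta)^{\beta-1}$ times the Jacobian equals the same expression in the primed variables. The cancellations hinge on the identity relating $N$, $V-U$, $\Beta(V-1)$, and $(1-\Beta)(1-U)$ (for instance $1-\Beta'=\,\text{something}\cdot(V-U)^{-1}$ and $1-U'=(1-\Beta)(1-U)\cdot V^{-1}$ and similar), and keeping the algebra organized is the delicate part. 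I expect the Jacobian to work out to a clean monomial in these building-block factors, which is exactly what makes the beta exponents line up; once that monomial is computed, the rest is matching powers. I would present the involution check and the statement of the symmetry in full, and organize the density computation around the auxiliary quantity $N$ and the factorizations of $1-U'$, $V'-1$, $1-\Beta'$, deferring the most tedious simplifications to a displayed chain of equalities.
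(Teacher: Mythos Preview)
Your plan is sound and would work, but the paper takes a genuinely different and slicker route that sidesteps the Jacobian computation you flag as the main obstacle. Instead of pushing forward densities, the paper \emph{represents} $(U,V,\Beta)$ via independent gamma variables: with $\Gvar_\alpha,\Gvar_\beta,\Gvar_\lambda$ independent Gamma$(\cdot,1)$ and $\Beta$ independent Beta$(\alpha,\beta)$, it sets
\[
U=\frac{\Gvar_\alpha+\Gvar_\lambda}{\Gvar_\alpha+\Gvar_\beta+\Gvar_\lambda},\qquad V=\frac{\Gvar_\alpha+\Gvar_\lambda}{\Gvar_\lambda},
\]
and then computes $U',V',\Beta'$ explicitly. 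One finds $\Beta'=\Gvar_\alpha/(\Gvar_\alpha+\Gvar_\beta)$, which is Beta$(\alpha,\beta)$ and independent of $\Gvar_\alpha+\Gvar_\beta$ and hence of $(U',V')$. For the remaining pair, the paper introduces the auxiliary $Y=\Gvar_\lambda/(\Gvar_\alpha+\Gvar_\beta+\Gvar_\lambda)$ and observes the symmetric identities
\[
U'=\Beta+(1-\Beta)Y,\quad V'=\Beta Y^{-1}+1-\Beta,\qquad U=\Beta'+(1-\Beta')Y,\quad V=\Beta' Y^{-1}+1-\Beta'.
\]
Since $(\Beta,Y)$ and $(\Beta',Y)$ are both pairs of independent variables with the same marginals, $(U',V')\overset{d}=(U,V)$ follows immediately. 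Part (b) is then dispatched by the $\alpha\leftrightarrow\beta$ axis-swap symmetry, as you suggested.

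What your approach buys is directness and no need to discover the gamma coupling; what the paper's approach buys is that the ``delicate part'' you identify---matching six beta exponents through a rational Jacobian---simply never arises, being replaced by a one-line distributional symmetry. If you carry out your density computation, you will find that the Jacobian is $(V-U)/(V'-U')\cdot(UV)/(U'V')$ (or an equivalent monomial), and the factorizations $1-U'=(1-\Beta)(V-U)/V$, $V'-1=\Beta(V-U)/U$, $1-\Beta'=V(1-U)/(V-U)$ do make everything collapse; but the gamma trick is the intended shortcut.
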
 

\begin{proof}
Algebra checks the involution property.  We prove part (a).  Part (b) follows by switching around  $\alpha$ and $\beta$ and by switching around  the axes.  

Let $(\Beta,\Gvar_\alpha,\Gvar_\beta,\Gvar_\lambda)$ be jointly independent with $\Beta\sim\text{Beta}(\alpha,\beta)$ and $\Gvar_\nu\sim\text{Gamma}(\nu,1)$.
Set 
	\be\label{UV8}  U=\frac{\Gvar_\alpha+\Gvar_\lambda}{\Gvar_\alpha+\Gvar_\beta+\Gvar_\lambda}\quad\text{and}\quad 
	V=\frac{\Gvar_\alpha+\Gvar_\lambda}{\Gvar_\lambda}.\ee
Then $(U,V,\Beta)$ have the desired distribution because $V$ is independent of $\Gvar_\alpha+\Gvar_\lambda$.

Compute
	\begin{align}
	\nn &U'=\Beta+(1-\Beta)\frac{U}{V}=\Beta+(1-\Beta)\frac{\Gvar_\lambda}{\Gvar_\alpha+\Gvar_\beta+\Gvar_\lambda},\\
	\nn &V'=\Beta\frac{V}{U}+1-\Beta=\Beta\frac{\Gvar_\alpha+\Gvar_\beta+\Gvar_\lambda}{\Gvar_\lambda}+1-\Beta,\\
\label{Be'9} 	&\Beta'=\frac{U(V-1)}{(V-U)}=\frac{\Gvar_\alpha}{\Gvar_\alpha+\Gvar_\beta}.
	\end{align}	

$\Beta'$ is independent of the pair $(U',V')$ because it is independent of $\Gvar_\alpha+\Gvar_\beta$. It also clearly has the same distribution as $\Beta$.

It remains to show that $(U',V')$ has the same distribution as $(U,V)$.  For this set
	\[Y=\frac{\Gvar_\lambda}{\Gvar_\alpha+\Gvar_\beta+\Gvar_\lambda}.\]
Observe that 
	\[U'=\Beta+(1-\Beta)Y\quad\text{and}\quad V'=\Beta Y^{-1}+1-\Beta.\]
Also
	\[\Beta'+(1-\Beta')Y=Y+\Beta'(1-Y)=\frac{\Gvar_\alpha+\Gvar_\lambda}{\Gvar_\alpha+\Gvar_\beta+\Gvar_\lambda}=U\]
and similarly
	\[\Beta' Y^{-1}+1-\Beta'=V.\]
Furthermore, $(Y,\Beta')$ are independent and so are $(Y,\Beta)$. Consequently, the two pairs have the same distribution
and then $(U',V')$ has the same distribution as $(U,V)$. The lemma is proved.
\end{proof}

  Observe from \eqref{ind9} that 
 \be\label{ind-83}   \frac{W'}{U}+ \frac{1-W'}{V}=1\qquad\text{and}\qquad 
  \frac{W}{U'}+ \frac{1-W}{V'}=1 .  \ee
 This is how the Doob transformed transition probabilities  arise from a  given forward transition $(W',1-W')$ or backward transition $(W,1-W)$.  
We derive the probability distribution of $W'/U$ (which is the same as that of $W/U'$).  $_{2}F_{1}$ below is the standard Gauss hypergeometric function
\be\label{hg3} _{2}F_{1}(a,b,c; z)=\sum_{k=0}^\infty \frac{(a)_k\,(b)_k}{(c)_k}\,\frac{z^k}{k!}  \ee
where $(c)_{k}=c(c+1)\dotsm(c+k-1)$ denotes the ascending factorial.   Other examples of rational functions of beta variables whose densities involve hypergeometric functions appear  in \cite{Duf-10, Pha-00}.


\begin{proposition}\label{pr:hypgeom}  The random variables $W'/U$ and  $W/U'$ of Lemma \ref{ind-sol} have the following density function $g_\lambda$  on the interval $(0,1)$. 
 
 In case  {\rm(a)} under assumption \eqref{uvw1}, 
 \be\label{f_R2.1}
g_\lambda(x) =  
\frac{B(\alpha+\lambda, \alpha+\beta)}{B(\alpha+\lambda, \beta)} \cdot \frac{x^{\alpha-1}(1-x)^{\lambda-1}}{B(\lambda,\alpha)}   \cdot 
  {_{2}F_1}(\alpha+\lambda, \alpha+\lambda,2\alpha+\beta+\lambda;x).  
\ee

 In case  {\rm(b)} under assumption \eqref{uvw2}, 
 \be\label{f_R2.2}
\wt g_\lambda(x) = 
\frac{B(\beta+\lambda, \alpha+\beta)}{B(\beta+\lambda, \alpha)} \cdot \frac{x^{\lambda-1}(1-x)^{\beta-1}}{B( \lambda, \beta)}   \cdot 
  {_{2}F_1}(\beta+\lambda, \beta+\lambda,\alpha+2\beta+\lambda;1-x).  
 \ee
Neither $g_\lambda$ nor  $\wt g_\lambda$ is  the density function of any beta distribution.  
\end{proposition}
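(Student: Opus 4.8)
The plan is to compute the density of $W'/U$ directly as the distribution of a ratio of (dependent) beta variables, exploiting the product-of-gammas representation already introduced in the proof of Lemma~\ref{ind-sol}. Recall from that proof that in case (a) we may realize $U = (\Gvar_\alpha+\Gvar_\lambda)/(\Gvar_\alpha+\Gvar_\beta+\Gvar_\lambda)$, $V = (\Gvar_\alpha+\Gvar_\lambda)/\Gvar_\lambda$ and $W' = \Beta + (1-\Beta)\Gvar_\lambda/(\Gvar_\alpha+\Gvar_\beta+\Gvar_\lambda)$ with $(\Beta,\Gvar_\alpha,\Gvar_\beta,\Gvar_\lambda)$ jointly independent, $\Beta\sim\text{Beta}(\alpha,\beta)$, $\Gvar_\nu\sim\text{Gamma}(\nu,1)$. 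Then $W'/U = [\Beta(\Gvar_\alpha+\Gvar_\beta+\Gvar_\lambda) + (1-\Beta)\Gvar_\lambda]/(\Gvar_\alpha+\Gvar_\lambda)$. First I would simplify: write $S=\Gvar_\alpha+\Gvar_\lambda$, $T=\Gvar_\beta$, so $S\sim\text{Gamma}(\alpha+\lambda,1)$, $T\sim\text{Gamma}(\beta,1)$, $S\perp T$, and the numerator becomes $\Beta(S+T) + (1-\Beta)\Gvar_\lambda = \Gvar_\lambda + \Beta(S+T-\Gvar_\lambda) = \Gvar_\lambda + \Beta(\Gvar_\alpha + T)$. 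Hence $W'/U = \Gvar_\lambda/S + \Beta(\Gvar_\alpha+T)/S$. This still involves $\Gvar_\alpha,\Gvar_\lambda$ separately, so instead I would keep the full four-variable integral.

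The cleanest route: condition on $S=\Gvar_\alpha+\Gvar_\lambda$. Given $S$, the ratio $R:=\Gvar_\lambda/S \sim\text{Beta}(\lambda,\alpha)$ independently of $S$, and given $S$, $T$ is still $\text{Gamma}(\beta,1)$ independent of everything. Writing $R=\Gvar_\lambda/S$ and $Q=T/S$ (so that conditionally on $S$, $Q$ has a Gamma-scaled density), one gets
\[
\frac{W'}{U} \;=\; R + \Beta\!\left(1-R+Q\right)\,\frac{?}{?}
\]
—better to avoid introducing $S$ as a denominator of $Q$ and instead integrate $S$ out. Concretely, I would write the joint density of $(\Beta,\Gvar_\alpha,\Gvar_\beta,\Gvar_\lambda)$ on $(0,1)\times(0,\infty)^3$, change variables to $(x, \Beta, \Gvar_\alpha,\Gvar_\beta)$ where $x = W'/U$ determines $\Gvar_\lambda$ linearly, compute the Jacobian, and integrate out $\Beta,\Gvar_\alpha,\Gvar_\beta$. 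The gamma integrals over $\Gvar_\alpha,\Gvar_\beta$ collapse to Beta functions, leaving a single integral over $\Beta\in(0,1)$ of the form $\int_0^1 \Beta^{a-1}(1-\Beta)^{b-1}(1-x\Beta)^{-c}\,d\Beta$, which is exactly an Euler integral representation of ${}_2F_1$. Matching $a,b,c$ to the parameters $\alpha,\beta,\lambda$ and collecting the normalizing Beta-function constants yields \eqref{f_R2.1}. Case (b) follows by the $\alpha\leftrightarrow\beta$ and axis swap already used in Lemma~\ref{ind-sol}, which interchanges $x$ and $1-x$; that accounts for the argument $1-x$ in \eqref{f_R2.2}.

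For the final sentence—that $g_\lambda$ and $\wt g_\lambda$ are not beta densities—I would argue by the shape of the density near the endpoints or by the presence of the nonconstant ${}_2F_1$ factor. The simplest argument: a $\text{Beta}(p,q)$ density is $\propto x^{p-1}(1-x)^{q-1}$, hence $\log$ of it is affine in $\log x$ and $\log(1-x)$; but $\log g_\lambda(x)$ contains the extra term $\log {}_2F_1(\alpha+\lambda,\alpha+\lambda,2\alpha+\beta+\lambda;x)$, which is a nonconstant real-analytic function on $(0,1)$ (its power series \eqref{hg3} has all positive coefficients and a genuinely present linear term since $\alpha+\lambda>0$), and no such term can be absorbed into the monomial prefactor. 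One should check the prefactor exponents can't be retuned to cancel: comparing the behavior as $x\to 0$ (where ${}_2F_1\to 1$, so $g_\lambda(x)\sim c\,x^{\alpha-1}(1-x)^{\lambda-1}$, forcing $p=\alpha$, $q=\lambda$) against the behavior as $x\to 1$ (where ${}_2F_1$ has a known logarithmic or power singularity depending on the parameter balance $c-a-b = -\beta-\lambda+... $) produces an inconsistency unless the ${}_2F_1$ is constant, which it is not. I expect the main obstacle to be bookkeeping: getting the Jacobian and the limits of integration exactly right in the change of variables from $(\Beta,\Gvar_\alpha,\Gvar_\beta,\Gvar_\lambda)$ to $(x,\Beta,\Gvar_\alpha,\Gvar_\beta)$, and then correctly identifying which Euler-type integral one lands on so that the ${}_2F_1$ parameters come out as claimed rather than a contiguous variant. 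A sanity check at $\alpha=\beta=1$ (or verifying that $\int_0^1 g_\lambda = 1$ via Gauss's summation ${}_2F_1(a,b,c;1)=\Gamma(c)\Gamma(c-a-b)/[\Gamma(c-a)\Gamma(c-b)]$) would confirm the constants.
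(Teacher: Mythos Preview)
Your proposal contains a genuine error at the very first step: the expression you wrote for $W'$ is actually the formula for $U'$. In the gamma realization inside the proof of Lemma~\ref{ind-sol} one has $U' = \Beta + (1-\Beta)\,\Gvar_\lambda/(\Gvar_\alpha+\Gvar_\beta+\Gvar_\lambda)$, whereas $W' = \Gvar_\alpha/(\Gvar_\alpha+\Gvar_\beta)$ (equation~\eqref{Be'9}). Consequently the ratio you set up, $[\Beta(\Gvar_\alpha+\Gvar_\beta+\Gvar_\lambda)+(1-\Beta)\Gvar_\lambda]/(\Gvar_\alpha+\Gvar_\lambda)$, is $U'/U$, not $W'/U$. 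Since the correct $W'$ does not involve $\Beta$ at all, your plan of reducing to a one-dimensional Euler integral over $\Beta$ cannot emerge from the right starting point; following your recipe literally would produce the density of the wrong random variable.

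The paper bypasses the gamma representation entirely and works directly with the two independent beta variables. From \eqref{ind9} one has $W'/U=(V-1)/(V-U)=(1-V^{-1})/(1-UV^{-1})$, with $U\sim\text{Beta}(\alpha+\lambda,\beta)$ and $V^{-1}\sim\text{Beta}(\lambda,\alpha)$ independent. Conditioning on $U=u$, the event $\{W'/U\le x\}$ becomes $\{V^{-1}\ge(1-x)/(1-xu)\}$; differentiating this conditional CDF in $x$ and then averaging over $u$ gives immediately the single integral
\[
\frac{x^{\alpha-1}(1-x)^{\lambda-1}}{B(\lambda,\alpha)\,B(\alpha+\lambda,\beta)}\int_0^1 (1-xu)^{-\alpha-\lambda}\,u^{\alpha+\lambda-1}(1-u)^{\alpha+\beta-1}\,du,
\]
which is precisely the Euler integral for $B(\alpha+\lambda,\alpha+\beta)\,{}_2F_1(\alpha+\lambda,\alpha+\lambda,2\alpha+\beta+\lambda;x)$. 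No multivariate Jacobian bookkeeping is needed. Your treatment of case~(b) via the $\alpha\leftrightarrow\beta$ swap and $x\mapsto 1-x$ is correct and matches the paper.

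For the claim that $g_\lambda$ is not a beta density, your argument is the right intuition but is not yet a proof: one must rule out that some \emph{other} exponents $(p,q)$ absorb the endpoint behavior of ${}_2F_1$. The paper makes this rigorous by assuming $g_\lambda(x)=x^{\gamma-1}(1-x)^{\delta-1}/B(\gamma,\delta)$, letting $x\to 0$ to force $\gamma=\alpha$ and pin down the normalizing constant, then expanding both sides as power series in $x$ and showing that the equations obtained from the coefficients at $k=1$ and $k=2$ are incompatible.
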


\begin{proof}  Consider case  (a).  Let $F_{V^{-1}}$ denote the  Beta$(\lambda, \alpha)$ c.d.f.\ of $V^{-1}$.    Fix $0<x<1$.   From  $W'/U= (1-V^{-1})/(1-UV^{-1})$, 
\begin{align}
\nn g_\lambda(x) &= \frac{d}{dx}\P\Bigl( \frac{W'}U \le x\Bigr)=  \frac{d}{dx} \P\Bigl( V^{-1}\ge \frac{1-x}{1-xU}\Bigr) \\[4pt] 
\nn &=  \frac1{B(\alpha+\lambda, \beta)} \int_0^1   \frac{\partial}{\partial x}\bigl( 1-F_{V^{-1}}(\tfrac{1-x}{1-xu})\bigr)  
u^{\alpha+\lambda-1}(1-u)^{\beta-1} \,du \\[4pt]
\label{888.2}   &=  \frac{x^{\alpha-1}(1-x)^{\lambda-1}}{B(\lambda, \alpha)B(\alpha+\lambda, \beta)} \int_0^1 
 (1-xu)^{-\alpha-\lambda}\,   u^{\alpha+\lambda-1}(1-u)^{\alpha+\beta-1} \,du.   
\end{align}
The last integral equals 
$B(\alpha+\lambda,\alpha+\beta) _2F_1(\alpha+\lambda, \alpha+\lambda, 2\alpha+\beta+\lambda; x)$ (equation (9.09) on page 161 in \cite{Olv-97}).  This verifies \eqref{f_R2.1}. 

In case (b) write $  {W'}/U  = 1- (1-U^{-1})/(1-U^{-1}V)$ where the last fraction has the distribution found in case (a) but with $\alpha$ and $\beta$ interchanged.  Hence we have \eqref{f_R2.2}. 

We verify that $g_\lambda$ is not a beta density, by deriving a contradiction from the assumption that for some $\gamma, \delta>0$, 
\be\label{f-be4} 
g_\lambda(x)=\frac1{B(\gamma, \delta)} \,x^{\gamma-1}(1-x)^{\delta-1}
\qquad\text{for}\ \ 0<x<1. 
\ee
Set this equal to line \eqref{888.2} and let $x\to 0$.  This forces 
  $\gamma=\alpha$  and gives  this identity:   
   \begin{align*}
1 &=  \frac{B(\gamma, \delta)}{B(\lambda, \alpha)B(\alpha+\lambda, \beta)}    \int_0^1 
   u^{\alpha+\lambda-1}(1-u)^{\alpha+\beta-1} \,du  
   =  \frac{B(\gamma, \delta)B(\alpha+\lambda, \alpha+\beta)}{B(\lambda, \alpha)B(\alpha+\lambda, \beta)}  .    
\end{align*} 
Using the identity  above, the equality of \eqref{888.2} and \eqref{f-be4} can be written as 
\be\label{f-be8} 
(1-x)^{\delta-\lambda}  = 
\frac1{B(\alpha+\lambda,\alpha+ \beta)} 
 \int_0^1 
 (1-xu)^{-\alpha-\lambda}\,   u^{\alpha+\lambda-1}(1-u)^{\alpha+\beta-1} \,du .    
\ee 
As $x\to 1$ the right-hand side converges to a limit that is strictly larger than 1 and  possibly infinite.  This forces $\lambda>\delta$ on the left.  Expand both sides of \eqref{f-be8} in series:
\be\begin{aligned}\nn
\sum_{k=0}^\infty \frac{(\lambda-\delta)_k}{k!} x^k 
&=  \frac1{B(\alpha+\lambda,\alpha+ \beta)}  \sum_{k=0}^\infty  \frac{(\alpha+\lambda)_{k}}{k!}\,x^k \int_0^1 
    u^{\alpha+\lambda+k-1}(1-u)^{\alpha+\beta-1} \,du\\
   &= \sum_{k=0}^\infty       \frac{(\alpha+\lambda)_{k}(\alpha+\lambda)_{k}}{(2\alpha+ \beta+\lambda)_k\,k!}\,x^k
\end{aligned}\ee
The equality of the coefficients for $k=1$ and $k=2$ gives a contradiction. 
\end{proof} 

\subsection{Harmonic functions on quadrants} \label{sec:bdry} 
 
  Lemma \ref{ind-sol} is applied  to construct two processes:  $(\p^\lambda, \Bd^\lambda)$ using   case (a) of the lemma  and $(\wt\p^\lambda, \wt\Bd^\lambda)$ using  case (b).  Parameters $(\alpha, \beta)$ are fixed while in both cases $0<\lambda<\infty$.  $\p^\lambda$ and  $\wt\p^\lambda$ are new i.i.d.\ Beta$(\alpha, \beta)$ environments.    $\Bd^\lambda$ and $\wt\Bd^\lambda$ are harmonic functions on $\Z_+^2$ that give rise to Doob transformed transition probabilities $\pi^\lambda$  and  $\wt\pi^\lambda$, respectively.  

 The need for two cases (a) and (b) arises from the two-to-one connection between parameters $\xi\in(\ri\Uset)\setminus\{\llnv\}$ and $0<\lambda<\infty$,  given  in Lemma \ref{lam-xi-t}\eqref{lam-xi}.    Then  to parametrize in terms of $\xi$,  let $\lambda(\xi)$ be given  by Lemma \ref{lam-xi-t}\eqref{lam-xi} and  define 
 \be\label{bar-def} 
 (\bar\p^\xi, \bar\Bd^\xi, \bar\pi^\xi)=
 \begin{cases} 
 (\p^{\lambda(\xi)}, \Bd^{\lambda(\xi)}, \pi^{\lambda(\xi)}), &\xi_1\in(\llnv_1,1)=(\frac\alpha{\alpha+\beta}, 1)\\[4pt]
  (\wt\p^{\lambda(\xi)}, \wt\Bd^{\lambda(\xi)}, \wt\pi^{\lambda(\xi)}), &\xi_1\in(0,\llnv_1)=(0,\frac\alpha{\alpha+\beta}). 
 \end{cases} 
\ee 
 This way we establish in Theorem \ref{thm:LLN} below  that $\xi\in(\ri\Uset)\setminus\{\llnv\}$ is  the    limiting velocity  of the Doob transformed RWRE with transition $\bar\pi^\xi$.

The law of large numbers velocity $\llnv=(\frac\alpha{\alpha+\beta},\frac\beta{\alpha+\beta})$ does not arise from any transition $\pi^\lambda$ or  $\wt\pi^\lambda$ for a finite $\lambda$.  As stated in   Lemma \ref{lam-xi-t}\eqref{lam-xi},   $\llnv$ corresponds to $\lambda=\infty$.   In the proof of Lemma \ref{ind-sol} above   letting $\lambda\to\infty$ in \eqref{UV8} yields $U=V=1$, which then gives also $U'=V'=1$.   We could define $\Bd^\infty=\wt\Bd^\infty=1$ which corresponds to the constant harmonic function.

We now perform construction \eqref{ind700}--\eqref{ind703}  of harmonic functions $\Bd^\lambda$ and forward transition probabilities $\p^\lambda$.   The distributional properties of the construction will come  from  part (a) of Lemma \ref{ind-sol}.    The  given inputs of the construction are   boundary variables and backward transition probabilities in the bulk.   We create  simultaneously infinitely many coupled systems indexed by the parameter   $0<\lambda<\infty$.   Remark \ref{rm-wt} below comments on the similar construction of $(\wt\p^\lambda, \wt\Bd^\lambda)$ based on case (b) of Lemma \ref{ind-sol}.

 Let  $\Pplus$ denote the joint distribution of mutually independent random variables   \label{Pbar} 
	\be\label{Del1} \{\Delta_{(i,0)},\Delta_{(0,j)}, \pch_{x,\,x-e_1}: i,j\in\N, x\in\N^2 \}\ee
  with marginal distributions  
$\Delta_{(i,0)},\Delta_{(0,j)}\sim$  Uniform(0,1)  and  $\pch_{x,\,x-e_1}\sim$ Beta$(\alpha,\beta)$.  
Set \begin{align}\label{pch2}\pch_{x,\,x-e_2}=1-\pch_{x,\,x-e_1}.\end{align}

For fixed  positive $a$ and $b$,  let  $F^{-1}(\cdot\,;a,b) :[0,1]\to[0,1]$ denote the inverse function of the Beta$(a,b)$ c.d.f.\ \eqref{be-cdf}.  For   $0<\lambda<\infty$ define  coupled boundary variables on the coordinate axes: 
	\be \label{Bdla-def}\begin{aligned}
	\Bd^\lambda_{(i-1,0),(i,0)}&=F^{-1}(\Delta_{(i,0)};\alpha+\lambda,\beta)
	\quad\text{for $i\ge 1$}  \\
\text{and}\qquad 
	 \Bd^\lambda_{(0,j-1),(0,j)}&=\frac1{F^{-1}(\Delta_{(0,j)};\lambda,\alpha)} \quad\text{for $j\ge 1$} .\end{aligned}\ee
  $\{\Bd^\lambda_{(i-1,0),(i,0)}:i\ge 1\}$ are i.i.d.\ Beta$(\alpha+\lambda,\beta)$, $\{(\Bd^\lambda_{(0,j-1),(0,j)})^{-1} :j\ge 1\}$ are
i.i.d.\ Beta$(\lambda,\alpha)$, and the two collections are independent of each other and of $\{\pch_{x,\,x-e_1}:x\in\N^2\}$.    

For each $\lambda>0$,  apply equations \eqref{ind700}--\eqref{ind702} inductively  to define random variables
	\begin{align}\label{Beta-bdry}
	\{\Bd^\lambda_{x,\,x+e_1},\Bd^\lambda_{x,\,x+e_2},\p^\lambda_{x,\,x+e_1}:x\in\Z_+^2\}  
	\end{align}
indexed by the full quadrant.  	
For $x\in\Z_+^2$  define additionally 
\[  \p^\lambda_{x,\,x+e_2}=1-\p^\lambda_{x,\,x+e_1}. \]
Conservation equations 
	\begin{align}\label{cocycle3}
	\Bd^\lambda_{x,\,x+e_1}\Bd^\lambda_{x+e_1,x+e_1+e_2}=\Bd^\lambda_{x,\,x+e_2}\Bd^\lambda_{x+e_2,\,x+e_1+e_2}
	\end{align}
are satisfied around all unit squares.    
Consequently  we can extend the definition of $\Bd^\lambda_{x,\,x+e_i}$  from directed nearest-neighbor edges to  $\Bd^\lambda_{x,y}$ for all $x,y\in\Z_+^2$ so that $\Bd^\lambda_{x,\,x}=1$ and  	\begin{align}\label{cocycle}
	\Bd^\lambda_{x,y}\Bd^\lambda_{y,z}=\Bd^\lambda_{x,z} \qquad \text{ for all   $x,y,z\in\Z_+^2$.} 
	\end{align}
In the sequel we  write   $\Bd^\lambda(x,y)$ for $\Bd^\lambda_{x,y}$ when subscripts are not convenient.

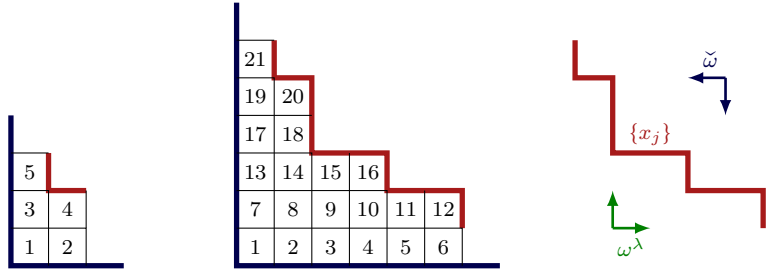
\begin{figure}[h]
 	\begin{center}
 		 \begin{tikzpicture}[>=latex, scale=0.5]
		 	\draw[line width=2pt, color=darkblue](3,0)--(0,0)--(0,4);
			\draw[line width=2pt, color=nicosred](1,3)--(1,2)--(2,2);
			\draw(0,3)--(1,3);
			\draw(0,2)--(1,2)--(1,0);
			\draw(0,1)--(2,1)--(2,0);
			\draw(2,2)--(2,1);
			\draw(0.5,0.5)node{\tiny 1};
			\draw(1.5,0.5)node{\tiny 2};
			\draw(0.5,1.5)node{\tiny 3};
			\draw(1.5,1.5)node{\tiny 4};
			\draw(0.5,2.5)node{\tiny 5};
		 \begin{scope}[shift={(6,0)}]
		 	\draw [line width= 2pt, color=nicosred](1,6)--(1,5)--(2,5)--(2,3)--(4,3)--(4,2)--(6,2)--(6,1);
	 		\draw [line width= 2pt, color=darkblue](0,7)--(0,0)--(7,0);
			\draw(0,6)--(1,6);
			\draw(0,5)--(1,5)--(1,0);
			\draw(0,4)--(2,4);
			\draw(0,3)--(2,3)--(2,0);
			\draw(0,2)--(4,2)--(4,0);
			\draw(3,3)--(3,0);
			\draw(5,2)--(5,0);
			\draw(0,1)--(6,1)--(6,0);
			\draw(0.5,0.5)node{\tiny 1};
			\draw(1.5,0.5)node{\tiny 2};
			\draw(2.5,0.5)node{\tiny 3};
			\draw(3.5,0.5)node{\tiny 4};
			\draw(4.5,0.5)node{\tiny 5};
			\draw(5.5,0.5)node{\tiny 6};
			\draw(0.5,1.5)node{\tiny 7};
			\draw(1.5,1.5)node{\tiny 8};
			\draw(2.5,1.5)node{\tiny 9};
			\draw(3.5,1.5)node{\tiny 10};
			\draw(4.5,1.5)node{\tiny 11};
			\draw(5.5,1.5)node{\tiny 12};
			\draw(0.5,2.5)node{\tiny 13};
			\draw(1.5,2.5)node{\tiny 14};
			\draw(2.5,2.5)node{\tiny 15};
			\draw(3.5,2.5)node{\tiny 16};
			\draw(0.5,3.5)node{\tiny 17};
			\draw(1.5,3.5)node{\tiny 18};
			\draw(0.5,4.5)node{\tiny 19};
			\draw(1.5,4.5)node{\tiny 20};
			\draw(0.5,5.5)node{\tiny 21};
		\end{scope}
		\begin{scope}[shift={(14,0)}]
			\draw [line width= 2pt, color=nicosred](1,6)--(1,5)--(2,5)--(2,3)--(4,3)--(4,2)--(6,2)--(6,1);
			\draw(3,3.5) node{\tiny\textcolor{nicosred}{$\{x_j\}$}};
	 		\draw[->, line width=1pt, color=darkblue](5,5)--(4,5);
			\draw[->, line width=1pt, color=darkblue](5,5)--(5,4);
			\draw(4.6,5.5) node{\tiny\textcolor{darkblue}{$\pch$}};
			\draw[->, line width=1pt, color=darkgreen](2,1)--(3,1);
			\draw[->, line width=1pt, color=darkgreen](2,1)--(2,2);
			\draw(2.5,0.5) node{\tiny\textcolor{darkgreen}{$\p^\lambda$}};
		\end{scope}
		\end{tikzpicture}
 	 \end{center}
 	\caption{\small Illustration of the corner-flipping procedure. Left and center: To obtain the $\Bd^\lambda$ values on the thick edges of the down-right path inside the quadrant start with the known values on the boundary edges and 
	consecutively flip the corners of the squares, for example in the indicated order. 
	Right: ratios $\Bd^\lambda$ along the down-right path, transitions $\pch$ out of sites northeast of the  path, and transitions $\p^\lambda$ southwest of it are jointly independent.}
 \label{fig:flip}
 \end{figure}

  A {\it down-right} lattice path $\{x_j\}_{j\in\Z}$  is by definition a nearest-neighbor path with increments  $x_j-x_{j-1}\in \{e_1,-e_2\}$.     Note that any bounded portion of a down-right path in $\Z_+^2$ can be obtained by finitely many corner flips starting from the path   $x_j= (j^+, j^-)$ that lies on the coordinate  axes.   A single corner flip is the transformation of variables  $(U,V,W)$ into  $(U', V', W')$  in Figure \ref{fig:ind}.  Figure \ref{fig:flip} illustrates successive corners flips.  
  By 	Lemma \ref{ind-sol}(a), each iteration of \eqref{ind700}--\eqref{ind702} preserves the properties stated in   the next proposition.

\begin{proposition}\label{stat-pr}   Let random variables \eqref{Del1} and \eqref{Bdla-def} be given, and define the process \eqref{Beta-bdry} inductively through  \eqref{ind700}--\eqref{ind702}.   Then for each  $0<\lambda<\infty$ we have the following distributional properties. 

Random variables $\{\p^\lambda_{x,\,x+e_1}:x\in\Z_+^2\}$ are i.i.d.\ {\rm Beta}$(\alpha,\beta)$. 
For each  $x\in\Z_+^2$ we have the marginal distributions  
\be\label{Bd-8.9}   \text{$\Bd^\lambda_{x,\,x+e_1}\sim\text{\rm Beta}(\alpha+\lambda,\beta)$ \ \ and \ \ 
$\dfrac1{\Bd^\lambda_{x,\,x+e_2}}\sim\text{\rm Beta}(\lambda,\alpha)$. }\ee

For any down-right path $\{x_j\}_{j\in\Z}$ in $\Z_+^2$,  the following random variables are all mutually independent:  
 \[  \{\Bd^\lambda_{x_j,x_{j+1}}:  j\in\Z\}, \ \  
\bigcup_{j\in\Z}\{\pch_{z,\,z-e_1}: z\ge x_j+(1,1)\},\ \  \text{and} \ \   \bigcup_{j\in\Z}\{\p^\lambda_{x,\,x+e_1}:0\le x\le x_j-(1,1)\}.  \] 

In particular, we have the translation invariance of the joint distribution: for any $a\in\Z_+^2$, 
\be\label{la-45.2} \begin{aligned} 
&(\p^\lambda_{x, x+e_1}, \,\Bd^\lambda_{u,v}, \,\pch_{z,z-e_1})_{x,u,v\in\Z_+^2, \, z\in\N^2} \\
&\qquad\qquad 
\ \overset{d}=\   
(\p^\lambda_{a+x,\, a+x+e_1}, \,\Bd^\lambda_{a+u,\,a+v}, \,\pch_{a+z,\,a+z-e_1})_{x,u,v\in\Z_+^2, \, z\in\N^2}
\end{aligned} \ee
\end{proposition}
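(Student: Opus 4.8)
The plan is to prove Proposition \ref{stat-pr} by induction on down-right paths, using the corner-flip move as the inductive step and Lemma \ref{ind-sol}(a) as the distributional engine. Concretely, I would first observe that the claimed independence structure holds for the base path $x_j=(j^+,j^-)$ lying on the coordinate axes: there, the variables $\{\Bd^\lambda_{x_j,x_{j+1}}\}$ are exactly the boundary variables of \eqref{Bdla-def}, which by construction are independent with the stated Beta$(\alpha+\lambda,\beta)$ and inverse-Beta$(\lambda,\alpha)$ marginals; the set $\bigcup_j\{\pch_{z,z-e_1}:z\ge x_j+(1,1)\}$ is all of $\{\pch_{z,z-e_1}:z\in\N^2\}$, independent of the boundary variables by \eqref{Del1}; and the third set $\bigcup_j\{\p^\lambda_{x,x+e_1}:0\le x\le x_j-(1,1)\}$ is empty for the base path, so there is nothing to check there. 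This establishes the base case, including the marginal distribution claims \eqref{Bd-8.9}.

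Next I would set up the induction. Any bounded portion of a down-right path in $\Z_+^2$ is obtained from the base path by finitely many corner flips (flip at a site where the path goes down then right, i.e. has increments $-e_2$ then $e_1$ through a southwest corner $v$ with $v,v+e_1,v+e_2$ on the path); each flip replaces the path locally by one passing through $v+e_1+e_2$ instead. I would argue that it suffices to prove that a single corner flip preserves the joint law described in the proposition. Fix a down-right path satisfying the inductive hypothesis and a flippable corner at $v$. Writing $(U,V,W)=(\Bd^\lambda_{v,v+e_2},\Bd^\lambda_{v,v+e_1},\pch_{v+e_1+e_2,v+e_1})$ — matching the conventions $(U,V,\Beta)=(\Bd_{x-e_2,x-e_1-e_2},\Bd_{x-e_1,x-e_1-e_2},\pch_{x,x-e_1})$ with $x=v+e_1+e_2$ — equations \eqref{ind700}--\eqref{ind702} produce $(U',V',W')=(\Bd^\lambda_{v+e_2,v+e_1+e_2},\Bd^\lambda_{v+e_1,v+e_1+e_2},\p^\lambda_{v,v+e_1})$. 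By the inductive hypothesis the triple $(U,V,W)$ is independent of everything else in the three collections, with $U^{-1}\sim$ Beta$(\lambda,\alpha)$, $V\sim$ Beta$(\alpha+\lambda,\beta)$, $W\sim$ Beta$(\alpha,\beta)$, which is precisely assumption \eqref{uvw1} of Lemma \ref{ind-sol}(a). Hence $(U',V',W')$ has the same distribution as $(U,V,W)$ and, being a deterministic function of $(U,V,W)$ alone, remains independent of all the other variables. This shows the new configuration — the new path's $\Bd$-ratios, the new set of $\pch$'s northeast of the new path (which gains $\pch_{v+e_1+e_2,v+e_1}$ and $\pch_{v+e_1+e_2,v+e_2}=1-\pch_{v+e_1+e_2,v+e_1}$), and the new set of $\p^\lambda$'s southwest of it (which gains $\p^\lambda_{v,v+e_1}$ and its complement) — has the required joint law, completing the induction. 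The marginal claims in \eqref{Bd-8.9} follow because every nearest-neighbor edge is an edge of some down-right path.

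Finally, the translation invariance \eqref{la-45.2} follows from the path-independence statement by a standard argument: fix any finite set of edges and sites; a single down-right path can be drawn containing all the edge-variables $\Bd^\lambda_{u,v}$ of interest with the needed $\pch$'s northeast and $\p^\lambda$'s southwest of it, and likewise for the $a$-shifted quantities; since in each case the joint law is a product of the same marginals (i.i.d. Beta$(\alpha,\beta)$ for the $\p^\lambda$'s and $\pch$'s, Beta$(\alpha+\lambda,\beta)$ / inverse-Beta$(\lambda,\alpha)$ for the $\Bd^\lambda$-edges according to orientation), the two collections agree in distribution. I expect the main obstacle to be bookkeeping rather than conceptual: one must verify carefully that after a corner flip the variable $(U,V,W)$ being flipped is genuinely disjoint from — and hence, by the inductive hypothesis, independent of — all the variables that survive unchanged into the new configuration, and that the newly created variables $\p^\lambda_{v,v+e_1}$, $\pch_{v+e_1+e_2,v+e_1}$ land in the correct one of the three collections for the new path. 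Getting the orientation conventions between \eqref{ind700}--\eqref{ind703}, Figure \ref{fig:ind}, and the down-right path direction to line up consistently is the delicate point; once that is pinned down, Lemma \ref{ind-sol}(a) does all the real work.
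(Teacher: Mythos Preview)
Your approach is exactly the paper's: it states only that ``By Lemma \ref{ind-sol}(a), each iteration of \eqref{ind700}--\eqref{ind702} preserves the properties stated in the next proposition,'' leaving the induction over corner flips from the axis path implicit; you have simply written that induction out.

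On the bookkeeping you flag as delicate, a few of your identifications are indeed off. In \eqref{uvw1} one needs $U\sim\text{Beta}(\alpha+\lambda,\beta)$ and $V^{-1}\sim\text{Beta}(\lambda,\alpha)$, so $U$ is the horizontal (south) edge $\Bd^\lambda_{v,v+e_1}$ and $V$ the vertical (west) edge $\Bd^\lambda_{v,v+e_2}$; you have these swapped. The input transition is $\Beta=\pch_{x,x-e_1}=\pch_{v+e_1+e_2,\,v+e_2}$ (the $-e_1$ step), not the $-e_2$ step you wrote. Finally, when the path flips from passing through $v$ to passing through $v+e_1+e_2$, the northeast $\pch$-collection \emph{loses} the variable at $v+e_1+e_2$ (it was strictly northeast of $v$ before, and is now on the path), while the southwest $\p^\lambda$-collection gains the variable at $v$; you have the first direction reversed. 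None of this touches the logic, which is sound.
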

 
Translation invariance \eqref{la-45.2} is a consequence of the down-right path  statement:  with a new origin at $a$,    the edge variables $\Bd^\lambda_{a+(i-1)e_k, \,a+ie_k}$ for $i\in\N$ and $k\in\{1,2\}$ and the bulk variables $(\pch_{z,\,z-e_1})_{z\in a+\N^2}$ have the same joint distribution as the original ones given in \eqref{Del1} and \eqref{Bdla-def}.

Equations  \eqref{ind-83}  give the  identities 
 	\begin{align}\label{aver2}
	\frac{\p^\lambda_{x,\,x+e_1}}{\Bd^\lambda_{x,\,x+e_1}}+\frac{\p^\lambda_{x,\,x+e_2}}{\Bd^\lambda_{x,\,x+e_2}}=1\qquad\text{for }x\in\Z_+^2
	\end{align}
and
 	\begin{align}\label{aver3}
	\frac{\pch_{x,\,x-e_1}}{\Bd^\lambda_{x-e_1,\,x}}+\frac{\pch_{x,\,x-e_2}}{\Bd^\lambda_{x-e_2,\,x}}=1\qquad\text{for }x\in\N^2.
	\end{align}
	

Consider   the RWRE $P^{\p^\lambda}$ that uses forward  transitions $\p^\lambda$.   
Combining \eqref{aver2}  with \eqref{cocycle}  gives the following for any fixed $y\in\Z_+^2$:  
  	\begin{align}\label{aver6}
	 \p^\lambda_{x,\,x+e_1} \Bd^\lambda_{x+e_1, y}+  \p^\lambda_{x,\,x+e_2}\Bd^\lambda_{x+e_2,y}= \Bd^\lambda_{x,y} \qquad\text{for }x\in\Z_+^2. 
	\end{align}
In other words,  for any fixed $y$,  $\Bd^\lambda_{x,y}$ is a harmonic function of $x$ for transition probabilities $\p^\lambda_{x,\,x+e_i}$ on $\Z_+^2$.  
In particular,  for two points $u\le y$ in $\Z_+^2$ we have
\be\label{harm76} 
 \Bd^\lambda_{u,y}= E^{\p^\lambda}_u[\Bd^\lambda(X_{\tau^-_y}, y)]. \ee	
By \eqref{aver3} the same harmonic function $\Bd^\lambda$ works for backward transitions $\pch$ and we have
\be\label{harm77} 
\Bd^\lambda_{u,y}= E^{\pch}_y[\Bd^\lambda(u,X_{\tau^+_u})].
\ee

We  perform a Doob transform on $P^{\p^\lambda}$ by introducing 
transition probabilities 
\be\label{pila2} \pi^{\lambda}_{x,\,x+e_i}=\frac{\p^\lambda_{x,\,x+e_i}}{\Bd^\lambda_{x,\,x+e_i}}\,,\quad i\in\{1,2\}.\ee
The RWRE  that uses   transitions $\pi^\lambda$  is the {\it $\Bd^\lambda$-tilted} RWRE and its quenched path measure is denoted by $P^{\pi^{\lambda}}$.   Let $x_{0,k}=(x_0,\dotsc,x_k)$ be an up-right path from $x_0=u$  that first enters the boundary $\B^-_y$ (recall definition \eqref{Bpm})  at  the endpoint $x_k$.  Then 
	\begin{align}\label{pila8}
	\begin{split}
	P^{\pi^\lambda}_u\{X_{0,k}=x_{0,k}\}
	&=\prod_{i=0}^{k-1}\frac{\p^\lambda_{x_i,x_{i+1}}}{\Bd^\lambda_{x_i,x_{i+1}}}
	=\frac{P^{\p^\lambda}_u\{X_{0,k}=x_{0,k}\}}{\Bd^\lambda_{u,x_k}}\\
	&=\frac{P^{\p^\lambda}_u\{X_{0,k}=x_{0,k}\}\Bd^\lambda_{x_k,y}}{\Bd^\lambda_{u,y}}
	=\frac{E^{\p^\lambda}_u[ \Bd^\lambda(X_{\tau^-_y},y) ,  X_{0,k}=x_{0,k}]  }{E^{\p^\lambda}_u[ \Bd^\lambda(X_{\tau^-_y},y)]}.
	\end{split}
	\end{align}
A particular consequence that we use in subsequent sections   is the following identity  for the probability of hitting one of the two parts  of the boundary.   For fixed  $u\le y$ in $\Z_+^2$ and $ i\in\{1,2\}$, summing \eqref{pila8} over all paths that enter $\B^-_y$  at a point of $ \B^{(-i)}_y$ gives 
\be\label{pila10}
P^{\pi^\lambda}_u\{X_{\tau^-_y} \in \B^{(-i)}_y\} =  \frac{E^{\p^\lambda}_u[ \Bd^\lambda(X_{\tau^-_y},y) ,  X_{\tau^-_y} \in \B^{(-i)}_y]  }{E^{\p^\lambda}_u[ \Bd^\lambda(X_{\tau^-_y},y)]}.
\ee

Equation \eqref{aver3}  says that the same harmonic function $\Bd^\lambda$ works for backward transitions $\pch$ as well. Hence we define also the backwards Doob transform
\be\label{pichla} 
\pich^{\lambda}_{x,\,x-e_i}=\frac{\pch_{x,\,x-e_i}}{\Bd^\lambda_{x-e_i,\,x}}\,,\quad i\in\{1,2\}.  
\ee
Then as  above  for fixed $u\le y$ in $\Z_+^2$ and a down-left path $x_{0,k}$ started  from $y$ that first enters $\B_u^+$ at $x_k$, 
	\begin{align}\label{pila88}
	\begin{split}
	P^{\pich^\lambda}_y\{X_{0,k}=x_{0,k}\}
	&=\frac{P^{\pch}_y\{X_{0,k}=x_{0,k}\}\Bd^\lambda_{u,x_k}}{\Bd^\lambda_{u,y}}
	=\frac{E^{\pch}_y[ \Bd^\lambda(u,X(\tau^+_u)) ,  X_{0,k}=x_{0,k}]  }{E^{\pch}_y[ \Bd^\lambda(u,X(\tau^+_u))]}.
	\end{split}
	\end{align}

\begin{remark}\label{rm-wt}   Let us comment briefly on the    version of the construction above  that produces $(\wt\p^\lambda, \wt\Bd^\lambda)$ based on  case (b) of Lemma \ref{ind-sol}.  
   Instead of \eqref{Bdla-def},  begin with 
	\be \label{Bdla-def7}\begin{aligned}
	\wt\Bd^\lambda_{(i-1,0),(i,0)}&=\frac1{F^{-1}(\Delta_{(i,0)};\lambda,\beta)} 
	\quad\text{for $i\ge 1$}  \\
\text{and}\qquad 
	 \wt\Bd^\lambda_{(0,j-1),(0,j)}&=F^{-1}(\Delta_{(0,j)};\beta+\lambda,\alpha)\quad\text{for $j\ge 1$} .\end{aligned}\ee
Equations \eqref{ind700}--\eqref{ind703} are iterated exactly as before.  
Proposition \eqref{stat-pr} is valid word for word for $(\wt\p^\lambda, \wt\Bd^\lambda, \pch)$, except that \eqref{Bd-8.9} is replaced with 
\be\label{Bd-8.10}   \text{  $\dfrac1{\wt\Bd^\lambda_{x,\,x+e_1}}\sim\text{\rm Beta}(\lambda,\beta)$   \ \ and \ \ 
$\wt\Bd^\lambda_{x,\,x+e_2}\sim\text{\rm Beta}(\beta+\lambda,\alpha)$. }
\ee
The Doob-transformed transitions are defined again by 
\be\label{pila9} \wt\pi^{\lambda}_{x,\,x+e_i}=\frac{\wt\p^\lambda_{x,\,x+e_i}}{\wt\Bd^\lambda_{x,\,x+e_i}}\,,\quad i\in\{1,2\},\ee
with quenched path measure  $P^{\wt\pi^{\lambda}}$.  Equations \eqref{harm76} and \eqref{pila8} are valid for $ (\wt\p^{\lambda}, \wt\Bd^{\lambda}, \wt\pi^{\lambda})$. 
%
%
\hfill$\triangle$\end{remark}

Now let $\lambda(\xi)$ be given  by Lemma \ref{lam-xi-t}\eqref{lam-xi} for  $\xi=(\xi_1, 1-\xi_1)\in(\ri\Uset)\setminus\{\llnv\}$.   
 Combine the two constructions  $(\p^{\lambda}, \Bd^{\lambda}, \pi^{\lambda})$ and $ (\wt\p^{\lambda}, \wt\Bd^{\lambda}, \wt\pi^{\lambda})$ by defining $ (\bar\p^\xi, \bar\Bd^\xi, \bar\pi^\xi)$ by \eqref{bar-def} for all $\xi\in(\ri\Uset)\setminus\{\llnv\}$.   The quenched path measure of the RWRE that uses transition $\bar\pi^\xi$ is given by 
 \be\label{Pxi}   P^{\bar\pi^\xi}_x= 
 \begin{cases} 
 P^{\pi^{\lambda(\xi)}}_x, &\xi_1\in(\llnv_1, 1)\\[4pt]
  P^{\wt\pi^{\lambda(\xi)}}_x, &\xi_1\in(0,\llnv_1). 
 \end{cases} 
\ee 

\begin{theorem}\label{thm:LLN}
We have this almost sure law of large numbers:   for all $\xi\in(\ri\Uset)\setminus\{\llnv\}$,  
\[  P^{\bar\pi^\xi}_0\{n^{-1}X_n\to\xi\}=1 \qquad   \text{$\Pplus$-almost surely. }\]
\end{theorem}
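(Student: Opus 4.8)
The plan is to prove the law of large numbers $P^{\bar\pi^\xi}_0\{n^{-1}X_n\to\xi\}=1$ by computing the mean velocity of the Doob-transformed walk. Since the increments $X_{n+1}-X_n$ take values in $\{e_1,e_2\}$, we have $n^{-1}X_n\cdot e_1 = n^{-1}\sum_{k=0}^{n-1}\one\{X_{k+1}-X_k=e_1\}$, so it suffices to show that this average converges almost surely (under $\Pplus\otimes P^{\bar\pi^\xi}_0$, and then in fact $\Pplus$-a.s.\ the quenched average converges) to $\xi_1$. By Proposition \ref{stat-pr} together with the definition \eqref{bar-def}, under $\Pplus$ the transition probabilities $\{\bar\pi^\xi_{x,x+e_1}\}_{x\in\Z_+^2}$ form a stationary ergodic field (indeed i.i.d.\ along antidiagonals and translation invariant by \eqref{la-45.2}), so the natural approach is an ergodic-theorem / environment-seen-from-the-particle argument. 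First I would use that the marginal law of $\bar\pi^\xi_{x,x+e_1}=\bar\p^\xi_{x,x+e_1}/\bar\Bd^\xi_{x,x+e_1}$ has density $g_{\lambda(\xi)}$ (or $\wt g_{\lambda(\xi)}$), as in Proposition \ref{pr:hypgeom}, whose mean I will need to identify.

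The cleanest route is to avoid a full point-of-view-of-the-particle construction and instead estimate $P^{\bar\pi^\xi}_0\{X_n\cdot e_1 = m\}$ directly using the boundary-hitting representation \eqref{pila8}--\eqref{pila10}. For $\xi_1\in(\llnv_1,1)$, apply \eqref{pila8} with $u=0$ and $y=[n\xi]$ (or nearby lattice points on the antidiagonal $\{x_1+x_2=n\}$): the probability that the $\pi^\lambda$-walk started at $0$ exits the box $[0,y]$ through a prescribed portion of $\B^{(-1)}_y$ versus $\B^{(-2)}_y$ is the reweighted $\p^\lambda$-exit probability with weights $\Bd^\lambda(X_{\tau^-_y},y)$. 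Then I would invoke the distributional independence in Proposition \ref{stat-pr} to compute $\Eplus$ of the relevant ratios, reducing the question to expectations of products of the explicit beta variables $\Bd^\lambda_{x,x+e_1}\sim\text{Beta}(\alpha+\lambda,\beta)$ and $(\Bd^\lambda_{x,x+e_2})^{-1}\sim\text{Beta}(\lambda,\alpha)$ along down-right paths. The key computation is to show that when $\xi$ and $\lambda$ are linked by \eqref{eq:lam-xi1}--\eqref{eq:lam-xi2}, the Doob-transformed walk concentrates on reaching the target $[n\xi]$, i.e.\ $P^{\pi^\lambda}_0\{\abs{X_n - n\xi}_1 \ge \e n\}\to 0$. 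This is where the polygamma identities defining $\lambda(\xi)$ enter: the logarithmic moment generating function of the increment, computed via the beta moments, has its relevant derivative equal to $\xi_1$ precisely at the chosen $\lambda$.

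In more detail, I would set up a moment / Markov-inequality estimate: for $\theta\in\R$ consider $E^{\pi^\lambda}_0[e^{\theta X_n\cdot e_1}]$ and show, using \eqref{pila8} and the independence structure, that $n^{-1}\log E^{\pi^\lambda}_0[e^{\theta X_n\cdot e_1}]$ converges $\Pplus$-a.s.\ to a smooth convex function $\Lambda(\theta)$ with $\Lambda'(0)=\xi_1$. The ingredients are: (i) the tilted transition $\pi^\lambda$ has the form $\p^\lambda_{x,x+e_i}/\Bd^\lambda_{x,x+e_i}$ with the constraint \eqref{aver2}; (ii) along a single down-right path the variables decouple, giving a product formula for path probabilities in terms of the boundary $\Bd^\lambda$-increments; (iii) beta-integral identities (of the type used in the proof of Proposition \ref{pr:hypgeom}, plus the hypergeometric evaluation from \cite{Olv-97}) evaluate the needed expectations. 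Once $\Lambda'(0)=\xi_1$ is verified, standard large-deviation upper bounds give $P^{\pi^\lambda}_0\{\abs{n^{-1}X_n\cdot e_1 - \xi_1}\ge\e\}\le e^{-cn}$ for $\Pplus$-a.e.\ environment (using a Borel--Cantelli argument over the randomness of $\Pplus$, controlled by the stationarity \eqref{la-45.2} and an $L^1$ or $L^2$ bound on $n^{-1}\log E^{\pi^\lambda}_0[e^{\theta X_n\cdot e_1}]$), hence the a.s.\ convergence $n^{-1}X_n\to\xi$. The case $\xi_1\in(0,\llnv_1)$ is handled identically with $(\wt\p^\lambda,\wt\Bd^\lambda,\wt\pi^\lambda)$ and \eqref{Bd-8.10}, using the symmetry between $\alpha\leftrightarrow\beta$ and the two axes noted after Lemma \ref{ind-sol}.

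The main obstacle I anticipate is step (iii): carrying out the beta/hypergeometric moment computation cleanly enough to see that the velocity equals exactly $\xi_1$ as defined by \eqref{eq:lam-xi1}. A slicker alternative that sidesteps some of this is to first prove the LLN in the easier form of concentration of $B^\xi_{0,[n\zeta]}$ — but since $B^\xi$ is only constructed later (Theorem \ref{th:Bus}) via limits of these very quadrant harmonic functions, within this section one really must work with $\Bd^\lambda$ directly. An additional technical point is passing from annealed ($\Pplus\otimes P^{\pi^\lambda}_0$) concentration to quenched ($\Pplus$-a.s.) convergence; I expect this to follow from a second-moment estimate on $n^{-1}\log E^{\pi^\lambda}_0[e^{\theta X_n\cdot e_1}]$ using the i.i.d.-on-antidiagonals property, but verifying the required mixing/independence along the relevant down-right paths will need care.
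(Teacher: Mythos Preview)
Your proposal is headed in the right direction but misses the key simplification, and as a result you end up facing the hypergeometric moment computation you correctly identify as an obstacle. The paper's proof avoids all of that via a single identity you nearly state but do not exploit: by the cocycle property \eqref{cocycle}, for any endpoint $y$ with $|y|_1=n$,
\[
P^{\pi^\lambda}_0\{X_n=y\}=\frac{P^{\p^\lambda}_0\{X_n=y\}}{\Bd^\lambda_{0,y}}.
\]
This is an exact factorization, not an approximate one requiring beta integral identities. It immediately gives
\[
n^{-1}\log P^{\pi^\lambda}_0\{X_n=[n\zeta]\}=n^{-1}\log P^{\p^\lambda}_0\{X_n=[n\zeta]\}-n^{-1}\log\Bd^\lambda_{0,[n\zeta]},
\]
so the problem splits into two pieces, each of which is already available. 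The first term converges to $-I_q(\zeta)$ by the quenched LDP \eqref{qLDP}, applicable because $\p^\lambda$ under $\Pplus$ has the same i.i.d.\ Beta$(\alpha,\beta)$ law as $\p$ (Proposition \ref{stat-pr}). The second term converges to $m(\lambda)\cdot\zeta$ with $m(\lambda)=\E[\log\Bd^\lambda_{0,e_1}]e_1+\E[\log\Bd^\lambda_{0,e_2}]e_2$, by a uniform ergodic theorem for stationary $L^1$ cocycles (Theorem A.3 of \cite{Geo-etal-15}); the lower bound $\log\Bd^\lambda_{x,x+e_i}\ge\log\p^\lambda_{x,x+e_i}$ from \eqref{aver2} supplies the moment condition needed there. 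Hence $P^{\pi^\lambda}_0$ satisfies a quenched LDP with rate function $I_q^\lambda(\zeta)=I_q(\zeta)-\zeta\cdot m(\lambda)$, and the polygamma relation \eqref{eq:lam-xi1} is exactly what makes the unique zero sit at $\zeta=\xi$.

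Two specific points where your plan would run into trouble: first, your remark that ``$B^\xi$ is only constructed later'' is a red herring---the cocycle you need is $\log\Bd^\lambda$, which is already built in Section \ref{sec:bdry} and to which the ergodic theorem applies directly. Second, your step (ii) (``along a single down-right path the variables decouple'') describes the independence structure of the $\Bd^\lambda$-increments, but this does not translate into a product formula for the moment generating function $E^{\pi^\lambda}_0[e^{\theta X_n\cdot e_1}]$, because the transition $\pi^\lambda$ is genuinely correlated across sites; the hypergeometric density of Proposition \ref{pr:hypgeom} describes only the one-point marginal. So the MGF computation you sketch would not go through as stated without first using the factorization above---at which point the paper's argument is already done.
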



\begin{proof}    
%
%
We give the details for the case $\xi_1\in(\llnv_1, 1)=(\frac{\alpha}{\alpha+\beta},1)$ with $\lambda=\lambda(\xi)$.   
By  translation invariance (Proposition \ref{stat-pr}) we can extend $\log\Bd^\lambda$ to a process $\{\log\Bd^\lambda_{x,y}:x,y\in\Z^2\}$ indexed by the entire lattice.  This process  has the shift-invariance and additivity properties of \eqref{B89},  in other words it is a stationary $L^1$ cocycle. 
Such processes satisfy a uniform ergodic theorem under certain regularity assumptions, as for example given in Theorem A.3 in the Appendix of \cite{Geo-etal-15}.  
Variable    $\log\Bd^\lambda_{0,\,e_i}$  is integrable and  
\eqref{aver2} gives the  lower bound  $\log\Bd^\lambda_{x,\,x+e_i}\ge \log\p^\lambda_{x,\,x+e_i}$   in terms of an i.i.d.\ process with strictly more than two moments.  This is sufficient for   Theorem A.3 of \cite{Geo-etal-15} which     gives the almost sure  limit 
   \label{erg-thm}
	\begin{align}\label{whatever}
	\lim_{n\to\infty}n^{-1}\max_{\abs{x}_1\le n}\abs{\log\Bd_{0,x}^\lambda-m(\lambda)\cdot x}=0,
	\end{align}
with    mean vector 
	\begin{align*}
	m(\lambda)
	&=\E[\log\Bd^\lambda_{0,e_1}]e_1+\E[\log\Bd_{0,e_2}^\lambda]e_2\\
	&=(\psi_0(\alpha+\lambda)-\psi_0(\alpha+\beta+\lambda))e_1+(\psi_0(\alpha+\lambda)-\psi_0(\lambda))e_2.
	\end{align*}

Proposition \ref{stat-pr} says that under $\Pplus$ transitions $\p^\lambda$ have the same distribution as $\p$ does under $\P$.
Then, by \eqref{qLDP} and \eqref{whatever} we have $\Pplus$-almost surely
	\begin{align*}
	\lim_{n\to\infty}n^{-1}\log P_0^{\pi^{\lambda}}\{X_n=[n\zeta]\}
&=	\lim_{n\to\infty}\bigl( \, n^{-1}\log P_0^{\p^{\lambda}}\{X_n=[n\zeta]\}- n^{-1}\log \Bd^\lambda_{0, \,[n\zeta]} \,\bigr) \\
	&=-I_q(\zeta)+\zeta_1\psi_0(\alpha+\beta+\lambda)+\zeta_2\psi_0(\lambda)-\psi_0(\alpha+\lambda).\end{align*} 
In other words, the distribution of $X_n/n$ under $P_0^{\pi^{\lambda}}$ satisfies a (quenched) large deviation principle with rate function 
	\[I_q^\lambda(\zeta)=I_q(\zeta)-\zeta_1\psi_0(\alpha+\beta+\lambda)-\zeta_2\psi_0(\lambda)+\psi_0(\alpha+\lambda).\]
By the strict convexity of $I_q$ and its expression \eqref{Iq},    $I^{\lambda}_q(\zeta)$ has a unique zero at $\zeta=\xi$ with $\xi_1$ given by the right-hand side of \eqref{eq:lam-xi1}. 
This proves Theorem \ref{thm:LLN}.
%
%
\end{proof}

As the last point, let us record monotonicity and continuity  that are valid   for the boundary variables by definition  \eqref{Bdla-def} and then extended by the construction.  For $x,y\in\Z_+^2$
	\begin{align}\label{monotone1}
	\gamma>\lambda>0\ \Longrightarrow \ \Bd^\gamma_{x,\,x+e_1}>\Bd^\lambda_{x,\,x+e_1}\quad\text{and}\quad\Bd^\gamma_{x,\,x+e_2}<\Bd^\lambda_{x,\,x+e_2}
	\end{align}
and
	\begin{align}\label{cont1}
	\Bd^\gamma_{x,\,y}\ \mathop{\longrightarrow}_{\gamma\to\lambda}\ \Bd^\lambda_{x,\,y} 
	\quad\text{and}\quad   \p^\gamma_x \ \mathop{\longrightarrow}_{\gamma\to\lambda}\ \p^\lambda_x\,. 
	\end{align}
The limits in  \eqref{cont1} are valid as stated also for  $(\wt\p, \wt\Bd)$, but the monotonicity is reversed:  
	\begin{align}\label{monotone2}
	\gamma>\lambda>0\ \Longrightarrow \ \wt\Bd^\gamma_{x,\,x+e_1}<\wt\Bd^\lambda_{x,\,x+e_1}\quad\text{and}\quad\wt\Bd^\gamma_{x,\,x+e_2}>\wt\Bd^\lambda_{x,\,x+e_2}
	\end{align}

\subsection{Global harmonic functions}

In this section we construct the process  $B^\xi_{x,y}$ discussed in the results of Section \ref{sec:doob}.  To have a single point of  reference, we summarize the construction and its properties in the next Theorem \ref{th:Buse},   then derive the claims made in  Section \ref{sec:doob}, and after that proceed to prove Theorem \ref{th:Buse} piece by piece.  

 The probability space $\OSP$  in the theorem  is the product space  $\Omega=[0,1]^{\Z^2}$ of beta environments $\p=(\p_{x,\,x+e_1}:x\in\Z^2)$ where the variables $\p_{x,\,x+e_1}$ are i.i.d.\ Beta$(\alpha,\beta)$-distributed.   Shift mappings    $T_z$ act by  $(T_z\p)_{x,\,x+e_i}=\p_{x+z,\,x+z+e_i}$ for $x,z\in\Z^2$.  Velocities $\xi\in\ri\Uset$ are denoted by $\xi=(\xi_1,\xi_2)=(\xi_1, 1-\xi_1)$, and the distinguished   velocity is $\llnv=(\frac{\alpha}{\alpha+\beta}, \frac{\beta}{\alpha+\beta})$.   A down-right path $\{x_i\}\subset\Z^2$ in part \eqref{B:B=Bd} below satisfies $x_{i+1}-x_{i}\in\{e_1, -e_2\}$.     Note that the increment distributions  in part (a.1) below are those of case (a) of Lemma \ref{ind-sol},  while part (a.2) corresponds to case (b) of Lemma \ref{ind-sol}.
 
\begin{theorem}\label{th:Buse}
Fix $0<\alpha,\beta<\infty$.   On the probability space $\OSP$ there exists a stochastic process $\{B^\xi_{x,y}(\p):x,y\in\Z^2,\xi\in\ri\Uset\}$ with  the following properties.

\noindent
{\rm I. Distribution and expectations. } 

\begin{enumerate}[label={\rm(\alph*)}, ref={\rm\alph*}] 
\itemsep=2pt
\item\label{B:beta}   For $\xi=\llnv$ the process  $B^{\llnv}_{x,y}$ is identically zero. 
For $\xi\in(\ri\Uset)\setminus\{\llnv\}$,  the marginal distributions and  expectations are as follows, with $\lambda(\xi)$ given by \eqref{eq:lam-xi1}--\eqref{eq:lam-xi2}.   

 {\rm (a.1)}  	For  $\xi_1\in(\llnv_1,1)$, 
 \begin{align*}
 e^{B^\xi_{x,\,x+e_1}}\sim {\rm Beta}(\alpha+\lambda(\xi), \beta)
 \quad\text{and}\quad 
  e^{-B^\xi_{x,\,x+e_2}}\sim {\rm Beta}(\lambda(\xi), \alpha), 
 \end{align*}
 and so 
	\begin{align}\label{EB1}
	\begin{split}
	&\E[B^\xi_{x,\,x+e_1}]=\psi_0(\alpha+\lambda(\xi))-\psi_0(\alpha+\beta+\lambda(\xi))\quad\text{and}\\
	&\E[B^\xi_{x,\,x+e_2}]=\psi_0(\alpha+\lambda(\xi))-\psi_0(\lambda(\xi)). 
	\end{split}
	\end{align}
	
 {\rm (a.2)}  	For  $\xi_1\in(0,\llnv_1)$, 
  \begin{align*}
 e^{-B^\xi_{x,\,x+e_1}}\sim {\rm Beta}(\lambda(\xi), \beta)
 \quad\text{and}\quad 
  e^{B^\xi_{x,\,x+e_2}}\sim {\rm Beta}(\beta+\lambda(\xi), \alpha), 
 \end{align*}
 and so 
	\begin{align}\label{EB2}
	\begin{split}
	&\E[B^\xi_{x,\,x+e_1}]=\psi_0(\beta+\lambda(\xi))-\psi_0(\lambda(\xi))\quad\text{and}\\
	&\E[B^\xi_{x,\,x+e_2}]=\psi_0(\beta+\lambda(\xi))-\psi_0(\alpha+\beta+\lambda(\xi)).
	\end{split}
	\end{align}

\item\label{B:indep} For any $z\in\Z^2$, the variables  $\{B^\xi_{x,y}(\p):x,y\not\le z,\xi\in\ri\Uset\}$ are independent of  the variables $\{\p_{x+e_1}:x\le z\}$. 

\item\label{B:B=Bd}
For a fixed $\xi\in(\ri\Uset)\setminus\{\llnv\}$,  the joint distribution of $(\p, B^\xi)$  is the same as that of $(\bar\p^\xi, \log\bar\Bd^\xi)$ defined in \eqref{bar-def}.  This distribution is  described in Proposition \ref{stat-pr} and Remark \ref{rm-wt}. In particular,   on any down-right path $\{x_i\}_{i\in\Z}$ on $\Z^2$ the variables  $\{B^\xi_{x_i,\,x_{i+1}}\}_{i\in\Z}$ are independent.

\item\label{B:qldp} The quenched  large deviation rate function of \eqref{qLDP}  satisfies 
\[I_q(\xi)=-\inf_{\zeta\in\ri\Uset}\big\{\E[B^\zeta_{0,e_1}]\xi_1+\E[B^\zeta_{0,e_2}]\xi_2\big\} \qquad\text{for all $\xi\in\Uset$}.\] 
The infimum  is uniquely attained at $\zeta=\xi$.

\end{enumerate} 

\noindent 
{\rm II. Pointwise properties.}   There exists an event  $\Omega_0\subset\Omega$ such that  $\P(\Omega_0)=1$   and the following statements hold for each $\p\in\Omega_0$, $\xi, \zeta\in\ri\Uset$, and $x,y,z\in\Z^2$.  

\begin{enumerate}[resume,label={\rm(\alph*)}, ref={\rm\alph*}] 

\item\label{B:coc}   Cocycle properties:  stationarity 
\be\label{stat7} B^\xi_{x+z,y+z}(\p)=B^\xi_{x,y}(T_z\p) \ee
and additivity 
\be\label{add7}  B^\xi_{x,y}(\p)+B^\xi_{y,z}(\p)=B^\xi_{x,z}(\p). \ee
In particular,  $B^\xi_{x,x}(\p)=0$ and $B^\xi_{x,y}(\p)=-B^\xi_{y,x}(\p)$.

\item\label{B:rec} Harmonic increments: 
	\begin{align}\label{B:recovery}
	\p_{x,\,x+e_1}e^{-B^\xi_{x,\,x+e_1}(\p)}+\p_{x,\,x+e_2}e^{-B^\xi_{x,\,x+e_2}(\p)}=1.
	\end{align}

\item\label{B:mono} Monotonicity: If  $\xi\cdot e_1<\zeta\cdot e_1$ then 
	\[B^\xi_{x,\,x+e_1}\ge B^\zeta_{x,\,x+e_1}\quad\text{and}\quad B^\xi_{x,\,x+e_2}\le B^\zeta_{x,\,x+e_2}.\]
\item\label{B:cadlag}  $B^\xi_{x,y}(\p)$ is a cadlag function of $\xi_1\in(0,1)$.
\end{enumerate}

\noindent 
{\rm III. Limits. }  
For each fixed  $\xi\in\ri\Uset$ there exists an event  $\Omega^{(\xi)}_0\subset\Omega$ that can vary with $\xi$, has   $\P(\Omega^{(\xi)}_0)=1$,    and is such that  the following statements hold for each $\p\in\Omega^{(\xi)}_0$ and $x,y\in\Z^2$.  


\begin{enumerate}[resume,label={\rm(\alph*)}, ref={\rm\alph*}] 

\item\label{B:cont} 
For any sequence $\xi^n\in\ri\Uset$ such that  $\xi^n\to\xi$, we have 
$\ddd \lim_{n\to\infty}B^{\xi^n}_{x,y}(\p)=B^\xi_{x,y}(\p).$
	
\item\label{B:limit} 
For any sequence $z_N\in\Z^2$ with  $\abs{z_N}_1\to\infty$ and $z_N/N\to\xi$ we have the limit 
	\begin{align}\label{Busemann}
	B^\xi_{x,y}(\p)=\lim_{N\to\infty}\big(\log P_x^\p\{X_{\abs{z_N-x}_1}=z_N\}-\log P_y^\p\{X_{\abs{z_N-y}_1}=z_N\}\,\big).
	\end{align}


\end{enumerate}

%
%
%

\end{theorem}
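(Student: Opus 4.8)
The plan is to build the process $B^\xi$ from the quadrant-level objects $(\bar\p^\xi, \bar\Bd^\xi)$ of Section~\ref{sec:bdry} and to establish each itemized claim in the order in which it becomes accessible. First I would fix, for each rational $\xi\in\ri\Uset$, a sequence of corners $v_n\to\infty$ (say along the antidiagonal in direction $\xi$) and build $B^\xi$ on the increasing quadrants $v_n-\Z_+^2$ using the construction \eqref{ind700}--\eqref{ind703} with boundary data \eqref{Bdla-def} or \eqref{Bdla-def7}, depending on whether $\xi_1>\llnv_1$ or $\xi_1<\llnv_1$; for $\xi=\llnv$ set $B^{\llnv}\equiv 0$. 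Proposition~\ref{stat-pr} (and its analogue in Remark~\ref{rm-wt}) gives the distributional statements (a), the independence statement (b), the down-right-path independence in (c), the cocycle/stationarity relations in (e), and the harmonic-increment identity in (f) directly, since these are all preserved under the corner flips and hence hold on each quadrant; the key point is that the joint laws on nested quadrants are consistent, so Kolmogorov extension produces a single process indexed by all of $\Z^2$. Monotonicity (g) follows from \eqref{monotone1}/\eqref{monotone2} together with the coupling in which all $\lambda$'s are driven by the same uniform boundary variables, and right-continuity in $\xi_1$ (h) follows from \eqref{cont1} plus the monotonicity, which forces one-sided limits to exist and, after choosing the cadlag modification, to match. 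Statement (d) about $I_q$ is then exactly Theorem~\ref{thm:Iq3}(a) rephrased through $\E[B^\xi_{0,e_i}]$ and the formulas \eqref{EB1}--\eqref{EB2}, combined with the variational representation \eqref{K-var1}; I would cite Lemma~\ref{lam-xi-t} and the explicit form of $I_q$ to check that the infimum over $\zeta$ is attained uniquely at $\zeta=\xi$.

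The substantive work is in Part~III, the Busemann-type limit \eqref{Busemann}. Here the approach is the standard ``harmonic function sandwich'': for $u\le y$ in a quadrant $v-\Z_+^2$, the backward walk started at $v$ and killed on $\B^+_u$ has hitting distribution on $\B^{(\pm i)}_u$ controlled by the tilted measure identities \eqref{pila10} and \eqref{pila88}, and one shows that the ratio $P^\p_x\{X_{|z_N-x|_1}=z_N\}/P^\p_y\{X_{|z_N-y|_1}=z_N\}$ is squeezed between $\bar\Bd^{\xi-}_{x,y}$-type and $\bar\Bd^{\xi+}_{x,y}$-type expressions built from harmonic functions for directions slightly to either side of $\xi$. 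Concretely, I would use the recursive (Markov) decomposition
\[
P^\p_x\{X_{|z_N-x|_1}=z_N\}=\p_{x,\,x+e_1}P^\p_{x+e_1}\{\cdots\}+\p_{x,\,x+e_2}P^\p_{x+e_2}\{\cdots\}
\]
to see that these probability ratios themselves satisfy a discrete Dirichlet problem, so they equal an average of boundary ratios against the quenched exit measure; then the monotonicity of $\bar\Bd^\gamma$ in $\gamma$ (\eqref{monotone1}/\eqref{monotone2}) and the law of large numbers for the tilted walk, Theorem~\ref{thm:LLN}, force the exit point to concentrate so that the averaged boundary ratios converge to $B^\xi_{x,y}$. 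The continuity in $\xi$ from the left, needed to close the sandwich, comes from \eqref{cont1} for $\bar\Bd$ together with the cadlag property (h), after noting that the set of $\xi\in\ri\Uset$ at which $\xi_1\mapsto B^\xi_{x,y}$ has a jump is at most countable, hence has probability zero for each fixed $\xi$; this gives the exceptional null set $\Omega^{(\xi)}_0$. Statement (i), continuity of $\xi^n\mapsto B^{\xi^n}_{x,y}$ at a fixed $\xi$, then follows by combining (h) with monotonicity: one-sided monotone limits exist everywhere, and at the fixed $\xi$ they agree almost surely because the two one-sided limits have the same (beta) marginal law by continuity of $\lambda(\cdot)$ in Lemma~\ref{lam-xi-t}, and an ordered pair of integrable random variables with equal expectations must be equal a.s.

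The main obstacle I anticipate is the uniform control needed to pass the Dirichlet-problem representation to the $N\to\infty$ limit: one must show that the quenched exit distribution of the walk (from $x$ or $y$, conditioned to reach $z_N$) puts negligible mass near the ``far'' corner of the relevant quadrant, uniformly enough that the contribution of the mismatched boundary ratios vanishes. This requires the quenched large deviation estimate \eqref{qLDP} applied in the tilted environment (Theorem~\ref{thm:LLN} and the rate function $I_q^\lambda$ appearing in its proof) to quantify how atypical it is for the conditioned walk to stray from direction $\xi$, plus the integrability of $\log\bar\Bd^\xi$ from \eqref{aver2} to guarantee that the tails of the boundary ratios do not blow up the average. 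Once this localization is in hand, the sandwich closes: taking $\gamma\downarrow\xi_1$ and $\gamma\uparrow\xi_1$ and invoking left/right continuity of $\bar\Bd^\gamma$ squeezes the limit to $B^\xi_{x,y}$. I would carry out the argument first for $y=x+e_i$ and rational $\xi$, then extend to general $x,y$ by the additivity \eqref{add7}, and finally remove the rationality restriction on $\xi$ using the cadlag regularity (h) and a diagonal argument over a countable dense set of directions.
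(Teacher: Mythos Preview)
There is a real gap in your construction. The theorem asserts the existence of $B^\xi_{x,y}(\p)$ as a measurable function of the environment $\p$ on the original space $\OSP$. Your proposed route---Kolmogorov extension from the quadrant processes $(\bar\p^\xi,\bar\Bd^\xi)$---lives on the auxiliary space carrying the extra boundary randomness $\Delta$ of \eqref{Del1}, and does not produce a process on $\Omega$: the ratios $\bar\Bd^\xi$ depend on $\Delta$ and are not functions of $\bar\p^\xi$ alone. You would obtain a process with the correct joint law but not $B^\xi(\p)$, and then part~(\ref{B:indep}), the stationarity \eqref{stat7}, and the very meaning of the Doob transform \eqref{pi-doob} are lost.

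The paper proceeds in the opposite order. It first proves that for fixed $\xi$ the Busemann limit \eqref{Busemann} exists $\P$-a.s.\ (Corollary~\ref{cor:Bus}) and \emph{defines} $B^\xi$ by that limit, which is automatically a function of $\p$. The stationary construction is used only indirectly: since $\bar\p^\gamma\deq\p$ for every $\gamma$, the hitting ratio $\HP{\p}{0}{[N\xi]}/\HP{\p}{e_1}{[N\xi]}$ under $\P$ has the same law as the corresponding ratio in environment $\bar\p^\gamma$ under $\Pplus$, and the latter is bounded pointwise by $\bar\Bd^\gamma_{0,e_1}$ (Proposition~\ref{prop:comp}, via Lemma~\ref{lm99} and Theorem~\ref{thm:LLN}). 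Sending $\gamma\to\lambda(\xi)$ from each side traps the $\liminf$ and the $\limsup$ between two random variables of the \emph{same} beta law, forcing them to coincide a.s.\ and giving part~(\ref{B:beta}) immediately. This is a distributional sandwich, not the pointwise one your outline describes.

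A second casualty is monotonicity~(\ref{B:mono}). Your proposed source, the coupling \eqref{monotone1}--\eqref{monotone2}, lives on the auxiliary space and there is no mechanism to transfer that ordering to $\Omega$. The paper instead uses a purely deterministic planar inequality (Lemma~\ref{lm:order}): for \emph{any} transition kernel satisfying \eqref{t:p11}, the ratio $F(a+e_1,y)/F(a,y)$ is monotone in the direction of $y$. Applied on $\Omega$ this yields the sandwich \eqref{sandwich} for rational $\eta,\zeta$ straddling $\xi$; monotonicity of $\xi\mapsto B^\xi$ then follows from the limit definition with no reference to the coupled family, and from it the cadlag extension \eqref{auxlim9} to all $\xi\in\ri\Uset$.
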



A few comments about the theorem.   The translation-invariant process $(\bar\p^\xi, \log\bar\Bd^\xi)$ referred to in part \eqref{B:B=Bd} was constructed in \eqref{bar-def} in Section \ref{sec:bdry} only on the quadrant  $\Z_+^2$.   In order for part \eqref{B:B=Bd} above to make full sense,    $(\bar\p^\xi, \log\bar\Bd^\xi)$  must be extended from   $\Z_+^2$ to the full lattice $\Z^2$ by Kolmogorov's extension theorem.   It is also important to distinguish when $\xi$ is fixed and when it can vary.   The distributional equality of  $B^\xi$ and  $\log\bar\Bd^\xi$  is not valid jointly across different $\xi$ because the joint distribution of $\{B^\xi\}$ is not the one constructed in Section  \ref{sec:bdry} through a coupling with uniform random variables.  
Note also the distinction between \eqref{B:cadlag} and \eqref{B:cont}:  at a fixed $\xi$ there is continuity almost surely,  but globally over $\xi$ the path  is cadlag. 

We prove the results of Section \ref{sec:doob}.   As given in \eqref{pi-doob}, the transformed transition probability is defined by $\doob^\xi_{x,\,x+e_i}(\p)= \p_{x,\,x+e_i}e^{-B^\xi_{x,\,x+e_i}(\p)}$.

\begin{proof}[Proof of Theorem \ref{th:Bus}]
 Theorem \ref{th:Bus} is a subset of Theorem \ref{th:Buse}.  \end{proof}

\begin{proof}[Proof of Theorem \ref{th:pi-xi}]  
We can express $\doob^\xi_{x,\,x+e_1}$  in terms of the increments $(B^\xi_{x,\,x+e_1}, B^\xi_{x,\,x+e_2})$:   
\be\label{pixi67}   \doob^\xi_{x,\,x+e_1}= \frac{e^{B^\xi_{x,\,x+e_2}}-1}{e^{B^\xi_{x,\,x+e_2}}-e^{B^\xi_{x,\,x+e_1}}}. 
\ee 
To prove the formula, substitute in limits \eqref{Busemann} and use the Markov property.  Note also that this is the analogue of  $W'/U=(V-1)/(V-U)$ from \eqref{ind9}. 

Given $n\in\Z$, define the down-right path $\{x^j\}$ by 
\[  x^{2k}=(n+k,-k) \quad\text{and}\quad   x^{2k+1}=(n+k+1,-k)\quad\text{for } \ k\in\Z. \]
The antidiagonal  $\{x: x_1+x_2=n\}$ is the subsequence  $\{x^{2k}\}$, and $(B^\xi_{x^{2k},\,x^{2k}+e_1}, B^\xi_{x^{2k},\,x^{2k}+e_2})$
$=(B^\xi_{x^{2k},\,x^{2k+1}}, -B^\xi_{x^{2k-1},\,x^{2k}})$.  These pairs are i.i.d.\ by part \eqref{B:B=Bd} of Theorem \ref{th:Buse}. 

 For  $\xi\in\Uset\setminus\{\llnv\}$ the law of large numbers part \eqref{pi-lln} of Theorem \ref{th:pi-xi} follows from Theorem \ref{thm:LLN} and  the observation that $(\p, B^\xi)$ has the same distribution as $(\bar\p^\xi, \log\bar\Bd^\xi)$,  as stated in part \eqref{B:B=Bd} of Theorem \ref{th:Buse}.   
 
 For $\xi=\llnv$,   $P^{\doob^{\llnv}(\p)}_0=P^\p_0$, the original path measure in an i.i.d.\ environment, and the LLN is the one in \eqref{lln}.  
\end{proof}

\begin{proof}[Proof of Theorem \ref{cond}]
   Immediate from the limits \eqref{Busemann}.  
   %
%
 \end{proof} 

\smallskip 

\begin{theorem}\label{th:pi-pdf} For  $\xi\in(\ri\Uset)\setminus\{\llnv\}$, 
random variable  $\doob^\xi_{0,e_1}$ is not beta distributed.  Let  $\lambda(\xi)$ be given by \eqref{eq:lam-xi1}--\eqref{eq:lam-xi2} and let    $g_\lambda$ and $\wt g_\lambda$ be  the functions defined in \eqref{f_R2.1}--\eqref{f_R2.2}.    Then the 
  density function $f^\xi(x)$  of $\doob^\xi_{0,e_1}$ for  $0<x<1$ is given by   
   \be\label{pi-pdf}   f^\xi(x)= 
 \begin{cases} 
g_{\lambda(\xi)}(x), &\xi_1\in(\llnv_1, 1)\\
 \wt g_{\lambda(\xi)}(x), &\xi_1\in(0,\llnv_1). 
 \end{cases} \ee

\end{theorem}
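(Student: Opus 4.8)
The plan is to identify the density of $\doob^\xi_{0,e_1}$ with the density $g_{\lambda(\xi)}$ or $\wt g_{\lambda(\xi)}$ already computed in Proposition \ref{pr:hypgeom}, by tracing through the distributional identities established in Theorem \ref{th:Buse}. First I would recall the formula \eqref{pixi67},
\[
\doob^\xi_{x,\,x+e_1}= \frac{e^{B^\xi_{x,\,x+e_2}}-1}{e^{B^\xi_{x,\,x+e_2}}-e^{B^\xi_{x,\,x+e_1}}},
\]
which expresses the Doob-transformed transition probability as a function of the pair of increments $(B^\xi_{x,\,x+e_1},B^\xi_{x,\,x+e_2})$ emanating from a single site. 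This is exactly the algebraic form $W'/U=(V-1)/(V-U)$ of the involution \eqref{ind9}, upon setting $U=e^{B^\xi_{x,\,x+e_2}}$, $V=e^{B^\xi_{x,\,x+e_1}}$ — wait, more precisely one matches $V=e^{-B^\xi_{x,\,x+e_2}}$ type quantities to the variables $(U,V)$ of Lemma \ref{ind-sol}; I would carefully line up which increment plays the role of $U$ and which plays $V$ in each of the two cases $\xi_1\in(\llnv_1,1)$ and $\xi_1\in(0,\llnv_1)$, using the marginal laws in parts (a.1) and (a.2) of Theorem \ref{th:Buse}\eqref{B:beta}.

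The key step is then part \eqref{B:B=Bd} of Theorem \ref{th:Buse}: for a fixed $\xi$, the joint law of $(\p,B^\xi)$ equals that of $(\bar\p^\xi,\log\bar\Bd^\xi)$, and under \eqref{bar-def} the latter is $(\p^{\lambda(\xi)},\log\Bd^{\lambda(\xi)})$ when $\xi_1\in(\llnv_1,1)$ and $(\wt\p^{\lambda(\xi)},\log\wt\Bd^{\lambda(\xi)})$ when $\xi_1\in(0,\llnv_1)$. Consequently the pair of increments $(B^\xi_{0,e_1},B^\xi_{0,e_2})$ has the same joint law as $(\log\Bd^{\lambda}_{0,e_1},\log\Bd^{\lambda}_{0,e_2})$ (resp. the tilde version), with $\lambda=\lambda(\xi)$. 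Now the transformed transition $\pi^\lambda_{0,e_1}=\p^\lambda_{0,e_1}/\Bd^\lambda_{0,e_1}$ from \eqref{pila2} is, via the identity \eqref{ind-83}, precisely the variable $W'/U$ of Lemma \ref{ind-sol}(a) in the relevant coupling: indeed $\p^\lambda_{0,e_1}=W'$ in the notation of Figure \ref{fig:ind} and $\Bd^\lambda_{0,e_1}=U$ (or $V$; this matching must be pinned down), so $\doob^\xi_{0,e_1}$ is distributed as $W'/U$, whose density Proposition \ref{pr:hypgeom} gives as $g_\lambda$. The symmetric computation with case (b) of the lemma and the tilde construction of Remark \ref{rm-wt} yields $\wt g_\lambda$ for $\xi_1\in(0,\llnv_1)$. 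Finally the non-beta claim is immediate from the corresponding assertion in Proposition \ref{pr:hypgeom}.

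I would organize the proof as: (i) reduce, via \eqref{pixi67} and the cocycle/independence structure, to computing the law of a single-site ratio; (ii) invoke Theorem \ref{th:Buse}\eqref{B:B=Bd} to transport the question to the explicitly coupled construction $(\p^\lambda,\Bd^\lambda)$ of Section \ref{sec:bdry}; (iii) recognize that construction's Doob transition $\pi^\lambda_{0,e_i}=\p^\lambda_{0,e_i}/\Bd^\lambda_{0,e_i}$ as the variable $W'/U$ of Lemma \ref{ind-sol} through the defining relations \eqref{ind-83} and \eqref{pila2}; (iv) quote Proposition \ref{pr:hypgeom} for the density and the non-beta property; (v) handle the two parameter ranges by the case split in \eqref{bar-def} and Remark \ref{rm-wt}.

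The main obstacle I anticipate is bookkeeping in step (iii): one must verify that the "first-step" triple $(\Bd^\lambda_{0,e_1},\Bd^\lambda_{0,e_2},\p^\lambda_{0,e_1})$ — which lives on the northeast corner produced by one flip from the boundary edges at the origin — really is an instance of the output triple $(U',V',\Beta')$ of the involution with inputs $(U,V,\Beta)$ distributed as in \eqref{uvw1}, so that Lemma \ref{ind-sol}(a) and hence Proposition \ref{pr:hypgeom} apply verbatim. Concretely, the boundary data $\Bd^\lambda_{(-1,0),(0,0)}$ and $\Bd^\lambda_{(0,-1),(0,0)}$ have the distributions of $U$ and $V$ in \eqref{uvw1} by \eqref{Bdla-def} (after the translation-invariant extension to $\Z^2$ from Proposition \ref{stat-pr}), and the backward transition $\pch_{(0,0),(-e_1)}$ supplies $\Beta$; one flip then produces $(\Bd^\lambda_{0,e_1},\Bd^\lambda_{0,e_2},\p^\lambda_{-e_1-e_2,-e_2})$, and a further relabeling by stationarity gives the site-at-origin statement. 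Care is needed because the roles of the two axes (and of $\alpha$ vs.\ $\beta$) are swapped between the two cases, so the identification of which of $e^{B^\xi_{0,e_1}}$, $e^{-B^\xi_{0,e_2}}$ plays the part of $U$ and which of $V$ differs between (a.1) and (a.2); once that matching is fixed correctly, the density formulas \eqref{f_R2.1}--\eqref{f_R2.2} transfer directly to \eqref{pi-pdf}.
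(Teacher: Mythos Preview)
Your proposal is correct and arrives at the same conclusion as the paper, but it is more circuitous than necessary. The paper's proof is a three-line observation: formula \eqref{pixi67} writes $\doob^\xi_{0,e_1}=(V-1)/(V-U)$ with $U=e^{B^\xi_{0,e_1}}$ and $V=e^{B^\xi_{0,e_2}}$; Theorem \ref{th:Buse}\eqref{B:beta} gives their marginals and Theorem \ref{th:Buse}\eqref{B:B=Bd} their independence (since $e_2\to 0\to e_1$ is a down-right path); these match exactly the $(U,V)$ of \eqref{uvw1} or \eqref{uvw2}, so Proposition \ref{pr:hypgeom} applies directly.

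Your steps (ii)--(iii), transporting through the full distributional identity to the quadrant construction and then identifying $\pi^\lambda_{0,e_1}$ as the output of a specific corner flip, are unnecessary. In particular, the ``main obstacle'' you anticipate --- verifying that the triple $(\Bd^\lambda_{0,e_1},\Bd^\lambda_{0,e_2},\p^\lambda_{0,e_1})$ is genuinely an instance of $(U',V',\Beta')$ by tracing boundary data at negative coordinates --- is a difficulty you have created for yourself. The proof of Proposition \ref{pr:hypgeom} never uses $\Beta$ or the involution structure; it only computes the density of $(V-1)/(V-U)$ from the independent marginals of $U$ and $V$. So once you have \eqref{pixi67} and the independence plus marginals of the two $B^\xi$-increments at the origin, you are done. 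Your route works, but the direct one avoids all of the bookkeeping you were worried about.
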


\begin{proof}  This comes from Proposition \ref{pr:hypgeom}. Formula \eqref{pixi67},   the independence of $B^\xi_{0,e_1}$ and  $B^\xi_{0,e_2}$, and their distributions given in part \eqref{B:beta} of Theorem \ref{th:Buse} imply that $\doob^\xi_{0,e_1}$ has exactly the distribution of $W'/U$ in Proposition \ref{pr:hypgeom}. 
\end{proof} 

We turn to prove Theorem \ref{th:Buse}.   
  In addition to the probability space $\OSP$ with its beta environment $\p$,  we use the coupled processes  $\{\bar\p^\xi_{x,\,x+e_1},\bar\Bd^\xi_{x,y}:x,y\in\Z_+^2\}$ under distribution $\Pplus$, constructed in Section \ref{sec:bdry} with properties given in Proposition \ref{stat-pr} and in the subsequent discussion.   The key point is that   each   environment $\bar\p^\xi$ has the same i.i.d.\ Beta$(\alpha,\beta)$ distribution as the original environment $\p$.   The construction of $B^\xi_{x,y}$ is based on  the limits   \eqref{Busemann}.  These limits are proved by bounding ratios of hitting probabilities with the ratio variables $\bar\Bd^\xi$ from \eqref{bar-def}   whose distributions we control. 
 


We begin with two lemmas that   do not use the  beta distributions.   The setting for Lemmas \ref{lm:order} and \ref{lm99} is the following:   $a\in\Z^2$ and   
on the  quadrant  $\S=a+\Z_+^2$ we have a   Markov transition probability $p$   such that 
	\begin{align}\label{t:p11}   0<p_{x,\,x+e_1}=1-p_{x,\,x+e_2}<1 \end{align}
for all $x\in\S$.   Let $P_x$ with expectation $E_x$ denote the Markov chain with transition $p$ starting at $x\in\S$.
Use the standard notation for hitting probabilities:  
\[  F(x,y) =P_x( \exists n\ge 0: X_n=y). \] 

\begin{lemma}\label{lm:order}   
The following inequalities hold for all $y\in\S=a+\Z_+^2$: 
\begin{align}\label{P98}
\frac{F(a+e_1,y+e_2)}{F(a,y+e_2)} \le \frac{F(a+e_1,y)}{F(a,y)} \le \frac{F(a+e_1,y+e_1)}{F(a,y+e_1)} . 
\end{align}
The first two numerators can vanish but the denominators are all positive.  
The same inequalities hold with  $e_1$ and $e_2$  switched around.    
\end{lemma}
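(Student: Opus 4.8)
The plan is to reduce the two displayed inequalities to a single monotonicity statement for a ratio of hitting probabilities, and then prove that statement by induction on the antidiagonal index of the target point.

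\emph{Step 1 (reduction).} Two elementary identities hold for any directed kernel $p$ satisfying \eqref{t:p11}. Partitioning paths from $a$ by their first step — only one of $a+e_1$, $a+e_2$ can lie on a given directed path, since they share an antidiagonal — gives, for every $z\in\S$ with $z\ne a$,
\[
F(a,z)=p_{a,a+e_1}F(a+e_1,z)+p_{a,a+e_2}F(a+e_2,z),
\]
and partitioning by the last step gives, for every $x\in\S$ and $z>x$,
\[
F(x,z)=p_{z-e_1,z}F(x,z-e_1)+p_{z-e_2,z}F(x,z-e_2),
\]
with the convention that $F(x,\cdot)$ vanishes off $x+\Z_+^2$. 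Note $F(a,z)>0$ for all $z\in\S$, while $F(a+e_1,z)=0$ exactly on $\{z_1=a_1\}$ and $F(a+e_2,z)=0$ exactly on $\{z_2=a_2\}$. Set $R(z)=F(a+e_1,z)/F(a,z)$ and $\rho(z)=F(a+e_1,z)/F(a+e_2,z)\in[0,\infty]$ (so $\rho=0$ on $\{z_1=a_1\}$ and $\rho=\infty$ on $\{z_2=a_2\}\setminus\{a\}$). The first identity yields $R(z)=\bigl(p_{a,a+e_1}+p_{a,a+e_2}\rho(z)^{-1}\bigr)^{-1}$, a weakly increasing function of $\rho(z)$ on $[0,\infty]$. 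Hence the lemma is equivalent to
\[
\rho(y+e_2)\le\rho(y)\le\rho(y+e_1)\qquad\text{whenever }y,\,y+e_1,\,y+e_2\in\S,
\]
and the $e_1\leftrightarrow e_2$ variant follows from the evident symmetry of the setup under interchanging the two coordinate directions.

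\emph{Step 2 (induction).} I would prove the displayed monotonicity of $\rho$ by induction on $\abs{z}_1$. On the two axes $\rho$ is identically $0$ or $\infty$, so the inequalities hold there trivially; in particular they hold on the base antidiagonal $\{a+e_1,a+e_2\}$. For an interior target $z$ (i.e.\ $z_1>a_1$ and $z_2>a_2$), the last-step identity applied at $z+e_1$ and at $z+e_2$ expresses
\[
\rho(z+e_1)=\frac{p_{z,z+e_1}F(a+e_1,z)+p_{z+e_1-e_2,\,z+e_1}F(a+e_1,z+e_1-e_2)}{p_{z,z+e_1}F(a+e_2,z)+p_{z+e_1-e_2,\,z+e_1}F(a+e_2,z+e_1-e_2)}
\]
as a weighted mediant (with nonnegative weights) of $\rho(z)$ and $\rho(z+e_1-e_2)$, and similarly $\rho(z+e_2)$ as a weighted mediant of $\rho(z)$ and $\rho(z+e_2-e_1)$; such a mediant always lies between the two fractions, interpreted in $[0,\infty]$. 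Writing $z=(z-e_2)+e_2$ and $z+e_1-e_2=(z-e_2)+e_1$, the inductive hypothesis at the site $z-e_2$, one antidiagonal lower, gives $\rho(z)\le\rho(z+e_1-e_2)$, hence $\rho(z)\le\rho(z+e_1)$; writing $z=(z-e_1)+e_1$ and $z+e_2-e_1=(z-e_1)+e_2$, the inductive hypothesis at $z-e_1$ gives $\rho(z+e_2-e_1)\le\rho(z)$, hence $\rho(z+e_2)\le\rho(z)$. This closes the induction and, via Step 1, proves the lemma.

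I do not anticipate a substantive obstacle; the work is bookkeeping rather than conceptual. The points that need care are: the degenerate cases in which a hitting probability vanishes, handled by passing to the values $\rho\in\{0,\infty\}$ and checking that the desired inequalities become trivial; keeping the antidiagonal indexing straight in the induction; and verifying that the auxiliary sites $z\pm(e_1-e_2)$ appearing in the mediants lie in $\S$ or on its boundary, so that the mediant interpretation remains valid in the extended reals. All of these are routine.
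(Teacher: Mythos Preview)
Your proof is correct and follows the same overall scheme as the paper's: induction on the antidiagonal of the target point, driven by the last-step decomposition of hitting probabilities. The packaging differs. The paper introduces the ratios $U_{x,y}=F(x,y)/F(x,y-e_1)$ and $V_{x,y}=F(x,y)/F(x,y-e_2)$, rewrites \eqref{P98} as $U_{a,y}\le U_{a+e_1,y}$ and $V_{a,y}\ge V_{a+e_1,y}$, and propagates these via identities of the form $U_{x,y}=p_{y-e_1,y}+p_{y-e_2,y}\,U_{x,y-e_2}/V_{x,y-e_1}$. You instead first reduce, via the first-step decomposition from $a$, to monotonicity of the single ratio $\rho(z)=F(a+e_1,z)/F(a+e_2,z)$, and then observe that the last-step identity exhibits $\rho(z+e_i)$ as a weighted mediant of $\rho$-values on the previous antidiagonal. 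Your variant is a bit tidier---one function to track, and the mediant makes the inductive step transparent---while the paper's choice of $U,V$ has the incidental benefit that those same ratios are reused in the next lemma. The edge cases you flag (the undefined $\rho(a)$, and the boundary situations where one denominator in the mediant vanishes so that $\rho=\infty$) are indeed routine and correctly handled by your extended-real conventions.
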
 

\begin{proof}  The second statement follows by applying \eqref{P98} to the  transition probability $\wt p$ obtained by reflecting $p$  across the diagonal passing through $a$: for $x=(x_1,x_2)\in\Z_+^2$ set  $\wt p_{a+x,a+x+e_i}=p_{a+\wt x, a+\wt x+e_{3-i}}$,  where  $\wt x=(x_2,x_1)$.  

We prove  claim  \eqref{P98} by induction on $y$.  It is convenient to use the ratios 
\[  U_{x,y}= \frac{F(x,y)}{F(x,y-e_1)}
\quad\text{and}\quad 
V_{x,y}= \frac{F(x,y)}{F(x,y-e_2)} \qquad \text{for $x\le y$ in $\S$. } \]
The numerator does not vanish but the denominator can vanish and then the ratio has value $\infty$.   

   \eqref{P98} holds trivially for $y=a+\ell e_2$ with $\ell\ge 0$  because the first two numerators vanish while the other probabilities are positive.   Hence we may assume $y\ge a+e_1$.   By a shift of $y$  \eqref{P98} is equivalent to having 
\begin{align}
\label{P101}    U_{a,y}\le U_{a+e_1,y}  \ \text{ for $y\ge a+2e_1$} 
\quad\text{and}\quad  
V_{a,y}\ge V_{a+e_1,y}  \ \text{for $y\ge a+e_1+e_2$.}  
\end{align}

We check  the boundaries first.  For $y=a+ke_1$ for $k\ge 2$,  $U_{a,y}=U_{a+e_1,y}=p_{y-e_1,y}$.    For $y=a+e_1+\ell e_2$ for $\ell\ge 1$, 
\begin{align*}
V_{a,y}= \frac{F(a,y-e_2) p_{y-e_2,y}+  F(a,y-e_1)p_{y-e_1,y}}{F(a,y-e_2)}   > p_{y-e_2,y} =  V_{a+e_1,y}.  
\end{align*}

It remains the check  \eqref{P101} for $y=a+ke_1+\ell e_2$ for $k\ge 2$ and $\ell \ge 1$.  
For $y\ge x+e_1+e_2$ the Markov property and assumption  \eqref{t:p11}  give 
\[  F(x,y)= F(x, y-e_1)p_{y-e_1,y}+ F(x, y-e_2)p_{y-e_2,y}   \]
from which we derive the identities 
\[  U_{x,y} =  p_{y-e_1,y}+  p_{y-e_2,y} \frac{U_{x,y-e_2}}{V_{x,y-e_1}} 
 \]
and 
\[  V_{x,y} =  p_{y-e_1,y} \frac{V_{x,y-e_1}} {U_{x,y-e_2}}+  p_{y-e_2,y} 
 \]
also for  
$y\ge x+e_1+e_2$.   

Now proceed by induction on $y\ge a+2e_1+e_2$, beginning with $y=a+2e_1+e_2$, and then taking $e_1$ and $e_2$ steps.   The boundary cases checked above together with the  induction assumption give $U_{a,y-e_2}\le U_{a+e_1,y-e_2}$  and $V_{a,y-e_1}\ge V_{a+e_1,y-e_1}$.  
Then the identities above give $U_{a,y}\le U_{a+e_1,y}$  and $V_{a,y}\ge V_{a+e_1,y}$.   
\end{proof}

\begin{lemma} \label{lm99}   Let $v\ge a$  on $\Z_+^2$ and set $y=v+e_1+e_2$.   Suppose $f(x)>0$ for $x$ on the boundary $\B^{-}_y$.  
We have the following inequalities. 

   For $a+e_1\le v$:  
	\begin{align}\label{mono}
	\begin{split}
	\frac{E_a[f(X_{\tau^-_y}),  X_{\tau^-_y}\in \B^{(-2)}_y]}{E_{a+e_1}[f(X_{\tau^-_y}),  X_{\tau^-_y}\in \B^{(-2)}_y]}
	&\le \frac{F(a,v)}{F(a+e_1,v)} 
	\le\frac{E_a[f(X_{\tau^-_y}),  X_{\tau^-_y}\in \B^{(-1)}_y]}{E_{a+e_1}[f(X_{\tau^-_y}),  X_{\tau^-_y}\in \B^{(-1)}_y]} . 
		\end{split}
	\end{align}
	
For $a+e_2\le v$: 	
 	\begin{align}\label{mono2}
	\begin{split}
	\frac{E_a[f(X_{\tau^-_y}),  X_{\tau^-_y}\in \B^{(-1)}_y]}{E_{a+e_2}[f(X_{\tau^-_y}),  X_{\tau^-_y}\in \B^{(-1)}_y]}
	&\le \frac{F(a,v)}{F(a+e_2,v)} 
	\le\frac{E_a[f(X_{\tau^-_y}),  X_{\tau^-_y}\in \B^{(-2)}_y]}{E_{a+e_2}[f(X_{\tau^-_y}),  X_{\tau^-_y}\in \B^{(-2)}_y]} .
	\end{split}
	\end{align}
\end{lemma}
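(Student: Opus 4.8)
\textbf{Proof plan for Lemma \ref{lm99}.}
The idea is to reduce the two-sided estimate for weighted hitting expectations to the simpler ratio monotonicity already established in Lemma \ref{lm:order}, using the Markov property to decompose a hit of $v$ according to where the walk first enters the boundary $\B^-_y$. Fix $v\ge a$ with $a+e_1\le v$ and set $y=v+e_1+e_2$; the case $a+e_2\le v$ is identical after reflecting across the diagonal through $a$, exactly as in the reduction used in the proof of Lemma \ref{lm:order}. Since the walk must pass through some point of $\B^-_y$ before it can reach $v$ (because $v$ lies strictly inside the quadrant $v-\Z_+^2$ whose outer boundary is $\B^-_y$), the strong Markov property at time $\tau^-_y$ gives the decomposition
\[
 F(a,v)=E_a\bigl[F(X_{\tau^-_y}, v)\bigr]
 = \sum_{w\in\B^-_y} P_a(X_{\tau^-_y}=w)\, F(w,v),
\]
and the same with $a$ replaced by $a+e_1$.

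First I would split this sum into the contribution from $\B^{(-1)}_y$ and the contribution from $\B^{(-2)}_y$, writing $F(a,v)=S^{(1)}_a+S^{(2)}_a$ with $S^{(i)}_a=E_a[F(X_{\tau^-_y},v),\,X_{\tau^-_y}\in\B^{(-i)}_y]$, and likewise $F(a+e_1,v)=S^{(1)}_{a+e_1}+S^{(2)}_{a+e_1}$. The ratio $F(a,v)/F(a+e_1,v)$ is then a \emph{mediant} of the two ratios $S^{(1)}_a/S^{(1)}_{a+e_1}$ and $S^{(2)}_a/S^{(2)}_{a+e_1}$ (a weighted average with nonnegative weights), so it lies between them; hence it suffices to show that the $\B^{(-2)}_y$ ratio is the smaller of the two and the $\B^{(-1)}_y$ ratio the larger. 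The key step is now to prove, for each boundary point $w$ separately, that $w\mapsto F(a,w)/F(a+e_1,w)$ is monotone along each axis of $\B^-_y$: it is nonincreasing as $w$ moves along $\B^{(-2)}_y$ away from the corner $y$ and nondecreasing as $w$ moves along $\B^{(-1)}_y$. This is precisely the content of Lemma \ref{lm:order} (after a translation of the base point to $a$ and a relabeling), since the two sides of \eqref{P98} compare the ratio at $w$ against the ratio at $w+e_1$ and at $w+e_2$. Pulling the comparison with $F(\cdot,v)$ itself into the picture — i.e.\ comparing $F(a,w)/F(a+e_1,w)$ for $w$ ranging over one axis with $F(a,v)/F(a+e_1,v)$ — and then using that for $w=F(X_{\tau^-_y},v)$ the weight $F(w,v)$ is absorbed into the expectation, one obtains
\[
 \frac{S^{(2)}_a}{S^{(2)}_{a+e_1}}\;\le\;\frac{F(a,v)}{F(a+e_1,v)}\;\le\;\frac{S^{(1)}_a}{S^{(1)}_{a+e_1}},
\]
which upon recognizing $S^{(i)}_a=E_a[f(X_{\tau^-_y}),X_{\tau^-_y}\in\B^{(-i)}_y]$ with the particular choice $f(\cdot)=F(\cdot,v)$ — and then noting the estimate is really being used for a general positive boundary function $f$, obtained by the same mediant argument applied to $\sum_w P_a(X_{\tau^-_y}=w)f(w)$ together with the ratio monotonicity — yields \eqref{mono}. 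The inequality \eqref{mono2} follows by the diagonal reflection.

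The main obstacle I anticipate is bookkeeping rather than conceptual: one must be careful that the monotonicity of the hitting-probability ratios supplied by Lemma \ref{lm:order} is stated with base points $a$ and $a+e_1$ and target moving by $e_1$ or $e_2$, whereas here the ``moving'' object is the first entrance point $w$ along a one-dimensional boundary axis, so the orientation of the inequality must be tracked carefully to see that it is the $\B^{(-2)}_y$-ratio (not the $\B^{(-1)}_y$-ratio) that gives the lower bound. A secondary point to handle cleanly is the possible vanishing of some of the numerators $F(a+e_1,w)$ for $w$ on the ``wrong'' axis (as flagged in Lemma \ref{lm:order}); these terms simply contribute $0$ to the relevant $S^{(i)}$ and are harmless in the mediant inequality, but this should be stated explicitly. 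Once the orientation is pinned down and the degenerate terms are dispatched, the rest is the standard mediant inequality $\min(p/q,r/s)\le (p+r)/(q+s)\le\max(p/q,r/s)$ for nonnegative $p,r$ and positive $q,s$, applied termwise.
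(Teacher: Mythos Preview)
Your mediant idea is the right one, and it gives a cleaner route than the paper's proof, which runs a second induction on $|v-a|_1$ structurally identical to the induction already done in Lemma \ref{lm:order}. But your execution contains a real error: the decomposition $F(a,v)=E_a[F(X_{\tau^-_y},v)]$ is false. Since $y=v+e_1+e_2$, the point $v$ lies strictly below $\B^-_y$ in both coordinates, so any up-right path from $a\le v$ to $v$ stays inside $\{x\le v\}$ and never meets $\B^-_y$; conversely, once the walk is on $\B^-_y$ it can never return to $v$. Thus $F(X_{\tau^-_y},v)\equiv 0$ and your $S^{(i)}_a$ all vanish. The walk does not pass through $\B^-_y$ \emph{before} reaching $v$; it passes through it \emph{after}.

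The fix is minor and preserves your strategy. For $w=(v_1+1,k)\in\B^{(-2)}_y$ with $a_2\le k\le v_2$, the unique predecessor not already on $\B^-_y$ is $w-e_1=(v_1,k)$, and since every path from $a$ to $(v_1,k)$ stays in $\{x\le v\}$ one has $P_a(X_{\tau^-_y}=w)=F(a,(v_1,k))\,p_{(v_1,k),w}$; the analogous identity holds on $\B^{(-1)}_y$ with predecessor $(\ell,v_2)$. The $\B^{(-2)}_y$ ratio in \eqref{mono} then becomes
\[
\frac{\sum_{k} F(a,(v_1,k))\,c_k}{\sum_{k} F(a+e_1,(v_1,k))\,c_k},\qquad c_k=p_{(v_1,k),(v_1+1,k)}f((v_1+1,k))>0.
\]
Now Lemma \ref{lm:order}, iterated in the $e_2$-direction along the east side $\{(v_1,k):a_2\le k\le v_2\}$ up to the corner $v$, gives $F(a,(v_1,k))/F(a+e_1,(v_1,k))\le F(a,v)/F(a+e_1,v)$ for every $k$, and your mediant inequality delivers the left half of \eqref{mono}. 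The north side yields the right half the same way; terms with $F(a+e_1,\cdot)=0$ contribute only to the numerator and therefore help. So the bridge to Lemma \ref{lm:order} is through the pre-exit points on the sides of the rectangle $[a,v]$, not through $\B^-_y$ itself.
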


\begin{proof}   For the proof, fix $v$ and do   induction on $\abs{v-a}_1\ge1$. Consider the case $a=v-ke_1$ for  $k\ge1$. Then
	\[\frac{E_a[f(X_{\tau^-_y}),  X_{\tau^-_y}\in \B^{(-2)}_y]}{E_{a+e_1}[f(X_{\tau^-_y}),  X_{\tau^-_y}\in \B^{(-2)}_y]}=p_{a,a+e_1}=\frac{F(a,v)}{F(a+e_1,v)}\,.\]
On the other hand,   when the walk is required to hit $\B^{(-1)}_y$, both steps $e_1$ and $e_2$ are feasible from $a$, and so 
	\[E_a[f(X_{\tau^-_y}),  X_{\tau^-_y}\in \B^{(-1)}_y]\ge p_{a,a+e_1}\,E_{a+e_1}[f(X_{\tau^-_y}),  X_{\tau^-_y}\in \B^{(-1)}_y].  \]
This establishes   \eqref{mono}.   \eqref{mono2}  for  $a=v-ke_2$ for $k\ge1$ follows in a  symmetric manner. In particular,  we have the full conclusion for $\abs{v-a}_1=1$.

Suppose \eqref{mono}--\eqref{mono2} hold for all pairs $a\le v$ with $\abs{v-a}_1=\ell\ge1$.  Consider $a\le v$ with $\abs{v-a}_1=\ell+1$.
We already know the result when  $a\in\{v-(\ell+1)e_1,v-(\ell+1)e_2\}$.
Thus we may assume   that $a<v$ coordinatewise.
For $i\in\{1,2\}$ take the identity 
	\[p_{a,a+e_1}\frac{E_{a+e_1}[f(X_{\tau^-_y}),  X_{\tau^-_y}\in \B^{(-i)}_y]}{E_{a}[f(X_{\tau^-_y}),  X_{\tau^-_y}\in \B^{(-i)}_y]}+p_{a,a+e_2}\frac{E_{a+e_2}[f(X_{\tau^-_y}),  X_{\tau^-_y}\in \B^{(-i)}_y]}{E_{a}[f(X_{\tau^-_y}),  X_{\tau^-_y}\in \B^{(-i)}_y]}=1\]
and rearrange it to yield the two identities 
	\begin{align}\label{ind}
	\begin{split}
	&\qquad\qquad\qquad \frac{{E_{a}[f(X_{\tau^-_y}),  X_{\tau^-_y}\in \B^{(-i)}_y]}}{{E_{a+e_1}[f(X_{\tau^-_y}),  X_{\tau^-_y}\in \B^{(-i)}_y]}}
	=p_{a,a+e_1} \\
	&+p_{a,a+e_2}\frac{{E_{a+e_2}[f(X_{\tau^-_y}),  X_{\tau^-_y}\in \B^{(-i)}_y]}}{{E_{a+e_1+e_2}[f(X_{\tau^-_y}),  X_{\tau^-_y}\in \B^{(-i)}_y]}} \biggl( \frac{{E_{a+e_1}[f(X_{\tau^-_y}),  X_{\tau^-_y}\in \B^{(-i)}_y]}}{{E_{a+e_1+e_2}[f(X_{\tau^-_y}),  X_{\tau^-_y}\in \B^{(-i)}_y]}} \biggr)^{-1} 
	\end{split}
	\end{align}
 and 
	\begin{align}\label{ind2}
	\begin{split}	&\qquad\qquad\qquad \frac{{E_{a}[f(X_{\tau^-_y}),  X_{\tau^-_y}\in \B^{(-i)}_y]}}{{E_{a+e_2}[f(X_{\tau^-_y}),  X_{\tau^-_y}\in \B^{(-i)}_y]}}
	=p_{a,a+e_2}\\ 
	&+p_{a,a+e_1}\frac{{E_{a+e_1}[f(X_{\tau^-_y}),  X_{\tau^-_y}\in \B^{(-i)}_y]}}{{E_{a+e_1+e_2}[f(X_{\tau^-_y}),  X_{\tau^-_y}\in \B^{(-i)}_y]}}\biggl(  \frac{{E_{a+e_2}[f(X_{\tau^-_y}),  X_{\tau^-_y}\in \B^{(-i)}_y]}}{{E_{a+e_1+e_2}[f(X_{\tau^-_y}),  X_{\tau^-_y}\in \B^{(-i)}_y]}}\biggr)^{-1}.
	\end{split}
	\end{align}
Derive  the analogous  equations for the ratios of hitting probabilities  from   the identity 	
	\[p_{a,a+e_1}\frac{F(a+e_1,v)}{F(a.v)}+p_{a,a+e_2}\frac{F(a+e_2,v)}{F(a,v)}=1. \]	
Apply the induction assumption on the right-hand sides of \eqref{ind} and \eqref{ind2} and their counterparts for the ratios of hitting probabilities.  This verifies \eqref{mono} and \eqref{mono2} for $u$. 
\end{proof}

The remainder of the proof relies on special properties of the beta environment.  The next proposition gives  control over limits of hitting probability ratios through the harmonic functions constructed for Proposition \ref{stat-pr}.    Hitting probabilities under environment $\p$ are denoted by 
\be\label{HTdef}  \HPa{\p}{x}{y}=\HP{\p}{x}{y}=P^\p_x( \exists n\ge 0: X_n=y). \ee
When $x\le y$ this is of course $\HP{\p}{x}{y}=P^\p_x(  X_{\abs{y-x}_1}=y)$ which we also use occasionally when the notation is not too heavy.   

\begin{proposition}\label{prop:comp}
Fix $\xi\in(\ri\Uset)\setminus\{\llnv\}$.   
If $\eta,\zeta\in\ri\Uset$ are such that 
\be\label{56.7}  
\eta_1<\xi_1<\zeta_1 \ee
 then 
  for all $x\in\Z^2_+$ and $z\in\Z^2$ we have almost surely 
	\[\varlimsup_{N\to\infty}\frac{\HP{\bar\p^\xi}{x}{[N\zeta]+z}}{\HP{\bar\p^\xi}{x+e_1}{[N\zeta]+z}}\le\bar\Bd^\xi_{x,\,x+e_1}
	\le\varliminf_{N\to\infty}\frac{\HP{\bar\p^\xi}{x}{[N\eta]+z}}{\HP{\bar\p^\xi}{x+e_1}{[N\eta]+z}}\]
and
	\[\varliminf_{N\to\infty}\frac{\HP{\bar\p^\xi}{x}{[N\zeta]+z}}{\HP{\bar\p^\xi}{x+e_2}{[N\zeta]+z}}\ge\bar\Bd^\xi_{x,\,x+e_2}
	\ge\varlimsup_{N\to\infty}\frac{\HP{\bar\p^\xi}{x}{[N\eta]+z}}{\HP{\bar\p^\xi}{x+e_2}{[N\eta]+z}}\,.\]
%
Recall from \eqref{bar-def} that these inequalities split into two separate results: one for $(\p^{\lambda(\xi)}, \Bd^{\lambda(\xi)})$ when $\xi_1\in(\llnv_1,1)$, and the other for $(\wt\p^{\lambda(\xi)}, \wt\Bd^{\lambda(\xi)})$ when $\xi_1\in(0,\llnv_1)$. 
\end{proposition}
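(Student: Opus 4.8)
The plan is to exploit the fact that under $\Pplus$ the environment $\bar\p^\xi$ has the i.i.d.\ Beta$(\alpha,\beta)$ distribution, and that $\bar\Bd^\xi$ is a harmonic function for its \emph{backward} transitions $\pch$, so that the backward Doob-transformed walk $P^{\pich^\lambda}$ and the forward Doob-transformed walk $P^{\pi^\lambda}$ are genuine RWREs whose laws of large numbers we already know from Theorem \ref{thm:LLN}. Throughout I treat the case $\xi_1\in(\llnv_1,1)$ with $\lambda=\lambda(\xi)$ and the notation $(\p^\lambda,\Bd^\lambda,\pi^\lambda,\pich^\lambda)$; the case $\xi_1\in(0,\llnv_1)$ is identical with $(\wt\p^\lambda,\wt\Bd^\lambda)$ in place of $(\p^\lambda,\Bd^\lambda)$ and the roles of $\alpha,\beta$ and of the two axes swapped, and all statements below are claimed ``almost surely,'' with the null set depending on $\xi,\eta,\zeta,x,z$ (it can be enlarged to a fixed null set later by taking a countable dense set of directions and using the monotonicity of Lemma \ref{ind-sol}/\eqref{monotone1}).

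First I would set up the comparison that converts ratios of hitting probabilities into the harmonic ratios $\Bd^\lambda$. By the Markov property, for $y\ge x+e_1+e_2$ the hitting probability $\HP{\p^\lambda}{x}{y}$ decomposes according to the first entry into the boundary $\B^-_y$, and the key identity \eqref{harm76} says $\Bd^\lambda_{x,y}=E^{\p^\lambda}_x[\Bd^\lambda(X_{\tau^-_y},y)]$. Apply Lemma \ref{lm99} to the forward transition $p=\p^\lambda$ with $f=\HP{\p^\lambda}{\abullet}{y}$ on the boundary $\B^-_y$ (or, more directly, with $f(w)=\Bd^\lambda_{w,y}$), and with $v=[N\zeta]$ for the upper bound and $v=[N\eta]$ for the lower bound. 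This sandwiches $\HP{\p^\lambda}{x}{v}/\HP{\p^\lambda}{x+e_1}{v}$ between the two conditional expectations
\[
\frac{E^{\p^\lambda}_x[\Bd^\lambda(X_{\tau^-_y},y),\, X_{\tau^-_y}\in\B^{(-2)}_y]}{E^{\p^\lambda}_{x+e_1}[\Bd^\lambda(X_{\tau^-_y},y),\, X_{\tau^-_y}\in\B^{(-2)}_y]}
\quad\text{and}\quad
\frac{E^{\p^\lambda}_x[\Bd^\lambda(X_{\tau^-_y},y),\, X_{\tau^-_y}\in\B^{(-1)}_y]}{E^{\p^\lambda}_{x+e_1}[\Bd^\lambda(X_{\tau^-_y},y),\, X_{\tau^-_y}\in\B^{(-1)}_y]},
\]
and by \eqref{pila10} each of these is a ratio of probabilities $P^{\pi^\lambda}_x\{X_{\tau^-_y}\in\B^{(-i)}_y\}/P^{\pi^\lambda}_{x+e_1}\{X_{\tau^-_y}\in\B^{(-i)}_y\}$ times the harmonic-function normalizers; the point of the decomposition is that $\Bd^\lambda_{x,y}=\Bd^\lambda_{x,\,x+e_1}\Bd^\lambda_{x+e_1,y}$ by the cocycle property \eqref{cocycle}, so the ratio $\Bd^\lambda_{x,y}/\Bd^\lambda_{x+e_1,y}=\Bd^\lambda_{x,\,x+e_1}$ drops out exactly, and the whole sandwich reduces to bounding $\Bd^\lambda_{x,\,x+e_1}$ between two quantities of the form $\Bd^\lambda_{x,\,x+e_1}\cdot\rho^{(i)}_N$ where $\rho^{(i)}_N=P^{\pi^\lambda}_{x}\{X_{\tau^-_y}\in\B^{(-i)}_y\}/P^{\pi^\lambda}_{x+e_1}\{X_{\tau^-_y}\in\B^{(-i)}_y\}$.

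Second, I would show $\rho^{(i)}_N\to 1$ along the appropriate subsequences using the law of large numbers for $P^{\pi^\lambda}$ from Theorem \ref{thm:LLN}: under $P^{\pi^\lambda}_0$ the walk satisfies $n^{-1}X_n\to\xi$ almost surely, with $\xi_1$ determined by \eqref{eq:lam-xi1}. Since $\eta_1<\xi_1<\zeta_1$, the walk from $x$ (and from $x+e_1$), when run until it hits $\B^-_{[N\zeta]+e_1+e_2}$, will hit the horizontal part $\B^{(-1)}$ with probability $\to 1$ (the target direction $\zeta$ is ``above'' the typical velocity $\xi$ in the $e_2$-direction, equivalently the walk tends to exit through the $e_1$-axis side of the corner at $[N\zeta]$), and symmetrically it hits the vertical part $\B^{(-2)}$ of $\B^-_{[N\eta]+e_1+e_2}$ with probability $\to 1$; this is a routine consequence of the LLN together with the fact that the exit boundary is a translate of the corner at the target point. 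Hence for the $\zeta$-sandwich the relevant $\rho^{(1)}_N=P^{\pi^\lambda}_x\{X_{\tau^-}\in\B^{(-1)}\}/P^{\pi^\lambda}_{x+e_1}\{X_{\tau^-}\in\B^{(-1)}\}\to 1$ (both probabilities tend to $1$), giving $\varlimsup_N \HP{\p^\lambda}{x}{[N\zeta]+z}/\HP{\p^\lambda}{x+e_1}{[N\zeta]+z}\le\Bd^\lambda_{x,\,x+e_1}$ since the other side of the Lemma \ref{lm99} sandwich only contributes an upper bound; and for the $\eta$-sandwich $\rho^{(2)}_N\to1$ gives the matching lower bound $\Bd^\lambda_{x,\,x+e_1}\le\varliminf_N \HP{\p^\lambda}{x}{[N\eta]+z}/\HP{\p^\lambda}{x+e_1}{[N\eta]+z}$. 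The $e_2$ statements follow by the symmetric half of Lemma \ref{lm99} (or by reflecting across the diagonal), with the inequalities reversed because $(\Bd^\lambda_{x,\,x+e_2})^{-1}$ plays the role of the increasing quantity. Finally I would handle the shift $z\in\Z^2$: replacing the target $[N\zeta]$ by $[N\zeta]+z$ does not change the limiting direction, so the same argument applies verbatim once one notes that for $N$ large the exit boundary $\B^-_{[N\zeta]+z+e_1+e_2}$ still lies strictly northeast of $x$; one can also absorb $z$ by translating the quadrant, using the translation invariance \eqref{la-45.2}.

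The main obstacle is the second step: proving that the Doob-transformed walk $P^{\pi^\lambda}_x$ exits the corner $\B^-_{[N\zeta]+e_1+e_2}$ through the prescribed side with probability tending to $1$, uniformly enough that the ratio $\rho^{(i)}_N$ converges to $1$ rather than merely staying bounded. The LLN from Theorem \ref{thm:LLN} gives $X_n/n\to\xi$ but does not by itself immediately control the exit side of a tilted corner at distance $\Theta(N)$; what is needed is that the random time $\tau^-_{[N\zeta]+e_1+e_2}$ is $\Theta(N)$ and that at that time the walk's displacement concentrates near $N\xi$, so that relative to the corner at $N\zeta$ it is displaced in the $-e_2$ direction (hence exits via the $e_1$-axis). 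This is a ``law of large numbers up to a hitting time'' statement; I would deduce it from the spatial LLN by a standard argument (the walk crosses every antidiagonal, its position on antidiagonal $k$ is concentrated near $k\xi/|\xi|_1=k\xi$, and $[N\zeta]$ sits on antidiagonal $N$ strictly on the $e_1$-side of $N\xi$ because $\zeta_1>\xi_1$), but making the ratio of the two near-one probabilities converge to exactly $1$ — as opposed to controlling only one of them — requires the lower bound on the complementary exit probability to decay, which again follows from the same concentration once one observes that exiting through the ``wrong'' side forces an atypical deviation of order $N$. All of this is exactly parallel to the Busemann-function constructions for the log-gamma polymer in \cite{Geo-etal-15} and the corner growth model in \cite{Geo-Ras-Sep-17-ptrf-1}, and I would cite those for the routine parts of the concentration argument.
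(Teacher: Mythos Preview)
Your approach is exactly the paper's: apply Lemma~\ref{lm99} with $f(w)=\bar\Bd^\xi_{w,y_N}$ and $y_N=[N\zeta]+z+e_1+e_2$, rewrite the bounding ratios via \eqref{pila10} and \eqref{harm76} as $\bar\Bd^\xi_{x,x+e_1}$ times a ratio of $P^{\bar\pi^\xi}$ exit-side probabilities, and then invoke the LLN of Theorem~\ref{thm:LLN} to send both exit-side probabilities to~$1$. The ``main obstacle'' you flag is not one: once the LLN gives $P^{\bar\pi^\xi}_x\{X_{\tau^-_{y_N}}\in\B^{(-1)}_{y_N}\}\to 1$ and likewise from $x+e_1$ (which is immediate since $\zeta_1>\xi_1$ forces the $\xi$-directed path to cross the horizontal arm of the corner first), the ratio of two sequences each tending to~$1$ tends to~$1$ with no rate needed---your talk of needing the complementary probability to ``decay'' at a controlled rate is superfluous.
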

 
\begin{proof}  
The inequalities claimed are all proved the same way.  We illustrate with the first one.  Let $y_N=[N\zeta]+z+e_1+e_2$.  
By \eqref{mono},   then \eqref{pila10} and \eqref{harm76},
\begin{align*}
&\frac{\HP{\bar\p^\xi}{x}{[N\zeta]+z}}{\HP{\bar\p^\xi}{x+e_1}{[N\zeta]+z}}
\le 
\frac{E^{\bar\p^\xi}_x[\bar\Bd^\xi(X_{\tau^-_{y_N}}, y_N), X_{\tau^-_{y_N}}\in \B^{(-1)}_{y_N}] }{E^{\bar\p^\xi}_{x+e_1}[\bar\Bd^\xi(X_{\tau^-_{y_N}}, y_N), X_{\tau^-_{y_N}}\in \B^{(-1)}_{y_N}]}\\
&=\frac{P_x^{\bar\pi^\xi}\{X_{\tau^-_{y_N}}\in \B^{(-1)}_{y_N}\}\,E^{\bar\p^\xi}_{x}[\bar\Bd^\xi(X_{\tau^-_{y_N}}, y_N)]}{P_{x+e_1}^{\bar\pi^\xi}\{X_{\tau^-_{y_N}}\in \B^{(-1)}_{y_N}\}\,E^{\bar\p^\xi}_{x+e_1}[\bar\Bd^\xi(X_{\tau^-_{y_N}}, y_N)]}
= \frac{P_x^{\bar\pi^\xi}\{X_{\tau^-_{y_N}}\in \B^{(-1)}_{y_N}\}}{P_{x+e_1}^{\bar\pi^\xi}\{X_{\tau^-_{y_N}}\in \B^{(-1)}_{y_N}\}} \cdot \bar\Bd^\xi_{x,\,x+e_1}. 
\end{align*}
	The probabilities in the last expression converge to one  by the law of large numbers of Theorem \ref{thm:LLN} because by  \eqref{56.7} the $\xi$-ray passes $\zeta$ on the left.  
\end{proof}

\begin{corollary}\label{cor:Bus}
Fix $\xi\in\ri\Uset$. Let $\p$ be an i.i.d.\ {\rm Beta$(\alpha,\beta)$} environment.
Then almost surely, for all $z\in\Z^2$, the limits
	\be\label{89.90}
	\lim_{N\to\infty}\frac{\HP{\p}{0}{[N\xi]+z}}{\HP{\p}{e_1}{[N\xi]+z}}
	\quad\text{and}\quad
	\lim_{N\to\infty}\frac{\HP{\p}{e_2}{[N\xi]+z}}{\HP{\p}{0}{[N\xi]+z}}
%
\ee
exist and are independent of $z$. 

\begin{enumerate}[label={\rm(\alph*)}, ref={\rm\alph*},leftmargin=*]
\item When $\xi=\llnv$, the limits equal 1.  

\item\label{cor:Bus.b} For $\xi\ne\llnv$, let 
  $\lambda=\lambda(\xi)$ be determined by Lemma \ref{lam-xi-t}\eqref{lam-xi}.  
  \begin{enumerate}[label={\rm(\alph{enumi}.\roman*)}, ref={\rm\alph{enumi}.\roman*}] 
 \item If  $\xi_1\in(\llnv_1,1)$ the two limits in \eqref{89.90}  are, respectively, {\rm Beta}$(\alpha+\lambda,\beta)$ 
and {\rm Beta}$(\lambda,\alpha)$ distributed. 

\item If  $\xi_1\in(0,\llnv_1)$ the reciprocals of  the two limits in \eqref{89.90}  are, respectively, {\rm Beta}$(\lambda,\beta)$ 
and {\rm Beta}$(\beta+\lambda,\alpha)$ distributed.
\end{enumerate} 
\end{enumerate}
\end{corollary}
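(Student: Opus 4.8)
The plan is to deduce Corollary~\ref{cor:Bus} from Proposition~\ref{prop:comp} by a sandwiching argument over directions, combined with the distributional input recorded in Proposition~\ref{stat-pr}. The reason this does not follow immediately from Proposition~\ref{prop:comp} alone is that Proposition~\ref{prop:comp} bounds $\limsup$ and $\liminf$ of the hitting-probability ratios from \emph{opposite} sides, using \emph{strictly different} directions $\eta,\zeta$ with $\eta_1<\xi_1<\zeta_1$, and it is stated for the auxiliary environment $\bar\p^\xi$ rather than for a generic i.i.d.\ environment $\p$. Both of these gaps need to be bridged, and the first is the real work.

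First I would handle the case $\xi=\llnv$. Here $B^{\llnv}\equiv 0$ and there is no harmonic function on quadrants to appeal to, so instead I would argue directly: by the reversibility-free Markov-property identities for hitting probabilities together with the quenched LLN \eqref{lln} (the averaged walk is genuinely ballistic toward $\llnv$ and the quenched walk satisfies the same LLN), the ratios $\HP{\p}{0}{[N\llnv]+z}/\HP{\p}{e_1}{[N\llnv]+z}$ and $\HP{\p}{e_2}{[N\llnv]+z}/\HP{\p}{0}{[N\llnv]+z}$ both converge to $1$. Concretely one can run the monotonicity Lemmas~\ref{lm:order}--\ref{lm99} with $f\equiv 1$: Lemma~\ref{lm:order} shows the ratio $\HP{\p}{0}{y}/\HP{\p}{e_1}{y}$ is squeezed between the corresponding ratios at $y\pm e_i$, and since stepping $y$ along the antidiagonal toward $\llnv$ changes these ratios monotonically and the walk's LLN forces the two one-step-shifted hitting probabilities to have ratio tending to $1$, the limit is $1$. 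This also gives the $z$-independence in the degenerate case.

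For $\xi\ne\llnv$ the core step is to upgrade the one-sided bounds of Proposition~\ref{prop:comp} into an actual limit. Fix sequences $\eta^k\uparrow$ and $\zeta^k\downarrow$ to $\xi_1$ inside $(0,1)$, all $\ne\llnv_1$. For each $k$, Proposition~\ref{prop:comp} (applied with the pair $\eta^k,\zeta^k$ straddling $\xi$) gives, for the environment $\bar\p^\xi$,
\[
\varlimsup_{N\to\infty}\frac{\HP{\bar\p^\xi}{x}{[N\zeta^k]+z}}{\HP{\bar\p^\xi}{x+e_1}{[N\zeta^k]+z}}\le\bar\Bd^\xi_{x,\,x+e_1}\le\varliminf_{N\to\infty}\frac{\HP{\bar\p^\xi}{x}{[N\eta^k]+z}}{\HP{\bar\p^\xi}{x+e_1}{[N\eta^k]+z}},
\]
and similarly for the $e_2$-ratio. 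The task is then to trade the directions $\zeta^k,\eta^k$ for the single direction $\xi$. For this I would invoke the monotonicity of hitting-probability ratios in the endpoint direction supplied by Lemma~\ref{lm:order}: along the antidiagonal $\{y_1+y_2=N\}$, the ratio $\HP{\bar\p^\xi}{x}{y}/\HP{\bar\p^\xi}{x+e_1}{y}$ is monotone as $y$ sweeps from the $\zeta^k$-side toward the $\eta^k$-side through $[N\xi]$, so $\HP{\bar\p^\xi}{x}{[N\xi]+z}/\HP{\bar\p^\xi}{x+e_1}{[N\xi]+z}$ lies between the $[N\zeta^k]+z$-ratio and the $[N\eta^k]+z$-ratio for all large $N$. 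Taking $\limsup$ and $\liminf$ in $N$ and then $k\to\infty$ squeezes the limit: $\lim_N \HP{\bar\p^\xi}{x}{[N\xi]+z}/\HP{\bar\p^\xi}{x+e_1}{[N\xi]+z}=\bar\Bd^\xi_{x,\,x+e_1}$ a.s., independently of $z$. The same argument with $e_1\leftrightarrow e_2$ gives the second limit equal to $(\bar\Bd^\xi_{0,e_2})^{-1}$ when $x=0$, noting the cocycle relation $\bar\Bd^\xi_{e_2,0}=1/\bar\Bd^\xi_{0,e_2}$.

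Finally I would transfer this back to a generic i.i.d.\ Beta$(\alpha,\beta)$ environment $\p$. By Proposition~\ref{stat-pr}, under $\Pplus$ the environment $\bar\p^\xi$ has exactly the i.i.d.\ Beta$(\alpha,\beta)$ law, so the statement ``$\lim_N \HP{\p}{0}{[N\xi]+z}/\HP{\p}{e_1}{[N\xi]+z}$ exists a.s.\ and is $z$-independent'' holds for $\p$ as well, being a statement about the law of the environment alone. Its distribution is the law of $\bar\Bd^\xi_{0,e_1}$, and the law of $\lim_N \HP{\p}{e_2}{[N\xi]+z}/\HP{\p}{0}{[N\xi]+z}$ is that of $\bar\Bd^\xi_{0,e_2}$. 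By the definition \eqref{bar-def} and the marginal distributions \eqref{Bd-8.9} and \eqref{Bd-8.10} in Proposition~\ref{stat-pr} and Remark~\ref{rm-wt}, for $\xi_1\in(\llnv_1,1)$ these are $\mathrm{Beta}(\alpha+\lambda,\beta)$ and $\mathrm{Beta}(\lambda,\alpha)$ respectively, while for $\xi_1\in(0,\llnv_1)$ their reciprocals are $\mathrm{Beta}(\lambda,\beta)$ and $\mathrm{Beta}(\beta+\lambda,\alpha)$. This yields parts (a), (b.i), (b.ii).

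The main obstacle I anticipate is the interchange-of-directions step: one must be careful that Lemma~\ref{lm:order} is being applied with the endpoint $y$ varying along the correct antidiagonal so that the monotonicity genuinely places $[N\xi]+z$ between $[N\zeta^k]+z$ and $[N\eta^k]+z$ for all large $N$ (the rounding in $[\,\cdot\,]$ and the fixed shift $z$ must not spoil the ordering), and that the same $\limsup/\liminf$ sandwich survives sending $k\to\infty$ since the bound $\bar\Bd^\xi_{x,x+e_1}$ on the right of Proposition~\ref{prop:comp} does not depend on $k$. A secondary, routine point is verifying $z$-independence uniformly in $z$ from a countable union of almost-sure events. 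Everything else — the degenerate case $\xi=\llnv$, and the distributional identification at the end — is bookkeeping against results already in the excerpt.
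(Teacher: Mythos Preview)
Your squeeze for $\xi\ne\llnv$ does not close. With the environment fixed at $\bar\p^\xi$ and targets at $[N\zeta^k]+z$, $[N\xi]+z$, $[N\eta^k]+z$ (where $\eta^k_1<\xi_1<\zeta^k_1$), Lemma~\ref{lm:order} indeed gives the ordering
\[
L_N\;\le\;M_N\;\le\;R_N,
\]
but Proposition~\ref{prop:comp} only tells you $\varlimsup_N L_N\le\bar\Bd^\xi_{x,x+e_1}\le\varliminf_N R_N$. These are bounds on the \emph{wrong} extremes of the outer sequences: you would need an upper bound on $\varlimsup_N R_N$ and a lower bound on $\varliminf_N L_N$ to trap $M_N$, and you have neither. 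Sending $k\to\infty$ changes nothing, since the bound $\bar\Bd^\xi_{x,x+e_1}$ is $k$-independent and the missing bounds on $\varlimsup_N R_N$ and $\varliminf_N L_N$ are simply absent for every $k$. So the argument does not establish that $M_N$ converges.

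The paper's fix is to vary the \emph{environment} parameter rather than the target direction. For $\xi_1\in(\llnv_1,1)$ pick $\gamma<\lambda(\xi)<\delta$, so that $\xi_1(\gamma)>\xi_1>\xi_1(\delta)$. Proposition~\ref{prop:comp} applied in the environments $\p^\gamma$ and $\p^\delta$ with target direction $\xi$ (which now plays the role of $\eta$ for $\gamma$ and of $\zeta$ for $\delta$) gives
\[
\Bd^\gamma_{0,e_1}\le\varliminf_N\frac{\HP{\p^\gamma}{0}{[N\xi]+z}}{\HP{\p^\gamma}{e_1}{[N\xi]+z}},
\qquad
\varlimsup_N\frac{\HP{\p^\delta}{0}{[N\xi]+z}}{\HP{\p^\delta}{e_1}{[N\xi]+z}}\le\Bd^\delta_{0,e_1}.
\]
Since $\p^\gamma$, $\p^\delta$ both have the i.i.d.\ Beta$(\alpha,\beta)$ law of $\p$, these are distributional bounds on $\varliminf_N$ and $\varlimsup_N$ of the \emph{same} ratio under $\P$. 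Sending $\gamma,\delta\to\lambda(\xi)$ and using the pointwise continuity $\Bd^\gamma,\Bd^\delta\to\Bd^{\lambda(\xi)}$ from \eqref{cont1}, both $\varliminf$ and $\varlimsup$ are Beta$(\alpha+\lambda(\xi),\beta)$-distributed; since $\varliminf\le\varlimsup$ always, equality of distributions forces a.s.\ equality. The $z$-independence and the case $\xi=\llnv$ are then handled by further monotonicity plus distributional comparisons, not by a direct LLN argument; your treatment of $\xi=\llnv$ via the quenched LLN is too vague to stand on its own, whereas the paper reduces it to the already-proved case $\xi\ne\llnv$ by sandwiching with directions $\eta,\zeta$ on either side of $\llnv$ and letting $\lambda(\eta),\lambda(\zeta)\to\infty$.
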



\begin{proof}
Consider the case  $\xi_1\in(\llnv_1,1)$ and the first limit of \eqref{89.90}.    
Let $\gamma<\lambda<\delta$.    By  Lemma \ref{lam-xi-t}\eqref{lam-xi}    the velocities associated with these parameters  in the range $(\llnv_1,1)$ satisfy $\xi_1(\gamma)>\xi_1=\xi_1(\lambda)>\xi_1(\delta)$.   Hence by 
Proposition \ref{prop:comp} 
	\be\label{89.88}  \begin{aligned}
	&\Bd^\gamma_{0,e_1}\le\varliminf_{N\to\infty}\frac{\HP{\p^\gamma}{0}{[N\xi]+z}}{\HP{\p^\gamma}{e_1}{[N\xi]+z}}  
\qquad \text{and}\qquad  
	\varlimsup_{N\to\infty}\frac{\HP{\p^\delta}{0}{[N\xi]+z}} {\HP{\p^\delta}{e_1}{[N\xi]+z}}  \le\Bd^\delta_{0,e_1}.
	\end{aligned}\ee
Since $\Bd^\gamma, \Bd^\delta\to\Bd^\lambda$ 
 as $\gamma, \delta\to\lambda$ 
 by \eqref{cont1},    and since 
$\Bd^\lambda_{0,e_1}$ is Beta($\alpha+\lambda,\beta$)-distributed,  we have that 
	\[\varliminf_{N\to\infty}\frac{\HP{\p}{0}{[N\xi]+z}}{\HP{\p}{e_1}{[N\xi]+z}}\quad\text{and}\quad
	\varlimsup_{N\to\infty}\frac{\HP{\p}{0}{[N\xi]+z}}{\HP{\p}{e_1}{[N\xi]+z}}\]
are both Beta($\alpha+\lambda,\beta$) random variables. 
Since  $\varliminf\le \varlimsup$,  their equality in distribution   implies their $\P$-almost sure equality.   Same reasoning works for the second  limit of \eqref{89.90}. This proves the existence of the limits and claim (b.i), for any fixed $z$.  

To argue that the limit with $z=0$  equals  the limit with an arbitrary $z=(z_1,z_2)$,  pick integers $k_N$ so that $[k_N\xi]_2=[N\xi]_2+z_2$.   Then, depending on the relative locations of   $[k_N\xi]_1$ and $ [N\xi]_1+z_1$,  by  \eqref{P98} either  the inequality 
\be\label{89.93} \begin{aligned}
\frac{\HP{\p}{0}{[k_N\xi]}}{\HP{\p}{e_1}{[k_N\xi]}}
\ge \frac{\HP{\p}{0}{[N\xi]+z}}{\HP{\p}{e_1}{[N\xi]+z}}
\end{aligned}\ee
or its opposite is valid for infinitely many $N$.    In the limit we get again an almost sure inequality between two Beta($\alpha+\lambda,\beta$) random variables, which therefore must coincide almost surely. 

For $\xi_1\in(0,\llnv_1)$ one can repeat the same steps but use instead the processes $(\wt\p, \wt\Bd)$  that follow part (b) of Lemma \ref{ind-sol}. 

For the case  $\xi=\llnv$,  pick   $\eta,\zeta\in\ri\Uset$  such that $\eta_1<\llnv_1<\zeta_1$.  By \eqref{P98}, 
\begin{align*}
\frac{\HP{\p}{0}{[N\zeta]+z}}{\HP{\p}{e_1}{[N\zeta]+z}}
\le
\frac{\HP{\p}{0}{[N\llnv]+z}}{\HP{\p}{e_1}{[N\llnv]+z}}
\le
\frac{\HP{\p}{0}{[N\eta]+z}}{\HP{\p}{e_1}{[N\eta]+z}}. 
\end{align*}
By the cases already proved,  the left and right ratios converge to  random variables with distributions 
Beta$(\alpha+\lambda(\zeta),\beta)$ and Beta$(\lambda(\eta),\beta)^{-1}$, respectively.   These random variables converge to 1 as we let $\zeta,\eta\to\llnv$ which sends $\lambda(\zeta), \lambda(\eta)\to\infty$ (see Lemma \ref{lam-xi-t}\eqref{lam-xi} and the middle plot of Fig.~\ref{fig:lam}).  The second ratio in \eqref{89.90} for  $\xi=\llnv$ is handled similarly.  
\end{proof}

We can now organize all the derived properties  to establish  Theorem \ref{th:Buse}.  

\begin{proof}[Proof of Theorem \ref{th:Buse}]
We begin by constructing the process for a fixed $\xi\in\ri\Uset$, then do it simultaneously for a dense countable subset of $\ri\Uset$, and finally capture all of $\ri\Uset$ with limits.  

Fix $\xi\in\ri\Uset$.   Using  Corollary \ref{cor:Bus} and shifts $P_{x+a}^{\p}(X_n=v)=P_a^{T_x\p}(X_n=v-x)$,  we  define 
\be\label{auxlim7}	\begin{aligned}
	B^\xi_{x,y}(\p)&=\lim_{N\to\infty}\big\{\log P_x^{\p}(X_{\abs{z_N-x}_1}=z_N) - \log P_{y}^{\p}(X_{\abs{z_N-y}_1}=z_N)\big\}\\
	&=\lim_{N\to\infty}\big(\log\HP{\p}{x}{z_N} - \log\HP{\p}{y}{z_N}\big). 
	\end{aligned} \ee
as an almost sure limit,  for all $x,y\in\Z^2$, and for any sequence $z_N=[N\xi]+z$ with an arbitrary fixed $z$.  (The second line is the same as the first, stated to illustrate the alternative notations we use.)   The limit is independent of the choice of $z$.    The marginal distributional properties \eqref{B:beta}, stationary cocycle  properties \eqref{B:coc}, and harmonicity \eqref{B:rec}  stated in Theorem \ref{th:Buse} follow from Corollary \ref{cor:Bus} and the structure of the limits.  

%

{\it Proof of part \eqref{B:indep}.}   The independence of the weights $\{\p_x\}$ and construction \eqref{auxlim7}  imply directly the first independence claim of  part \eqref{B:indep}. 

{\it Proof of part \eqref{B:B=Bd} for  fixed $\xi$.} 
We write the details for the case $\xi_1\in(\llnv_1,1)$.    Consider the joint distribution of $m$ weights $\p_{z_h}$ for $1\le h\le m$ and $k+\ell$ nearest-neighbor increments  $B^\xi_{x_i,x_i+e_1}$ and $B^\xi_{y_j,y_j+e_2}$ for $1\le i\le k$ and $1\le j\le \ell$.  By a shift, we may assume that $z_h, x_i, y_j$ all lie in $\Z_+^2$.  Let $\gamma<\lambda(\xi)<\delta$ as in the proof of Corollary \ref{cor:Bus}.  Limit \eqref{auxlim7} works also in environments $\p^\gamma$ and $\p^\delta$ since they have the same i.i.d.\ beta distribution as $\p$.  Let $r_h, s_i, t_j\in\R$.  
Then inequalities \eqref{89.88} and their counterparts for $e_2$ give us these bounds:
\begin{align*}
&\P\{ \p_{z_h}\le r_h, \, e^{B^\xi_{x_i,x_i+e_1}}\le s_i,  \, e^{B^\xi_{y_j,y_j+e_2}}\ge t_j \;\forall \,h,i,j\} \\
&= \Pplus\{   \p^\gamma_{z_h}\le r_h, \, e^{B^\xi_{x_i,x_i+e_1}}(\p^\gamma)\le s_i,  \, e^{B^\xi_{y_j,y_j+e_2}}(\p^\gamma)\ge t_j \;\forall \,h,i,j\} \\
&\le\Pplus\{   \p^\gamma_{z_h}\le r_h, \, \Bd^\gamma_{x_i,x_i+e_1} \le s_i,  \, \Bd^\gamma_{y_j,y_j+e_2} \ge t_j \;\forall \,h, i,j\} 
\end{align*}
and from the other side 
\begin{align*}
&\P\{ \p_{z_h}\le r_h, \,  e^{B^\xi_{x_i,x_i+e_1}}\le s_i,  \, e^{B^\xi_{y_j,y_j+e_2}}\ge t_j \;\forall \,h,i,j\}  \\
&= \Pplus\{  \p^\delta_{z_h}\le r_h, \, e^{B^\xi_{x_i,x_i+e_1}}(\p^\delta)\le s_i,  \, e^{B^\xi_{y_j,y_j+e_2}}(\p^\delta)\ge t_j \;\forall  \,h,i,j\} \\
&\ge\Pplus\{  \p^\delta_{z_h}\le r_h, \, \Bd^\delta_{x_i,x_i+e_1} \le s_i,  \, \Bd^\delta_{y_j,y_j+e_2} \ge t_j \;\forall \,h,i,j\} . 
\end{align*}
Letting $\gamma,\delta\to\lambda(\xi)$  brings the bounds together by \eqref{cont1}:
\be\label{B:98.0}\begin{aligned}
&\P\{ \p_{z_h}\le r_h, \,  e^{B^\xi_{x_i,x_i+e_1}}\le s_i,  \, e^{B^\xi_{y_j,y_j+e_2}}\ge t_j \;\forall \,h,i,j\} \\
&= 
\Pplus\{  \p^{\lambda(\xi)}_{z_h}\le r_h, \, \Bd^{\lambda(\xi)}_{x_i,x_i+e_1} \le s_i,  \, \Bd^{\lambda(\xi)}_{y_j,y_j+e_2} \ge t_j \;\forall \,h, i,j\}.  
\end{aligned}\ee
Thus  the joint distribution of $(\p, B^\xi)$  is the same as that of $(\p^{\lambda(\xi)},  \log\Bd^{\lambda(\xi)})$  described in Proposition \ref{stat-pr}.     The independence of nearest-neighbor $B^\xi$-increments along a down-right path follows.  

Let $\Omega_0$ be the event of full $\P$-probability on which the process $B^\xi_{x,y}$ is defined by \eqref{auxlim7}  for all $\xi$ in the countable set $\Uset_0=(\ri\Uset)\cap\Q^2$.  

%
%


Consider $\xi\in\ri\Uset$ and $\zeta,\eta\in\Uset_0$ with $\eta_1<\xi_1<\zeta_1$. Take a (possibly random) sequence $z_N$ with $z_N/N\to\xi$. Let $M=\abs{z_N}_1$.
For large enough $N$ we have $[M\eta]\cdot e_1<z_N\cdot e_1<[M\zeta]\cdot e_1$ and $[M\eta]\cdot e_2>z_N\cdot e_2>[M\zeta]\cdot e_2$. By Lemma \ref{lm:order} we have for such $N$
	\[\frac{\HP{\p}{x}{[M\zeta]}}{\HP{\p}{x+e_1}{[M\zeta]}} \le \frac{\HP{\p}{x}{z_N}}{\HP{\p}{x+e_1}{z_N}} \le 
	\frac{\HP{\p}{x}{[M\eta]}}{\HP{\p}{x+e_1}{[M\eta]}}\,.\]
The already established limit \eqref{auxlim7} with  sequences $[M\zeta]$ and $[M\eta]$ gives
	\begin{align}\label{sandwich}
	\begin{split}
	&B^\zeta_{x,\,x+e_1}
	\le \varliminf_{N\to\infty}\Big\{\log P^\p_x(X_{\abs{z_N-x}_1}=z_N) - \log P^\p_{x+e_1}(X_{\abs{z_N-x}_1-1}=z_N)\Big\}\\
	&\le \varlimsup_{N\to\infty}\Big\{\log P^\p_x(X_{\abs{z_N-x}_1}=z_N) - \log P^\p_{x+e_1}(X_{\abs{z_N-x}_1-1}=z_N)\Big\}\le B^\eta_{x,\,x+e_1}.
	\end{split}
	\end{align}

The reverse inequalities hold when $e_1$ is replaced by $e_2$. 

  \eqref{sandwich} proves  that the monotonicity in part \eqref{B:mono} holds for  $\xi, \zeta\in\Uset_0$, $\p\in\Omega_0$, and $x\in\Z^2$.   Consequently, for any $\xi\in(\ri\Uset)\setminus\Uset_0$ we can {\it define}  $B^\xi_{x,\,x+e_i}(\p) $ for $\p\in\Omega_0$ by the monotone limit 
  \be\label{auxlim9} 
 B^\xi_{x,\,x+e_i}(\p) = \lim_{\Uset_0\ni\zeta\to\xi,\, \zeta_1>\xi_1 }  B^\zeta_{x,\,x+e_i}(\p) 
 \ee
 as 
$\zeta_1$ decreases to $\xi_1$.   By shrinking $\Omega_0$ we can assume that \eqref{auxlim9} holds also when $\xi\in\Uset_0$.  (This is because the monotonicity gives an inequality in \eqref{auxlim9},  but the two sides agree in distribution and hence agree almost surely.)   By additivity on the right-hand side we can extend \eqref{auxlim9}  to define  \[  B^\xi_{x,\,y}(\p)  = \lim_{\Uset_0\ni\zeta\to\xi,\, \zeta_1>\xi_1 }  B^\zeta_{x,\,y}(\p) \qquad \text{for all $x,y\in\Z^2$ and $\p\in\Omega_0$.}   \]
This definition in terms of right limits extends  the properties proved thus far to all $\xi$.  Furthermore,  cadlag paths (part \eqref{B:cadlag}) have also been established now.  


Fix $\xi\in\ri\Uset$ and $i\in\{1,2\}$. The almost sure continuity of $\zeta\mapsto B^\zeta_{0,e_i}$ at $\zeta=\xi$ follows from   monotonicity \eqref{B:mono}  and from the continuity of $\zeta\mapsto\E[B^\zeta_{0,e_i}]$,  which itself is a consequence of
continuity of the polygamma functions in \eqref{EB1} and \eqref{EB2}. Claim \eqref{B:cont} follows then from the cocycle property in part \eqref{B:coc}.

Continue with a fixed $\xi\in\ri\Uset$.  Let $\zeta,\eta\to\xi$ in \eqref{sandwich} and use the almost sure continuity we just proved.  This shows   that  limit \eqref{Busemann} holds   $\P$-almost surely, simultaneously  for 
all $x\in\Z^2$, $y\in\{x+e_1,x+e_2\}$, and any sequence $z_N$ with $z_N/N\to\xi$. 
The case of a general $y\in\Z^2$  follows   from additivity.  Part \eqref{B:limit} is done.  

%

We turn to part \eqref{B:qldp}. 
When  $\xi_1\in[\llnv_1,1]$, the variational formula for $I_q$ comes from  \eqref{EB1} and the explicit calculations in \eqref{I-var} in Section \ref{Iq-pf}. To minimize the  formula take the derivative of 
	\[\big(\psi_0(\alpha+\lambda(\zeta))-\psi_0(\alpha+\beta+\lambda(\zeta))\big)\xi_1+\big(\psi_0(\alpha+\lambda(\zeta))-\psi_0(\lambda(\zeta))\big)\xi_2\]
in $\zeta_1$ and set it to $0$. This gives the equation 
	\[\xi_1=\frac{\psi_1(\lambda(\zeta))-\psi_1(\alpha+\lambda(\zeta))}{\psi_1(\lambda(\zeta))-\psi_1(\alpha+\beta+\lambda(\zeta))}\]
which by Lemma \ref{lam-xi-t}\eqref{lam-xi} has a unique solution at $\zeta=\xi$. 
The case $\xi_1\in[0,\llnv_1]$ works similarly. 
%
%
%
\end{proof}

 \section{Stationary beta polymer}
\label{Beta-Polys}
 
By looking at the random walk paths under the Doob-transformed RWRE in reverse direction, we can view  this model as a stationary  directed polymer model, called the beta polymer.  We establish this connection in the present section, and then use it in the next two sections to rely on recently published estimates in \cite{Cha-Noa-17-} for   the technical work behind our Theorems  \ref{th:B-var} and \ref{th:kpz}.  The polymer model described here is  case (1.4) on p.~4 of \cite{Cha-Noa-17-},  with their parameter triple $(\mu, \beta, \theta)$ corresponding to our $(\alpha, \beta, \lambda)$.  The notation $Z_{m,n}$ and $Q_{m,n}$ used below  matches the notation of  \cite{Cha-Noa-17-}.

Recall the backward  transition probabilities $\pch$, introduced in \eqref{Del1} and \eqref{pch2}, and 
random variables $(\Bd^\lambda,\p^\lambda)$ from \eqref{Beta-bdry}.
The quenched stationary  {\sl beta polymer} is a  polymer distribution  on up-right paths on the nonnegative first quadrant $\Z_+^2$  that start at the origin.  In our  notation this model  uses potential $V(x-e_j,e_j)=\log\pch_{x,\,x-e_j}$ across edges  $(x-e_j,x)$ for  $x\in\N$, and potential $V(x-e_j,e_j)=\log\Bd^\lambda_{x-e_j,x}$ across boundary edges  $(x-e_j,x)$ for $x\in\B^{(+j)}_0\smallsetminus\{0\}$, $j\in\{1,2\}$.
Fix a point $v=(m,n)\in\N^2$.  The point-to-point partition function for paths from $0$ to $v$ is 
\[Z_{m,n}=\sum_{y_{0,m+n}}e^{\sum_{i=0}^{m+n-1}V(y_i,y_{i+1}-y_i)}\,,\]
where the sum is over up-right  paths $y_{0,m+n}=(y_0,\dotsc,y_{m+n})$ from $0$ to $v=(m,n)$.  

If $x_i=y_{m+n-i}$ denotes the reversed path and $\ell=\min\{i: x_i\in \B^+_0\}$ is  the time  of its first entry   into the boundary $\B^+_0$, then 
\begin{align*}
e^{\sum_{i=0}^{m+n-1}V(y_i,y_{i+1}-y_i)} = \Bd^\lambda_{0, x_\ell} \prod_{i=0}^{\ell-1}\pch_{x_i,x_{i+1}}
= \Bd^\lambda_{0, x_\ell}  P^{\pch}_v\{X_{0,\ell}=x_{0,\ell}\}= \Bd^\lambda_{0, x_\ell}  P^{\pch}_v\{X_{0,\tau^+_0}=x_{0,\ell}\}. 
\end{align*} 
Summing up over the paths gives the first equality below, and the second comes from \eqref{harm77}:  
	\be\label{Z-Bd} Z_{m,n}=E^{\pch}_v[\Bd^\lambda(0,X_{\tau^+_0})]=\Bd^\lambda_{0,v}
	\qquad\text{for $v=(m,n)$.} \ee

The quenched polymer distribution  on up-right  paths $y_{0,m+n}=(y_0,\dotsc,y_{m+n})$ from $0$ to $v=(m,n)$ is
\[Q_{m,n}(y_{0,n})=\frac{e^{\sum_{i=0}^{m+n-1}V(y_i,y_{i+1}-x_i)}}{Z_{m,n}}\,.\]
Letting again $x_i=y_{m+n-i}$  and $\ell=\min\{i: x_i\in \B^+_0\}$ and using \eqref{pila88}, 
\be\label{Q-P}
Q_{m,n}(y_{0,n})=\frac{ \Bd^\lambda_{0, x_\ell}  P^{\pch}_v\{X_{0,\ell}=x_{0,\ell}\}}{\Bd^\lambda_{0,v}}
=P^{\pich^\lambda}_v\{X_{0,\ell}=x_{0,\ell}\} \qquad\text{for $v=(m,n)$.} 
\ee
Thus (the reverse of)  the polymer path under  $Q_{m,n}$  
is obtained by running the Doob-transformed RWRE under   $P^{\pich^\lambda}_v$ until it hits the boundary $\B^+_0$, and then following the boundary to the origin.



\section{The variance of the increment-stationary harmonic functions}\label{sec:var}


The method for bounding  the fluctuations of the walk for Theorem \ref{th:kpz} is to control the exit point of the walk from rectangles.   This  is achieved with the help of the harmonic functions $\Bd^\lambda$ and $\wt\Bd^\lambda$ constructed in Section \ref{sec:bdry}.    We work exclusively with $\Bd^\lambda$ and omit the analogous statements and proofs for $\wt\Bd^\lambda$.  Equivalently, we are treating explicitly only the case $\xi_1\in(\llnv_1,1)$ and omitting the details for $\xi_1\in(0,\llnv_1)$.  

This section gives the connection between  the fluctuations of $\log\Bd^\lambda$ and the entry  point on the boundary,  and bounds on the variance of $\log\Bd^\lambda$. Theorem \ref{th:B-var} is proved at the end of the section.

Recall the beta integral $B(a,b)$  and the c.d.f.\  $F(s;a,b)$  of the Beta$(a,b)$ distribution from  \eqref{BB} and \eqref{be-cdf}.   
Define	
\[\widetilde L(s,a,b)=-\,\frac1s\cdot \frac{\frac{\partial}{\partial a}F(s;a,b)}{\frac{\partial}{\partial s}F(s;a,b)}\,.\]

Note that $\frac{\partial}{\partial a}B(a,b)=(\psi_0(a)-\psi_0(a+b))B(a,b)$. 
A computation then gives
	\begin{align}\label{L-def}
	\widetilde L(s,a,b)
	&=-s^{-a}(1-s)^{1-b}\int_0^s  t^{a-1}(1-t)^{b-1}[\log t-(\psi_0(a)-\psi_0(a+b))]\,dt.
	\end{align}
Observe that \[\widetilde L(s,a,b)=s^{-a}(1-s)^{1-b}B(a,b){\rm Cov}(-\log\Beta,\one\{\Beta\le s\}),\] 
where $\Beta\sim\text{Beta}(a,b)$. Since $-\log t$ and $\one\{t\le s\}$ are 
decreasing functions of $t$ we see that $\widetilde L(s,a,b)>0$ for all $s\in(0,1)$ and $a,b>0$.    Let $L(s,\lambda)=\widetilde L(s,\alpha+\lambda,\beta)$.

 Recall hitting  times $\tau^-_v$ and $\tau^+_0$ defined in \eqref{taupm}. 
Let $\Varplus$ 
denote the variance 
under the coupling $\Pplus$ of Section \ref{sec:bdry}.
Let $\wcP_v^\lambda(\cdot)=\Eplus P_v^{\pich^\lambda}(\cdot)$ denote the averaged measure of the RWRE that utilizes the backward Doob-transformed transition probability $\pich^\lambda$ of \eqref{pichla}.  Its  expectation operation is  $\wcE_v^\lambda[\cdot]=\Eplus E_v^{\pich^\lambda}[\cdot]$.

\begin{theorem}\label{var(Bd)}
The following holds for all $\alpha,\beta,\lambda>0$ and all $v=(m,n)\in\N^2$:
	\begin{align}
	\begin{split}
	\Varplus(\log\Bd^\lambda_{0,v})
	&=n(\psi_1(\lambda)-\psi_1(\alpha+\lambda))-m(\psi_1(\alpha+\lambda)-\psi_1(\alpha+\beta+\lambda))\\
	 &\qquad+2\wcE_v^\lambda\Big[
	 \sum_{i=0}^{X(\tau_0^+)\cdot e_1-1} \!\!\!\! L(\Bd^\lambda_{ie_1,(i+1)e_1},\lambda)\Big]
	\end{split}
	\label{var-id}\\
	\begin{split}
	&=m(\psi_1(\alpha+\lambda)-\psi_1(\alpha+\beta+\lambda))-n(\psi_1(\lambda)-\psi_1(\alpha+\lambda))\\
	&\qquad\qquad+2\wcE_v^{\lambda}\Big[\sum_{i=0}^{X(\tau_0^+)\cdot e_2-1}\widetilde L(1/\Bdla_{ie_2,(i+1)e_2},\lambda,\alpha)\Big]. 
	\end{split}\label{var-id-e2}
	\end{align}
\end{theorem}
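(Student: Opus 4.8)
The plan is to compute $\Varplus(\log\Bd^\lambda_{0,v})$ by a telescoping decomposition along a path from $0$ to $v$, recognizing the log-harmonic-function increments as a sum of weakly dependent terms whose variance and covariances can be evaluated using the explicit beta distributions from Proposition~\ref{stat-pr} and the independence structure along down-right paths. First I would use the cocycle property \eqref{cocycle} to write $\log\Bd^\lambda_{0,v}$ as a telescoping sum of edge increments $\log\Bd^\lambda_{x_i,x_{i+1}}$ along a suitable lattice path, but the cleaner route is the one the polymer connection \eqref{Z-Bd} makes available: since $\log\Bd^\lambda_{0,v}=\log Z_{m,n}$, I would differentiate a tilted partition function (or equivalently use the reversed-path representation \eqref{Q-P}) so that $\Varplus(\log\Bd^\lambda_{0,v})$ is expressed via first and second derivatives in the parameter $\lambda$ (or $\alpha$) of a cumulant-generating-type quantity. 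Concretely, the increments along the boundary axes are i.i.d.\ beta variables by \eqref{Bdla-def}, so $\mathrm{Var}(\log\Bd^\lambda_{(i-1,0),(i,0)})=\psi_1(\alpha+\lambda)-\psi_1(\alpha+\beta+\lambda)$ and $\mathrm{Var}(\log\Bd^\lambda_{(0,j-1),(0,j)})=\psi_1(\lambda)-\psi_1(\alpha+\lambda)$, which accounts for the two polygamma terms with their signs.

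The heart of the argument is the covariance correction term $2\wcE_v^\lambda[\sum_i L(\Bd^\lambda_{ie_1,(i+1)e_1},\lambda)]$. I expect this to come out of a variance-of-a-sum computation where one path (along the south boundary then up) and another path (straight to $v$) are compared; the difference of the two telescopings localizes on the portion of a reversed random-walk path (under $\pich^\lambda$, by \eqref{pila88} and \eqref{Q-P}) that runs along the $e_1$-axis before time $\tau^+_0$. The quantity $L(s,\lambda)=\widetilde L(s,\alpha+\lambda,\beta)$ has been set up precisely so that $\widetilde L(s,a,b)=s^{-a}(1-s)^{1-b}B(a,b)\,\mathrm{Cov}(-\log W,\mathbf 1\{W\le s\})$ with $W\sim\mathrm{Beta}(a,b)$; I would verify that this is exactly the conditional covariance that arises when one conditions on the value of a boundary edge weight $\Bd^\lambda_{ie_1,(i+1)e_1}$ and asks how much the walk's decision to step across that edge correlates with $-\log$ of the weight. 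The mechanism is the standard ``Gaussian-type integration by parts / Kipnis--Varadhan-style'' identity for stationary cocycles on the boundary, exploited in the log-gamma polymer literature (\cite{Sep-12-corr}, \cite{Geo-etal-15}): differentiating the boundary weight's law in its first shape parameter produces the $-\log$ insertion, and the indicator $\mathbf 1\{X(\tau^+_0)\cdot e_1 > i\}$ records whether edge $i$ is used by the reversed path.

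The two displayed identities \eqref{var-id} and \eqref{var-id-e2} are the same formula viewed from the two boundary axes, so once \eqref{var-id} is established, \eqref{var-id-e2} follows by the symmetry that swaps the roles of $e_1$ and $e_2$ and of the two boundary conditions — the same symmetry used in the proof of Lemma~\ref{ind-sol}(b) from (a), under which $\mathrm{Beta}(\alpha+\lambda,\beta)$ on horizontal edges is interchanged with $\mathrm{Beta}(\lambda,\alpha)^{-1}$ on vertical edges. I would state this equivalence explicitly and note that it also yields, upon subtracting the two expressions, the identity $\wcE_v^\lambda[\sum_{i}L(\Bd^\lambda_{ie_1,(i+1)e_1},\lambda)]$ $-\wcE_v^\lambda[\sum_i \widetilde L(1/\Bd^\lambda_{ie_2,(i+1)e_2},\lambda,\alpha)]$ equals a deterministic multiple of $m+n$, a consistency check worth recording.

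The main obstacle I anticipate is making the ``differentiate the boundary weight's law'' step rigorous and correctly bookkeeping which increments pair up: one must carefully separate the bulk randomness ($\pch$ in the northeast region, equivalently the $\pi^\lambda$'s in the southwest region) from the boundary-edge randomness, invoke the independence of $\{\Bd^\lambda_{x_j,x_{j+1}}\}$ along a down-right path from Proposition~\ref{stat-pr}, and argue that all covariance contributions from non-adjacent edges vanish so that only the single-edge conditional covariances survive. The polymer formula \eqref{Z-Bd} and the path-reversal identities \eqref{harm77}, \eqref{pila88} are what let one transfer everything onto the averaged backward walk $\wcP^\lambda_v$; getting the finiteness and interchange-of-derivative-and-sum justifications (using that $\log\Bd^\lambda_{0,e_i}$ has moments of all orders, being a log-beta variable) will be the routine-but-fussy part.
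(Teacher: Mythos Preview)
The paper does not actually prove this theorem: it observes that, through the polymer identifications \eqref{Z-Bd} and \eqref{Q-P}, the statement is Proposition~1.1 of \cite{Cha-Noa-17-} (and is also proved in the preprint \cite{Bal-Ras-Sep-18-arxiv}). Your sketch is essentially the argument carried out in those references, so at the level of strategy you are on the right track: decompose $\log\Bd^\lambda_{0,v}$ via the cocycle along boundary paths, reduce the variance to $\mathrm{Cov}(I,\log Z)$ with $I=\sum_{i=0}^{m-1}\log\Bd^\lambda_{ie_1,(i+1)e_1}$, and evaluate each $\mathrm{Cov}(\log\Bd^\lambda_{ie_1,(i+1)e_1},\log Z)$ by the beta score identity $\mathrm{Cov}(\log W,g(W))=\partial_a\,\mathbb E[g(W)]$ for $W\sim\mathrm{Beta}(a,b)$. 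In the coupling \eqref{Bdla-def} this derivative is exactly $L(\Bd^\lambda_{ie_1,(i+1)e_1},\lambda)$, and $\partial\log Z/\partial S_i$ equals the quenched polymer probability $P^{\pich^\lambda}_v\{X(\tau^+_0)\cdot e_1>i\}$ by \eqref{Q-P}; summing over $i$ yields the $\wcE_v^\lambda$ term.

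Two small corrections to your write-up. First, your ``consistency check'' is mis-stated: subtracting \eqref{var-id} from \eqref{var-id-e2} gives
\[
\wcE_v^\lambda\Bigl[\textstyle\sum_i L\Bigr]-\wcE_v^\lambda\Bigl[\textstyle\sum_j \widetilde L\Bigr]
= m\bigl(\psi_1(\alpha+\lambda)-\psi_1(\alpha+\beta+\lambda)\bigr)-n\bigl(\psi_1(\lambda)-\psi_1(\alpha+\lambda)\bigr),
\]
which is linear in $(m,n)$ but not a multiple of $m+n$; it is precisely the identity $\partial_\lambda\Eplus[\log Z]=\Eplus[\partial_\lambda\log Z]$. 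Second, the phrase ``covariance contributions from non-adjacent edges vanish'' is misleading: nothing of the sort is used. The computation never pairs edges with each other; each $\mathrm{Cov}(S_i,\log Z)$ is evaluated directly by differentiating in the parameter of that single edge, using the independence of $S_i$ from the bulk $\pch$ and the other boundary weights. The ``Kipnis--Varadhan'' label is also off; the mechanism is the exponential-family score identity (differentiate the Beta density in its first shape parameter), not a martingale decomposition.
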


An empty sum (e.g.\ $\sum_{i=0}^{-1}$) equals $0$. Thus, the  $\wcE_v^\lambda$  expectation on the right-hand side of  \eqref{var-id} is in fact
over the event $\{X(\tau^+_0)\in\B^{(+1)}_0\}$.    When $v$ is chosen (approximately) in the direction $\xi(\lambda)$ so that the first two terms on the right-hand side of \eqref{var-id} (approximately) cancel,  the equation expresses  the KPZ relation that in $1+1$ dimension the wandering exponent is twice the free energy exponent. 

Theorem \ref{var(Bd)} is the same as  Proposition 1.1 in \cite{Cha-Noa-17-},  via the connections \eqref{Z-Bd} and \eqref{Q-P} between the RWRE and the polymer.   Theorem \ref{var(Bd)} is also proved in Section 4.1 of the first preprint version  \cite{Bal-Ras-Sep-18-arxiv} of this paper.

Starting from  the  identity in Theorem \ref{var(Bd)}, a series of coupling arguments  and estimates leads to upper and lower  bounds on the fluctuations of $\log\Bd^\lambda$.   Theorem \ref{th:Bd-var} below follows directly from Theorem 1.2 of \cite{Cha-Noa-17-}. It is also proved in Sections 4.2 and 4.3 of the first preprint version  \cite{Bal-Ras-Sep-18-arxiv} of this paper.
Here, $\xi(\lambda)$ is given by \eqref{eq:lam-xi1}.  

\begin{theorem}\label{th:Bd-var}  Fix $\alpha,\beta>0$. Fix $\lambda>0$. 
Given a constant  $0<\gamma<\infty$, there exist  positive finite constants $c$, $C$, and $N_0$, depending only on $\alpha$, $\beta$, $\gamma$, and $\lambda$, 
such that 
	\begin{align}\label{var-Bd}  
	c   N^{2/3} \le \Vvv[\Bd^\lambda_{0,me_1+ne_2} ]\le C\, N^{2/3} 
	\end{align}
for all  $N\ge N_0$ and $(m,n)\in\N^2$ such that 
\[\abs{m-N\xi_1(\lambda)} \vee\abs{n-N\xi_2(\lambda)} \le \gamma N^{2/3}.\]
The same constants $c$, $C$, and $N_0$ can be taken for $(\alpha,\beta,\gamma,\lambda)$ varying in a compact subset of $(0,\infty)^4$.
\end{theorem}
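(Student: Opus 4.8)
The plan is to derive Theorem \ref{th:Bd-var} from the exact variance identity of Theorem \ref{var(Bd)} by the exit-point analysis that is by now standard for stationary $1{+}1$-dimensional polymer and percolation models. Throughout, $(\alpha,\beta,\gamma,\lambda)$ ranges over a fixed compact subset of $(0,\infty)^4$ and $v=(m,n)$ satisfies $\abs{m-N\xi_1(\lambda)}\vee\abs{n-N\xi_2(\lambda)}\le\gamma N^{2/3}$; we must bound $\Varplus(\log\Bd^\lambda_{0,v})$, i.e.\ the quantity in \eqref{var-Bd}, above and below by multiples of $N^{2/3}$. In \eqref{var-id} the linear part equals $n(\psi_1(\lambda)-\psi_1(\alpha+\lambda))-m(\psi_1(\alpha+\lambda)-\psi_1(\alpha+\beta+\lambda))$, and by \eqref{eq:lam-xi1} together with $\xi_1(\lambda)+\xi_2(\lambda)=1$ this combination vanishes when $(m,n)=N\xi(\lambda)$; hence for $v$ in the stated window the linear part contributes at most $CN^{2/3}$ in absolute value, uniformly in the parameters. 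The problem thus reduces to the two-sided bound
\[ cN^{2/3}\ \le\ 2\,\wcE_v^\lambda\Big[\sum_{i=0}^{X(\tau_0^+)\cdot e_1-1}L(\Bd^\lambda_{ie_1,(i+1)e_1},\lambda)\Big]\ \le\ CN^{2/3} \]
for the correction term appearing in \eqref{var-id}.

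The next step is to convert the correction term into a bound on the walk's entry point on the horizontal axis. From \eqref{L-def}, $L(\,\cdot\,,\lambda)=\widetilde L(\,\cdot\,,\alpha+\lambda,\beta)$ is strictly positive on $(0,1)$, blows up only logarithmically as $s\to0$ (so that $\wcE_v^\lambda[L(\Bd^\lambda_{0,e_1},\lambda)]<\infty$, since $\alpha+\lambda>0$), and decays linearly as $s\to1$; moreover the horizontal edge ratios $\Bd^\lambda_{ie_1,(i+1)e_1}$ are i.i.d.\ ${\rm Beta}(\alpha+\lambda,\beta)$ by Proposition \ref{stat-pr} and \eqref{Bd-8.9}, with density bounded away from $0$ and $\infty$ on compact subintervals of $(0,1)$. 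A renewal/Wald-type estimate, based on the independence structure of Proposition \ref{stat-pr}, then yields
\[ \wcE_v^\lambda\Big[\sum_{i=0}^{X(\tau_0^+)\cdot e_1-1}L(\Bd^\lambda_{ie_1,(i+1)e_1},\lambda)\Big]\ \asymp\ \wcE_v^\lambda\bigl[X(\tau_0^+)\cdot e_1\,;\,X(\tau_0^+)\in\B^{(+1)}_0\bigr], \]
with comparison constants depending only on the compact parameter set. So it suffices to show that the expected entry point on $\B^{(+1)}_0$ is of order $N^{2/3}$; the symmetric statement for $\B^{(+2)}_0$ follows in the same way from \eqref{var-id-e2}.

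The core of the argument — and the step I expect to be the main obstacle — is the two-sided bound on the expected exit point, obtained by perturbing the boundary parameter. Set $\lambda_\pm=\lambda\pm\eta N^{-1/3}$ for a fixed constant $\eta$ chosen large relative to $\gamma$; by Lemma \ref{lam-xi-t}\eqref{lam-xi} and smoothness of the polygamma functions, the characteristic direction is displaced by $\Theta(\eta N^{-1/3})$, so that $v$ lies at distance $\Theta(\eta N^{2/3})$ on a definite side of the $\lambda_\pm$-characteristic lines at scale $N$, and under $\wcP_v^{\lambda_-}$ the walk leaves the corner overwhelmingly through $\B^{(+1)}_0$ (Theorem \ref{thm:LLN}). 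For the upper bound I would compare the identity \eqref{var-id} at $\lambda$ and at $\lambda_-$, using the coupled family $(\Bd^\gamma)_{\gamma>0}$ of Section \ref{sec:bdry} and its monotonicity \eqref{monotone1} to relate the two exit points and the two correction terms, together with the nonnegativity of the variance, to extract $\wcE_v^\lambda[X(\tau_0^+)\cdot e_1]\le CN^{2/3}$; the delicate bookkeeping of the coupling and of the lower-order error terms is where the real work lies. For the lower bound, at $\lambda_+$ the linear part of \eqref{var-id} acquires a definite (negative) sign of magnitude $\Theta(N^{2/3})$, so nonnegativity of $\Varplus(\log\Bd^{\lambda_+}_{0,v})$ forces the $\lambda_+$-correction term to be at least $cN^{2/3}$; a monotone coupling argument, again via \eqref{monotone1}, transfers a lower bound of comparable order back to the parameter $\lambda$, giving $\wcE_v^\lambda[X(\tau_0^+)\cdot e_1+X(\tau_0^+)\cdot e_2]\ge cN^{2/3}$ and hence the lower bound in \eqref{var-Bd}. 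Uniformity of $c$, $C$, $N_0$ over the compact parameter set follows because every ingredient — the polygamma derivatives, the function $L$, the speed in the law of large numbers, and the coupling comparisons — is controlled uniformly there. All of these exit-point estimates are, in the equivalent polymer language, precisely those established in Theorem 1.2 of \cite{Cha-Noa-17-} and in Sections 4.2--4.3 of \cite{Bal-Ras-Sep-18-arxiv}, so in the final write-up one may either reproduce them or cite those sources.
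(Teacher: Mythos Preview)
Your proposal is correct and aligned with the paper: the paper does not give an independent proof of this theorem but simply cites Theorem~1.2 of \cite{Cha-Noa-17-} (and Sections~4.2--4.3 of the preprint \cite{Bal-Ras-Sep-18-arxiv}), and your sketch accurately describes the boundary-parameter perturbation/coupling method carried out in those references, concluding with the same citations.
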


\begin{proof}[Proof of  Theorem \ref{th:B-var}]
By virtue of Theorem \ref{th:Buse}\eqref{B:B=Bd},  Theorem \ref{th:Bd-var} implies Theorem \ref{th:B-var} for the case $\xi_1\in(\llnv_1,1)$. The remaining case $\xi_1\in(0,\llnv_1)$ follows from the (omitted) 
version of Theorem \ref{th:Bd-var} for $\wt\Bd^\lambda$. 
\end{proof}

\section{Path fluctuations}\label{pf:main}

In this section we prove results about path fluctuations, from which Theorem \ref{th:kpz} will follow. 
For an up-right path $X_{0,\infty}$ started at the origin  and an integer $n\ge0$ let 
	\begin{align*}
	\Xmin_n=\min\{m\ge0:X_{m+n}\cdot e_2=n\}\quad\text{and}\quad
	\Xmax_n=\max\{m\ge0:X_{m+n}\cdot e_2=n\}.
	\end{align*}
Then $\Xmin_n e_1+ne_2$ and $\Xmax_n e_1+ne_2$ are, respectively, 
 the leftmost and rightmost points of the path on the horizontal line $ne_2+\Z_+e_1$. See the left panel in Figure \ref{fig:path}.
The vertical counterparts are given by
	\begin{align*}
	\Ymin_m=\min\{n\ge0:X_{m+n}\cdot e_1=m\}\quad\text{and}\quad
	\Ymax_m=\max\{n\ge0:X_{m+n}\cdot e_1=m\}.
	\end{align*}

 \begin{figure}[h]
 	\begin{center}
 		 \begin{tikzpicture}[>=latex, scale=0.7]
		 	\draw(0,0)--(0,6);
			\draw(0,0)--(8,0);
			\draw[dashed](4,0)--(4,3)--(0,3);
			\draw(5,0)--(5,0.2);
			\draw(4,0)--(4,0.2);
			\draw(2,0)--(2,0.2);
			\filldraw[fill=light-gray,draw=black] (15.7,2.7) rectangle (16.3,3.3);
			\draw[line width=1pt, color=darkblue](0,0)--(8,6);
			\draw [line width= 2pt, color=nicosred](0,0)--(0,1)--(2,1)--(2,3)--(5,3)--(5,5)--(8,5);
			\draw[nicosred,fill=nicosred](2,3) circle(6pt);
			\draw[nicosred,fill=nicosred](5,3) circle(6pt);
			\draw[darkblue,fill=darkblue](4,3) circle(6pt);
			\draw (2,0)node[below]{\tiny$\Xmin_{n}$};
			\draw (5.5,0)node[below]{\tiny$\Xmax_{n}$};
			\draw (4,0)node[below]{\tiny$m$};
			\draw (0,3)node[left]{\tiny$n$};

		 	\draw(12,0)--(12,6);
			\draw(12,0)--(20,0);
			\draw[dotted](16,0)--(16,6);
			\draw[dotted](12,3)--(20,3);
			\draw[line width=1pt, color=darkblue](12,0)--(20,6);
			\draw (16,0)node[below]{\tiny$m$};
			\draw (12,3)node[left]{\tiny$n$};
			\draw[line width=1.5pt](16-1,3)--(16+1,3);
			\draw[line width=1.5pt](16,3-1)--(16,3+1);

			\draw [line width=1pt,color=nicosred](12,0)--(12,0.5)--(13,0.5)--(13,1)--(14,1)--(14,1.5)--(14.5,1.5)--(14.5,2)--(15.3,2)--(15.3,3)--(15.5,3)--(15.5,3.4)--(16,3.4)--(16,3.8)--(16.5,3.8)--(16.5,4.2)--(17.5,4.2)--(17.5,4.5)--(18,4.5)--(18,5)--(19.5,5)--(19.5,5.5)--(20,5.5);
			
		\end{tikzpicture}
 	 \end{center}
 	\caption{\small In both plots the diagonal line points in direction $\xi(\lambda)$. Left: the definition of $\Xmin_n$ and $\Xmax_n$. 
	Right: illustration of \eqref{path+bd:1} and \eqref{path+bd:4}. 
	The four arms of the cross centered at $(m,n)=\fl{N\xi}$ are of length $rN^{2/3}$ each. The shaded box, also centered at $(m,n)$, has sides of length $2\delta N^{2/3}$. For large $r$, the path has a high probability of 
	entering and exiting through the cross and never touching the dotted lines. For small $\delta$, there is a positive probability, uniformly in $N$, that the path stays left of the top edge of the shaded box, completely avoiding the  box.}
 \label{fig:path}
 \end{figure}
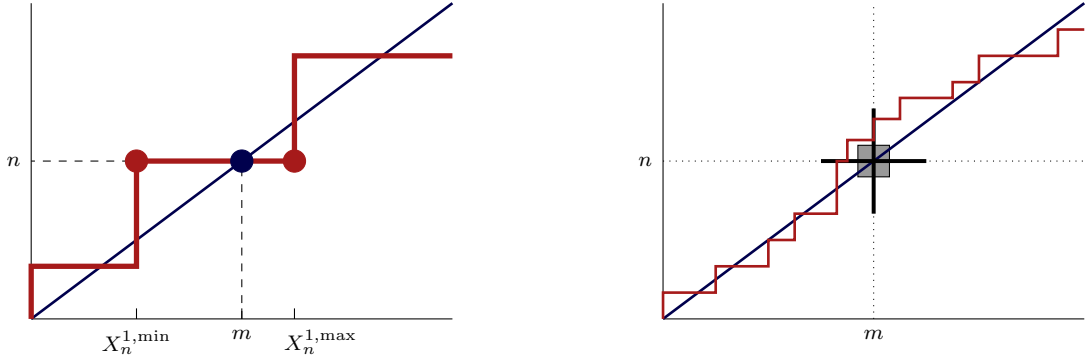

Again, the next result is stated and proved for the case $\xi_1\in(\llnv_1,1)$ only. The other case works similarly. 
Recall  $\xi(\lambda)$ from \eqref{eq:lam-xi1}.
Let $\bfP_0^\lambda=\Eplus P_0^{\pi^\lambda}$, with expectation $\bfE_0^\lambda=\Eplus E_0^{\pi^\lambda}$.
By Theorem \ref{thm:LLN}, $\xi(\lambda)$ is the LLN direction for $\bfP^\lambda_0$. 
 
\begin{theorem}\label{th:mainaux}
Fix $\alpha,\beta,\lambda>0$. 
\begin{enumerate}[label={\rm(\alph*)}, ref={\rm\alph*}] 
\item {\rm Upper bound.} There exist finite positive constants $r_0$ and $C$ depending on $\alpha$, $\beta$, and $\lambda$, such that 
for all  
$r\ge r_0$,   integer $N\ge1$,  and $(m,n)=\fl{N\xi(\lambda)}$, we have
	\begin{align}
	\bfP^\lambda_0\big\{\Xmin_{n}<m-rN^{2/3}\big\}\le Cr^{-3}\quad\text{and}\quad\bfP^\lambda_0\big\{\Xmax_{n}>m+rN^{2/3}\big\}\le Cr^{-3}.\label{path+bd:1}
	\end{align}
From this it follows that
	\begin{align}
	\begin{split}
	&\bfE^\lambda_0[\abs{(m-\Xmin_n)^+}^p]^{1/p}\le\Big(1+\frac{Cp}{3-p}\Big)^{1/p}N^{2/3}\quad\text{and}\\
	&\bfE^\lambda_0[\abs{(\Xmax_n-m)^+}^p]^{1/p}\le\Big(1+\frac{Cp}{3-p}\Big)^{1/p}N^{2/3}.
	\end{split}
	\label{path+bd:5}
	\end{align}
\item {\rm Lower bound.}  There exist finite positive constants $\delta$ and  $c$ depending on  $\alpha$, $\beta$, and $\lambda$, such that 
for any 
integer $N\ge1$ such that $(m,n)=\fl{N\xi(\lambda)}\in\N^2$ 
we have
	\begin{align}
	&\bfE^\lambda_0[(m-\Xmin_n)^+]\ge c N^{2/3}\quad\text{and}\label{path+bd:3a}\\
	&\bfP^\lambda_0\big\{\Xmin_n\le\Xmax_n<m-\delta N^{2/3}\big\}\ge c.\label{path+bd:4}
	\end{align}
\end{enumerate}
Similar bounds hold for the vertical counterparts $\Ymin_m$ and $\Ymax_m$. The same constants can be taken for all $(\alpha,\beta,\lambda)$ varying in a compact subset of $(0,\infty)^3$.
\end{theorem}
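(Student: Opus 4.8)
\textbf{Plan of proof for Theorem \ref{th:mainaux}.}

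The strategy is the standard percolation/polymer comparison between the exit point of the walk from a rectangle and the fluctuations of the boundary harmonic function $\Bd^\lambda$, exactly as developed for the corner growth model and the log-gamma polymer, and here transported to the RWRE via the Doob transform. The main input is Theorem \ref{th:Bd-var}, which gives $\Vvv[\Bd^\lambda_{0,v}]\asymp N^{2/3}$ when $v$ is on the $\xi(\lambda)$-ray, together with the variance identity \eqref{var-id}. I would work with the forward transition $\pi^\lambda$ (quenched law $P_0^{\pi^\lambda}$, averaged law $\bfP_0^\lambda=\Eplus P_0^{\pi^\lambda}$), recalling from \eqref{pila8}--\eqref{pila10} that for fixed $y$ the $\Bd^\lambda$-tilted walk's hitting distribution on the boundary $\B^-_y$ is expressed through ratios of the harmonic function: the probability of exiting through the axis $\B^{(-i)}_y$ is $E^{\p^\lambda}_0[\Bd^\lambda(X_{\tau^-_y},y),\,X_{\tau^-_y}\in\B^{(-i)}_y]/E^{\p^\lambda}_0[\Bd^\lambda(X_{\tau^-_y},y)]=\Bd^\lambda_{0,y}^{-1}E^{\p^\lambda}_0[\Bd^\lambda(X_{\tau^-_y},y),\,X_{\tau^-_y}\in\B^{(-i)}_y]$. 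The event $\{\Xmin_n< m-rN^{2/3}\}$ (for the walk, going forward from $0$) translates, after reversing time or equivalently re-reading the tilted walk's exit problem, into the walk's entry point on the boundary being far in one coordinate direction; I would set this up carefully so that the comparison becomes: a large left-displacement of the exit point forces an atypically large (resp. small) value of $\Bd^\lambda_{0,v'}$ along a shifted boundary segment, and Chebyshev's inequality on the $O(N^{2/3})$ variance then yields the $Cr^{-3}$ bound (the cube coming from the $N^{2/3}$ scale of both the deviation and $\sqrt{\Vvv}$, i.e. $r^2$ from Chebyshev refined to $r^3$ by an extra union/summation over dyadic scales or by the third-moment-type control implicit in the $L$-term identity). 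The moment bounds \eqref{path+bd:5} follow from \eqref{path+bd:1} by integrating the tail: $\bfE[((m-\Xmin_n)^+/N^{2/3})^p]=\int_0^\infty p r^{p-1}\bfP(\Xmin_n<m-rN^{2/3})\,dr\le 1+\int_{1}^\infty pr^{p-1}Cr^{-3}dr=1+Cp/(3-p)$ for $p<3$, which is the displayed constant.

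For the lower bounds, part (b), I would use the variance identity \eqref{var-id} in the reverse direction. The left-hand side is $\Vvv[\Bd^\lambda_{0,v}]\ge cN^{2/3}$ by Theorem \ref{th:Bd-var}; when $v=\fl{N\xi(\lambda)}$ the first two (linear in $m,n$) terms on the right of \eqref{var-id} cancel up to $O(1)$ by Lemma \ref{lam-xi-t}\eqref{lam-xi} and the definition of $\xi(\lambda)$, so the surviving term $2\wcE_v^\lambda\big[\sum_{i=0}^{X(\tau_0^+)\cdot e_1-1}L(\Bd^\lambda_{ie_1,(i+1)e_1},\lambda)\big]$ must itself be $\ge cN^{2/3}$. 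Since $L(s,\lambda)>0$ is bounded on compact subsets of $(0,1)$ and the summand is i.i.d.-like along the axis, this forces $\wcE_v^\lambda[X(\tau_0^+)\cdot e_1]\ge c'N^{2/3}$; because $X(\tau_0^+)\cdot e_1$ for the backward tilted walk $\pich^\lambda$ is (by the path reversal in Section \ref{Beta-Polys}, see \eqref{Q-P}) the same random quantity as $(m-\Xmin_n)^+$ (or $\Xmax_n-m$) for the forward tilted walk under $\bfP_0^\lambda$, this yields \eqref{path+bd:3a}. To upgrade the expectation bound \eqref{path+bd:3a} to the probability bound \eqref{path+bd:4}, I would combine it with the upper tail \eqref{path+bd:1}: $\bfE[(m-\Xmin_n)^+]\ge cN^{2/3}$ and $\bfP((m-\Xmin_n)^+>rN^{2/3})\le Cr^{-3}$ together give, via a truncated-second-moment or reverse-Markov argument, that $(m-\Xmin_n)^+\ge \delta N^{2/3}$ with probability at least $c$ for suitable small $\delta$; one also needs to control $\Xmax_n$ on the same event, which follows because $\Xmax_n\ge\Xmin_n$ automatically and the same exit-point comparison bounds $\Xmax_n-m$ from above with high probability, so on an event of positive probability we get $\Xmin_n\le\Xmax_n<m-\delta N^{2/3}$. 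The vertical statements for $\Ymin_m,\Ymax_m$ follow by the symmetric argument using \eqref{var-id-e2} and the $e_2$-axis version of \eqref{mono2}, and the uniformity of constants over compact parameter sets is inherited from the corresponding uniformity in Theorem \ref{th:Bd-var} and continuity of the polygamma functions.

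The main obstacle will be the precise dictionary translating the forward walk's leftmost/rightmost point statistics ($\Xmin_n$, $\Xmax_n$ under $\bfP_0^\lambda$) into the boundary exit-point statistics of the tilted walk governed by $\Bd^\lambda$, and checking that the cancellation of the linear terms in \eqref{var-id} is exact enough (error $O(1)$, not $O(N^{2/3})$) for the choice $(m,n)=\fl{N\xi(\lambda)}$ — this requires differentiating the relation \eqref{eq:lam-xi1} and keeping track of the $\fl{\cdot}$ roundoff. A secondary technical point is extracting the cube power $r^{-3}$ rather than the crude $r^{-2}$ Chebyshev bound; this is where one must exploit either the $L$-term structure (which controls a third-order quantity) or perform the dyadic-scale decomposition carefully, and it is the step most sensitive to getting constants right. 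Since Theorem \ref{th:B-var} and Theorem \ref{th:Bd-var} are already available (and in fact \cite{Cha-Noa-17-} provides the polymer-side estimates that feed all of this), the remaining work is entirely the bookkeeping of these comparisons; I expect no genuinely new probabilistic difficulty beyond that bookkeeping.
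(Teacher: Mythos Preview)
Your overall strategy matches the paper's: translate path displacement into boundary exit statistics via the Doob transform, feed in the variance identity \eqref{var-id} and the variance bounds of Theorem~\ref{th:Bd-var}, and cite \cite{Cha-Noa-17-} for the $r^{-3}$ tail estimate. Your derivation of \eqref{path+bd:5} from \eqref{path+bd:1} is exactly the paper's, and your route to \eqref{path+bd:3a} is also the paper's. One small correction there: the paper does not use pointwise boundedness of $L$ (your ``$L$ bounded on compact subsets of $(0,1)$'' would not suffice, since $\Bd^\lambda_{ie_1,(i+1)e_1}$ has full support on $(0,1)$ and the summands are not independent of the hitting time); instead it invokes a Wald-type inequality (Lemma~4.2 of \cite{Cha-Noa-17-}) that bounds the expected $L$-sum by $C(\wcE_v^\lambda[X(\tau_0^+)\cdot e_1]+1)$.

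There is, however, a genuine gap in your passage from $\Xmin$-statements to $\Xmax$-statements, both for the second half of \eqref{path+bd:1} and for \eqref{path+bd:4}. For \eqref{path+bd:4} you write that controlling $\Xmax_n$ ``follows because \dots the same exit-point comparison bounds $\Xmax_n-m$ from above with high probability.'' But an upper bound on $\Xmax_n-m$ only prevents $\Xmax_n$ from being too far \emph{right} of $m$; it says nothing about $\Xmax_n<m-\delta N^{2/3}$. The paper instead uses a rescaling trick: apply the already-proved $\Xmin$-probability bound \eqref{bd4-prelim} at a slightly larger scale $N_0=N+\lfloor\delta N^{2/3}\rfloor$ with $(m_0,n_0)=\lfloor N_0\xi(\lambda)\rfloor$, so that $n_0\ge n$. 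Monotonicity of the path gives $\Xmax_n\le\Xmin_{n_0}$, and one checks arithmetically that $\Xmin_{n_0}<m_0-\delta_0 N_0^{2/3}$ (with $\delta_0>2\delta$) forces $\Xmax_n<m-\delta N^{2/3}$. The same rescaling idea, swapped to the vertical direction, handles the second inequality in \eqref{path+bd:1}: if $\Xmax_n>m+rN^{2/3}$ then for $N_0=\lfloor(m+rN^{2/3})/\xi_1(\lambda)\rfloor$ one gets $\Ymin_{m_0}<n_0-c\,rN_0^{2/3}$, reducing to the vertical version of the first inequality.
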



\begin{proof}
Abbreviate  $u=(m,n)=\fl{N\xi(\lambda)}$. 
Inequality \eqref{path+bd:1} is trivial if $rN^{1/3}\ge m$. We hence assume that $rN^{2/3}<m$. 

Note that 
	\begin{align}\label{Xmin=exit}
	(m-\Xmin_n)^+=m-X(\tau^-_{(m,n)})\cdot e_1. 
	\end{align}
Thus, the first probability in \eqref{path+bd:1} equals
	\begin{align}\label{abc}
	\bfP^\lambda_0\big\{X(\tau_u^-)\cdot e_1<m-rN^{2/3}\big\}=\wcP^\lambda_u\big\{X(\tau_0^+)\cdot e_1>rN^{2/3}\big\}.
	\end{align}
%
%
Applying Lemma 4.7 of \cite{Cha-Noa-17-} and the connection \eqref{Q-P}  gives 
	\[\wcP^\lambda_u\big\{X(\tau_0^+)\cdot e_1>rN^{2/3}\big\}\le Cr^{-3}.\]
(This is also (4.24) in \cite{Bal-Ras-Sep-18-arxiv}.) 
This proves the first inequality in \eqref{path+bd:1}. 

To prove the second inequality set $N_0=\fl{\frac{m+rN^{2/3}}{\xi_1(\lambda)}}$ and $(m_0,n_0)=\fl{N_0\xi(\lambda)}$.
Then $m_0\le m+rN^{2/3}$ and therefore if $\Xmax_n>m+rN^{2/3}$, then $\Ymin_{m_0}\le n$.
But we also have 
	\begin{align*}
	n_0
	&>N_0\xi_2(\lambda)-1\ge\frac{m\xi_2(\lambda)}{\xi_1(\lambda)}+\frac{\xi_2(\lambda)}{\xi_1(\lambda)}rN^{2/3}-1-\xi_2(\lambda)\\
	&\ge n+\frac{\xi_2(\lambda)}{\xi_1(\lambda)}rN^{2/3}-1-\xi_2(\lambda)-\frac{\xi_2(\lambda)}{\xi_1(\lambda)}\\
	&\ge n+\frac{\xi_2(\lambda)}{2\xi_1(\lambda)}rN^{2/3}\ge n+\frac{\xi_2(\lambda_1)}{2\xi_1(\lambda_2)}rN^{2/3},
	\end{align*}
provided $r\ge 2(1+\xi_1(\lambda_1)+\frac{\xi_1(\lambda_1)}{\xi_2(\lambda_2)})$.
The upshot is that if $\Xmax_n>m+rN^{2/3}$ then $\Ymin_{m_0}<n_0-\frac{\xi_2(\lambda_1)}{2\xi_1(\lambda_2)}rN_0^{2/3}$.
The second inequality in \eqref{path+bd:1} thus follows from the vertical version of the first inequality, but with $N_0$ and $r_0=\frac{\xi_2(\lambda_1)}{2\xi_1(\lambda_2)}r$
playing the roles of $N$ and $r$, respectively.

Bounds \eqref{path+bd:5} follow from \eqref{path+bd:1}. For example, for the first bound abbreviate $Y=(m-\Xmin_n)^+$ and write
	\begin{align*}
	\bfE^\lambda_0[(N^{-2/3}Y)^p]
	&=\int_0^\infty p\,r^{p-1}\,\bfP^\lambda_0(Y>rN^{2/3})\,dr\\
	&\le\int_0^1 p\,r^{p-1}\,dr+C\int_1^\infty p\,r^{p-4}\,dp=1+\frac{Cp}{3-p}\,.
	\end{align*}

Next, applying Lemma 4.2 of \cite{Cha-Noa-17-} we have
	\begin{align}\label{var-id2}
	\wcE_v^\lambda\Big[\sum_{i=0}^{X(\tau_0^+)\cdot e_1-1} \!\!\!\! L(\Bd^\lambda_{ie_1,(i+1)e_1},\lambda)\Big]\le C\big(\wcE^{\lambda}_v[X(\tau_0^+)\cdot e_1]+1\big).
	\end{align}
(This is also (4.15) in \cite{Bal-Ras-Sep-18-arxiv}.)
Now, bound \eqref{path+bd:3a} follows from stringing together \eqref{var-id2}, \eqref{var-id}, and the lower bound in \eqref{var-Bd}, then reversing the picture in \eqref{Xmin=exit}.
To get \eqref{path+bd:4} first write
	\begin{align*}
	c N^{2/3} 
	&\le  \bfE^\lambda_0[Y]  =  \bfE^\lambda_0[Y\one\{Y\le \delta N^{2/3}\}]  +   \bfE^\lambda_0[Y\one\{Y> \delta N^{2/3}\}]\\
	&\le      \delta N^{2/3}  +    \bfE^\lambda_0[Y^2]^{1/2}\,  \bfP^\lambda_0( Y> \delta N^{2/3} )^{1/2}.
	\end{align*}
Applying \eqref{path+bd:5} with say $p=2$ and taking $\delta\le c/2$ we get
	\begin{align}\label{bd4-prelim}
	\bfP^\lambda_0\big\{\Xmin_n<m-\delta N^{2/3}\big\}\ge \frac{c}{2\sqrt{1+2C}}\,.
	\end{align}

Now take $\delta_0>2\delta$, $N_0=N+\fl{\delta N^{2/3}}$, and $(m_0,n_0)=\fl{N_0\xi(\lambda)}$.
Note that 	$m_0\le\fl{N\xi_1(\lambda)}+\delta N^{2/3}=m+\delta N^{2/3}$.
This forces
	\[m_0-\delta_0 N_0^{2/3}\le m+\delta N^{2/3}-2\delta N^{2/3}=m-\delta N^{2/3}.\]
Since $n\le n_0$ we have that if $\Xmin_{n_0}<m_0-\delta_0 N_0^{2/3}$, then 
	\[\Xmax_n\le \Xmin_{n_0}<m_0-\delta_0N_0^{2/3}\le m-\delta N^{2/3}.\]
Bound \eqref{path+bd:4}  follows from the above and \eqref{bd4-prelim} with $N_0$ and $\delta_0$ playing the roles of $N$ and $\delta$, respectively.
\end{proof}

 \begin{proof}[Proof of Theorem \ref{th:kpz}]
We only argue for the case $\xi_1\in(\llnv_1,1)$, the other case being similar.

By Theorem \ref{th:Buse}\eqref{B:B=Bd} the distribution of $P^{\doob^\xi}_0$ under $\P$ is the same as that of $P^{\pi^\lambda}_0$ under $\Pplus$, provided $\lambda$ and $\xi$ are put in duality via \eqref{eq:lam-xi1}. Hence, 
$\bfP^\xi_0=\bfP^\lambda_0$.  Now, the claims of the theorem follow from \eqref{path+bd:1} and \eqref{path+bd:4}. See Figure \ref{fig:path}.  
%
%
%
\end{proof}

 \section{Proofs of the large deviation results}\label{Iq-pf}


\begin{proof}[Proof of Lemma \ref{lam-xi-t}]
Define the function 
	\[f(\lambda)=\frac{\psi_1(\lambda)-\psi_1(\alpha+\lambda)}{\psi_1(\lambda)-\psi_1(\alpha+\beta+\lambda)}.\]
We  prove that $f$ is strictly decreasing in $\lambda>0$.
Its derivative is
	\begin{align*}
	f'(\lambda)&=
	\frac{(\psi_2(\lambda)-\psi_2(\alpha+\lambda))}
	{(\psi_1(\lambda)-\psi_1(\alpha+\beta+\lambda))}\\
	&\qquad\qquad-
	\frac{(\psi_1(\lambda)-\psi_1(\alpha+\lambda))(\psi_2(\lambda)-\psi_2(\alpha+\beta+\lambda))}
	{(\psi_1(\lambda)-\psi_1(\alpha+\beta+\lambda))^2}.
	\end{align*}
Since $\psi_1$ is strictly decreasing, $f'(\lambda)<0$ is equivalent to
	\begin{align}\label{nice}
	\frac{\psi_2(\lambda)-\psi_2(\alpha+\lambda)}{\psi_1(\lambda)-\psi_1(\alpha+\lambda)}<
	\frac{\psi_2(\lambda)-\psi_2(\alpha+\beta+\lambda)}{\psi_1(\lambda)-\psi_1(\alpha+\beta+\lambda)}\,.
	\end{align}
This in turn follows from $\psi_2\circ\psi_1^{-1}$ being strictly concave, which is proved in 
Lemma \ref{psi-con}.

We have so far shown that $f$ is strictly decreasing. Since $\psi_1(\lambda)\to\infty$ as $\lambda\searrow0$ we have
$f(\lambda)\to1$ as $\lambda\searrow0$.  Similarly, by Lemma \ref{lm:psi-exp} we have
$\lambda^2(\psi_1(\lambda)-\psi_1(a+\lambda))\to a$ as $\lambda\to\infty$ and thus $f(\lambda)\to\frac{\alpha}{\alpha+\beta}$ as
$\lambda\to\infty$. The claims in part \eqref{lam-xi} for $\xi_1\in[\llnv_1,1]$ now follow. The case $\xi_1\in[0,\llnv_1]$ comes by interchanging the roles of $\alpha$ and $\beta$ and those of $\xi_1$ and $\xi_2$.

Define the function
	\[g(\lambda)= \psi_0(\alpha+\beta+\lambda)-\psi_0(\lambda).\]
Since $\psi_1$ is strictly decreasing we see that
	\[g'(\lambda)=\psi_1(\alpha+\beta+\lambda)-\psi_1(\lambda)<0.\]
Hence, $g$ is strictly decreasing. As $\lambda\searrow0$ we have $\psi_0(\lambda)\to-\infty$ and $g(\lambda)\to\infty$.
Representation \eqref{psi0} gives 
	\[g(\lambda)=-\frac1{\alpha+\beta+\lambda}+\frac1\lambda-\sum_{k=1}^\infty\Big(\frac1{\alpha+\beta+\lambda+k}-\frac1{\lambda+k}\Big).\]
Then we see that as $\lambda\to\infty$, $g(\lambda)\to0$.
Part \eqref{lam-t}  follows and  Lemma \ref{lam-xi-t} is proved.
\end{proof}

\begin{proof}[Proof of Theorems \ref{thm:Iq} and \ref{thm:Iq2}]
We utilize the stationary ratios $\Bd^\lambda$ and transitions $\p^\lambda$ from Section \ref{sec:bdry}.   By  Proposition \ref{stat-pr},      $\p^\lambda$ under $\Pplus$ (defined on page \pageref{Pbar}) has the same distribution as the original environment  $\p$ under $\P$. 

By the ergodic theorem  
\begin{align}\label{aux1}
n^{-1}\log\Bd_{0,ne_2}^\lambda&=n^{-1}\sum_{i=0}^{n-1}\log\Bd_{ie_2,(i+1)e_2}^\lambda\mathop{\longrightarrow}_{n\to\infty}\Eplus[\log\Bd_{0,e_2}^\lambda] =  \psi_0(\alpha+\lambda)-\psi_0(\lambda).  
\end{align}
(Recall that the logarithm of a Gamma($\nu,1$) has expected value $\psi_0(\nu)$ and that a
Beta($a,b$) is a ratio of a Gamma($a,1$) and a Gamma($a+b,1$). By Proposition \ref{stat-pr},  $\Bd_{ie_2,(i+1)e_2}^{-1}$ are i.i.d.\  Beta($\lambda, \alpha$).)

On the other hand, harmonicity of $\Bd_{\cbullet, ne_2}$ implies, similarly  to \eqref{harm76}, 
 that for any $x\in\Z_+^2$ with $\abs{x}_1\le n$
\begin{align}\label{aux3}
\Bd_{x,ne_2}^\lambda=\sum_{j=0}^n P^{\p^\lambda}_x\{X_{n-\abs{x}_1}=je_1+(n-j)e_2\} \Bd_{je_1+(n-j)e_2,ne_2}^\lambda. 
\end{align}
The sum above  has nonzero terms exactly when $j$ is between $x\cdot e_1$ and $n-x\cdot e_2$.  
Abbreviate 
	\[R_{i,n}=\Bd^\lambda_{(i+1)e_1+(n-i-1)e_2,\, ie_1+(n-i)e_2}.\] 
For fixed $n$,  under $\Pplus$,  variables $R_{i,n}$ are i.i.d.\ and each distributed as $\Bd_{e_1,e_2}^\lambda$.   Rewrite  \eqref{aux3}  for $x=0$   as  
\begin{align}\label{aux111}
\Bd_{0,ne_2}^\lambda=\sum_{j=0}^n P^{\p^\lambda}_0\{X_n=je_1+(n-j)e_2\} \prod_{i=0}^{j-1} R_{i,n}.
\end{align}

Assuming the usual asymptotics and approximations work, 
\begin{align*}
n^{-1}\log\Bd_{0,ne_2}^\lambda&\approx \max_{0\le j\le n} \biggl\{ n^{-1}\log  P^{\p^\lambda}_0\{X_n=je_1+(n-j)e_2\}  
+  n^{-1}\sum_{i=0}^{j-1}\log R_{i,n} \biggr\} \\
&= \sup_{\xi\in\Uset} \biggl\{ n^{-1}\log  P^{\p^\lambda}_0\{X_n=[n\xi]\}  
+  n^{-1}\sum_{i=0}^{[n\xi]\cdot e_1-1}\log R_{i,n} \biggr\} \\
&\underset{n\to\infty}\longrightarrow  \; \sup_{\xi\in\Uset} \bigl\{ -I_q(\xi) + \xi\cdot e_1 \bigl(\psi_0(\alpha+\beta+\lambda)-\psi_0(\lambda)\bigr)\bigr\}. 
\end{align*}
We defer the detailed justification of this limit  to the end of the proof.

The above and \eqref{aux1} give the equation 
 \begin{align}\label{I-97} 
\psi_0(\alpha+\lambda)-\psi_0(\lambda)= 
 \sup_{\xi\in\Uset} \bigl\{  \xi_1 \bigl(\psi_0(\alpha+\beta+\lambda)-\psi_0(\lambda)\bigr) -I_q(\xi)\bigr\}. 
 \end{align} 
 
 For $t\in\R$ let 
 \[  f(t)=I_q^*(te_1)=\sup_{s\in\R} \{ ts-I_q(se_1+(1-s)e_2)\}  \]
 where of course $I_q(\xi)=\infty$ for $\xi\notin\Uset$ (i.e.\ $s\not\in[0,1]$).  
For $t\ge0$ and $\lambda(t)$ defined by Lemma \ref{lam-xi-t}\eqref{lam-t} equation \eqref{I-97} gives 
\[   f(t)=\psi_0(\alpha+\lambda(t))-\psi_0(\lambda(t)).  \]
This proves \eqref{Iq*1}.

We have
\begin{align*}
f'(t)&=\bigl( \psi_1(\alpha+\lambda(t))-\psi_1(\lambda(t))\bigr)  \lambda'(t) \\
&=\frac{\psi_1(\alpha+\lambda(t))-\psi_1(\lambda(t))}{\psi_1(\alpha+\beta+\lambda(t))-\psi_1(\lambda(t))}\ \underset{t\searrow0}\longrightarrow \   \frac{\alpha}{\alpha+\beta} 
\end{align*}
where the last limit has already been shown at the end of the proof of Lemma \ref{lam-xi-t}\eqref{lam-xi}. 
Consequently, $f'(0+)=\frac{\alpha}{\alpha+\beta}$. 
Since $f$ is convex, we get that 
	\begin{align}
	f'(t\pm)\le \frac{\alpha}{\alpha+\beta}\quad\text{for $t\le 0$.}\label{aux101}
	\end{align}

Since the RWRE under the averaged measure $\int P_0^{\p^\lambda}(\cdot) \,\Pplus(d\wplus)$ is  simple random walk,   RWRE
with transitions $\p^\lambda$ satisfies an almost-sure law of large numbers with velocity given by 
	\[\Eplus[\p^\lambda_{0,e_1}e_1+\p^\lambda_{0,e_2}e_2]=(\tfrac{\alpha}{\alpha+\beta}, \tfrac{\beta}{\alpha+\beta})=\llnv.\]
This gives  $I_q(\llnv)=0$.  

Let  $\xi\in\Uset$ with $\xi_1\ge\frac{\alpha}{\alpha+\beta}$. 
The second equality in the next computation comes from \eqref{aux101}. 
\begin{align}
I_q(\xi)&=\sup_{t\in\R} \{ t\xi_1-f(t)\}=  \sup_{t>0} \{ t\xi_1-f(t)\}\notag\\
&=\sup_{\lambda>0}  
  \bigl\{  \xi_1 \bigl(\psi_0(\alpha+\beta+\lambda)-\psi_0(\lambda)
 \bigr) - \psi_0(\alpha+\lambda) +\psi_0(\lambda) \bigr\}  \label{I-var}\\
 &= \xi_1\psi_0\bigl(\alpha+\beta+\lambda(\xi_1)\bigr)+(1-\xi_1)\psi_0\bigl(\lambda(\xi_1)\bigr)  -\psi_0\bigl(\alpha+\lambda(\xi_1)\bigr)\label{I-var2}
\end{align}
because condition \eqref{eq:lam-xi1} picks out the maximizer above.

To derive  $I_q(\xi)$ for $\xi_1\in[0,\frac{\alpha}{\alpha+\beta}]$, switch around $\alpha$ and $\beta$ and  the axes and then apply the first formula of \eqref{Iq} already proved.   

%

To compute $I_q^*(te_1)$ for $t<0$   write temporarily  $f_{\alpha,\beta}(t)$ and $I_{\alpha,\beta}(\xi)$ 
to make the dependence on the parameters $\alpha, \beta$ explicit.  Then  
	\begin{align*}
	f_{\alpha,\beta}(t)&=t+\sup_{0\le s\le1}\{(-t)(1-s)-I_{\alpha,\beta}(se_1+(1-s)e_2)\}\\
	&=t+\sup_{0\le s\le1}\{(-t)(1-s)-I_{\beta,\alpha}((1-s)e_1+se_2)\}
	=t+f_{\beta,\alpha}(-t).
	\end{align*}
Formula \eqref{Iq*2} follows.
In particular, we have $f'_{\alpha,\beta}(0-)=1-\frac{\beta}{\alpha+\beta}=f'_{\alpha,\beta}(0+)$ and 
$f_{\alpha,\beta}$ is everywhere differentiable. Thus, $I_q=I_{\alpha,\beta}$ is strictly convex on $\Uset$.

We have now verified formula \eqref{Iq} for $I_q$ and Theorem \ref{thm:Iq2}.   
By Lemma 8.1 of \cite{Ras-Sep-Yil-17-ejp} the statement $I_q(\xi)>I_a(\xi)$ $\forall\xi\in\Uset\setminus\{\llnv\}$ is equivalent to 
	\[I^*_q(t)<I^*_a(t)\quad\text{for all $t\ne0$}.\]
(The case $t=0$ corresponds to $\xi=\llnv$ and thus leads to an equality.)

Substituting the above functions this becomes
	\[\psi_0(\alpha+\lambda(t))-\psi_0(\lambda(t))<\log(\llnv_1 e^t+\llnv_2)\]
and
	\[-t+\psi_0(\beta+\lambda(t))-\psi_0(\lambda(t))<\log(\llnv_1 e^{-t}+\llnv_2)\]
for all $t>0$.

Using \eqref{eq:lam-t} and a little bit of rearrangement the above is equivalent to showing that
%
	\[e^{\psi_0(\alpha+\lambda)}<\llnv_1 e^{\psi_0(\alpha+\beta+\lambda)}+\llnv_2e^{\psi_0(\lambda)}\]
and
	\[e^{\psi_0(\beta+\lambda)}<\llnv_1 e^{\psi_0(\lambda)}+\llnv_2 e^{\psi_0(\alpha+\beta+\lambda)}\]
for all $\lambda\ge0$. 

Since $\llnv_1(\alpha+\beta+\lambda)+\llnv_2\lambda=\alpha+\lambda$ and $\llnv_1\lambda+\llnv_2(\alpha+\beta+\lambda)=\beta+\lambda$, 
the above inequalities would follow if  $e^{\psi_0(x)}$ were a strictly convex function. The second derivative of this function is given by 
$e^{\psi_0(x)}(\psi_2(x)+\psi_1(x)^2)$,
which is positive by Lemma \ref{psi-diff}. We have hence shown that $I_q(\xi)>I_a(\xi)$ for all $\xi\in\Uset$ with $\xi\ne\llnv$.

The proofs of Theorems \ref{thm:Iq} and \ref{thm:Iq2} are complete, except that 
it remains  to give the detailed justification of  \eqref{I-97}.  By \eqref{aux111} for any $\xi\in\Uset$,  
\[n^{-1}\log\Bd_{0,ne_2}^\lambda\ge n^{-1}\log P_0^{\p^\lambda}\{X_n=[n\xi]\}+n^{-1}\sum_{i=0}^{[n\xi]\cdot e_1-1}\log R_{i,n}.\]
Taking $n\to\infty$ and applying \eqref{aux1}, \eqref{qLDP}, and the ergodic theorem we get
\begin{align}
\psi_0(\alpha+\lambda)-\psi_0(\lambda)
&\ge -I_q(\xi)+\xi_1\E[\log\Bd_{e_1,e_2}^\lambda]\label{aux99}\\
&=-I_q(\xi)+\xi_1(\E[\log\Bd_{0,e_2}^\lambda]-\E[\log\Bd^\lambda_{0,e_1}])\notag\\
&=-I_q(\xi)+\xi_1(\psi_0(\alpha+\beta+\lambda)-\psi_0(\lambda)).\notag
\end{align}
(Since the summands $\log R_{i,n}$ shift with $n$ the limit of their average is not a.s.\ but rather an in probability limit.) 
Supremum over $\xi$ gives
	\[\psi_0(\alpha+\lambda)-\psi_0(\lambda)\ge\max_{\xi\in\Uset}\big\{-I_q(\xi)+\xi_1(\psi_0(\alpha+\beta+\lambda)-\psi_0(\lambda))\big\}.\]

For the reverse inequality go back to \eqref{aux3} and write
	\begin{align}\label{upper}
	\begin{split}
	n^{-1}\log \Bd_{0,ne_2}^\lambda
	\le n^{-1}\log(n+1)+n^{-1}\max_{0\le j\le n} \Big\{&\log P^{\p^\lambda}_0\{X_n=je_1+(n-j)e_2\}\\
	&+n^{-1}\log\Bd_{je_1+(n-j)e_2,ne_2}^\lambda\Big\}.
	\end{split}
	\end{align}

Now take 
$n\ge k\ge 2$. Let $m_n$ be the integer such that 
	\[(m_n-1)(k-1)<n\le m_n(k-1).\]
Then for any $j$ with $0\le j\le n$ we have $j/m_n+1\le k$ and thus there exists a unique $i=i(j,n)$ such that $0\le i\le k$ and $j< m_n i\le j+m_n$.
Let $x_{j,n}=ie_1+(k-i)e_2$.

For $z\in\{e_1,e_2\}$ abbreviate
	\[A_{x,z}=\max\big\{\abs{\log\p^\lambda(x,x+z)},\abs{\log\Bd^\lambda_{x,\,x+z}}\big\}.\]
For $0\le j\le n$ use the Markov property to bound
	\[P_0^{\p^\lambda}\{X_n=je_1+(n-j)e_2\}P_{je_1+(n-j)e_2}^{\p^\lambda}\{X_{m_nk-n}=m_n x_{j,n}\}
	\le P_0^{\p^\lambda}\{X_{m_nk}=m_n x_{j,n}\}.\]
On the other hand, observe that one can go from $je_1+(n-j)e_2$ to $m_nx_{j,n}$ by taking at most $m_n$ steps of type $e_1$ and then at most $m_n$ steps of type $e_2$.
Since $m_nk\le nk/(k-1)+k\le 3n$ we have
	\begin{align}\label{aux2}
	\log P_{je_1+(n-j)e_2}^{\p^\lambda}\{X_{m_nk-n}=m_n x_{j,n}\}
	\ge -2\max\Big\{\sum_{0\le i\le 3n/k}A_{x+iz,z}:\abs{x}_1\le3n,z\in\{e_1,e_2\}\Big\}.
	\end{align}
(Taking a maximum over all $x$ with $\abs{x}_1\le3n$ is an overkill, but still good enough for our purposes.)
Similarly,
	\[\Bd^\lambda(je_1+(n-j)e_2,ne_2)=\Bd^\lambda(je_1+(n-j)e_2,m_n x_{j,n})\Bd^\lambda(m_n x_{j,n},m_n k e_2)\Bd^\lambda(m_n k e_2,ne_2)\]
and for the same reason as \eqref{aux2} we have
	\begin{align*}
	&\log\Bd^\lambda(je_1+(n-j)e_2,m_n x_{j,n})+\log\Bd^\lambda(m_n k e_2,ne_2)\\
	&\qquad\le 3\max\Big\{\sum_{0\le i\le 3n/k}A_{x+iz,z}:\abs{x}_1\le3n,z\in\{e_1,e_2\}\Big\}
	\end{align*}
	
Let $D_k=\{x/k:x\in\Z_+^2,\abs{x}_1=k\}$. Collect the above bounds and continue from \eqref{upper} to write
	\begin{align}\label{aux89}
	\begin{split}
	n^{-1}\log \Bd^\lambda_{0,ne_2}
	\le&\max_{\xi\in D_k}\Big\{n^{-1}\log P_0^{\p^\lambda}\{X_{m_nk}=m_nk\xi\}+n^{-1}\log\Bd^\lambda(m_n k\xi,m_n k e_2)\Big\}\\
	&+5n^{-1}\max\Big\{\sum_{0\le i\le 3n/k}A_{x+iz,z}:\abs{x}_1\le3n,z\in\{e_1,e_2\}\Big\}.
	\end{split}
	\end{align}

Since for each $x\in\Z_+^2$ and $z\in\{e_1,e_2\}$, $\{A_{x+iz,z}:i\in\Z_+\}$ are i.i.d.\ and have strictly more than two moments, Lemma A.4 of \cite{Ras-Sep-Yil-13} 
implies that 
	\[\lim_{k\to\infty}\lim_{n\to\infty}n^{-1}\max\Big\{\sum_{i=0}^{3n/k}A_{x+iz,z}:\abs{x}_1\le 3n,z\in\{e_1,e_2\}\Big\}=0.\]

Note that $m_nk/n\to k/(k-1)$ as $n\to\infty$. Applying the last display, \eqref{aux1}, \eqref{qLDP}, and the ergodic theorem 
(similarly to how the right-hand side of \eqref{aux99} was obtained) we get
	\begin{align*}
	\psi_0(\alpha+\lambda)-\psi_0(\lambda)
	&\le\lim_{k\to\infty}\lim_{n\to\infty} \max_{\xi\in D_k}\Big\{n^{-1}\log P_0^{\p^\lambda}\{X_{m_nk}=m_nk\xi\}+n^{-1}\log\Bd^\lambda(m_n k\xi,m_n k e_2)\Big\}\\
	&= \lim_{k\to\infty}\frac{k}{k-1}\max_{\xi\in D_k}\big\{-I_q(\xi)+\xi_1(\psi_0(\alpha+\beta+\lambda)-\psi_0(\lambda))\big\}\\
	&\le \max_{\xi\in\Uset}\big\{-I_q(\xi)+\xi_1(\psi_0(\alpha+\beta+\lambda)-\psi_0(\lambda))\big\}.
	\end{align*}
\eqref{I-97} is proved.
\end{proof}

\begin{proof}[Proof of Theorem \ref{thm:Iq3}]   Equation \eqref{Iq17} is the same as  Theorem \ref{th:Buse}\eqref{B:qldp} above,  where it was proved directly without recourse to the general variational formula \eqref{K-var1}.    Substitution of $B^\xi$ on the right-hand side of \eqref{K-var1} now verifies  that the infimum    is attained at  $B=B^\xi$.     After $I_q$ is extended to all of $\R_+$, formula \eqref{Iq17} remains valid.  This and  calculus verify \eqref{Iq18}.  
\end{proof}

\appendix

\section{Facts about polygamma functions}\label{psi-prop}

\begin{remark}
In what follows, some lengthy algebraic manipulations were  performed with Maple and checked with Sage.
Specifically, these were the last expansion in the proof of Lemma \ref{lm:psi-exp}, the expansions in the proofs of Lemmas \ref{lm:exp1} and \ref{lm:exp2}, and the computation of 
$I''''_q(\llnv)$ at the very end of the paper.
\end{remark}

Let us recall a few facts about polygamma functions $\psi_0(x)=\Gamma'(x)/\Gamma(x)$ and  $\psi_n(x)=\psi_{n-1}'(x)$ for $x>0$ and $n\in\N$.  

For $n\ge1$ we have the integral representation
	\begin{align}\label{psi-int}
	\psi_n(x)=-\int_0^\infty\frac{(-t)^ne^{-xt}}{1-e^{-t}}\,dt\,.
	\end{align}
See formula 6.4.1 in \cite{Abr-Ste-92}. In particular, we see that for $n\ge1$, $\psi_n(x)$ never vanishes, has sign $(-1)^{n-1}$ for all $x>0$, $\psi_0$ is strictly concave and  increasing,
while $\psi_1$ is strictly convex and decreasing with $\psi_1(x)\to\infty$ when $x\searrow0$ and $\psi_1(x)\to0$ when $x\to\infty$.

Differentiating the relation $\Gamma(x+1)=x\Gamma(x)$, dividing by $\Gamma(x)$, then differentiating $n$ times gives 
the recurrence relation
	\begin{align}\label{psi-rec}
	\psi_n(x)=\psi_n(x+1)-(-1)^n n!\, x^{-(n+1)}\quad\text{for all }n\ge0\text{ and }x>0.
	\end{align}
In particular, this shows that $\psi_0(x)\sim\log x\to\infty$ as $x\to\infty$.

Combining formulas 6.3.5 and 6.3.16 from \cite{Abr-Ste-92} we also have the expansion
	\begin{align}\label{psi0}
	\psi_0(x)=-\gamma-\frac1x+\sum_{k=1}^\infty\Big(\frac1k-\frac1{x+k}\Big),
	\end{align}
where $\gamma=\lim_{n\to\infty}(-\log n+\sum_{k=1}^n k^{-1})$ is Euler's constant and $x>0$. In particular, $\psi_0(x)\to-\infty$ as $x\searrow0$.

\begin{lemma}\label{lm:psi-exp}
For $n\ge1$ and $a>0$ fixed we have
	\begin{align*}
	&\psi_n(x+a)-\psi_n(x)\\
	&=\tfrac{(-1)^{n}a}{x^{n+1}}\Big(n!-\tfrac{(a-1)(n+1)!}{2x}+\tfrac{(a-1)(2a-1)(n+2)!}{12x^2}-\tfrac{a(a-1)^2(n+3)!}{24x^3}+\tfrac{(6a^4-15a^3+10a^2-1)(n+4)!}{720x^4}+\cO(\tfrac1{x^5})\Big).
	\end{align*}
\end{lemma}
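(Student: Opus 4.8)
The plan is to reduce the statement to the classical asymptotic expansion of the polygamma functions at infinity, followed by a term‑by‑term substitution $x\mapsto x+a$. From the standard expansion $\psi_0(x)\sim\log x-\tfrac1{2x}-\sum_{k\ge1}\tfrac{B_{2k}}{2k\,x^{2k}}$ (formula~6.3.18 in \cite{Abr-Ste-92}), differentiating $n$ times gives, for $n\ge1$,
\[
\psi_n(x)\sim(-1)^{n+1}\Bigl(\frac{(n-1)!}{x^n}+\frac{n!}{2x^{n+1}}+\sum_{k\ge1}B_{2k}\,\frac{(2k+n-1)!}{(2k)!}\,\frac1{x^{2k+n}}\Bigr),
\]
which is formula~6.4.11 in \cite{Abr-Ste-92}; if one prefers not to invoke it directly, the same expansion follows from the integral representation \eqref{psi-int} for $\psi_1$ by expanding $t/(1-e^{-t})$ in powers of $t$ and applying Watson's lemma, and then using the recurrence \eqref{psi-rec} to pass from $\psi_1$ to $\psi_n$. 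Since $B_2=\tfrac16$, $B_4=-\tfrac1{30}$ and all odd‑index Bernoulli numbers past $B_1$ vanish, the relevant truncation reads
\[
\psi_n(x)=(-1)^{n+1}\Bigl(\frac{(n-1)!}{x^n}+\frac{n!}{2x^{n+1}}+\frac{(n+1)!}{12\,x^{n+2}}-\frac{(n+3)!}{720\,x^{n+4}}+\cO(x^{-(n+6)})\Bigr),
\]
with the coefficients of $x^{-(n+3)}$ and $x^{-(n+5)}$ equal to zero.

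The next step is to substitute $x\mapsto x+a$ into this truncated expansion, subtract $\psi_n(x)$, and expand each factor $(x+a)^{-m}=x^{-m}\sum_{i\ge0}\binom{-m}{i}(a/x)^i$ into a finite Taylor series in $1/x$ with $\cO(x^{-(n+6)})$ remainder (valid with $a$ fixed). The $x^{-n}$ contributions cancel, so the difference begins at order $x^{-(n+1)}$; collecting the coefficients of $x^{-(n+1)},\dots,x^{-(n+5)}$ and pulling out the factor $(-1)^n a\,x^{-(n+1)}$ yields the claimed polynomials in $a$. The constant term is $n!$, as it must be since the leading contribution is $a\,\psi_{n+1}(x)\sim(-1)^n a\,n!\,x^{-(n+1)}$; the remaining coefficients $-\tfrac{(a-1)(n+1)!}{2}$, $\tfrac{(a-1)(2a-1)(n+2)!}{12}$, $-\tfrac{a(a-1)^2(n+3)!}{24}$, $\tfrac{(6a^4-15a^3+10a^2-1)(n+4)!}{720}$ come out of the same computation. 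A useful consistency check is $a=1$: by \eqref{psi-rec} the left side is then exactly $(-1)^n n!\,x^{-(n+1)}$, and indeed every displayed higher‑order coefficient carries a factor that vanishes at $a=1$ (note $6-15+10-1=0$).

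I expect the only real obstacle to be bookkeeping: one must correctly account for the contributions to each of the five powers $x^{-(n+1)},\dots,x^{-(n+5)}$ arising from the four surviving terms of the $\psi_n$‑expansion and from sufficiently many terms of each binomial series, and then simplify the resulting degree‑$4$ polynomials in $a$. This is precisely the kind of lengthy algebra flagged in the Remark opening this appendix, and it is most safely carried out with a computer algebra system; there is no new analytic content beyond the cited expansions of $\psi_0$ and its derivatives together with the elementary remainder estimates for truncated asymptotic and Taylor series.
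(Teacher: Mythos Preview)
Your approach is correct but differs from the paper's. The paper works directly with the integral representation \eqref{psi-int}: it writes
\[
\psi_n(x+a)-\psi_n(x)=(-1)^{n+1}\int_0^\infty \frac{t^n e^{-xt}(e^{-at}-1)}{1-e^{-t}}\,dt,
\]
substitutes $s=xt$, and expands the single function $(e^{-as/x}-1)/(1-e^{-s/x})$ as a power series in $s/x$; the coefficients in the statement then drop out after applying $\int_0^\infty s^{n+k}e^{-s}\,ds=(n+k)!$. Your route instead quotes the full asymptotic series for $\psi_n$ and performs the substitution $x\mapsto x+a$ term by term via binomial expansions. Both are valid. The paper's method is more economical: the cancellation of the leading $x^{-n}$ term is automatic (it never appears), and only one series has to be expanded rather than four separate terms each requiring several binomial orders. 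Your method has the advantage of resting on a textbook formula and making the Bernoulli‑number structure explicit, at the cost of heavier bookkeeping; your $a=1$ sanity check is a nice touch that the paper does not include.
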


\begin{proof}
Using \eqref{psi-int} write 
	\begin{align*}
	&\psi_n(x+a)-\psi_n(x)
	=(-1)^{n+1}\int_0^\infty \frac{t^n e^{-xt}(e^{-at}-1)}{1-e^{-t}}\,dt\\
	&=\tfrac{(-1)^{n+1}}{x^{n+1}}\int_0^\infty s^n e^{-s}\cdot\frac{e^{-as/x}-1}{1-e^{-s/x}}\,ds\\
	&=\tfrac{(-1)^{n}}{x^{n+1}}\int_0^\infty s^n e^{-s}\cdot\frac{\frac{as}{x}-\frac{a^2s^2}{2x^2}+\frac{a^3s^3}{6x^3}+\cdots+\cO(\frac{s^6}{x^6})}{\frac{s}{x}-\frac{s^2}{2x^2}+\frac{s^3}{6x^3}+\cdots+\cO(\frac{s^6}{x^6})}\,ds\\
	&=\tfrac{(-1)^{n}a}{x^{n+1}}\int_0^\infty s^n e^{-s}\Big(1-\tfrac{(a-1)s}{2x}+\tfrac{(a-1)(2a-1)s^2}{12x^2}-\tfrac{a(a-1)^2s^3}{24x^3}+\tfrac{(6a^4-15a^3+10a^2-1)s^4}{720x^4}+\cO(\tfrac{s^5}{x^5})\Big)\,ds.
	\end{align*}
The claim now follows from $\int_0^\infty s^n e^{-s}\,ds=n!$. 
\end{proof}


\begin{lemma}\label{psi-con}
Function $\psi_2\circ\psi_1^{-1}$ is strictly concave; 
\end{lemma}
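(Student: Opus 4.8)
The goal is to show that the function $h=\psi_2\circ\psi_1^{-1}$, defined on the range of $\psi_1$, namely $(0,\infty)$ in the argument $y=\psi_1(x)$ with $x>0$, is strictly concave. Recall that $\psi_1$ is strictly decreasing with $\psi_1(x)\to\infty$ as $x\searrow 0$ and $\psi_1(x)\to 0$ as $x\to\infty$, so $\psi_1^{-1}$ is a well-defined smooth bijection. First I would compute $h''$ by the chain rule. Writing $x=\psi_1^{-1}(y)$, so that $dx/dy=1/\psi_1'(x)=1/\psi_2(x)$, we get $h'(y)=\psi_3(x)/\psi_2(x)$ and then
\begin{align}\label{hpp}
h''(y)=\frac{d}{dy}\!\left(\frac{\psi_3(x)}{\psi_2(x)}\right)
=\frac{\psi_4(x)\psi_2(x)-\psi_3(x)^2}{\psi_2(x)^2}\cdot\frac{1}{\psi_2(x)}
=\frac{\psi_4(x)\psi_2(x)-\psi_3(x)^2}{\psi_2(x)^3}.
\end{align}
Since $\psi_2(x)<0$ for all $x>0$ (by the integral representation \eqref{psi-int}, $\psi_2$ has sign $(-1)^{2-1}=-1$), the denominator $\psi_2(x)^3$ is strictly negative. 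Hence strict concavity, $h''(y)<0$, is equivalent to the inequality
\begin{align}\label{key-ineq}
\psi_4(x)\,\psi_2(x)-\psi_3(x)^2>0\qquad\text{for all }x>0.
\end{align}

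The core of the argument is therefore \eqref{key-ineq}, which I would prove using the integral representation \eqref{psi-int}. From that formula, for $n\ge 1$,
\[
\psi_n(x)=-\int_0^\infty \frac{(-t)^n e^{-xt}}{1-e^{-t}}\,dt
=(-1)^{n+1}\int_0^\infty \frac{t^n e^{-xt}}{1-e^{-t}}\,dt,
\]
so if we set $d\mu(t)=\dfrac{t^2 e^{-xt}}{1-e^{-t}}\,dt$, a positive measure on $(0,\infty)$, then
\[
\psi_2(x)=-\int_0^\infty d\mu(t),\qquad
\psi_3(x)=\int_0^\infty t\,d\mu(t),\qquad
\psi_4(x)=-\int_0^\infty t^2\,d\mu(t).
\]
Consequently
\[
\psi_4(x)\psi_2(x)-\psi_3(x)^2
=\left(\int_0^\infty t^2\,d\mu\right)\!\left(\int_0^\infty d\mu\right)-\left(\int_0^\infty t\,d\mu\right)^2,
\]
which is precisely the Cauchy--Schwarz discrepancy for the functions $1$ and $t$ against the measure $\mu$, hence nonnegative; and it is strictly positive because $1$ and $t$ are not proportional $\mu$-a.e. (equivalently, $\mu$ is not a point mass). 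This gives \eqref{key-ineq} and therefore $h''(y)<0$, completing the proof.

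\textbf{Main obstacle.} There is no serious analytic difficulty here; the one point requiring care is purely bookkeeping: getting the signs right in the chain-rule computation \eqref{hpp} and in the sign pattern of $\psi_2,\psi_3,\psi_4$ from \eqref{psi-int}, and then recognizing the resulting expression as a Cauchy--Schwarz (or equivalently a Jensen / variance) inequality. One should also note in passing that differentiating under the integral sign is justified because all the integrals $\int_0^\infty t^{n}e^{-xt}/(1-e^{-t})\,dt$ converge locally uniformly in $x>0$ (the integrand is $O(t^{n-1})$ near $0$ and decays exponentially at $\infty$), so the representation $\psi_n(x)=(-1)^{n+1}\int_0^\infty t^n e^{-xt}(1-e^{-t})^{-1}\,dt$ and its use above are fully rigorous.
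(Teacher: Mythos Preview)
Your proof is correct and takes essentially the same approach as the paper: both compute $h''$ by the chain rule, reduce to the inequality $\psi_4\psi_2>\psi_3^2$, and establish it via the integral representation \eqref{psi-int}. The only cosmetic difference is that the paper writes out the symmetrization $t^2+s^2>2ts$ in the double integral explicitly, whereas you invoke Cauchy--Schwarz directly for the measure $d\mu(t)=t^2 e^{-xt}(1-e^{-t})^{-1}\,dt$; these are the same argument.
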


\begin{proof}
See for example the proof of Lemma 5.3 in 
\cite{Bar-Cor-17}. We give here the details for completeness. 

We have 
	\[(\psi_2\circ\psi_1^{-1})''=\Big(\frac{\psi_3\circ\psi_1^{-1}}{\psi_2\circ\psi_1^{-1}}\Big)'
	=\frac{\psi_4\circ\psi_1^{-1}-(\psi_3\circ\psi_1^{-1})^2/\psi_2\circ\psi_1^{-1}}{(\psi_2\circ\psi_1^{-1})^2}\,.\]
Since $\psi_2$ is negative, strict concavity of $\psi_2\circ\psi_1^{-1}$ would follow from showing that $\psi_4\psi_2>\psi_3^2$.
By the integral representation \eqref{psi-int} this is equivalent to 
	\[\int_0^\infty\!\!\int_0^\infty\frac{t^3s^3e^{-xt-xs}}{(1-e^{-t})(1-e^{-s})}\,dt\,ds<\int_0^\infty\!\!\int_0^\infty\frac{t^2s^4e^{-xt-xs}}{(1-e^{-t})(1-e^{-s})}\,dt\,ds\,.\]
Symmetrizing the right-hand side (i.e.\ adding another copy with $s$ and $t$ interchanged) the above becomes
	\[\int_0^\infty\!\!\int_0^\infty\frac{t^2s^2e^{-xt-xs}}{(1-e^{-t})(1-e^{-s})}(2ts)\,dt\,ds<\int_0^\infty\!\!\int_0^\infty\frac{t^2s^2e^{-xt-xs}}{(1-e^{-t})(1-e^{-s})}(t^2+s^2)\,dt\,ds\,,\]
which is true for all $x>0$. 
\end{proof}

\begin{lemma}
For all $x>0$ we have 
	\[\psi_1(x)>\frac1x+\frac1{2x^2}\,.\]
\end{lemma}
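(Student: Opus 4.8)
The plan is to use the integral representation \eqref{psi-int} for $\psi_1$ together with a matching integral representation for the elementary terms $1/x$ and $1/(2x^2)$. Recall that $\psi_1(x)=\int_0^\infty \frac{t\,e^{-xt}}{1-e^{-t}}\,dt$, that $\frac1x=\int_0^\infty e^{-xt}\,dt$, and that $\frac1{2x^2}=\int_0^\infty \frac{t}{2}\,e^{-xt}\,dt$. Subtracting, the claimed inequality $\psi_1(x)>\frac1x+\frac1{2x^2}$ is equivalent to
\[
\int_0^\infty e^{-xt}\Bigl(\frac{t}{1-e^{-t}}-1-\frac t2\Bigr)\,dt>0\qquad\text{for all }x>0.
\]
So it suffices to show that the function $h(t):=\dfrac{t}{1-e^{-t}}-1-\dfrac t2$ is strictly positive for all $t>0$; then the integrand is strictly positive and the integral is strictly positive.

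To prove $h(t)>0$ on $(0,\infty)$, I would rewrite it over a common denominator: $h(t)=\dfrac{t-(1+\frac t2)(1-e^{-t})}{1-e^{-t}}=\dfrac{\frac t2 - 1 + (1+\frac t2)e^{-t}}{1-e^{-t}}$. The denominator is positive for $t>0$, so it remains to show the numerator $g(t):=\frac t2-1+(1+\frac t2)e^{-t}$ is positive for $t>0$. One checks $g(0)=0$, and $g'(t)=\frac12-\frac12 e^{-t}\cdot t$… more cleanly, $g'(t)=\frac12+\frac12 e^{-t}-(1+\frac t2)e^{-t}=\frac12-\frac12 e^{-t}-\frac t2 e^{-t}$, so $g'(0)=0$, and $g''(t)=\frac12 e^{-t}-\frac12 e^{-t}+\frac t2 e^{-t}=\frac t2 e^{-t}>0$ for $t>0$. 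Hence $g'$ is strictly increasing with $g'(0)=0$, so $g'(t)>0$ on $(0,\infty)$, hence $g$ is strictly increasing with $g(0)=0$, so $g(t)>0$ on $(0,\infty)$. This gives $h(t)>0$ and completes the argument.

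Alternatively, and perhaps more transparently, I could use the classical series expansion $\dfrac{t}{1-e^{-t}}=\dfrac{t e^t}{e^t-1}=\sum_{k=0}^\infty \dfrac{B_k^{+}}{k!}t^k$ (the generating function of Bernoulli numbers with the convention $B_1=+\frac12$), whose first terms are $1+\frac t2+\frac{t^2}{12}+O(t^4)$ with all relevant contributions beyond verified positive on $(0,\infty)$; but pinning down positivity of the full tail is exactly the kind of thing the monotonicity argument above handles cleanly without invoking Bernoulli number sign patterns, so I would go with the $g,g',g''$ route.

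The only mild obstacle is bookkeeping: making sure the integral representation of $\psi_1$ is quoted with the correct sign (from \eqref{psi-int} with $n=1$, $\psi_1(x)=\int_0^\infty \frac{t e^{-xt}}{1-e^{-t}}\,dt$, since $-(-t)^1=t$), and confirming the elementary Laplace transforms $\int_0^\infty e^{-xt}\,dt=1/x$ and $\int_0^\infty t e^{-xt}\,dt=1/x^2$. Everything else is the elementary calculus lemma $g(t)>0$ for $t>0$, which is three lines. I expect no real difficulty.
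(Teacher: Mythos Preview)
Your proof is correct and takes a genuinely different route from the paper's. The paper establishes the inequality first for large $x$ via the asymptotic expansion $\psi_1(x)=\tfrac1x+\tfrac1{2x^2}+\tfrac1{6x^3}+\cO(x^{-5})$, and then pushes it down to all $x>0$ by a backward induction using the recurrence $\psi_1(x)=\psi_1(x+1)+\tfrac1{x^2}$, checking that if the inequality holds at $x+1$ then $\psi_1(x)-\tfrac1x-\tfrac1{2x^2}>\tfrac1{2x^2(x+1)^2}>0$. Your argument instead writes the difference $\psi_1(x)-\tfrac1x-\tfrac1{2x^2}$ as a single Laplace integral with integrand $e^{-xt}h(t)$ and shows $h(t)>0$ pointwise via elementary calculus. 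Your route is shorter and more self-contained: it avoids both the asymptotic expansion (with its implicit error control) and the recurrence step. The paper's approach, on the other hand, fits the pattern reused in the very next lemma (Lemma~\ref{psi-diff}), where the same ``asymptotics plus recurrence'' template is applied to $\psi_2+\psi_1^2$; so there is some economy in the paper's choice.
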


\begin{proof}
Write
	\[\psi_1(x)=\int_0^\infty\frac{t e^{-xt}}{1-e^{-t}}\,dt=\frac1{x^2}\int_0^\infty\frac{s e^{-s}}{1-e^{-s/x}}\,ds.\]
Expand 
\begin{align}
\frac1{1-e^{-s/x}}
&=\frac1{\frac{s}{x}-\frac{s^2}{2x^2}+\frac{s^3}{6x^3}-\frac{s^4}{24x^4}+\cO(x^{-5})}\notag\\
&=\frac{x}{s}\frac1{1-\frac{s}{2x}+\frac{s^2}{6x^2}-\frac{s^3}{24x^3}+\cO(x^{-4})}\notag\\
&=\frac{x}{s}\cdot\Bigl(1+\frac{s}{2x}-\frac{s^2}{6x^2}+\frac{s^3}{24x^3}+\bigl(\frac{s}{2x}-\frac{s^2}{6x^2}\bigr)^2+\frac{s^3}{8x^3}+\cO(x^{-4})\Bigr)\notag\\
&=\frac{x}{s}\cdot\Bigl(1+\frac{s}{2x}-\frac{s^2}{6x^2}+\frac{s^3}{24x^3}+\frac{s^2}{4x^2}-\frac{s^3}{6x^3}+\frac{s^3}{8x^3}+\cO(x^{-4})\Bigr)\notag\\
&=\frac{x}{s}\cdot\Bigl(1+\frac{s}{2x}+\frac{s^2}{12x^2}+\cO(x^{-4})\Bigr).\label{e-exp}
\end{align}
This gives
	\begin{align}\label{psi1-exp}
	\psi_1(x)=\frac1x+\frac{1}{2x^2}+\frac{1}{6x^3}+\cO(x^{-5}).
	\end{align}
We thus see that the claim of the lemma holds for all large enough $x$.

Next, assume that for some $x>0$ we have
	\[\psi_1(x+1)>\frac1{x+1}+\frac1{2(x+1)^2}\,.\]
Use  \eqref{psi-rec} to write
	\begin{align*}
	\psi_1(x)-\frac1x-\frac1{2x^2}
	&=\psi_1(x+1)-\frac1x+\frac1{2x^2}\\
	&>\frac1{x+1}+\frac1{2(x+1)^2}-\frac1x+\frac1{2x^2}\\
	&=\frac{1}{2x^2(x+1)^2}>0.
	\end{align*}
Thus, we see that if the claim holds for $x+1$ it holds also for $x$.
This and the fact that it holds for all large enough $x$ implies that it holds for all $x>0$.
\end{proof}

\begin{lemma}\label{psi-diff}
We have for all $x>0$
	\[\psi_2(x)+\psi_1(x)^2>0.\]
\end{lemma}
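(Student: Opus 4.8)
The plan is to set $G(x)=\psi_2(x)+\psi_1(x)^2$ and prove that $G(x)>G(x+1)$ for every $x>0$, while $G(x)\to 0$ as $x\to\infty$; together these two facts force $G(x)>0$ everywhere, which is exactly the claim.

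First I would establish the one-step comparison. Using the recurrences \eqref{psi-rec} in the forms $\psi_1(x)=\psi_1(x+1)+x^{-2}$ and $\psi_2(x)=\psi_2(x+1)-2x^{-3}$, a direct expansion gives
\[
G(x)=G(x+1)+2x^{-2}\psi_1(x+1)+x^{-4}-2x^{-3}.
\]
It then suffices to show the correction term is positive. Here I would invoke the preceding lemma, $\psi_1(y)>y^{-1}+\tfrac12 y^{-2}$, at $y=x+1$, reducing matters to
\[
2x^{-2}\Bigl(\tfrac1{x+1}+\tfrac1{2(x+1)^2}\Bigr)+x^{-4}-2x^{-3}\ \ge\ 0,
\]
and clearing denominators by the positive factor $x^4(x+1)^2$ turns the left side into $\tfrac{2x^2(x+1)+x^2+(x+1)^2-2x(x+1)^2}{x^4(x+1)^2}$, whose numerator collapses to the constant $1$ after routine algebra. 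Hence $G(x)>G(x+1)$ for all $x>0$.

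Next I would record that $G(x)\to 0$ as $x\to\infty$. This follows immediately from the integral representation \eqref{psi-int}: for $n=1,2$ we have $|\psi_n(x)|\le \int_0^\infty t^n e^{-xt}(1-e^{-t})^{-1}\,dt$, which tends to $0$ by dominated convergence (for $x\ge 1$ dominate by the integrable function $t^n e^{-t}(1-e^{-t})^{-1}$). Thus $\psi_1(x),\psi_2(x)\to 0$, so $G(x)\to 0$.

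Finally I would assemble the pieces: for fixed $x>0$ and any integer $n\ge 1$, iterating the one-step inequality gives $G(x)>G(x+1)>\cdots>G(x+n)$, and letting $n\to\infty$ yields $G(x)\ge 0$; since moreover $G(x)>G(x+1)\ge 0$ by the same reasoning applied at $x+1$, we conclude $G(x)>0$. I do not anticipate a real obstacle here — this mirrors the ``push down from large $x$'' scheme already used in the preceding lemma — the only mildly delicate points are the algebraic check that the correction term equals $1/(x^4(x+1)^2)$ and the observation that the decay $\psi_1,\psi_2\to 0$ is precisely what lets the telescoping chain be terminated at infinity.
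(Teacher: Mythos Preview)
Your argument is correct and follows the same scheme as the paper: both use the recurrence \eqref{psi-rec} together with the preceding lemma $\psi_1(x)>x^{-1}+\tfrac12 x^{-2}$ to show that the increment $G(x)-G(x+1)=2x^{-2}\psi_1(x+1)+x^{-4}-2x^{-3}$ is positive, and then propagate from large $x$. The only minor difference is the anchoring at infinity: the paper computes the expansion $G(x)=\tfrac1{12}x^{-4}+\cdots$ to see $G(x)>0$ eventually and runs backward induction, whereas you use the simpler observation $G(x)\to 0$ combined with strict monotonicity---a slightly cleaner variant that avoids the expansion.
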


\begin{proof}
We proceed similarly to the above lemma. First, we compute the expansions for large $x$.
Write
	\[\psi_2(x)=-\int_0^\infty\frac{t^2 e^{-xt}}{1-e^{-t}}\,dt=-\frac1{x^3}\int_0^\infty\frac{s^2 e^{-s}}{1-e^{-s/x}}\,ds.\]
From \eqref{e-exp} we get
	\[\psi_2(x)=-\frac1{x^2}\Bigl(1+\frac1{x}+\frac{1}{2x^2}+\cO(x^{-4})\Bigr).\]
And from \eqref{psi1-exp} we have
	\[\psi_1(x)^2=\frac1{x^2}\Big(1+\frac{1}{x}+\frac{1}{3x^2}+\frac{1}{4x^2}+\frac1{6x^3}+\cO(x^{-4})\Big).\]
Hence
	\[\psi_2(x)+\psi_1(x)^2=\frac1{12x^4}+\cO(x^{-3}),\]
which says that the claim of the lemma holds for all large enough $x$.

Next, assume that for some $x>0$ we have
	\[\psi_2(x+1)+\psi_1(x+1)^2>0.\]
Use \eqref{psi-rec} (twice for the first equality and once for the last one) to write
	\begin{align*}
	\psi_2(x)+\psi_1^2(x)
	&=\psi_2(x+1)-\frac2{x^3}+\bigl(\psi_1(x+1)+\frac1{x^2}\bigr)^2\\
	&=\psi_2(x+1)+\psi_1(x+1)^2+\frac{2\psi_1(x+1)}{x^2}-\frac2{x^3}+\frac1{x^4}\\
	&>\frac{2\psi_1(x+1)}{x^2}-\frac2{x^3}+\frac1{x^4}\\
	&=\frac{2}{x^2}\Bigl(\psi_1(x+1)-\frac1{x}+\frac1{2x^2}\Bigr)\\
	&=\frac{2}{x^2}\Bigl(\psi_1(x)-\frac1{x}-\frac1{2x^2}\Bigr).
	\end{align*}
The last quantity is positive, by the above lemma. Hence, we see that if the inequality claimed in the lemma
holds for $x+1$ it holds for $x$ as well. This, and the fact the inequality holds for all large enough $x$ implies the 
inequality holds for all $x>0$.
\end{proof}

\section{Differentiability and expansion of $I_q$}\label{app:exp}

 When convenient we consider  $I_q$ and $\lambda$ from \eqref{Iq} and Lemma \ref{lam-xi-t}\eqref{lam-xi} as functions of $\xi_1$: $I_q(\xi_1)=I_q(\xi_1,1-\xi_1)$ and $\lambda(\xi_1)=\lambda(\xi_1,1-\xi_1)$.

%

\begin{lemma}  \label{lm700}    There exist open sets $G_0, G_1$ in $\bC$ such that $(0,\llnv_1)\subset G_0$ and $(\llnv_1,1)\subset G_1$, and holomorphic functions $d_0$ and $f_0$ on $G_0$ and $d_1$ and $f_1$ on $G_1$ such that $\lambda(\xi)=d_0(\xi_1)$ and $I_q(\xi)=f_0(\xi_1)$ for $\xi_1\in(0,\llnv_1)$  and $\lambda(\xi)=d_1(\xi_1)$ and $I_q(\xi)=f_1(\xi_1)$ for $\xi_1\in(\llnv_1,1)$. 
\end{lemma}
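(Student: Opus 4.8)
The plan is to obtain both statements from the holomorphic inverse function theorem, treating the interval $(\llnv_1,1)$ in detail and deducing the case $(0,\llnv_1)$ by the same symmetry ($\alpha\leftrightarrow\beta$, $\xi_1\leftrightarrow\xi_2$) used in the proof of Lemma \ref{lam-xi-t}.  The starting point is that $\psi_0$ and $\psi_1$ are holomorphic on the right half-plane $H=\{z\in\bC:\Re z>0\}$: for $\psi_1$ this is immediate from the locally uniformly convergent series $\psi_1(z)=\sum_{k\ge0}(z+k)^{-2}$, and for $\psi_0$ from \eqref{psi0}.  Hence the function
	\[F(z)=\frac{\psi_1(z)-\psi_1(\alpha+z)}{\psi_1(z)-\psi_1(\alpha+\beta+z)}\]
is holomorphic on $H$ away from zeros of its denominator, and since $\psi_1$ is strictly decreasing on $(0,\infty)$ the denominator does not vanish there; so $F$ is holomorphic on some open $V$ with $(0,\infty)\subset V\subset H$ and agrees on $(0,\infty)$ with the function $f$ from the proof of Lemma \ref{lam-xi-t}, where it was shown that $f'(\lambda)<0$ for all $\lambda>0$.

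Next I would run the holomorphic inverse function theorem locally.  For each $\lambda_0\in(0,\infty)$ we have $F'(\lambda_0)=f'(\lambda_0)\ne 0$, so $F$ is biholomorphic from a small disk around $\lambda_0$ onto a small disk around $\xi_1^0:=f(\lambda_0)\in(\llnv_1,1)$, with a holomorphic local inverse $g_{\lambda_0}$.  On the real axis $g_{\lambda_0}$ must coincide with the real-analytic inverse $\lambda(\cdot)$ of $f$ (real analyticity of $\lambda$ coming from the real implicit function theorem, again using $f'\ne0$).  Letting $\xi_1^0$ range over $(\llnv_1,1)$ and taking the union of the target disks gives an open set $G_1\supset(\llnv_1,1)$; the local inverses then glue to one holomorphic function $d_1$ on $G_1$ with $d_1|_{(\llnv_1,1)}=\lambda$, because on any overlap (arranged to be connected and to meet $\R$) two such inverses agree along a real segment and hence everywhere by the identity theorem.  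Since $d_1$ is real and positive on $(\llnv_1,1)$, shrinking $G_1$ we may assume $d_1(G_1)\subset H$, so that $\psi_0$ is holomorphic at $d_1(\xi_1)$, $\alpha+d_1(\xi_1)$, and $\alpha+\beta+d_1(\xi_1)$ for all $\xi_1\in G_1$.

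Finally, the first branch of \eqref{Iq} (see also \eqref{I-var2}) expresses $I_q(\xi_1)=\xi_1\psi_0(\alpha+\beta+\lambda(\xi_1))+(1-\xi_1)\psi_0(\lambda(\xi_1))-\psi_0(\alpha+\lambda(\xi_1))$ on $(\llnv_1,1)$, so one sets
	\[f_1(\xi_1)=\xi_1\,\psi_0\bigl(\alpha+\beta+d_1(\xi_1)\bigr)+(1-\xi_1)\,\psi_0\bigl(d_1(\xi_1)\bigr)-\psi_0\bigl(\alpha+d_1(\xi_1)\bigr),\]
which is holomorphic on $G_1$ and agrees with $I_q$ on $(\llnv_1,1)$.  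Repeating with $\alpha,\beta$ interchanged, using \eqref{eq:lam-xi2} and the second branch of \eqref{Iq}, yields $G_0$, $d_0$, and $f_0$.  I expect the main (mild) obstacle to be the bookkeeping in the gluing step — ensuring the local biholomorphic inverses patch into a single-valued holomorphic $d_1$; this is handled, as indicated, by noting that each local inverse restricts to the uniquely determined real-analytic function $\lambda$ on the real axis and invoking the identity theorem, together with a small shrinking of $G_1$ to keep $d_1$ inside the domain of holomorphy of $\psi_0$.
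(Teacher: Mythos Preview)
Your proposal is correct and follows essentially the same approach as the paper: extend the map $\lambda\mapsto\xi_1$ from \eqref{eq:lam-xi1} to a holomorphic function on a neighborhood of $(0,\infty)$, use $f'(\lambda)\ne 0$ to invoke the holomorphic inverse function theorem locally, pass to a global inverse $d_1$ on a neighborhood $G_1$ of $(\llnv_1,1)$, and then compose with $\psi_0$ via \eqref{Iq} to get $f_1$.  The only cosmetic difference is in the local-to-global step: the paper arranges the source disks so that $\sigma_1$ is globally one-to-one on their union $V_1$ (using strict monotonicity of $\sigma_1$ on the real axis) and then takes $G_1=\sigma_1(V_1)$ with $d_1=\sigma_1^{-1}$ directly, whereas you glue local inverses via the identity theorem on overlaps meeting $\R$; both are standard and equivalent here.
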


\begin{proof} We present the proof for the case $G_1$.  Let 
\begin{align*} 
 \sigma_1(z)=
\frac{\psi_1(z)-\psi_1(\alpha+z)}{\psi_1(z)-\psi_1(\alpha+\beta+z)} , 
\quad z\in \bH_1=\{z: \Re z>0\}, 
\end{align*} 
denote the function defined by \eqref{eq:lam-xi1}, now thought of as a function on the open right half plane.  The set $U_1=\{z\in\bH_1: \psi_1(z)\ne \psi_1(\alpha+\beta+z)\}$ is an open subset of $\bH_1$ that contains the  half-line $(0,\infty)$.   Since $\psi_1$ is holomorphic on $\bH_1$,  it follows that $\sigma_1$ is holomorphic on $U_1$.  By the proof of Lemma \ref{lam-xi-t} in Section \ref{Iq-pf},   $\sigma_1'(\lambda)<0$ for $0<\lambda<\infty$.   By Theorem 10.30 in \cite{Rud-87}, each $\lambda\in(0,\infty)$ is the center of an open  disk in   $U_1$   on which $\sigma_1$ is one-to-one.   By using the strict negativity and continuity  of $\sigma_1'$ on  $(0,\infty)$, we can take these disks small enough so that $\sigma_1$ is a one-to-one mapping of the union $V_1$ of these disks.  

 $G_1=\sigma_1(V_1)$   is an open set since holomorphic functions are open mappings.  $G_1$ contains the open real interval  $(\llnv_1,1)$ by Lemma \ref{lam-xi-t}.   By   Theorem 10.33 in \cite{Rud-87},   the inverse function  $d_1(w) $  of $\sigma_1$ is holomorphic on $G_1$.  Since $d_1$ maps $G_1$ into $\bH_1$ where $\psi_0$ is holomorphic,  
\[ f_1(w)= w\psi_0\bigl(\alpha+\beta+d_1(w)\bigr)+(1-w)\psi_0\bigl(d_1(w)\bigr)-\psi_0\bigl(\alpha+d_1(w)\bigr) \]
is holomorphic on $G_1$.  From the definitions it now follows that  for $\xi_1\in(\llnv_1,1)$,   $d_1(\xi_1)=\lambda(\xi)$ and  $f_1(\xi_1)=I_q(\xi)$.  
\end{proof} 

\begin{lemma}\label{lm:exp1}
We have the following expansion for $\xi_1$ from \eqref{eq:lam-xi1}:
\begin{align*}
\xi_1(\lambda)
&=\tfrac{\alpha}{\alpha+\beta}\Big(1+\tfrac{\beta}{\lambda}+\tfrac{\beta(1-2\alpha)}{2\lambda^2}+\tfrac{\beta\alpha(\alpha-1)}{\lambda^3}
-\tfrac{\beta(12\alpha^3  - 18\alpha^2+\alpha-\beta+3)}{12\lambda^4}+\cO(\tfrac1{\lambda^5})\Big).
\end{align*}
\end{lemma}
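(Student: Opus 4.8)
The plan is to derive the asymptotic expansion of $\xi_1(\lambda)$ as $\lambda\to\infty$ directly from formula \eqref{eq:lam-xi1}, using the expansion of $\psi_1(a+\lambda)-\psi_1(\lambda)$ already established in Lemma \ref{lm:psi-exp}. Recall from that lemma (with $n=1$, and after replacing the fixed shift parameter by $\alpha$ and by $\alpha+\beta$) that
\begin{align*}
\psi_1(\lambda)-\psi_1(\alpha+\lambda)
&=\frac{\alpha}{\lambda^2}\Bigl(1-\frac{(\alpha-1)\cdot 2!}{2\lambda}+\frac{(\alpha-1)(2\alpha-1)\cdot 3!}{12\lambda^2}-\frac{\alpha(\alpha-1)^2\cdot 4!}{24\lambda^3}\\
&\qquad\qquad+\frac{(6\alpha^4-15\alpha^3+10\alpha^2-1)\cdot 5!}{720\lambda^4}+\cO(\tfrac1{\lambda^5})\Bigr),
\end{align*}
and similarly for $\psi_1(\lambda)-\psi_1(\alpha+\beta+\lambda)$ with $\alpha$ replaced by $\alpha+\beta$ throughout. (Here I am using the sign bookkeeping: Lemma \ref{lm:psi-exp} gives $\psi_1(x+a)-\psi_1(x)$ with an overall $(-1)^1=-1$, so $\psi_1(x)-\psi_1(x+a)$ is the positive quantity above.)

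First I would form the ratio
\[
\xi_1(\lambda)=\frac{\psi_1(\lambda)-\psi_1(\alpha+\lambda)}{\psi_1(\lambda)-\psi_1(\alpha+\beta+\lambda)}
=\frac{\alpha}{\alpha+\beta}\cdot\frac{1+a_1\lambda^{-1}+a_2\lambda^{-2}+a_3\lambda^{-3}+a_4\lambda^{-4}+\cO(\lambda^{-5})}{1+b_1\lambda^{-1}+b_2\lambda^{-2}+b_3\lambda^{-3}+b_4\lambda^{-4}+\cO(\lambda^{-5})},
\]
where $a_k$ is the coefficient of $\lambda^{-k}$ in the bracketed numerator series above and $b_k$ is the same with $\alpha$ replaced by $\alpha+\beta$. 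Explicitly
\[
a_1=-(\alpha-1),\quad a_2=\tfrac12(\alpha-1)(2\alpha-1),\quad a_3=-\alpha(\alpha-1)^2,\quad a_4=\tfrac1{6}(6\alpha^4-15\alpha^3+10\alpha^2-1),
\]
and $b_k$ are obtained by the substitution $\alpha\mapsto\alpha+\beta$. Then I would expand the reciprocal of the denominator geometrically, $\bigl(1+\sum b_k\lambda^{-k}\bigr)^{-1}=1-b_1\lambda^{-1}+(b_1^2-b_2)\lambda^{-2}+(-b_1^3+2b_1b_2-b_3)\lambda^{-3}+(b_1^4-3b_1^2b_2+b_2^2+2b_1b_3-b_4)\lambda^{-4}+\cO(\lambda^{-5})$, multiply by the numerator series, and collect powers of $\lambda^{-1}$. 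The $\lambda^{-1}$ coefficient should collapse to $\beta$ (since $a_1-b_1=-(\alpha-1)+(\alpha+\beta-1)=\beta$), the $\lambda^{-2}$ coefficient to $\tfrac{\beta(1-2\alpha)}{2}$, and so on, matching the claimed expansion. The correctness of the $\lambda\to\infty$ limit $\xi_1\to\alpha/(\alpha+\beta)$ is already known from Lemma \ref{lam-xi-t}, which also guarantees $\lambda(\xi)\to\infty$ as $\xi_1\to\llnv_1$, so the asymptotic statement is meaningful.

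The main obstacle is purely computational: propagating the four-term expansions of the two polygamma differences through a quotient and collecting the coefficients without error, especially the $\lambda^{-3}$ and $\lambda^{-4}$ terms which involve cubic and quartic symmetric combinations of $b_1,\dots,b_4$ and cross terms with $a_1,\dots,a_4$. This is exactly the kind of algebra the paper already flags (see the Remark at the start of Appendix \ref{psi-prop}) as having been done in Maple and checked in Sage, so in the writeup I would simply record the result and note that the verification is a finite symbolic computation; alternatively one can check the identity order by order by clearing denominators and comparing coefficients of $\lambda^{-k}$ in $\xi_1(\lambda)\bigl(\psi_1(\lambda)-\psi_1(\alpha+\beta+\lambda)\bigr)=\psi_1(\lambda)-\psi_1(\alpha+\lambda)$, which avoids the geometric series inversion at the cost of a slightly larger linear system. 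Either way the proof is a routine but lengthy manipulation; no conceptual difficulty beyond careful bookkeeping.
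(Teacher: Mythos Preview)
Your proposal is correct and follows essentially the same approach as the paper: apply Lemma~\ref{lm:psi-exp} with $n=1$ to both $\psi_1(\lambda)-\psi_1(\alpha+\lambda)$ and $\psi_1(\lambda)-\psi_1(\alpha+\beta+\lambda)$, factor out $\alpha/(\alpha+\beta)$, and expand the quotient. The paper's proof is even terser than yours, simply displaying the ratio of the two bracketed series and declaring ``The claim follows,'' deferring the algebra to the Maple/Sage computation noted in the appendix's opening Remark.
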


\begin{proof}
Applying Lemma \ref{lm:psi-exp} to \eqref{eq:lam-xi1} we have
	\begin{align*}
	\xi_1(\lambda)
	&=\frac{\psi_1(\alpha+\lambda)-\psi_1(\lambda)}{\psi_1(\alpha+\beta+\lambda)-\psi_1(\lambda)}\\
	&=\frac{\alpha}{\alpha+\beta}\cdot\frac{1-\frac{\alpha-1}{\lambda}+\frac{(\alpha-1)(2\alpha-1)}{2\lambda^2}+\cdots+\cO(\lambda^{-5})}{1-\frac{\alpha+\beta-1}{\lambda}+\frac{(\alpha+\beta-1)(2\alpha+2\beta-1)}{2\lambda^2}+\cdots+\cO(\lambda^{-5})}.
	\end{align*}
The claim follows. 
\end{proof}

\begin{lemma}\label{lm:exp2}
With $\xi_1$ from \eqref{eq:lam-xi1} we have the following expansions as $\mu\to\infty$:
	\begin{align*}
	&\lambda'(\xi_1(\mu))=-\tfrac{\alpha+\beta}{\alpha\beta}\Bigl(\mu^2+(2\alpha-1)\mu+(\alpha^2-\alpha+1)+\tfrac{\alpha-\beta}{3\mu}\Bigr)+\cO(\tfrac1{\mu^2}),\\
	&\lambda''(\xi_1(\mu))=\tfrac{(\alpha+\beta)^2}{\alpha^2\beta^2}\Bigl(2\mu^3+3(2\alpha-1)\mu^2+3(2\alpha^2-2\alpha+1)\mu
	+\tfrac{6\alpha^3-9\alpha^2+10\alpha-\beta-3}3\Bigr)+\cO(\mu^{-1}),\\
	&\lambda'''(\xi(\mu))=-\tfrac{(\alpha+\beta)^3}{\alpha^3\beta^3}\Bigl(6\mu^4+12(2\alpha-1)\mu^3+3(12\alpha^2-12\alpha+5)\mu^2\\
	&\qquad\qquad\qquad\qquad\qquad\quad+(24\alpha^3-36\alpha^2+32\alpha-2\beta-9)\mu\Bigr)+\cO(1).
	\end{align*}
\end{lemma}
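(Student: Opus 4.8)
The plan is to exploit that, by the proof of Lemma~\ref{lam-xi-t}\eqref{lam-xi}, the map $\lambda\mapsto\xi_1(\lambda)$ defined by \eqref{eq:lam-xi1} is a strictly monotone real-analytic bijection of $(0,\infty)$ onto $(\llnv_1,1)$, with real-analytic inverse $\xi_1\mapsto\lambda(\xi_1)$ (Lemma~\ref{lm700}). Hence the standard inverse-function identities hold at the point $\xi_1=\xi_1(\mu)$, where primes on $\xi_1$ now denote differentiation in $\lambda$ evaluated at $\lambda=\mu$:
\begin{align*}
\lambda'(\xi_1(\mu))&=\frac1{\xi_1'(\mu)},\qquad
\lambda''(\xi_1(\mu))=-\frac{\xi_1''(\mu)}{\xi_1'(\mu)^3},\\
\lambda'''(\xi_1(\mu))&=\frac{3\,\xi_1''(\mu)^2-\xi_1'(\mu)\,\xi_1'''(\mu)}{\xi_1'(\mu)^5}.
\end{align*}
So it suffices to expand $\xi_1'(\mu),\xi_1''(\mu),\xi_1'''(\mu)$ as $\mu\to\infty$ to enough orders, substitute, and expand the resulting rational expressions in powers of $1/\mu$.

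For the derivatives of $\xi_1$ I would differentiate the expansion in Lemma~\ref{lm:exp1} term by term. This is legitimate: differentiating the quotient in \eqref{eq:lam-xi1} shows that $\xi_1',\xi_1'',\xi_1'''$ are explicit rational combinations of the polygamma differences $\psi_n(\lambda)-\psi_n(a+\lambda)$ for $n\le4$ and $a\in\{\alpha,\alpha+\beta\}$, and Lemma~\ref{lm:psi-exp} supplies the asymptotics of exactly these differences (via $\psi_n(\lambda)-\psi_n(a+\lambda)=-(\psi_n(\lambda+a)-\psi_n(\lambda))$), which one checks coincide with the term-by-term derivatives of the Lemma~\ref{lm:exp1} series. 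Writing $\xi_1(\lambda)=\llnv_1\bigl(1+\beta\lambda^{-1}+c_2\lambda^{-2}+c_3\lambda^{-3}+c_4\lambda^{-4}+\cO(\lambda^{-5})\bigr)$ with $c_2,c_3,c_4$ read off from Lemma~\ref{lm:exp1}, this gives
\begin{align*}
\xi_1'(\mu)&=-\llnv_1\bigl(\beta\mu^{-2}+2c_2\mu^{-3}+3c_3\mu^{-4}+4c_4\mu^{-5}+\cO(\mu^{-6})\bigr),\\
\xi_1''(\mu)&=\llnv_1\bigl(2\beta\mu^{-3}+6c_2\mu^{-4}+12c_3\mu^{-5}+20c_4\mu^{-6}+\cO(\mu^{-7})\bigr),\\
\xi_1'''(\mu)&=-\llnv_1\bigl(6\beta\mu^{-4}+24c_2\mu^{-5}+60c_3\mu^{-6}+120c_4\mu^{-7}+\cO(\mu^{-8})\bigr).
\end{align*}

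The final step is to substitute these into the three inverse-function identities: factor the leading monomial out of each $\xi_1^{(k)}(\mu)$ so that, e.g., $\xi_1'(\mu)=-\tfrac{\alpha\beta}{\alpha+\beta}\mu^{-2}\bigl(1+\cO(\mu^{-1})\bigr)$, and expand $(1+\cdot)^{-1}$, $(1+\cdot)^{-3}$, $(1+\cdot)^{-5}$ by geometric series, keeping terms down to the orders required by the claimed remainders ($\cO(\mu^{-2})$ for $\lambda'$, $\cO(\mu^{-1})$ for $\lambda''$, $\cO(1)$ for $\lambda'''$). The prefactors $-\tfrac{\alpha+\beta}{\alpha\beta}$, $\tfrac{(\alpha+\beta)^2}{\alpha^2\beta^2}$, $-\tfrac{(\alpha+\beta)^3}{\alpha^3\beta^3}$ then emerge from the leading coefficients $-\beta,2\beta,-6\beta$ of $\xi_1',\xi_1'',\xi_1'''$ together with $\llnv_1=\alpha/(\alpha+\beta)$, and the polynomial corrections come from the $c_2,c_3,c_4$ terms. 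I expect the main obstacle to be purely the bookkeeping: one must carry enough correction terms through the divisions so that the stated remainder orders are genuinely attained and track all cancellations, but there is no conceptual difficulty beyond this. As the paper notes, this algebra was executed with Maple and verified with Sage.
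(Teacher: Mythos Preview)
Your approach is correct and genuinely different from the paper's. The paper differentiates the defining relation \eqref{eq:lam-xi1} implicitly in $\xi_1$, solving at each stage for $\lambda'(\xi_1)$, $\lambda''(\xi_1)$, $\lambda'''(\xi_1)$ as explicit rational expressions in $\xi_1$ and the polygamma differences $\psi_n(\alpha+\beta+\lambda)-\psi_n(\lambda)$, $\psi_n(\alpha+\lambda)-\psi_n(\lambda)$; it then evaluates at $\xi_1=\xi_1(\mu)$ (so $\lambda=\mu$) and feeds in Lemmas~\ref{lm:psi-exp} and~\ref{lm:exp1}. You instead invert the direction: use the standard inverse-function identities to express $\lambda^{(k)}(\xi_1(\mu))$ in terms of $\xi_1^{(j)}(\mu)$, and obtain the latter by differentiating the expansion of Lemma~\ref{lm:exp1}. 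Your justification for term-by-term differentiation is adequate (and one can also note that the $\cO(\lambda^{-5})$ remainder in Lemma~\ref{lm:exp1}, being built from the polygamma integrals of Lemma~\ref{lm:psi-exp}, has derivatives of the expected orders). Your route is arguably tidier in that it recycles Lemma~\ref{lm:exp1} directly and avoids reintroducing $\xi_1$ as an explicit factor in the intermediate formulas; the paper's route has the advantage of producing closed-form expressions for $\lambda'(\xi_1),\lambda''(\xi_1),\lambda'''(\xi_1)$ valid for all $\xi_1$, not just asymptotically, which are reused verbatim in the computation of $I_q'',I_q''',I_q''''$ in \eqref{I''}--\eqref{I''''}. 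Either way the final step is the same symbolic-algebra bookkeeping.
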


\begin{proof}
Use \eqref{eq:lam-xi1} to write
	\begin{align*}
	\xi_1\bigl[\psi_1\bigl(\alpha+\beta+\lambda(\xi_1)\bigr)-\psi_1\bigl(\lambda(\xi_1)\bigr)\bigr]=\psi_1\bigl(\alpha+\lambda(\xi_1)\bigr)-\psi_1\bigl(\lambda(\xi_1)\bigr).
	\end{align*}
Differentiating this 
in $\xi_1$ and solving for $\lambda'(\xi_1)$ we get
	\[\lambda'(\xi_1)
	=\frac{\psi_1\bigl(\alpha+\beta+\lambda(\xi_1)\bigr)-\psi_1\bigl(\lambda(\xi_1)\bigr)}
	{\Bigl(\psi_2\bigl(\alpha+\lambda(\xi_1)\bigr)-\psi_2\bigl(\lambda(\xi_1)\bigr)\Bigr)-\xi_1\Bigl(\psi_2\bigl(\alpha+\beta+\lambda(\xi_1)\bigr)-\psi_2\bigl(\lambda(\xi_1)\bigr)\Bigr)}\,.\]
%
Differentiating  a second time we get
	\begin{align*}
	\lambda''(\xi_1)
	&=\frac{2\Bigl(\psi_2\bigl(\alpha+\beta+\lambda(\xi_1)\bigr)-\psi_2\bigl(\lambda(\xi_1)\bigr)\Bigr)\lambda'(\xi_1)}{\psi_2(\alpha+\lambda(\xi_1))-\psi_2(\lambda(\xi_1))-\xi_1\bigl(\psi_2(\alpha+\beta+\lambda(\xi_1))-\psi_2(\lambda(\xi_1))\bigr)}\\
	&\quad+\frac{\Bigl[\xi_1\bigl(\psi_3(\alpha+\beta+\lambda(\xi_1))-\psi_3(\lambda(\xi_1))\bigr)-\bigl(\psi_3(\alpha+\lambda(\xi_1))-\psi_3(\lambda(\xi_1))\bigr)\Bigr](\lambda'(\xi_1))^2}
	{\psi_2(\alpha+\lambda(\xi_1))-\psi_2(\lambda(\xi_1))-\xi_1\bigl(\psi_2(\alpha+\beta+\lambda(\xi_1))-\psi_2(\lambda(\xi_1))\bigr)}\,.
	\end{align*}
	
A third round of differentiation gives	
	\begin{align*}
	&\lambda'''(\xi_1)\\
	&=\frac{3\Bigl(\psi_3\bigl(\alpha+\beta+\lambda(\xi_1)\bigr)-\psi_3\bigl(\lambda(\xi_1)\bigr)\Bigr)\bigl(\lambda'(\xi_1)\bigr)^2+3\Bigl(\psi_2\bigl(\alpha+\beta+\lambda(\xi_1)\bigr)-\psi_2\bigl(\lambda(\xi_1)\bigr)\Bigr)\lambda''(\xi_1)}{\psi_2(\alpha+\lambda(\xi_1))-\psi_2(\lambda(\xi_1))-\xi_1\bigl(\psi_2(\alpha+\beta+\lambda(\xi_1))-\psi_2(\lambda(\xi_1))\bigr)}\\
	&\quad+\frac{\Bigl[\xi_1\bigl(\psi_4(\alpha+\beta+\lambda(\xi_1))-\psi_4(\lambda(\xi_1))\bigr)-\bigl(\psi_4(\alpha+\lambda(\xi_1))-\psi_4(\lambda(\xi_1))\bigr)\Bigr](\lambda'(\xi_1))^3}
	{\psi_2(\alpha+\lambda(\xi_1))-\psi_2(\lambda(\xi_1))-\xi_1\bigl(\psi_2(\alpha+\beta+\lambda(\xi_1))-\psi_2(\lambda(\xi_1))\bigr)}\\
	&\quad+\frac{3\Bigl[\xi_1\bigl(\psi_3(\alpha+\beta+\lambda(\xi_1))-\psi_3(\lambda(\xi_1))\bigr)-\bigl(\psi_3(\alpha+\lambda(\xi_1))-\psi_3(\lambda(\xi_1))\bigr)\Bigr]\lambda'(\xi_1)\lambda''(\xi_1)}
	{\psi_2(\alpha+\lambda(\xi_1))-\psi_2(\lambda(\xi_1))-\xi_1\bigl(\psi_2(\alpha+\beta+\lambda(\xi_1))-\psi_2(\lambda(\xi_1))\bigr)}\,.
	\end{align*}

The claims now come by applying  the  expansions from  Lemma \ref{lm:psi-exp}. 
\end{proof}

Now we can prove expansion \eqref{Iq-exp} of $I_q$, claimed in Remark \ref{rk:I-reg}. We focus only on the case $\xi_1\in[\frac{\alpha}{\alpha+\beta},1]$, the other case being similar.
Consider  $I_q$ and $\lambda$ as functions of $\xi_1$: $I_q(\xi_1)=I_q(\xi_1,1-\xi_1)$ and $\lambda(\xi_1)=\lambda(\xi_1,1-\xi_1)$.

By \eqref{eq:lam-xi1} 
\begin{align*}
I_q'(\xi_1)
&=\psi_0\bigl(\alpha+\beta+\lambda(\xi_1)\bigr)-\psi_0\bigl(\lambda(\xi_1)\bigr)\\
&\qquad+\bigl[\xi_1\psi_1\bigl(\alpha+\beta+\lambda(\xi_1)\bigr)+(1-\xi_1)\psi_1\bigl(\lambda(\xi_1)\bigr)-\psi_1\bigl(\alpha+\lambda(\xi_1)\bigr)\bigr]\lambda'(\xi_1)\\
&=\psi_0\bigl(\alpha+\beta+\lambda(\xi_1)\bigr)-\psi_0\bigl(\lambda(\xi_1)\bigr).
\end{align*}
Thus, $I_q'(\xi_1)\to0$ as $\xi_1\searrow\llnv_1$. A similar argument works for $\xi_1\nearrow\llnv_1$ and thus $I_q'(\llnv)=0$. Taking a second derivative we have
\begin{align}\label{I''}
I_q''(\xi_1)=\bigl[\psi_1\bigl(\alpha+\beta+\lambda(\xi_1)\bigr)-\psi_1\bigl(\lambda(\xi_1)\bigr)\bigr]\lambda'(\xi_1).
\end{align}
Since $\lambda$ is strictly decreasing on $[\llnv_1,1]$ and $\psi_1$ is strictly decreasing on $[0,\infty)$, the above is strictly positive.  This  gives another verification of the strict convexity of $I_q$.

Further differentiations give
\begin{align}\label{I'''}
I_q'''(\xi_1)
&=\bigl[\psi_2\bigl(\alpha+\beta+\lambda(\xi_1)\bigr)-\psi_2\bigl(\lambda(\xi_1)\bigr)\bigr](\lambda'(\xi_1))^2\\
&\quad+\bigl[\psi_1\bigl(\alpha+\beta+\lambda(\xi_1)\bigr)-\psi_1\bigl(\lambda(\xi_1)\bigr)\bigr]\lambda''(\xi_1)
\end{align}
and
	\begin{align}\label{I''''}
	\begin{split}
	I''''_q(\xi_1)
	&=\bigl[\psi_3\bigl(\alpha+\beta+\lambda(\xi_1)\bigr)-\psi_3\bigl(\lambda(\xi_1)\bigr)\bigr](\lambda'(\xi_1))^3\\
	&\quad+3\bigl[\psi_2\bigl(\alpha+\beta+\lambda(\xi_1)\bigr)-\psi_2\bigl(\lambda(\xi_1)\bigr)\bigr]\lambda'(\xi_1)\lambda''(\xi_1)\\
	&\quad+\bigl[\psi_1\bigl(\alpha+\beta+\lambda(\xi_1)\bigr)-\psi_1\bigl(\lambda(\xi_1)\bigr)\bigr]\lambda'''(\xi_1).
	\end{split}
	\end{align}
From \eqref{I''} and the  expansions in Lemmas \ref{lm:psi-exp}, \ref{lm:exp1}, and \ref{lm:exp2} we have
	\begin{align*}
	I''_q(\xi_1(\lambda))=-(\alpha+\beta)\lambda^{-2}\bigl(1+\cO(\lambda^{-1})\bigr)\Bigl(-\frac{\alpha+\beta}{\alpha\beta}\lambda^2+\cO(\lambda)\Bigr)\mathop{\longrightarrow}_{\lambda\to\infty}\frac{(\alpha+\beta)^2}{\alpha\beta}.
	\end{align*}	
In other words,	\[\lim_{\xi_1\searrow\llnv_1}I''_q(\xi_1)=\frac{(\alpha+\beta)^2}{\alpha\beta}>0.\]


Similarly,
\begin{align*}
I_q'''(\xi_1(\lambda))
&=\frac{(\alpha+\beta)^3}{\alpha^2\beta^2\lambda^3}\bigl(2-3(\alpha+\beta-1)\lambda^{-1}+\cO(\lambda^{-2})\bigr)\bigl(\lambda^2+(2\alpha-1)\lambda+\cO(1)\bigr)^2\\
&\quad-\frac{(\alpha+\beta)^3}{\alpha^2\beta^2\lambda^2}\bigl(1-(\alpha+\beta-1)\lambda^{-1}+\cO(\lambda^{-2})\bigr)\bigl(2\lambda^3+3(2\alpha-1)\lambda^2+\cO(\lambda)\bigr)\\
&=\frac{(\alpha+\beta)^3(\alpha-\beta)}{\alpha^2\beta^2}\bigl(1+\cO(\lambda^{-1})\bigr)\mathop{\longrightarrow}_{\lambda\to\infty}\frac{(\alpha+\beta)^3(\alpha-\beta)}{\alpha^2\beta^2}
\end{align*}
and also 
	\[\lim_{\xi_1\searrow\llnv_1}I''''_q(\xi_1)=\frac{(\alpha+\beta)^4(2\alpha^2-2\alpha\beta+2\beta^2+1)}{\alpha^3\beta^3}.\]

Write $I_q(\xi_1(\alpha,\beta),\alpha,\beta)$ to make the dependence on $\alpha$ and $\beta$ clear.
Then symmetry gives $\llnv_1(\alpha,\beta)=1-\llnv_1(\beta,\alpha)$ and $I_q(s,\alpha,\beta)=I_q(1-s,\beta,\alpha)$. Thus
	\[\lim_{s\nearrow\llnv_1(\alpha,\beta)}I^{(n)}_q(s,\alpha,\beta)=(-1)^n\lim_{t\searrow\llnv_1(\beta,\alpha)}I^{(n)}_q(t,\beta,\alpha).\]
From this we see that $I_q$ is $n$ times  continuously differentiable at $\llnv$ if and only if  functions 
	\[F_m(\alpha,\beta)=\lim_{\xi_1\searrow\llnv_1(\alpha,\beta)}I^{(m)}_q(\xi_1,\alpha,\beta)\]
are symmetric in $\alpha,\beta$ for all even $m\le n$ and antisymmetric for all odd $m\le n$.  The formulas we found above satisfy this up to $n=4$, therefore  $I_q$ is at least
four times continuously differentiable at $\llnv$ and expansion \eqref{Iq-exp} holds.\qed

\footnotesize

\bibliographystyle{plain}

\bibliography{firasbib2010} 

\end{document}